\numberwithin{equation}{section} 
\begin{document}
\title{The $\KO$--valued spectral flow for skew-adjoint Fredholm
operators}

\author{Chris Bourne}
\address{WPI-Advanced Institute for Materials Research (WPI-AIMR), Tohoku University,
2-1-1 Katahira, Aoba-ku, Sendai, 980-8577, Japan and 
RIKEN iTHEMS, Wako, Saitama 351-0198, Japan}
\email{chris.bourne@tohoku.ac.jp}
\thanks{C.B. is supported by a JSPS Grant-in-Aid for Early-Career Scientists (No. 19K14548).}

\author{Alan L. Carey}
\address{Mathematical Sciences Institute, Australian National University, 
Kingsley St., Canberra, ACT 0200, Australia 
and School of Mathematics and Applied Statistics, University of Wollongong, NSW, Australia,  2522}  
\email{acarey@maths.anu.edu.au}
\urladdr{http://maths.anu.edu.au/~acarey/}
\thanks{A.L.C. acknowledges the financial support of the Australian Research Council.}

\author{Matthias Lesch}
\address{Mathematisches Institut,
Universit\"at Bonn,
Endenicher Allee 60,
53115 Bonn,
Germany}
\email{ml@matthiaslesch.de, lesch@math.uni-bonn.de}
\urladdr{www.matthiaslesch.de, www.math.uni-bonn.de/people/lesch}
\thanks{M.L. would like to thank the following institutions:
The School of Mathematics and Applied Statistics of the University of Wollongong
and the University of Sydney for their hospitality, the international
visitor program of the University of Sydney for financial support,
the University of Bonn for granting a sabbatical semester and the
Hausdorff Center for Mathematics, Bonn, for financial support. }

\author{Adam Rennie}
\address{School of Mathematics and Applied Statistics, University of Wollongong, NSW, Australia,  2522}
\email{renniea@uow.edu.au}
\thanks{C.B., A.L.C. and A.R. thank the Erwin Schr\"{o}dinger Institute program 
Bivariant K-Theory in Geometry and Physics for hospitality and support during the 
production of this work.}

\subjclass[2010]{ 19K56, 46L80,  81T75}
\keywords{Fredholm index, spectral flow, Clifford algebra, $\KU$- and
$\KK$-theory}

\dedicatory{This article is dedicated to Krzysztof Wojciechowski, our
friend and colleague whom we have missed for over a decade.  His
interest and contributions to index theory and geometry have
been a constant source of inspiration.}

\newcommand{\Alan}{\textbf{Alan}}
\newcommand{\Adam}{\textbf{Adam}}
\newcommand{\Chris}{\textbf{Chris}}
\newcommand{\Matthias}{\textbf{Matthias}}
\begin{abstract}
  In this article we give a comprehensive treatment of a `Clifford module
flow' along paths in the skew-adjoint Fredholm operators on a real Hilbert space that takes 
values in $\KO_*(\R)$ via the Clifford index of
Atiyah--Bott--Shapiro.  We develop its properties for both bounded and
unbounded skew-adjoint operators including an axiomatic characterization.
Our constructions and approach are motivated by the principle that 
\[
  \text{spectral flow} = \text{Fredholm index}.
\]
That is, we show how the $\KO$--valued spectral flow relates to a
$\KO$--valued index by proving a  Robbin--Salamon type result.
The Kasparov product is also used to establish a $\text{spectral flow} = \text{Fredholm index}$ 
result at the level of
bivariant $\KU$-theory. We explain how our results incorporate previous applications of
$\Z/ 2\Z$--valued spectral flow in the study of topological phases of matter.
\end{abstract}

\maketitle

\tableofcontents

\section*{Introduction}

In this paper we examine the theory and applications of spectral flow for operators 
on a real Hilbert space. Aspects of the theory have been in place since Atiyah and 
Singer~\cite{AS69}. Recent developments in the mathematical study of topological phases 
necessitates a more systematic approach.

\subsection*{Background}
\label{s.Alan}

The first occurrence of the notion of spectral flow for loops in the
space of skew-adjoint Fredholm operators on a real Hilbert space
appears in an article of Witten \cite{Wit82}.  From a more
mathematical perspective the first study is due to Lott \cite{Lot88}
who, drawing inspiration from Witten's ideas, introduced the notion of
`Clifford module-valued spectral flow' for loops in the classifying
spaces of real $\KU$-theory. He followed the treatment by Atiyah--Singer
for these classifying spaces \cite{AS69}.  These early papers also
closely followed the article of Atiyah, Patodi, and Singer \cite[Sec.
7]{APS:SARIII} by introducing spectral flow using the notion of
`intersection number'. 

The present article is partly motivated by recent applications of the
notion of `real spectral flow' in the study of topological phases of
matter \cite{CPSchuba19,DSBW} where skew-adjoint Fredholm operators
arise naturally from Hamiltonians in quantum systems.  Our focus,
however, is to give a comprehensive treatment of the mathematical
questions that these applications have raised. We nevertheless sketch
how notions of spectral flow that have previously appeared in the
study of topological phases relate to our results.

Our first observation is that the intersection number definition of
spectral flow  is not helpful for the applications to topological
phases that have emerged recently.  For these, an analytic approach is
needed, particularly for aperiodic and disordered systems whose spectra 
cannot be easily studied in general.   For spectral flow
along paths of self-adjoint Fredholm operators on a complex Hilbert
space, an analytic approach began in \cite{Phi96} and was extended in
\cite{BLP05}.  This analytic approach may be adapted to the case of
spectral flow along paths in the skew-adjoint Fredholm operators on a
real Hilbert space. One method of achieving this is found in
\cite{CPSchuba19} where a $\Z/2\Z$--valued spectral flow in the
skew-adjoint Fredholm operators on a real Hilbert space was defined. 

The study of topological phases can be related to 
the classifying spaces of $\KO$-theory~\cite{AZ97, Kitaev09}.
 Here we exploit the construction by Atiyah--Singer
\cite{AS69, Kar70, Sch93} of these spaces in terms of skew-adjoint Fredholm
operators and develop in this framework a general analytic theory of
spectral flow. 

\subsection*{The main results}
\label{ss.Alan.MR}

Our first objective is the generalization of the analytic
$\Z/ 2\Z$--valued spectral flow of~\cite{CPSchuba19}  along paths in the space of 
skew-adjoint Fredholm operators on a real Hilbert space to a `Clifford
module flow' that takes values in $KO_*(\R)$ via the Clifford index of
Atiyah--Bott--Shapiro~\cite{ABS64}.  After setting out the
preliminaries in the early sections, we give the definition in Sec.~\ref{s.ClkSF} 
and its relation to all previously studied spectral flow. 
We develop its properties for both bounded and
unbounded skew-adjoint operators  and  present an
axiomatic framework for this $\KO$--valued spectral flow that is
analogous to that already existing in the complex case in \cite{L05}.

Our constructions and approach are motivated by the principle that 
\[
  \text{spectral flow} = \text{Fredholm index}.
\]
That is, we show how $\KO$--valued spectral flow relates to a
$\KO$--valued index by proving in Theorem~\ref{thm:Robbin-Salamon} in
Sec.~\ref{s.RS} a Robbin--Salamon type result \cite{RobSal:SFM}.
Kasparov's bivariant theory has also come to play a role in
applications to models of topological phases of matter. With this in
mind we extend Theorem~\ref{thm:Robbin-Salamon} to show how the
Kasparov product may be used to establish a 
$ \text{spectral flow} = \text{Fredholm index} $ result at the level
of bivariant K-theory.  The proof  is a straightforward application of
the constructive Kasparov product \cite{KaaLes:SFU}. 

\subsection*{Organization of the paper}

After a brief introduction to the physical motivations underpinning our 
work in Sec.~\ref{ss.Alan.MPT}, 
Sec.~\ref{s.ABSC} gives a careful introduction to Clifford algebras and
to the ABS Clifford index \cite{ABS64}. 
As it will be
useful for applications we go into some explicit details.  Then 
Sec.~\ref{s.NHCT} expands on \cite{AS69} and contains
preliminaries needed for later proofs.

The essentially new material starts in Sec.~\ref{s.FPCI} where we
introduce a $\KO$--valued index for Fredholm pairs of complex
structures on a real Hilbert space. This is the real counterpart to
the long-standing concept of Fredholm pairs of projections.  There is
of course a link between them which we explain. We note that this
section is directly relevant to the applications to topological
phases.

The $\KO$--valued spectral flow for paths of bounded skew-adjoint
Fredholm operators is introduced next in Sec.~\ref{s.ClkSF}. We
collect  its fundamental properties and show that it incorporates
previous examples of real spectral flow that have been discussed in
the literature. The extension to the unbounded case then follows in
Sec.~\ref{s.EUO}.

Our $\KO$--valued spectral flow can be characterized uniquely in an
axiomatic fashion as we explain in Sec.~\ref{s.USF}. This
characterization enables us to give a short proof of our
Robbin--Salamon type theorem in Sec.~\ref{s.RS}. Finally, in Sec.~\ref{s.SFKP} 
we relate this theorem to Kasparov theory by exploiting
the unbounded Kasparov product. The latter viewpoint is relevant for 
applications to topological phases and the bulk-edge
correspondence. We collect some homotopy results from \cite{AS69, Mil63} that will be of 
use to us in Appendix~\ref{s.SCHE}.

\section{Motivation from physical theory}
\label{ss.Alan.MPT}

As we remarked earlier, motivation for this paper comes from issues
raised by the mathematical aspects of studies of topological phases
for fermionic one particle systems.  Historically model Hamiltonians of these
fermion systems commute or anti-commute with prescribed symmetry
operators that may arise as unitary or anti-unitary involutive operators on a complex
Hilbert space. Such anti-unitary symmetries then determine a real
structure on the complex space and one may consider topological phase
labels of Hamiltonians that respect given symmetries and real
structure. Our results in this paper show that these topological phase
labels are identified by $\KO$--valued spectral flow.

Previous work, notably~\cite{Kitaev09, FM13} and further developed in 
(amongst other places) \cite{Thiang16} showed that the symmetries of the Hamiltonians
that allow topological phases generate real Clifford algebras. This leads on to the idea of
detecting different topological phases using  the Clifford module valued index  of
Atiyah--Bott--Shapiro.  Some of the present authors showed that one could apply
Kasparov's bivariant theory in the real case to realize these Clifford
module indices~\cite{BCR2,BKR17}.

The connection of topological phases with real index theory of 
skew-adjoint Fredholm operators arises in its simplest form as follows.
There is a class of model Hamiltonians that are said to be of 
Bogoliubov--de Gennes type (these are called symmetry class D in the 
physics literature). That is, there is a complex 
Hilbert space $H_\C$ equipped with a particle-hole symmetry 
given by an anti-linear  involution $C$ on $H_\C$.
The symmetry operator $C$ is self-adjoint, $C^2=1$ and  
$C\mathbf{H}C = -\mathbf{H}$ with $\mathbf{H}=\mathbf{H}^*$ 
the (bounded) self-adjoint Hamiltonian of the system. 
Then the operator $i\mathbf{H}$ commutes with $C$ and gives a skew-adjoint 
operator on the real Hilbert space 
$H_\R = \{v \in H_\C \,:\, C v = v\}$. 
It is normally assumed in these models that 
$0 \notin \sigma(\mathbf{H})$ and so $i\mathbf{H}$ is an invertible (and therefore Fredholm) 
skew-adjoint operator on a real Hilbert space. 

The space of all skew-adjoint Fredholm operators commuting with $C$
has non-trivial topology.  One method of detecting this topology is
to consider paths of such Hamiltonians $\{\mathbf{H}_t\}_{t\in [0,1]}$ that respect the 
particle hole symmetry $C \mathbf{H}_t C = -\mathbf{H}_t$ for all $t \in [0,1]$. If we restrict to 
Fredholm paths, where $\ker(\mathbf{H}_t)$ is finite dimensional for all $t$,  we obtain a 
path of skew-adjoint Fredholm operators $\{i \mathbf{H}_t\}_{t\in [0,1]}$ on the real Hilbert space $H_\R$. 
If we also assume the endpoints 
$\mathbf{H}_0$ and $\mathbf{H}_1$ are invertible (e.g. we consider Fredholm paths connecting 
two gapped Bogoliubov--de Gennes Hamiltonians), this path can be completed into a loop. 
As such, topological properties of Hamiltonians in symmetry class D are closely connected to the 
loop space of skew-adjoint Fredholm operators, as was studied in detail in~\cite{CPSchuba19}.
Such loops give us the simplest example of $\KO$--valued spectral flow and 
was termed $\Z/2\Z$ spectral flow.

It was observed in~\cite{BSB19} that the $\Z/2\Z$--valued spectral
flow provides an alternative method to describe the relative 
topological phase of Bogoliubov--de Gennes Hamiltonians  without 
any other information (e.g. dimension or a Brillouin torus).  
 Because the
topological phase of gapped free-fermion Hamiltonians  may take values
in any of the groups $\KO_{\ast}(\R)$, there is a clear motivation to
fully develop the notion of a general $\KO$--valued spectral flow that
incorporates systems with symmetries. A related point of view may also
be found in~\cite{DSBW}.

To consider paths of Hamiltonians with symmetries and a 
generalized spectral flow, the picture of free-fermionic topological 
phases developed by Kennedy and Zirnbauer~\cite{KZ16} and more recently 
Alldridge, Max and Zirnbauer~\cite{AMZ19} is of particular use to us. 
Following this description, 
we again consider  a self-adjoint 
Bogoliubov--de Gennes Hamiltonian $\mathbf{H}$, $C\mathbf{H}C = -\mathbf{H}$, 
such that  
$\mathbf{H}$ commutes with prescribed physical symmetry 
operators~\cite{AZ97, HHZ05}. 
A key observation described in detail in~\cite{KZ16} is that 
these physical symmetry operators can be used to construct a representation 
of a real Clifford algebra acting on the real Hilbert space $H_\R$ of 
elements fixed by $C$. Furthermore, the 
generators of this Clifford representation anti-commute with the 
real skew-adjoint operator $i\mathbf{H}$ and its `spectral flattening' 
$J_{\mathbf{H}} = i\mathbf{H}|\mathbf{H}|^{-1}$. That is, free-fermionic gapped Hamiltonians 
with Altland--Zirnbauer-type symmetries are in one-to-one correspondence 
with skew-adjoint gapped operators on a real Hilbert space that anti-commute 
with the generators of a Clifford algebra representation. Representations of 
all ten Morita classes of 
Clifford algebras can be constructed by considering the different Altland--Zirnbauer symmetry types. 
Topological phases have been associated to such systems by classical
homotopy methods or van Daele $\KU$--theory~\cite{KZ16, AMZ19}. 
See also~\cite{Kel17, K19}.

Given a fixed Altland--Zirnbauer symmetry type, we can 
consider paths of Hamiltonians $\{\mathbf{H}_t\}_{t\in[0,1]}$ that respect  
the given symmetry for all $t \in [0,1]$. 
Passing to the real subspace fixed under the anti-linear involution, 
the path $\{i\mathbf{H}_t\}_{t\in[0,1]}$ will anti-commute with the Clifford generators 
that come from the symmetry type for all $t$. By restricting to 
Fredholm paths with invertible endpoints, we therefore obtain a 
loop of skew-adjoint Fredholm operators on a real Hilbert space that anti-commute 
with a fixed number of Clifford generators.
By constructing a $\KO$--valued spectral flow that defines a homomorphism of  the loop 
group of such skew-adjoint Fredholm operators onto an Atiyah--Bott--Shapiro index group, 
we obtain a mathematically 
precise formulation of the physical description of topological phases as homotopy 
classes of symmetric Hamiltonians. That is, the 
$\KO$--valued spectral flow precisely encodes the topological
obstruction for two symmetric and gapped free-fermion Hamiltonian
systems to be connected by a continuous Fredholm path that respects the
underlying symmetries. Conversely, a non-trivial $\KO$--valued spectral
flow guarantees the existence of topological zero energy states (i.e.
a kernel) at some point along the path joining two gapped Hamiltonians.

Let us also note that the application of spectral flow to systems and models in 
condensed matter physics has a rich history. The connection between charge transport, 
Chern numbers and the Hall conductance with the index of a pair of projections 
(used heavily in Phillip's definition \cite{Phi96} of spectral flow) was 
studied in detail by Avron, Seiler and Simon~\cite{ASS94, ASS94b}. 
The index of a pair of projections continues to provide a useful mechanism 
to characterize bulk topological phases in the physics literature, see~\cite{LiMong} for example.
In the mathematical physics literature, the index of skew-adjoint 
Fredholm operators and generalizations of the index of a pair of projections 
have been used to characterize the strong topological phase in all dimensions 
and symmetry classes~\cite{GSB, KatsuraKoma}.

Recently 
Schulz-Baldes and co-authors have used spectral flow constructions to study 
the strong invariant of topological insulators (potentially with 
anti-linear symmetries) in two-dimensional systems~\cite{DSB} and 
in complex classes of topological phases (symmetry type A and AIII) 
in arbitrary dimension~\cite{CSB19}. 
Such spectral flows are usually implemented physically via a flux or monopole 
insertion through a local cell in a lattice system.
 Our work more closely 
follows~\cite{CPSchuba19,DSBW}, which concerns the homotopy type of 
Hamiltonians and Fredholm operators with certain symmetries and without reference 
to a Brillouin zone or pairing with a Dirac-type element in $\KU$--homology, 
some basic examples are given in Sec.~\ref{ss.TopPhaseExamples}.
Therefore, while our $\KO$--valued spectral flow covers all symmetry classes of 
free-fermionic topological phases, it is in general an additional step to relate this 
spectral flow to a strong or weak topological phase.

While our results in the current paper shed new light on $\KO_r$--valued spectral flow for all
$r$, the instance that is completely unexplored previously is that of $\KO_4$. 
Because our constructions do not make reference to lattice dimension, $KO_4$--valued 
spectral flow corresponds to Hamiltonians in symmetry class AII.
In this case the kernels of the skew-adjoint
Fredholm operators are quaternionic Hilbert spaces. Thus we obtain from our results the notion of quaternionic 
spectral flow.  One may write down explicit Hamiltonians which realize this case though the construction is lengthy
and we will not attempt it here.  These model Hamiltonians are however of the general form
that we describe later in Sec.~\ref{ss.TopPhaseExamples}.



\section{Clifford algebras and the ABS construction}
\label{s.ABSC}

Since it will be important for us we briefly recall here the ABS
construction, cf. \cite{ABS64,LM89}. We put an emphasis, though,
on the real Clifford algebras $\Clrs$ with $r$ self-adjoint and
$s$ skew-adjoint generators. The extension of the ABS construction
to this case, cf. \cite{At66, Kar70}, is straightforward, but as
far as we know, not easily accessible in published form.

\subsection{Preliminaries and notation}
\label{ss.PN}

The purpose of this section is to fix some terminology and notation. It
will be in effect through
the whole paper. This section should be consulted first in case the
reader is looking for the definition
of a certain notation.

By $\Z, \R, \C$ we denote the sets of integers, real numbers, and complex
numbers resp.

Let $\Clrs$ denote the (real) Clifford algebra on $r+s$ 
unitary generators
$e_1,\ldots,e_r$, $f_1,\ldots , f_s$ subject to the relations 
\begin{equation}\label{eq.Cliff-Relations}
  \begin{split}
      & e_i e_j + e_j e_i = 2\delta_{ij}, \quad f_k f_l + f_k f_l =
      -2\delta_{kl}, 
           \quad e_i f_k + f_k e_i = 0,\\
      &f_j^* =-f_j,\quad e_j^*=e_j, \quad \text{for } 1\le i,j\le r,\ \  1\le k,l\le s.
  \end{split}
\end{equation}
The algebra $\Clrs$ is a $\Z/2\Z$--graded algebra by declaring the generators to be of odd
parity. The product of all generators
\begin{equation}
      \go_{r,s}:= e_1\cldots e_r\cdot f_1\cldots f_s,
      \quad \go^2_{r,s} = 
      \begin{cases} (-1)^r, & r+s\equiv 3 \text{ or } 4\mlmod 4,\\
                   (-1)^{r+1}, & r+s\equiv 1 \text{ or } 2\mlmod 4,
      \end{cases}  
\end{equation}      
is called the \emph{volume element}. It is central if $r+s$ is odd
and it implements the grading if $r+s$ is even.

\begin{dfn}[Clifford element, Clifford generator]
Elements $e_1,\ldots,e_r$, $f_1,\ldots , f_s$ in an algebra (e.g. on a
Hilbert space) satisfying the relations \eqref{eq.Cliff-Relations} will be
addressed as Clifford elements or, if the generated algebra is
emphasized, Clifford generators.
\end{dfn}

Our convention for labeling $\Clrs$ is the same as in
\cite{Kas80,AMZ19} and opposite to \cite{LM89}.  Occasionally, for
illustration purposes we translate results to the complex case, where 
$\CClrs=\CCl_{0,r+s}=\CCl_{r+s,0}=:\CCl_{r+s}$ denotes the complex Clifford
algebra on $r+s$ generators (same relations).

\begin{dfn}[$\Clrs$--Hilbert space, Clifford module]
A real resp. complex Hilbert space is a vector space $H$ over the $\R$ resp. $\C$ together
with an inner product $\inn{\cdot,\cdot}$ such that $H$ together with
the norm $\|x\|:=\sqrt{\inn{x,x}}$ is complete.

Let $H$ be a separable real Hilbert space and let $\sB(H)$ be the
$C^*$--algebra of bounded linear operators on $H$.
We call $H$ a \emph{$\Clrs$--Hilbert space} if $\Clrs$ is represented 
as a $C^*$--algebra on $H$. In this case, the generators are then denoted by 
$E_1,\ldots,E_r$, $F_1,\ldots,F_s\in \sB(H)$.

A \emph{finite dimensional} $\Clrs$--Hilbert space will also be called a
Clifford module, still with the understanding that it is equipped with an
inner product such that the $\Clrs$--action is a $C^*$--algebra
representation.
\end{dfn}

For a $\Clrs$--Hilbert space we denote by 
\begin{equation}\label{eq.NHCT.1}
  \sB^{r,s+1}(H):= \biggsetdef{T\in\sB(H)}{%
    T=-T^*, \; \begin{matrix} E_jT=-TE_j, \; j=\rdots,\\
                              F_kT=-TF_k, \; k=\sdots
                \end{matrix}   }
\end{equation}
the skew-adjoint operators which are $\Clrs$--anti-linear. In
particular, $\sB^{0,1}(H)$ denotes the skew-adjoint bounded operators.

Occasionally, the corner case $r=s=0$ comes up. A $\Cl_{0,0}$--Hilbert
space is just a real Hilbert space with the understanding that
$E_0=I$. Furthermore, we put $\sB^{0,0}(H)=\sB(H)$ (this corresponds
to $s=-1$).

For real and complex Hilbert spaces a bounded linear
operator $u$ is called `unitary' if $u^*u=uu^*=I$, that
is in the real case we use `orthogonal' and `unitary' as synonyms.

A $\Clrs$--Hilbert space will be called \emph{standard} if
the multiplicity of each irreducible representation of $\Clrs$
on $H$ is infinite. 

For separable Hilbert spaces the $\infty$ multiplicity assumption is
not really serious. Firstly, if $r-s\not\equiv 1 \mlmod 4$ it is a non-issue 
as $\Clrs$ has only $1$ irreducible representation 
(\cite[Theorem 5.7]{LM89}, beware the different convention).  If
$r-s\equiv 1 \mlmod{4}$ then there are $2$ irreducible
representations.  Given a concrete $H$ with a $\Clrs$ action we can
always find another separable Hilbert space $H'$ such that $H\oplus
H'$ satisfies the $\infty$ multiplicity assumption and hence is a
standard $\Clrs$--Hilbert space.

We add the remark that if the $\Clrs$--Hilbert space is standard
then it can be written as $H=H'\oplus H'$, where the decomposition
is $\Clrs$--linear and $H'$ itself is standard. Putting $E_{r+1}:=
\begin{pmatrix} 0 & I \\ I & 0\end{pmatrix}$,
$F_{s+1}:= \begin{pmatrix} 0 & -I \\ I & 0\end{pmatrix}$,
$H$ is equipped with a standard $\Cl_{r+1,s+1}$ structure.
Inductively, this can be continued and hence on a standard
$\Clrs$--Hilbert space we may always assume that the infinite Clifford
algebra $\bigcup_{r,s} \Cl_{r,s}$ is acting and that for each $r,s$
the action of $\Clrs$ is standard.  In particular, we may always
assume that there are as many Clifford generators $E_j,F_k$ at our
disposal as we please.

All standard $\Clrs$--Hilbert spaces are equivalent as $\Clrs$--modules
and there is the following obvious \emph{Absorption Theorem}:
given a $\Clrs$--Hilbert space $H$ and a standard $\Clrs$--Hilbert space
$H'$ then $H\oplus H'$ is a standard $\Clrs$--Hilbert space and hence
equivalent to $H'$ as a $\Clrs$--module.

Finally, $\sK(H)$ denotes the $C^*$--algebra of compact operators
and by $\pi: \sB(H) \to \sQ(H)=\sB(H)/\sK(H)$ we denote the quotient
map onto the \emph{Calkin algebra}. For an operator $T\in\sB(H)$
we will use the abbreviation $\|T\|_{\sQ(H)}:=\|\pi(T)\|_{\sQ(H)}$
for the norm of $\pi(T)=T+\sK(H)$ in the Calkin algebra. If 
there is only one Hilbert space around, the disambiguator ``$(H)$'' will
occasionally be omitted.

We will also have to deal with finite dimensional Clifford modules,
where homotopy issues are a priori a bit more delicate. We will,
however always be able to specialize to a \emph{stable} situation,
see Appendix~\ref{ss.SHFC}.

\subsection{Isomorphisms of Clifford algebras}
\label{ss.ICA}

For later reference, we introduce the $2\times 2$ matrices
\begin{align}
  K_1 &:= \twomat 1 0 0 {-1}, \quad
  K_2 :=  \twomat 0 1 1 0, \quad
  L_1 :=  \twomat 0 {-1} 1  0,\label{eq.ICA.1}\\
    \go_{1,1}& = K_1\cdot L_1 = -K_2,\quad \go_{2,0}= K_1\cdot K_2 =
    -L_1, \quad K_1\cdot K_2\cdot L_1 = I_2.\label{eq.ICA.2}
\end{align}

Note that, see e.g. \cite[Sec.~I.4]{LM89},
$\Cl_{1,0} = C^*(K_1)\simeq \R\oplus \R$, $\Cl_{2,0} =
C^*(K_1,K_2) = C^*(K_1,L_1) = \Cl_{1,1} = \Mat(2,\R)$, \footnote{ 
  For a ring $R$ we denote by $\Mat(k,R)$ the $k\times k$--matrices
  over $R$.}    
and $\Cl_{0,2}\simeq C^*(K_1\otimes L_1, K_2\otimes L_1)\simeq \HH$,
the latter being the quaternions.

From now on we will  view $\R^2$ as a $\Cl_{2,1}$--module
with generators $K_1, K_2, L_1$, as a $\Cl_{1,1}$--module with
generators $K_1, L_1$ as well as a $\Cl_{0,1}$ resp. $\Cl_{1,0}$--module
with generator $L_1$ resp. $K_1$. 

We note further that $C^*(K_1,K_2, L_1) = \Mat(2,\R)$ is one of
the two irreducible representations of $\Cl_{2,1}\simeq \Mat(2,\R)\oplus
\Mat(2,\R)$. The representation is characterized by the fact that
the (representation of the) \emph{volume element} 
$\go_{2,1} := K_1\cdot K_2\cdot L_1 = I_2$ is the $2\times 2$ unit
matrix.

Our choice of matrices $K_1$ and $K_2$ is of course arbitrary and 
other choices are possible. For example, we can exchange $K_1$ 
and $K_2$ by conjugating by the unitary $\frac{1}{\sqrt{2}}(K_1+K_2)$. Though 
we note that this transformation will reverse the orientation, where we 
recall $\omega_{2,0} = K_1K_2$ and similarly will send $L_1 \mapsto -L_1$. 
Hence such transformations can introduce a global minus sign 
and should be used with care.

\begin{prop} \label{p.ICA.1}
\textup{ 1.} Recall \cite[Sec.~I.4]{LM89} the following isomorphisms of 
Clifford algebras:
\begin{align}
  \Cl_{r+1,s+1} &\simeq \Clrs\otimes \Cl_{1,1}\simeq M(2,\Clrs)
     \simeq 
     \begin{cases}
                 M(2^{r+1}, \Cl_{0,s-r}), & \text{ if } s\ge r,\\
                 M(2^{s+1}, \Cl_{r-s,0}), & \text{ if } r\ge s,
     \end{cases}  \label{eq.ABSC.1}\\
  \Cl_{r+2,s}   &\simeq \Cl_{s,r}\otimes \Cl_{2,0},     \label{eq.ABSC.2}\\
  \Cl_{r,s+2}   &\simeq \Cl_{s,r}\otimes \Cl_{0,2},     \label{eq.ABSC.3}\\
  \Cl_{r+1,s}   &\simeq \Cl_{s+1,r}.  \label{eq.ABSC.4}
\end{align}  

\textup{2.a} A $\Cl_{r+1,s+1}$--Hilbert space is
$\Cl_{r+1,s+1}$--covariantly isomorphic to $H = H'\otimes \R^2$ with
$H'$ being a $\Cl_{r, s}$--Hilbert space and such that
$E_{r+1}=I_{H'}\otimes K_1$ and $F_{s+1}=I_{H'}\otimes L_1$.

\textup{2.b} Similarly, a $\Cl_{r+2,s}$--Hilbert space is
$\Cl_{r+2,s}$--covariantly isomorphic to 
$H = H'\otimes \R^2$ with $H'$ being a $\Cl_{s, r}$--Hilbert
space and such that $E_{r+j}=I_{H'}\otimes K_j$, $j=1,2$.

\textup{2.c} Finally, a \emph{standard} $\Cl_{1,0}$--Hilbert space is covariantly
isomorphic to $H'\otimes \R^2$ with $E_1=I_{H'}\otimes K_1$.
\end{prop}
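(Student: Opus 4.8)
These are the classical isomorphisms of \cite[Sec.~I.4]{LM89}; the only task is to re-express them in the ``$r$ self-adjoint, $s$ skew-adjoint'' convention of \eqref{eq.Cliff-Relations}, which I would do by writing each one down on generators and invoking surjectivity together with a real-dimension count. For \eqref{eq.ABSC.1} the map $\Clrs\otimes\Cl_{1,1}\to\Cl_{r+1,s+1}$ sending $e_i\mapsto e_i\otimes\go_{1,1}$, $f_k\mapsto f_k\otimes\go_{1,1}$ ($i\le r$, $k\le s$) and $I\otimes K_1\mapsto e_{r+1}$, $I\otimes L_1\mapsto f_{s+1}$ works: since $\go_{1,1}=K_1L_1$ is a self-adjoint involution, checking \eqref{eq.Cliff-Relations} for the images is a one-line computation, and a dimension count (both sides have real dimension $2^{r+s+2}$) promotes the resulting $*$-homomorphism to an isomorphism; combining with $\Cl_{1,1}\simeq\Mat(2,\R)$ (the action of $K_1,L_1$ on $\R^2$) gives $M(2,\Clrs)$, and iterating \eqref{eq.ABSC.1} itself peels off pairs of generators until one index vanishes, producing the $\Cl_{0,s-r}$ or $\Cl_{r-s,0}$ forms. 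The other isomorphisms go the same way: \eqref{eq.ABSC.2} uses the twist by $\go_{2,0}$, which, because $\go_{2,0}^2=-I$, interchanges $r\leftrightarrow s$ and swaps self-adjoint generators with skew ones; \eqref{eq.ABSC.3} is dual, with $\Cl_{0,2}\simeq\HH$; and for \eqref{eq.ABSC.4} left multiplication by the fixed generator $e_1$ turns $f_1,\dots,f_s$ into $s$ self-adjoint generators and $e_2,\dots,e_{r+1}$ into $r$ skew ones, which together with $e_1$ give $\Cl_{s+1,r}$.

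\textbf{Parts 2.a and 2.b.} These are the Hilbert space upgrades of \eqref{eq.ABSC.1} and \eqref{eq.ABSC.2}, and I would treat them in parallel. In case 2.a, $E_{r+1}=E_{r+1}^*$ with $E_{r+1}^2=I$, so $H=H_+\oplus H_-$ with $H_\pm$ its $\pm1$ eigenspaces; since $F_{s+1}$ anticommutes with $E_{r+1}$ and satisfies $F_{s+1}^*=-F_{s+1}$, $F_{s+1}^2=-I$, it restricts to mutually inverse isometries $H_+\leftrightarrow H_-$ (up to the sign in $F_{s+1}^{-1}=-F_{s+1}$). Put $H':=H_+$. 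A one-line check on the two basis vectors of $\R^2$ shows that $\Phi\colon H'\otimes\R^2\to H$, $\Phi(v\otimes\binom ab)=av+bF_{s+1}v$, is a unitary carrying $E_{r+1}$ to $I_{H'}\otimes K_1$ and $F_{s+1}$ to $I_{H'}\otimes L_1$. For the remaining generators: each $E_i$ ($i\le r$) and $F_k$ ($k\le s$) anticommutes with both $E_{r+1}$ and $F_{s+1}$, hence commutes with the volume element $\go:=E_{r+1}F_{s+1}$ of the peeled-off $\Cl_{1,1}$; this $\go$ is a self-adjoint involution that exchanges $H_+$ and $H_-$, so $E_i\go$ and $F_k\go$ preserve $H_+$, and the identities $(E_i\go)^*=E_i\go$, $(E_i\go)^2=I$, $(F_k\go)^*=-F_k\go$, $(F_k\go)^2=-I$ together with pairwise anticommutativity show that their restrictions to $H'$ define a $\Clrs$-Hilbert space structure. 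That these restrictions correspond under $\Phi$ to the original generators $E_i,F_k$ (up to the overall sign visible in $\go_{1,1}=-K_2$) is exactly the content of the algebra isomorphism of Part~1, so $\Phi$ is $\Cl_{r+1,s+1}$-covariant. Case 2.b is identical after replacing $F_{s+1}$ by the self-adjoint involution $E_{r+2}$ (so $\Phi(v\otimes\binom ab)=av+bE_{r+2}v$ and $E_{r+2}\leftrightarrow I_{H'}\otimes K_2$) and $\go$ by $\go_{2,0}=E_{r+1}E_{r+2}$; now $\go_{2,0}^2=-I$, so the same $*$/square computation shows $F_k\go_{2,0}$ becomes self-adjoint with square $+I$ while $E_i\go_{2,0}$ becomes skew with square $-I$, i.e.\ $H'$ acquires a $\Cl_{s,r}$-structure, matching the signature flip in \eqref{eq.ABSC.2}.

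\textbf{Part 2.c, and the main obstacle.} Here there is only the single self-adjoint involution $E_1$ and no second anticommuting generator with which to identify its $\pm1$ eigenspaces, and this is precisely where the standardness hypothesis enters. Since $\Cl_{1,0}\simeq\R\oplus\R$ has exactly two inequivalent irreducible representations, those on which $E_1$ acts as $+1$ and as $-1$, a standard $\Cl_{1,0}$-Hilbert space has $\dim H_+=\dim H_-=\infty$; choosing any unitary $U\colon H_+\to H_-$ and setting $H':=H_+$, the map $\Phi(v\otimes\binom ab)=av+bUv$ is a unitary with $E_1\leftrightarrow I_{H'}\otimes K_1$. The one genuinely delicate point throughout is the sign bookkeeping---ensuring one lands on $K_1,K_2,L_1$ rather than on their negatives, compare the warning after \eqref{eq.ICA.2}---while the rest is routine manipulation of Clifford relations, eigenspace decompositions and dimension counts.
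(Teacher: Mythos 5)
Your proposal is correct and follows essentially the same route as the paper: for 2.a/2.b you take $H'=\ker(E_{r+1}-I)$ and conjugate by the unitary $v\otimes\binom ab\mapsto av+bGv$ (with $G=F_{s+1}$ resp. $G=E_{r+2}$), then read off how the remaining Clifford generators transform; for 2.c you use standardness to make the two eigenspaces of $E_1$ isomorphic; and Part 1 is verified on generators. The only cosmetic differences are a slip in the direction of the map in your treatment of \eqref{eq.ABSC.1} (you write a map $\Clrs\otimes\Cl_{1,1}\to\Cl_{r+1,s+1}$ but describe the inverse, sending $e_i\mapsto e_i\otimes\go_{1,1}$ as the paper does), and for \eqref{eq.ABSC.4} you left-multiply by $e_1$ where the paper right-multiplies by $e_{r+1}$ — both choices yield the $\Cl_{s+1,r}$ relations. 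Your new generators on $H'$ (e.g.\ $E_i\go$ with $\go=E_{r+1}F_{s+1}$) agree on $H'$ with the paper's $F_{s+1}E_i$, since $E_i\go=\go E_i$ and $E_{r+1}$ acts as the identity there, so the two descriptions of the induced $\Cl_{r,s}$ (resp. $\Cl_{s,r}$) structure coincide.
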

\newcommand{\cliffisosthree}{\Eqref{eq.ABSC.1}--\eqref{eq.ABSC.3}}
\newcommand{\cliffisos}{\Eqref{eq.ABSC.1}--\eqref{eq.ABSC.4}}

The decompositions in 2.a--2.c will be used frequently below.
The tensor products in \Eqref{eq.ABSC.1}--\eqref{eq.ABSC.3} are
ordinary tensor products of graded algebras\footnote{
This statement might be confusing: if $A$ and $B$ are $\Z/2\Z$--graded algebras 
their tensor product $A\otimes B$ is graded by putting 
$\deg(a\otimes b):= \deg a + \deg b \mlmod 2$ for homogeneous elements $a\in A, b\in B$.
This is to be distinguished from the graded (skew-commutative) tensor product
$A\hat\otimes B$. The algebra $A\hat \otimes B$ has the same
underlying vector space $A\otimes B$ and the same grading, but the
product is $a\hat\otimes b \cdot a'\hat\otimes b' 
   := (-1)^{\deg b \deg a'} aa'\hat\otimes bb'$.
See also \cite[Sec.~2.6]{Kas80}.
} 
and the isomorphisms respect the natural grading.
For the next section it will be crucial that the isomorphism
\Eqref{eq.ABSC.4} does \emph{not} respect the grading. 

\begin{proof}
Let us first comment on the decompositions 2.a--2.c. In 2.c
the standard assumption guarantees that the eigenspaces
of $E_1$ are infinite--dimensional and hence isomorphic.

{2.a/2.b}
The first two cases may be treated in parallel by putting
$G=F_{s+1}$ in the first case and $G:= E_{r+2}$ in the second. 
Since $E_{r+1}$ and $G$ anti-commute, $G$
interchanges the two $\pm 1$--eigenspaces of $E_{r+1}$.
Let $H':= \ker (E_{r+1}-I)$ be the $+1$ eigenspace of $E_{r+1}$
and put
\begin{equation}
    \Psi: H'\otimes \R^2 \longrightarrow H,\quad 
      \vectwo x y \mapsto x+ Gy.
\end{equation}
Then one checks that 
\begin{equation}
    \Psi\ii E_{r+1} \Psi = I_{H'}\otimes K_1,
    \quad \Psi\ii G \Psi = \begin{cases} 
            I_{H'}\otimes K_2, & G=E_{r+2},\\
            I_{H'}\otimes L_1, & G=F_{s+2},
                             \end{cases} 
\end{equation}
and for any other Clifford element $X\in \{\Edots, \Fdots\}$
\begin{equation}
    \Psi\ii X \Psi = \begin{cases} 
      X E_{r+2} \otimes \omega_{2,0}, & G=E_{r+2}, \quad \omega_{2,0}= K_1\cdot K_2 = -L_1,\\
      F_{s+1} X \otimes \omega_{1,1}, & G=F_{s+1}, \quad 
              \omega_{1,1} = K_1\cdot L_1 = - K_2.
                     \end{cases} 
\end{equation}
Thus, letting $\Cl_{r,s}$ (resp. $\Cl_{s,r}$) act on $H'$ with generators
$F_{s+1} X$ (resp. $X E_{r+2} $), $X\in \{ \Edots,$ $ \Fdots \}$, 
$\Psi\ii$ is the isomorphism we are looking for. Note, that
$(X E_{r+2})^2 = - X^2$ and $(X F_{s+1})^2= X^2$, thus
$X E_{r+2} $, $X\in \{ \Edots$, $\Fdots \}$ satisfy the
$\Cl_{s,r}$--relations.

{1.} The previous consideration immediately leads to the proof of 
\Eqref{eq.ABSC.1} and \Eqref{eq.ABSC.2}. Namely, the isomorphism
$\Cl_{r+1,s+1}\to\Clrs\otimes \Cl_{1,1}$
is given by sending $\{ e_1,\ldots, e_r, f_1, \ldots, f_s \}\ni X\mapsto X \otimes
\go_{1,1}$ and $f_{s+1}\mapsto 1\otimes L_1$, $e_{r+1}\mapsto
  1\otimes K_1$. Note that $\go_{1,1}^2=I_2$.

Similarly, the isomorphism $\Cl_{r+2,s}\to \Cl_{s,r}\otimes
\Cl_{2,0}$ is given by sending 
$e_j \mapsto f_j' \otimes \go_{2,0}$, $j=1,\ldots,r$,
$f_k\mapsto  e_k'\otimes \go_{2,0}$, $k=1,\ldots, s$,
  and $e_{r+2}\mapsto 1\otimes K_2$, $e_{r+1}\mapsto 1\otimes K_1$.
Here, $e_k', f_j'$ denote the generators of $\Cl_{s,r}$. 
Note that $\go_{2,0}^2=-I_2$.

Analogously, the isomorphism $\Cl_{r,s+2}\to \Cl_{s,r}\otimes
\Cl_{0,2}$ is given by sending 
$e_j \mapsto f_j' \otimes \go_{0,2}$, $j=1,\ldots,r$,
$f_k\mapsto  e_k'\otimes \go_{0,2}$, $k=1,\ldots, s$,
and $f_{s+1}\mapsto 1\otimes L_1$,
$f_{s+2}\mapsto 1\otimes L_2$, where $L_1, L_2$ are the generators
of $\Cl_{0,2}$. Note that $\go_{0,2}^2=-I_2$.
  
Finally, the isomorphism $\Cl_{r+1,s}\to \Cl_{s+1,r}$ is given
as follows: inside the algebra $\Cl_{r+1,s}$ consider the elements
$e_1':= f_1\cdot e_{r+1},\ldots, e_s':=f_s\cdot e_{r+1}$, $e_{s+1}':=e_{r+1}$,
$f_1':=e_1\cdot e_{r+1},\ldots, f_r':=e_r\cdot e_{r+1}$.
$e_1',\dots, f_r'$ satisfy the Clifford relations for $\Cl_{s+1,r}$.
Hence the map $\Cl_{s+1,r}\to\Cl_{r+1,s}$, $e_j\mapsto e_j',
f_k\mapsto f_k'$ is the (inverse of) the claimed isomorphism.
This also shows that the isomorphism does not respect the grading.
\end{proof}

\begin{remark}\label{p.ICA.2}
{1. } We note in passing a consequence of the
isomorphism \Eqref{eq.ABSC.4}. Given a $\Cl_{r+1,s}$--Hilbert
space with generators $E_1,\ldots, E_{r+1}, F_1,\ldots, F_s$,
it becomes a $\Cl_{s+1,r}$--Hilbert space with generators
$E_j'=F_j\cdot E_{r+1}$, $j=1,\ldots, s$, $E_{s+1}'=E_{r+1}$,
$F_k'=E_k\cdot E_{r+1}$, $k=1,\ldots,r$. Furthermore,
given a $\Cl_{r+1,s}$--anti-linear operator $T\in\sB(H)$
with $T^*=\eps T$, $\eps\in\{\pm 1\}$, then $\tilde T=T E_{r+1}$
is $\Cl_{s+1,r}$--anti-linear with respect to the generators
$E_1',\ldots, F_r'$ and $\tilde T^* = -\eps \tilde T$.
Clearly, if $T$ is Fredholm then so is $\tilde T$.

Thus, there is a parallel theory of $\Clrs$--anti-linear
\emph{self-adjoint} operators. In light of the isomorphism
\Eqref{eq.ABSC.4} the translation is straightforward.
  See, however, the Remark~\ref{p.ClrsFred.3} below.

\subsubsection*{2. } For the decompositions {2.a--2.c} we note
that for $T\in\sB(H)$  with
the notation introduced at the beginning of Sec.~\ref{ss.ICA},
  \begin{equation}\label{eq.ABSC.13}
  \begin{split}
    T_{1,1}&:= \twomat 0 {-T^*} T 0 = \Re(T)\otimes L_1 + \Im(T)\otimes
  K_2,\\ 
     \Re(T)&:= \frac 12 (T+T^*), \quad \Im(T):= \frac 12 (T-T^*).
  \end{split}
\end{equation}  
  The operator $T_{1,1}$ is skew-adjoint and anti-commutes with $K_1$. If
$T$ is already skew-adjoint,  then $\Re(T)=0$ and $T_{1,1}$ anti-commutes with $L_1$
as well. Furthermore, if $T$ already has $\Clrs$--Clifford symmetries then
under the isomorphism \Eqref{eq.ABSC.1} the operator $T_{1,1}$ has
$\Cl_{r+1,s+1}$--Clifford symmetries.
\end{remark}

\subsection{The ABS construction}
\label{ss.ABS}

Denote by $\hat\sM_{r,s}$ resp. $\sM_{r,s}$ the Grothendieck group of
graded resp. ungraded finite dimensional $\Clrs$--modules (with
inner product) with addition being direct sum. 
For $s\ge 1$ we have $\hat\sM_{r,s}\simeq \sM_{r,s-1}$ by sending
the graded $\Clrs$--module $V$ to the ungraded $\Cl_{r,s-1}\simeq
\Clrs^0$ module $V^0$.\footnote{The superscript $^0$ refers to
the even subspace w.r.t. the grading.} Here, $\Clrs^0$ is 
identified with $\Cl_{r,s-1}$ by sending $e_i\cdot f_s$ to $e_i$,
$i=1,\ldots, r$ and $f_i\cdot f_s$ to $f_i$, $i=1,\ldots,s-1$.

Also, cf. the second part of Prop.~\ref{p.ICA.1} above, given a graded $\Clrs$--module
$V$ with grading operator $\eps$, $V$ becomes an ungraded
$\Cl_{r+1,s}$--module by sending $e_{r+1}$ to $\eps$. Thus
$\hat\sM_{r,s}\simeq \sM_{r+1,s}$ for all $s$. Altogether, we have
$\sM_{r,s-1}\simeq \sM_{r+1,s}$ and the isomorphism is canonical by
sending the $\Cl_{r,s-1}$--module $V$ to $V\otimes \R^2$ with
$\Cl_{r+1,s+1}\simeq \Clrs\otimes \Cl_{1,1}$ action given by
$X\otimes \go_{1,1}$, $I_V\otimes K_1, I_V\otimes L_1$.

Furthermore, let 
\begin{align}
    A_{r,s}  &:= \hat\sM_{r,s}/\hat\sM_{r,s+1} \,\bl \simeq
        \sM_{r,s-1}/\sM_{r,s}\quad \text{if } s\ge 1\br\label{eq.ABSC.5}, \\
    B_{r,s}  &:= \hat\sM_{r,s}/\hat\sM_{r+1,s} \,\bl \simeq
        \sM_{r,s-1}/\sM_{r+1,s-1}
          \quad \text{if } s\ge 1\br\label{eq.ABSC.6}.
\end{align}  

One of the main results of \cite{ABS64}, \cite[Sec.~I.9]{LM89} 
is that $A_{0,k}$ is canonically isomorphic to $\KO_k(\R)$.

The isomorphisms \Eqref{eq.ABSC.1}--\Eqref{eq.ABSC.4} and
the $8$--periodicity for the Clifford algebras immediately imply
that $A_{r,s}, B_{r,s}$ depend only on the difference $s-r\mlmod 8$.
Furthermore, the ungraded isomorphism \Eqref{eq.ABSC.4}
shows for all $r,\, s\ge 1$
\begin{equation}
  B_{r,s} \simeq \sM_{r,s-1}/\sM_{r+1,s-1} \simeq
  \sM_{s,r-1}/\sM_{s,r} \simeq A_{s,r}.
\end{equation}  
A posteriori, by periodicity, this isomorphism holds for all $r,s$.

The representation theory of the Clifford algebras easily gives
the following table directly:

\medskip
\begin{center}
    \begin{tabular}{|c|c|c|c|c|c|c|c|c|c|}
	\hline
	$s-r\mlmod 8$   &  $0$ & $1$  & $2$   &$3$    &$4$    &$5$   &$6$   &$7$  & $8$\\
  $A_{r,s}\simeq B_{s,r}$  &  $\Z$  & $\Z/2\Z$  & $\Z/2\Z$ & $0$ & $\Z$ & $0$ & $0$ & $0$ & $\Z$ \\
	\hline
\end{tabular}    
\end{center}
\medskip

\begin{dfn}[ABS map]
Below we will explicitly describe \emph{natural} isomorphisms
\begin{equation}\label{eq.ABSC.16}
  \tau_{r,s} : A_{r,s} \longrightarrow 
         \casetwo \Z {s-r\equiv 0\mlmod 4}{\Z/2\Z}{s-r\equiv 1 \text{
           or } 2\mlmod 8.}
\end{equation}
The map $\tau_{r,s}$ will be called the ABS map.
\end{dfn}

The isomorphisms of the table are unique only up to sign (relevant in
the cases $\equiv 0 \mlmod 4$), hence the choice of Clifford
generators matters in concrete cases. We therefore quickly list here
the concrete maps.

\subsection{Examples of the \textup{ABS} map for certain $r,s$ }
\label{ss.ABS-examples}

\subsubsection{ $r=s=0$ } \label{sss.rszero}
This case can only be dealt with in the
graded setting: if $V$ is a graded real vector space then 
$\tau_{0,0}: A_{0,0}\ni [V] \mapsto \dim_\R V^0 - \dim_\R V^1\in\Z$. 
A generator of the cyclic group $A_{0,0}$ is given by the class of the
graded vector space $V= V^0\oplus V^1=\R \oplus 0$.

\subsubsection{ $r=s\ge 1$ }\label{sss.rs}
Using $r=s\geq 1$ allows us to deal with the case $s-r=0$
in the ungraded setting, where the definition would have us deal with representations of 
`$\Cl_{0,-1}$'.  We have
\[\begin{split}
  \Cl_{r,r-1} &\simeq \Mat(2^{r-1}, \Cl_{1,0} =\R\oplus \R ), \\ 
      \Cl_{r,r}   & \simeq \Mat( 2^{r-1}, \Cl_{1,1}=\Mat(2,\R))
          \simeq \Mat(2^{r-1}, \Cl_{2,0}) \simeq \Cl_{r+1,r-1}.
  \end{split}
\]
We recall that $A_{r,r}=\sM_{r,r-1}/\sM_{r,r}\simeq B_{r,r}=\sM_{r,r-1}/\sM_{r+1,r-1}$ and 
since $\Cl_{r,r}=\Cl_{r+1,r-1}=\Mat(2^r,\R)$ the two quotients are just equal. 
Let $V$ be a finite dimensional $\Cl_{r,r-1}$--module. 

Note that (cf. \cite[Theorem 5.7]{LM89}, Sec.~\ref{s.NHCT}) $\Cl_{r,r-1}$
has two irreducible representations.
The volume element $\go_{r,r-1}=e_1\cldots e_r\cdot f_1\cldots f_{r-1}$
is central and satisfies $\go_{r,r-1}^*=\go_{r,r-1}$, $\go_{r,r-1}^2=I$.  The two irreducible
representations $\varrho_{\pm}$ of $\Cl_{r,r-1}$ are
$2^{r-1}$--dimensional and they are characterized by
$\varrho_{\pm}( \go_{r,r-1} ) = \pm I$. Hence their multiplicities in $V$ are
given by $2^{1-r}\cdot\dim_\R\ker(\go_{r,r-1} \mp I)$.  
If $V$ carries a $\Cl_{r+1,r-1}\simeq\Cl_{r,r}$ structure, then
since $\go_{r,r-1}$ must anti-commute with the additional Clifford
generator, the two multiplicities must coincide. Conversely, if these
two multiplicities coincide, $E_{r+1}$ resp. $F_r$ may be constructed.

Thus the natural identification of $A_{r,r}=B_{r,r}$ with $\Z$ is
given by
\begin{equation}
  \tau_{r,r}:
  A_{r,r}\ni [V]\mapsto \frac 1{2^{r-1}}\cdot\Bl \dim_\R \ker( \go_{r,r-1}-I)
  -\dim_\R\ker(\go_{r,r-1} + I) \Br \in \Z.
\end{equation}
E.g., a positive generator of the cyclic group $A_{1,1}$
is given by the $\Cl_{1,0}$ module $(\R, E_1=I)$.

In general, a positive generator of the cyclic group $A_{r,r}$
is given by $\R^{2^{r-1}}$ together with a choice of Clifford
matrices $E_1,\ldots,E_r$, $F_1,\ldots,F_{r-1}$ such that
$E_1\cldots E_r\cdot F_1\cldots F_{r-1} = I$.

Since $\Cl_{r,r+7}\simeq \Mat(16,\Cl_{r,r-1})$, up to some tweaking
with matrix algebras, the previous analysis carries over to 
all $s\ge r\ge 1$ with $s-r\equiv 0\mlmod 8$. Details are left
to the reader.

\subsubsection{ $s=r+1$ }\label{sss.rsone}
A finite dimensional ungraded real
$\Cl_{r,r}=\Mat(2^r,\R)$--module $V$ carries a 
$\Cl_{r,r+1}\simeq \Mat(2^r,\Cl_{0,1}\simeq \C)$--module structure if and only if 
$V$ decomposes into an \emph{even} number of irreducible
$\Cl_{r,r}$--modules. Thus
\begin{equation}
  \tau_{r,r+1}: A_{r,r+1} \ni [V]\mapsto \frac 1{2^r} \dim_\R V \mlmod 2\in \Z/2\Z.
\end{equation}
A generator of the cyclic group $A_{r,r+1}$ is given by
the (up to equivalence unique) $2^r$--dimensional $\Cl_{r,r}$--module,
i.e. $\R^{2^r}$ with a choice of Clifford matrices $\Edots$,
$F_1,\ldots,F_{r}$. Given such a module,
there is a unique choice of another Clifford generator $E_{r+1}$ such
that $E_1\cldots E_{r+1}\cdot F_1\cldots F_r=I$. Thus, the module
may at the same time be viewed as a generator of $A_{r+1,r+1}$.

\subsubsection{ $s=r+2$ }\label{sss.rstwo}
Let $(V,F_1)$ be a finite dimensional
ungraded $\Cl_{0,1}$--module. Then $F_1$ is a complex structure and 
there exists $F_2\in\sB(V)$ with $F_2^*=-F_2$, $F_1 F_2=-F_2F_1$ 
if and only if $\dim_\C V = \frac 12 \dim_\R V\equiv 0\mlmod 2$. Thus
the map of the table is given by $A_{0,2}\ni [V]
\mapsto \dim_\C (V,i=F_1)\mlmod 2 = \frac 12 \dim_\R V\mlmod 2$.
A generator of the cyclic group $A_{0,2}$ is given by
$\bl\R^2, L_1 =\twomat 0 {-1}1 0\br$.

For arbitrary $r$, cf. the previous discussion, the map is given
by
\begin{equation}
  \tau_{r,r+2}:A_{r,r+2} \ni [V]\mapsto \frac 1{2^{r+1}} \dim_\R V \mlmod 2\in \Z/2\Z.
\end{equation}

\subsubsection{ $s=r+4$, cf. the case $r=s$ above}
\label{sss.rsfour} This case is mostly parallel to Sec.~\ref{sss.rs}
above as $\Cl_{r,r+3}\simeq \Mat(2^r,\HH\oplus\HH)$ has two
$2^{r+2}$--dimensional irreducible representations $\varrho_\pm$
which are distinguished by $\varrho_{\pm}( \go_{r,r+3} ) = \pm I$.
Hence their multiplicities in a
$\Cl_{r,r+3}$--module $V$ are given by
$2^{-r-2}\cdot\dim_\R\ker(\go_{r,r+3} \mp I)$.  
Again, $V$ carries a $\Cl_{r,r+4}$ structure if and only if these multiplicities
coincide. In sum, the natural identification of $A_{r,r+4}$ with $\Z$ is given by
\begin{equation}
  \tau_{r,r+4}:A_{r,r+4}\ni [V]\mapsto 
  \frac 1{2^{r+2}}\Bl \dim_\R \ker(\go_{r,r+3}-I)
  -\dim_\R\ker(\go_{r,r+3}+I)\Br \in\Z.
\end{equation}
In applications to topological phases, where spaces are typically taken to be complex, 
the above quaternionic index is often computed with complex dimension and 
takes range in $2\Z$, see~\cite{GSB, KatsuraKoma} for example.

We note an important consequence of the previous discussion.

\begin{theorem}\label{t.forgetful}
There are natural forgetful maps 
$\fg_{r+1,s}:A_{r+1,s}\to  A_{r,s}$ sending the
  $\Cl_{r+1,s-1}$--module $V$ to the underlying $\Cl_{r,s-1}$--module.

\subsubsection*{1.} For $\fg_{2,2}$ we have 
$\tau_{1,2}\bl \fg_{2,2}([V]) \br = \tau_{2,2}([V]) \mlmod 2$, hence
$\fg_{2,2}$ is surjective.

\subsubsection*{2.} Furthermore, $\fg_{1,2}$ is an isomorphism. 
More concretely,
  \[
    \tau_{0,2}\bl \fg_{1,2}([V]) \br = \tau_{1,2}([V]).
   \] 

Explicitly, the module $(\R^2, K_1,K_2,L_1)$ generates $\KO_j$,
$j=0,1,2$ in the following incarnations:
  \begin{enumerate}\renewcommand{\labelenumi}{\textup{(\roman{enumi})}}
\item  $\tau_{2,2}$ of the $\Cl_{2,1}$--module $(\R^2, K_1,K_2,L_1)$
equals $1\in\Z$. 
      
\item $\fg_{2,2}( \R^2, K_1,K_2,L_1) = (\R^2, K_1, L_1)$
  which generates $A_{1,2}$ and hence 
      \[\tau_{1,2}(\R^2, K_1, L_1) = 1\in\Z/2\Z.\]
      
\item Finally, $\fg_{1,2}( \R^2, K_1, L_1) = (\R^2, L_1)$
  which in turn generates 
      $A_{0,2}$ as \[\tau_{0,2}( \R^2, L_1 ) = 1\in \Z/2\Z.\]
    \end{enumerate}      
\end{theorem}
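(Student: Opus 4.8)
The plan is to prove Theorem~\ref{t.forgetful} by unwinding the explicit descriptions of the ABS maps $\tau_{r,s}$ from Sec.~\ref{ss.ABS-examples} and tracking what the forgetful maps do on generators. First I would verify that the forgetful maps $\fg_{r+1,s} : A_{r+1,s}\to A_{r,s}$ are well-defined at all: since $A_{r+1,s} \simeq \sM_{r+1,s-1}/\sM_{r+2,s-1}$ and $A_{r,s}\simeq \sM_{r,s-1}/\sM_{r+1,s-1}$, restriction of the Clifford action from $\Cl_{r+1,s-1}$ to $\Cl_{r,s-1}$ sends $\sM_{r+1,s-1}\to \sM_{r,s-1}$ and (crucially) sends the subgroup $\sM_{r+2,s-1}$ into $\sM_{r+1,s-1}$, so the map descends to the quotients. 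This is a routine observation using that an ungraded $\Cl_{r+2,s-1}$--module is in particular an ungraded $\Cl_{r+1,s-1}$--module.

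Next I would handle part~1, the map $\fg_{2,2}:A_{2,2}\to A_{1,2}$. Here $A_{2,2}\simeq \sM_{2,1}/\sM_{2,2}$ is computed by $\tau_{2,2}$ via the multiplicities of the two irreducibles $\varrho_\pm$ of $\Cl_{2,1}$ distinguished by $\go_{2,1}=\pm I$, while $A_{1,2}\simeq \sM_{1,1}/\sM_{2,1}$ is computed by $\tau_{1,2}([V]) = \frac{1}{2^1}\dim_\R V \bmod 2$ (the parity of the number of irreducible $\Cl_{1,1}$--summands). The key point is that a $\Cl_{2,1}$--module $V$ of type $\tau_{2,2}([V]) = n$ (meaning $n$ copies of $\varrho_+$ net, say with $\dim_\R V = 2^{1}(p+q)$ when there are $p$ copies of $\varrho_+$ and $q$ of $\varrho_-$, so $n = p-q$) restricts to a $\Cl_{1,1}$--module of real dimension $2(p+q)$, hence $\tau_{1,2}(\fg_{2,2}[V]) = (p+q)\bmod 2 = (p-q)\bmod 2 = n\bmod 2$. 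Since $\tau_{2,2}$ is onto $\Z$, reducing mod $2$ shows $\fg_{2,2}$ is surjective onto $A_{1,2}\simeq\Z/2\Z$.

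For part~2, the map $\fg_{1,2}:A_{1,2}\to A_{0,2}$, I would compare $\tau_{1,2}([V]) = \frac12\dim_\R V\bmod 2$ (parity of the number of $2$-dimensional irreducible $\Cl_{1,1}$--summands) with $\tau_{0,2}([V]) = \frac12\dim_\R V \bmod 2$ (parity of the complex dimension of $(V,i=F_1)$, cf.~Sec.~\ref{sss.rstwo}). Since an irreducible $\Cl_{1,1}$--module has real dimension $2$ and restricts to a $1$-complex-dimensional $\Cl_{0,1}$--module, and both invariants are literally $\tfrac12\dim_\R V \bmod 2$, the maps agree: $\tau_{0,2}(\fg_{1,2}[V]) = \tau_{1,2}([V])$, and since $\tau_{1,2},\tau_{0,2}$ are isomorphisms onto $\Z/2\Z$ this forces $\fg_{1,2}$ to be an isomorphism. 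Finally the explicit list (i)--(iii) is a direct check: one computes $\go_{2,1} = K_1 K_2 L_1 = I_2$ from \Eqref{eq.ICA.2}, so $(\R^2,K_1,K_2,L_1)$ lies entirely in $\ker(\go_{2,1}-I)$ and $\tau_{2,2} = \frac{1}{2^{1-1}}(2 - 0)/2 $—wait, more carefully $\tau_{r,r}$ at $r=2$ is $\frac{1}{2}(\dim\ker(\go_{2,1}-I)-\dim\ker(\go_{2,1}+I)) = \frac12(2-0)=1$; then $\fg_{2,2}$ forgets $K_2$ leaving $(\R^2,K_1,L_1)$ with $\tau_{1,2}=\frac12\cdot 2 = 1\bmod 2$; then $\fg_{1,2}$ forgets $K_1$ leaving $(\R^2,L_1)$ with $\tau_{0,2} = \dim_\C(\R^2,i=L_1) = 1 \bmod 2$, matching Sec.~\ref{sss.rstwo}.

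The main obstacle I anticipate is purely bookkeeping: making sure the identifications $A_{r+1,s}\simeq \sM_{r+1,s-1}/\sM_{r+2,s-1}$ (and the shift $\hat\sM\leftrightarrow\sM$) are used consistently so that "forgetting one self-adjoint generator $E_{r+1}$" on the graded side corresponds exactly to the stated ungraded restriction, and keeping the normalization factors $2^{r-1}$, $2^r$, $2^{r+1}$ straight across the three maps. There is also a sign subtlety flagged in the paper (the isomorphisms in the table are only canonical up to sign, and conjugations exchanging $K_1,K_2$ reverse orientation), so I would be careful to use the specific generators $K_1,K_2,L_1$ with the normalization $\go_{2,1}=I_2$ throughout, rather than an arbitrary choice, to ensure the "$=1$" statements (not "$=-1$") in (i)--(iii) come out with the correct sign.
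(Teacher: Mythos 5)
Your proposal is correct in substance and follows the same route as the paper, which simply "walks through the cases" of Sec.~\ref{ss.ABS-examples} to verify the explicit formulas for $\tau_{r,s}$ on generators; your detailed bookkeeping of the normalization factors $2^{r-1}$, $2^r$, $2^{r+1}$ and the computation $\go_{2,1}=K_1K_2L_1 = I_2$ are exactly what the paper has in mind.

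There is one slip worth flagging: in your well-definedness check you wrote $A_{r+1,s}\simeq \sM_{r+1,s-1}/\sM_{r+2,s-1}$ and $A_{r,s}\simeq \sM_{r,s-1}/\sM_{r+1,s-1}$, but those are the \emph{$B$}-groups of \eqref{eq.ABSC.6}, not the $A$-groups of \eqref{eq.ABSC.5}; by \eqref{eq.ABSC.5} one has $A_{r+1,s}\simeq \sM_{r+1,s-1}/\sM_{r+1,s}$ and $A_{r,s}\simeq \sM_{r,s-1}/\sM_{r,s}$, and $A_{r,s}\ne B_{r,s}$ in general (the two depend on $s-r$ versus $r-s$ modulo $8$). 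The intended argument for well-definedness is in fact simpler than the one you gave: restriction of the Clifford action, i.e.\ forgetting $E_{r+1}$, sends $\sM_{r+1,s-1}\to\sM_{r,s-1}$ and also sends $\sM_{r+1,s}\to\sM_{r,s}$ (a $\Cl_{r+1,s}$-module with generators $E_1,\dots,E_{r+1},F_1,\dots,F_s$ remains a $\Cl_{r,s}$-module on the nose once $E_{r+1}$ is dropped), so the map descends to the $A$-quotients. Your parenthetical remark that an ungraded $\Cl_{r+2,s-1}$-module is in particular a $\Cl_{r+1,s-1}$-module is true, but it establishes well-definedness for the $B$-quotients rather than the $A$-quotients relevant here. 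Since the rest of your argument (the formula comparisons for $\tau_{2,2}$ vs.\ $\tau_{1,2}$ and $\tau_{1,2}$ vs.\ $\tau_{0,2}$, and the explicit checks (i)--(iii)) does use the correct $A$-group formulas from Sec.~\ref{ss.ABS-examples}, this is an isolated error and does not affect the conclusions.
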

\begin{proof} The proof follows by walking through the cases~\ref{sss.rs}--\ref{sss.rstwo} above.
\end{proof}

\subsection{The $\Cl_{r,s}$ Fredholm index}
\label{ss.ClrsFred}

\begin{dfn}\label{p.ClrsFred.1} Let $H$ be a $\Clrs$--Hilbert space. 
Following Atiyah and Singer\footnote{      
Cf. also \cite{Kar70}.  The $\sF^k$ in \cite{AS69} is our $\sF^{0,k}$.
The adaption of \cite{AS69} to the two parameter case is
straightforward as, up to a canonical homeomorphism,
$\sB^{r,s+1}, \sF^{r,s+1}$ depend only on the
difference $s-r\mlmod 8$.}  
and recalling the notation $\sB^{r,s+1}$ from \Eqref{eq.NHCT.1}, let 
\begin{equation}\label{eq.NHCT.2}
  \sF^{r,s+1}(H):=\bigsetdef{ T\in\sB^{r,s+1}(H)}{ T\text{ Fredholm} }.
\end{equation}  
Furthermore, if $H$ is standard then let $\sF^{r,s+1}_*(H)=\sF^{r,s+1}$ if
$r-s\not\equiv 2\mlmod 4$ and let $\sF^{r,s+1}_*(H)$ be the interesting path component
of $\sF^{r,s+1}$ if $r-s\equiv 2\mlmod 4$, see
\cite[p.~7]{AS69}.\footnote{ 
When comparing, note that in \cite{AS69}
$r=0, s+1=k$, hence $r-s\equiv 2\mlmod 4$ if and only if $k\equiv 3\mlmod 4$.} 
  Moreover, denote by $\FO_{*}^{r,s+1}\subset\sF_{*}^{r,s+1}$ (resp.
  $\FO^{r,s+1}\subset\sF^{r,s+1}$)  the space of those
$T$ with $\|T\|=1$ and whose image $\pi(T)\in \sQ(H):=\sB(H)/\sK(H)$
in the Calkin algebra is unitary. 
\end{dfn}

\begin{remark}
  The subspace $\FO^{r,s+1}$ ($\FO_*^{r,s+1}$)
  is a deformation retract of $\sF^{r,s+1}$ ($\sF_{*}^{r,s+1}$), in
particular the inclusion $\FO^{r,s+1}\subset \sF^{r,s+1}$ 
  ($\FO_{*}^{r,s+1}\subset \sF_{*}^{r,s+1}$) is
a homotopy equivalence.
\end{remark}

For $T\in\sF^{r,s+1}$ the kernel
$\ker T$ is naturally an ungraded $\Cl_{r,s}$--module and one therefore
puts
\begin{equation}
  \ind_{r,s+1}(T) = [\ker T]\in A_{r,s+1}, \quad \text{ and } 
  \Ind_{r,s+1}(T) := \tau_{r,s+1}\bl \ind_{r,s+1} (T) \br,
\end{equation}     
cf. \Eqref{eq.ABSC.16}. The index $\ind_{r,s+1}$ is locally constant on $\sF^{r,s+1}$, cf.
\cite[Sec.~III.10, Prop.~10.6]{LM89}. See also~\cite{GSB, KatsuraKoma} for a similar construction.

We note that $\ind_{r,s+1}$ is surjective. The following construction
is instructive and very explicit. Fix a standard $\Cl_{r,s+1}$--Hilbert
space $H_0$. Then, given a $\Cl_{r,s}$--module $V$, put $H=H_0\oplus V$
and $T_V := F_{s+1}\restr V\oplus 0\restr V$. Then
\begin{equation}
  \label{eq.ABSC.9}
  \ind_{r,s+1}(T_V) = [V] \in A_{r,s+1} \simeq \sM_{r,s}/\sM_{r,s+1}.
\end{equation}
Clearly, the only purpose of $H_0$ is to make $H$ itself standard.
Otherwise, one could just put $T_V=0$ on $H=V$ which has index $[V]$
as well.

Theorem~\ref{t.forgetful} has an immediate consequence for Fredholm
operators. 
\newcommand{\Tmatrix}{\begin{pmatrix} 
  0 & - T^* \\ T & 0\end{pmatrix} }
\newcommand{\tmatrix}{ \begin{matrix} 0 & - T^* \\ T & 0 \end{matrix} }
\newcommand{\TTmatrix}{\begin{pmatrix} 
  0 & T_{1,1} \\ T_{1,1} & 0
                       \end{pmatrix} }

\begin{prop}\label{p.ClrsFred.2}   
For a Fredholm operator $T\in \sF^{r,r}$ one has,
see \Eqref{eq.ABSC.13}, the index formula
\[
  \Ind_{r,r}(T) = \Ind_{r+1,r+1}\begin{pmatrix} 0 & - T^* \\ T &
  0\end{pmatrix} = \Ind_{r+1,r+1} \bl T_{1,1} \br ,
\]
where $T_{1,1}$, defined in \eqref{eq.ABSC.13}, 
is viewed as an element of $\sF^{r+1,r+1}$. 
This formula also holds true for $r=0$, where $T$ is just a Fredholm
  operator and $\Ind_{0,0}$ denotes the ordinary Fredholm index.

Next denote by $\fg_{r+1,s+1}: \sF^{r+1,s+1} \to \sF^{r,s+1}$
  the natural forgetful map. 
For $r\ge 1$ and $T\in\sF^{r+1,r+1}$ one has the index
formula
\[
  \Ind_{r,r+1}(\fg_{r+1,r+1}(T) ) = \Ind_{r+1,r+1}(T) \mlmod 2,
\]
and for $T\in\sF^{r,r+1}$
\[
  \Ind_{r-1,r+1}(\fg_{r,r+1}(T) ) = \Ind_{r,r+1}(T).
\] 
In particular, we have for a Fredholm operator $T$ with 
  $T_{1,1}=\Tmatrix$,
\begin{equation}
  \begin{split}
    \Ind_{0,2} \TTmatrix  
        &= \Ind_{1,2} \TTmatrix   \\
        &= \Ind_{2,2} \TTmatrix  \mlmod 2 \\
        &= \Ind_{1,1} \bl T_{1,1}  \br \mlmod 2
          = \Ind_{0,0}(T) \mlmod 2.
  \end{split}
\end{equation}   
\end{prop}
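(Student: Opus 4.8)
The plan is to reduce Proposition~\ref{p.ClrsFred.2} to the corresponding facts about the ABS groups already proved in Theorem~\ref{t.forgetful}, using that the index $\ind_{r,s+1}$ is locally constant and computed on the kernel. Concretely, every assertion is an equality of integers obtained by applying a $\tau$-map to a class $[\ker(\,\cdot\,)]\in A_{r,s+1}$, so the strategy is always the same: identify the kernel of the operator on the right with a Clifford module built (up to $\Clrs$-isomorphism) from $\ker T$, and then invoke the module-level identities of Theorem~\ref{t.forgetful}.

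First I would treat the formula $\Ind_{r,r}(T)=\Ind_{r+1,r+1}(T_{1,1})$. By \eqref{eq.ABSC.13}, $T_{1,1}=\twomat{0}{-T^*}{T}{0}$ on $H\otimes\R^2$, anti-commuting with $K_1$ and (since $T$ is skew-adjoint) with $L_1$; by Remark~\ref{p.ICA.2}.2 it carries $\Cl_{r+1,r+1}$-symmetries compatibly with the isomorphism \eqref{eq.ABSC.1}, so it genuinely lies in $\sF^{r+1,r+1}$. A direct computation gives $\ker T_{1,1}=\ker T^*\oplus\ker T=\ker T\oplus\ker T$ (using $\ker T^*=\ker T$ for skew-adjoint $T$), and under the decomposition of Proposition~\ref{p.ICA.1}.2.a this is exactly the $\R^2$-doubling that realizes the canonical isomorphism $\sM_{r,r-1}\simeq\sM_{r+1,r}$ described right before \eqref{eq.ABSC.5}. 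Hence $\ind_{r+1,r+1}(T_{1,1})$ corresponds to $\ind_{r,r}(T)$ under $A_{r,r}\simeq A_{r+1,r+1}$, and since the $\tau$-maps are compatible with this isomorphism (as recorded in Sec.~\ref{sss.rs}), the two integer indices agree. The case $r=0$ is the same computation, with $\Ind_{0,0}$ the ordinary Fredholm index of the kernel-dimension; one must just check the sign/orientation convention matches $\tau_{0,0}$ and $\tau_{1,1}$, which is where a little care is needed.

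Next, for the forgetful-map formulas, the point is that $\ker(\fg_{r+1,s+1}(T))=\ker T$ as a set, but now viewed as a $\Cl_{r,s}$-module by restriction of the $\Cl_{r+1,s}$-action — which is precisely the forgetful map $\fg_{r+1,s}$ on modules of Theorem~\ref{t.forgetful}. Thus $\ind_{r,r+1}(\fg_{r+1,r+1}(T))=\fg_{r+1,r+1}(\ind_{r+1,r+1}(T))$ and $\ind_{r-1,r+1}(\fg_{r,r+1}(T))=\fg_{r,r+1}(\ind_{r,r+1}(T))$; applying $\tau_{r,r+1}$ resp. $\tau_{r-1,r+1}$ and using items~1 and~2 of Theorem~\ref{t.forgetful} (namely $\tau_{r,r+1}\circ\fg=(\tau_{r+1,r+1})\bmod 2$ and $\tau_{r-1,r+1}\circ\fg=\tau_{r,r+1}$, after the obvious reindexing $r\rightsquigarrow r-1$ in the surjectivity/isomorphism statements) yields exactly the two displayed identities.

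Finally, the chain of equalities for $T_{1,1}=\Tmatrix$ is just the composition of the pieces: the first two equalities are the two forgetful-map formulas applied to $T_{1,1}\in\sF^{2,2}$ (first forgetting one $E$-generator, which is an isomorphism, then forgetting another, which is the mod-$2$ reduction); the third equality is the $r=1$ instance of the first formula $\Ind_{1,1}(T_{1,1})=\Ind_{2,2}(T_{1,1})$ read backwards; and the last equality $\Ind_{1,1}(T_{1,1})=\Ind_{0,0}(T)$ is the $r=0$ case of the very first formula. I expect the main obstacle to be bookkeeping the grading and orientation conventions — in particular checking that the doubling $\ker T\oplus\ker T$ induced by $T_{1,1}$ matches the \emph{canonical} (sign-fixed) isomorphism $A_{r,r}\simeq A_{r+1,r+1}$ used to define $\tau_{r+1,r+1}$, rather than its negative, since (as the text warns after \eqref{eq.ICA.2}) a careless change of Clifford generators can flip a global sign. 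Everything else is a routine unravelling of definitions.
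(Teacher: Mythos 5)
The paper states Proposition~\ref{p.ClrsFred.2} with no proof of its own, introducing it with the remark that ``Theorem~\ref{t.forgetful} has an immediate consequence for Fredholm operators''; what you do --- translate each index formula into the corresponding module-level identity on the kernel (computing $\ker T_{1,1}\simeq\ker T\otimes\R^2$ for the doubling, and observing that the forgetful map on operators restricts to the forgetful map on $\ker T$) and then invoke Theorem~\ref{t.forgetful} together with the explicit $\tau$-maps of Sec.~\ref{ss.ABS-examples} --- is exactly the argument the paper leaves implicit, so the approach matches. The only blemishes are cosmetic: the line ``$\Ind_{1,1}(T_{1,1})=\Ind_{2,2}(T_{1,1})$'' should read $\Ind_{1,1}(T_{1,1})=\Ind_{2,2}\bl (T_{1,1})_{1,1}\br$, and the appeal to the ``obvious reindexing'' of Theorem~\ref{t.forgetful} for general $r$ is fine but should really cite the general formulas for $\tau_{r,r}$, $\tau_{r,r+1}$, $\tau_{r,r+2}$ in Sec.~\ref{ss.ABS-examples} rather than items 1--2 of the theorem, which are stated only for $r=1,2$.
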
  

\begin{remark}\label{p.ClrsFred.3} As explained in Remark~\ref{p.ICA.2}
there is a one to one correspondence $T\leftrightarrow T E_{r+1}$
between $\Cl_{r+1,s}$--anti-linear
skew-adjoint Fredholm operators and $\Cl_{s+1,r}$--anti-linear
self-adjoint Fredholm operators. The Fredholm index of $T$ lies
naturally in $A_{r+1,s+1}$ and is locally constant since in a
nontrivial eigenspace $\ker(T^2+\lambda^2)$ the phase of $T$ gives
an additional skew-adjoint
Clifford symmetry. Consequently, for a self-adjoint
$\Cl_{s+1,r}$--anti-linear operator $S(=T E_{r+1})$ the phase
of $S$ gives an additional self-adjoint Clifford symmetry.
Therefore, the index of $S$ lies naturally in $B_{s+1,r+1}$,
which by \Eqref{eq.ABSC.16} is isomorphic to $A_{r+1,s+1}$.
\end{remark}
  
\section{Some useful homotopies and the Cayley transform}
\label{s.NHCT}

\subsection{Spaces of complex structures}
\label{ss.SFOCS}

In this subsection we collect and expand on some of the homotopy theoretic
results from \cite{AS69}. 

\begin{dfn}\label{p.NHCT.1} Let $\sA$ be a real unital $C^*$--algebra
with Clifford elements $e_1,\ldots,e_r$, $f_1,\ldots, f_s\in\sA$.
Denote by $\sJ^{r,s+1}(\sA)$ the set of complex structures
\footnote{ A complex structure
$J$ is a unitary skew-adjoint element, in particular $J^2=-1_\sA$.}
$J\in\sA$ anti-commuting with the Clifford elements and by
$\sJ^{0,0}(\sA)$ the space of unitaries in $\sA$.

If $H$ is a $\Cl_{r,s}$--Hilbert space then 
we set $\sJ^{r,s+1}(H):= \sJ^{r,s+1}(\sB(H))$. 
\end{dfn}

If $H$ is a $\Clrs$--Hilbert space, $s>0$ or $r=s=0$, we put
\begin{align}
  \ovl{\Omega}_{r,s}(H) &:= \bigsetdef{T\in \sJ^{r,s}(H) }{ T-F_s \text{
     compact} },\label{eq.NHCT.3}\\
   \tilde\Omega_{r,s}(H) &:= \bigsetdef{T\in\sJ^{r,s}(H) }{ \|T-F_s\|_\sQ < 2},
      \label{eq.NHCT.4}
\end{align}  
cf. \cite[p.~19]{AS69}. In the case $r=s=0$ this must be read
with $F_0=I$.

Note that for $T\in \sJ^{r,s}(H)$ the element $\pi (F_s T)$ is unitary
and for a unitary $u$ one has the equivalence $\|u+I\|<2
\Leftrightarrow 1\not\in\spec u$ as $|z+1|\le 2$ for $z$ on the circle
with equality only for $z=1$.

For $T\in\sJ^{r,s}(H)$ this translates into the equivalence
\begin{equation}\label{eq.NHCT.5}
  T\in \tO_{r,s} \Leftrightarrow \| T-F_s\|_\sQ <2 
     \Leftrightarrow \| F_sT+I\|_\sQ <2 
     \Leftrightarrow 1 \not \in \specess\bl F_s T\br.
\end{equation}     

When it is necessary for disambiguation we will write 
\[
  \ovl{\Omega}_{r,s}(H; \Edots,\; \Fdots)
\]
to display explicitly the generators of the Clifford module structure.

\subsection{Some elementary homotopies}
\label{ss.EH}

\begin{prop}\label{p.NHCT.2} Let $\sA$ be a real unital $C^*$--algebra
with Clifford elements $e_1,\ldots,e_r$, $f_1,\ldots,f_s\in\sA$.
Then the diagonal in
\[
  \bigsetdef{ (J_0,J_1)\in \sJ^{r,s+1}(\sA)\times \sJ^{r,s+1}(\sA)}{ \| J_0-J_1 \| <2 }
\]
is a strong deformation retract. The result is also valid in the corner case $r=s+1=0$.
\end{prop}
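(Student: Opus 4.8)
The plan is to exhibit an explicit deformation retraction onto the diagonal. Given a pair $(J_0, J_1)$ with $\|J_0 - J_1\| < 2$, the natural first move is to consider the straight-line path $J_t' := (1-t)J_0 + t J_1$. This does not stay inside $\sJ^{r,s+1}(\sA)$ (it is skew-adjoint and anti-commutes with all the Clifford elements, but it need not be unitary), so the idea is to normalize it. First I would check that $J_t'$ is invertible under the hypothesis $\|J_0 - J_1\| < 2$: writing $J_t' = J_0\bigl(I - t + t J_0^{-1} J_1\bigr) = J_0\bigl(I - \tfrac{t}{1}(I - J_0^{*} J_1)\bigr)$ up to signs, and noting $J_0^{*} = -J_0 = J_0^{-1}$, one has $J_0^{*}J_1 = -J_0 J_1$; the spectrum of the unitary $-J_0 J_1 = J_0^* J_1$ avoids $-1$ precisely because $\|J_0 - J_1\| = \|J_0(I - J_0^* J_1)\| = \|I - J_0^* J_1\| < 2$. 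Hence $(1-t)I + t\,(J_0^* J_1)$ is invertible for all $t\in[0,1]$ (a convex combination of $1$ and points on the unit circle avoiding $-1$ never hits $0$), so $J_t'$ is invertible.

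Next I would replace $J_t'$ by its ``phase''. Since $J_t'$ is skew-adjoint and invertible, $(J_t')^* J_t' = -(J_t')^2$ is positive and invertible, so $|J_t'| := \bigl((J_t')^*J_t'\bigr)^{1/2}$ is a positive invertible element of $\sA$, and I set
\[
  R_t(J_0,J_1) := J_t' \, |J_t'|^{-1}.
\]
Then $R_t$ is unitary; it is skew-adjoint because $J_t'$ is skew-adjoint and commutes with $|J_t'|$ (as $|J_t'|$ is a norm-limit of polynomials in $(J_t')^2$, which commutes with $J_t'$); and it anti-commutes with each $e_i, f_k$ because $J_t'$ does and $|J_t'|^{-1}$, being a function of $(J_t')^2$, commutes with each $e_i, f_k$. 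Thus $R_t(J_0,J_1) \in \sJ^{r,s+1}(\sA)$ for all $t$. One checks $R_0 = J_0$, $R_1 = J_1$, and if $J_0 = J_1$ then $J_t' = J_0$ for all $t$ and $R_t = J_0$, so the homotopy is stationary on the diagonal — giving a \emph{strong} deformation retraction. Joint continuity of $(t, J_0, J_1) \mapsto R_t(J_0,J_1)$ on the open set $\{\|J_0 - J_1\| < 2\}$ follows from continuity of the functional calculus (the square-root and inverse are continuous on the positive invertibles, and $|J_t'|$ stays bounded away from $0$ locally uniformly). For the corner case $r = s+1 = 0$, $\sJ^{0,0}(\sA)$ is just the unitary group with no Clifford constraints, and the identical formula $R_t = \bigl((1-t)J_0 + tJ_1\bigr)\,|(1-t)J_0 + tJ_1|^{-1}$ works; the only point to verify is invertibility of the convex combination of two unitaries at distance $< 2$, which is the same spectral argument as above.

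The main obstacle I anticipate is the invertibility/phase step: one must be careful that $|J_t'|$ is bounded below \emph{uniformly in $t$} (needed both to make $R_t$ well-defined for all $t$ simultaneously and to get continuity), and that the ``phase'' construction genuinely preserves skew-adjointness and the Clifford anti-commutation relations — this rests on $|J_t'|^{-1}$ lying in the commutant of $\{J_t', e_1, \dots, f_s\}$, which follows from it being a continuous function of $(J_t')^2$. The rest (checking endpoints, stationarity on the diagonal, joint continuity) is routine once the phase is in hand. I would also remark that this is essentially the standard retraction used by Atiyah--Singer, adapted to the $C^*$-algebraic setting of Definition~\ref{p.NHCT.1}.
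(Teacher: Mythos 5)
Your proof is correct, and it takes a genuinely different route from the paper's. The paper fixes $J_0$ and deforms $J_1$ by conjugation: it introduces $Z_x = 1_{\sA} - x J_0 J_1$, verifies the identity $Z_1 J_1 = J_0 + J_1 = J_0 Z_1$ (which means the phase $Z_1|Z_1|^{-1}$ exactly conjugates $J_1$ to $J_0$), shows $Z_x$ stays invertible for $x\in[0,1]$, and then retracts by $(J_0,J_1)\mapsto\bigl(J_0,\,Z_x|Z_x|^{-1}J_1(Z_x|Z_x|^{-1})^*\bigr)$. You instead interpolate linearly and renormalize: take $J_t'=(1-t)J_0+tJ_1$, argue invertibility via the spectral fact that $-1\notin\spec(J_0^*J_1)$ when $\|J_0-J_1\|<2$, and pass to the phase $R_t=J_t'|J_t'|^{-1}$. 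Both yield a strong deformation retraction, and your checks that $R_t$ is a complex structure anticommuting with the $e_i,f_k$ are exactly right (the commutant argument via $|J_t'|$ being a function of $(J_t')^2$ is the correct one). The paper's conjugation picture has the side benefit of producing the Clifford-linear unitary $u_x=Z_x|Z_x|^{-1}$ intertwining the two complex structures, which is reused elsewhere (e.g.\ in Lemma~\ref{p.USF.4}); your interpolation is arguably cleaner for the retraction statement itself.

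One point that neither you nor the paper spells out, and which you may want to note: a strong deformation retraction of $X=\{(J_0,J_1):\|J_0-J_1\|<2\}$ onto the diagonal must land \emph{in $X$} for all $t$. For your construction this does hold and is not hard to see: writing $u=J_0^*J_1$, one has $J_0^*R_t=\bigl((1-t)I+tu\bigr)\bigl|(1-t)I+tu\bigr|^{-1}=f_t(u)$ for the continuous function $f_t(\lambda)=\frac{(1-t)+t\lambda}{|(1-t)+t\lambda|}$, which maps $S^1\setminus\{-1\}$ into itself (since $(1-t)+t\lambda$ avoids $(-\infty,0]$ for $\lambda\neq-1$); hence $\spec(J_0^*R_t)$ avoids $-1$ and $\|J_0-R_t\|<2$ throughout. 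It is worth adding this sentence, since it is the one step in your argument that is not routine functional calculus.
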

\begin{proof} The result is proved for $r,s\ge 0$, the corner case
$r=s+1=0$ is left to the reader. The retraction will be given
explicitly. Given
$J_0,J_1\in\sJ^{r,s+1}(\sA)$ with $\| J_0-J_1 \| < 2$, put
 $   Z:= Z(J_0,J_1) = 1_\sA - J_0 J_1$.
Then $Z$ commutes with $e_1,\ldots, e_r, f_1,\ldots,f_s$ and 
\begin{equation}  \label{eq.NHCT.6}
    Z J_1 = J_0+J_1 = J_0 Z.
\end{equation}
Taking adjoints we have $J_1 Z^* = Z^* J_0$, hence
\begin{equation}  \label{eq.NHCT.7}
    ZZ^* J_1 = Z^* J_0 Z = J_1 Z^* Z.
\end{equation}
We claim that $Z_x:= 1_\sA - x J_0 J_1$ is invertible 
for $0\le x \le 1$. Indeed
\[
      Z_x = 1_\sA - x(J_0-J_1) J_1 - x J_1^2 =
      (1+x) \bl 1_\sA - \frac x{1+x} (J_0-J_1) J_1 \br,
\]
which is invertible since
\[
    \bigl \| \frac x{1+x} (J_0-J_1) J_1 \bigr \| < \frac{2x}{1+x} \le
    1.
\]
In view of this and \Eqref{eq.NHCT.6}, \eqref{eq.NHCT.7} the
retraction is now given by
\[
  (J_0, J_1) \mapsto \Bl J_0, Z_x |Z_x|\ii J_1 \bl Z_x |Z_x|\ii \br^*
  \Br.\qedhere
\]
\end{proof}  

\begin{lemma}\label{p.NHCT.3} Let $H$ be a $\Cl_{r,s}$--Hilbert space.
The neighbourhood of $F_{s}\in\tO_{r,s}(H)$ given by
\begin{equation}
  \begin{split}\label{eq.NHCT.8}
  \tO_{r,s,*} :&=  \bigsetdef{\go\in\tO_{r,s}}{ \|\go - F_{s}\|<2 }\\
      &= \bigsetdef{\go\in\tO_{r,s}}{ 1\not\in\spec\bl F_{s}\go\br}
  \end{split}
\end{equation}
is contractible.

Furthermore, with the map
$\Phi_{r,s+1}: T\mapsto -F_{s} e^{\pi T F_s}$ of Theorem
\ref{p.SCHE.4} we have for invertible $T\in \FO^{r,s+1}$ the
inequality $\|\Phi_{r,s+1}(T) - F_{s} \| <2 $; that is, 
$\Phi_{r,s+1}(T)\in\bO_{r,s,*}$.  
Here, analogously to \Eqref{eq.NHCT.8}, 
$\bO_{r,s,*} := \bigsetdef{\go\in\bO_{r,s}}{ \|\go - F_{s}\|<2 }$,
cf. \Eqref{eq.NHCT.3}.
\end{lemma}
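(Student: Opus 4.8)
The plan for the first assertion is to read it off from Proposition~\ref{p.NHCT.2}. Assume $s\ge 1$ (the corner case $r=s=0$ is handled the same way using the corner case $r=s+1=0$ of that proposition, with $\sJ^{0,0}(H)$ the group of unitaries and $F_0=I$), and apply Proposition~\ref{p.NHCT.2} to $\sA=\sB(H)$ with Clifford elements $E_1,\ldots,E_r,F_1,\ldots,F_{s-1}$, so that $\sJ^{r,s}(H)$ plays the role of the space written $\sJ^{r,s+1}(\sA)$ there. By the Clifford relations $F_s\in\sJ^{r,s}(H)$, and since $\|J-F_s\|<2$ forces $\|J-F_s\|_\sQ<2$, comparison with the definition of $\tO_{r,s}$ and with \eqref{eq.NHCT.5} yields
\[
  \tO_{r,s,*}=\bigsetdef{J\in\sJ^{r,s}(H)}{\|J-F_s\|<2},
\]
which is exactly the slice $\{J_0=F_s\}$ of the set $\bigsetdef{(J_0,J_1)\in\sJ^{r,s}(H)\times\sJ^{r,s}(H)}{\|J_0-J_1\|<2}$ appearing in Proposition~\ref{p.NHCT.2}. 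The explicit strong deformation retraction onto the diagonal constructed there leaves the first coordinate fixed, so it restricts to this slice; since it takes values in the ambient set, every operator $J$ along the homotopy satisfies $\|J-F_s\|<2$ and hence stays in $\tO_{r,s,*}$; at time $1$ every point is sent to $F_s$, while $F_s$ is fixed throughout. Thus $\tO_{r,s,*}$ strong-deformation-retracts onto $\{F_s\}$ and in particular is contractible.

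For the second assertion, observe first that for any $T\in\FO^{r,s+1}$ one has $T^*=-T$, $T$ anticommutes with $F_s$, and $\pi(T)$ is unitary; hence $TF_s$ is bounded and skew-adjoint with $(TF_s)^2=T^2$, and $(TF_s)^2+I=T^2+I$ is compact, so $\pi(TF_s)$ is a complex structure in the Calkin algebra and $\specess(TF_s)\subseteq\{i,-i\}$. The entire function $f(z)=e^{\pi z}+1$ vanishes at $\pm i$, so $e^{\pi TF_s}+I=f(TF_s)$ is compact, and therefore $\Phi_{r,s+1}(T)-F_s=-F_s\bigl(e^{\pi TF_s}+I\bigr)$ is compact; combined with the elementary fact that $\Phi_{r,s+1}(T)$ is a complex structure anticommuting with $E_1,\ldots,E_r,F_1,\ldots,F_{s-1}$, this gives $\Phi_{r,s+1}(T)\in\bO_{r,s}$ (cf.\ Theorem~\ref{p.SCHE.4}). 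It remains to establish the norm bound, and this is the step that uses invertibility of $T$. Since $F_s$ is unitary, $\|\Phi_{r,s+1}(T)-F_s\|=\|e^{\pi TF_s}+I\|$, with $e^{\pi TF_s}$ unitary. Because $\|TF_s\|\le\|T\|=1$, spectral mapping applied to the normal operator $TF_s$ shows that $1\in\spec\bigl(e^{\pi TF_s}\bigr)$ would force $0\in\spec(TF_s)$, i.e.\ $TF_s$, and hence $T$ (as $F_s$ is invertible), would fail to be invertible --- contrary to hypothesis. Thus $1\notin\spec\bigl(e^{\pi TF_s}\bigr)$; since this spectrum is a compact subset of the unit circle and $|z+1|<2$ for every unimodular $z\neq 1$, we obtain $\|e^{\pi TF_s}+I\|<2$, and therefore $\Phi_{r,s+1}(T)\in\bO_{r,s,*}$.

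Both arguments are short, and I do not expect a genuine obstacle; the two points that need care are: in the first part, confirming that the slice $\{J_0=F_s\}$ of the set in Proposition~\ref{p.NHCT.2} coincides with $\tO_{r,s,*}$ and that the retraction given there actually fixes the first coordinate (so that it restricts to a deformation retraction of $\tO_{r,s,*}$), together with the corner case $r=s=0$ where $F_0=I$ is a unitary but not a complex structure; and, in the second part, the spectral bookkeeping --- the value $-1\in\spec(e^{\pi TF_s})$ is harmless since it contributes $0$ to the norm, whereas it is the value $1$ that must be excluded, and its exclusion is exactly the invertibility of $T$.
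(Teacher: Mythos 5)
Your proof is correct and follows essentially the same approach as the paper: contractibility is obtained by restricting the explicit strong deformation retraction of Proposition~\ref{p.NHCT.2} to the slice $\{J_0=F_s\}$, and the norm bound for invertible $T$ is the spectral mapping argument $\spec(TF_s)\subset[-i,0)\cup(0,i]\Rightarrow\spec(e^{\pi TF_s})\subset S^1\setminus\{1\}$. The only thing you left untouched is the (purely definitional) second equality in \eqref{eq.NHCT.8}, which the paper dispatches with a one-line reference to \eqref{eq.NHCT.5}; your re-derivation of $\Phi_{r,s+1}(T)\in\bO_{r,s}$ via compactness of $e^{\pi TF_s}+I$ is superfluous but harmless.
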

\begin{proof} The second equality in \Eqref{eq.NHCT.8} follows analogously
to the equivalence \Eqref{eq.NHCT.5}.
That $\tO_{r,s,*}$ is contractible follows as in the proof of 
 Prop.~\ref{p.NHCT.2}.

For invertible $T\in \FO^{r,s+1}$ the operator $TF_{s}$ is skew,
invertible and of norm $1$, hence
\[
   \spec(TF_{s}) \subset [-i,0)\cup (0,i]
\] 
and thus by the spectral mapping theorem
\[
    \spec( e^{\pi T F_{s}} ) \subset S^1\setminus \{1\} = 
      \bigsetdef{z\in S^1}{ |z+1|<2 },
\]
so consequently
\[
  \bigl\|\underbrace{ - F_{s} e^{\pi T F_{s} }}_{\Phi_{r,s+1}(T)} - F_{s} \bigr\| < 2.
    \qedhere
\]
\end{proof}

The following generalization of Theorem~\ref{p.SCHE.4} is crucial
for the theory of Fredholm pairs and the spectral flow.

\begin{theorem} \label{p.NHCT.4}
Let $f:i\R\to i\R$ be an increasing continuous function
such that
\begin{thmenum}
  \item $f$ is strictly increasing in a neighbourhood of $0$,
  \item $|f(x)|\le 1$ for all $x\in i\R$,  
  \item $f(-x) = -f(x) = \ovl{f(x)}$ for all $x\in i\R$.
\end{thmenum}
Furthermore, let $H$ be a standard $\Cl_{r,s+1}$--Hilbert space.
Then the map
\[
  \Phi_f: \sFs^{r,s+1} \to \widetilde{\Omega}_{r,s},
    \quad T\mapsto -F_s e^{\pi f(T F_s) }
\]
is a homotopy equivalence. 
  Furthermore, if $T\in\sFs^{r,s+1}$ is  invertible then 
$\Phi_f(T)\in \tO_{r,s,*}$.

A particularly nice homotopy equivalence is given by the Cayley
transform
\begin{equation}\label{eq.NHCT.9}
  \Phi: \sFs^{r,s+1} \to \widetilde\Omega_{r,s}, 
       \quad T\mapsto -F_s (I+TF_s)(I-TF_s)\ii.
\end{equation}
\end{theorem}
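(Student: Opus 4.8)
The plan is to reduce the statement about the general $\Phi_f$ to the special case already established in Theorem~\ref{p.SCHE.4} (the case $f(x)=x$, whose image map is $\Phi_{r,s+1}$), together with the elementary homotopies collected above. First I would verify that $\Phi_f$ is well-defined, i.e. that $-F_s e^{\pi f(TF_s)}$ actually lands in $\widetilde\Omega_{r,s}$. Writing $u = F_s T$, the operator $u$ is a skew-adjoint element whose essential spectrum avoids... more precisely, since $T\in\sFs^{r,s+1}$ we have $TF_s$ skew-adjoint with $\|TF_s\|\le 1$, and $TF_s$ is essentially invertible (Fredholmness of $T$). So $\spec_{\mathrm{ess}}(TF_s)\subset\{it:\varepsilon\le|t|\le 1\}$ for some $\varepsilon>0$, and by (i)--(iii) the function $f$ maps this into $\{it: f(\varepsilon)\le |t|\le 1\}$ with $f(\varepsilon)>0$; hence $e^{\pi f(TF_s)}$ is, modulo compacts, a unitary whose essential spectrum avoids $1$, giving $\|{-}F_s e^{\pi f(TF_s)} - F_s\|_{\sQ}<2$ and the required compact-perturbation/essential-spectrum condition \eqref{eq.NHCT.5}. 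One also checks the Clifford anti-commutation relations: $TF_s$ commutes with the $E_j$ and with $F_1,\dots,F_{s-1}$ and anti-commutes with $F_s$, so $f(TF_s)$ does the same by functional calculus, whence $-F_s e^{\pi f(TF_s)}$ anti-commutes with all generators of $\Cl_{r,s}$, i.e. lies in $\sJ^{r,s}(H)$. The last clause (invertible $T\mapsto\Phi_f(T)\in\tO_{r,s,*}$) is then exactly the computation already carried out in Lemma~\ref{p.NHCT.3} for $\Phi_{r,s+1}$, since for invertible $T$ one has $\spec(TF_s)\subset[-i,0)\cup(0,i]$, so $f(TF_s)$ has the same property and $\spec(e^{\pi f(TF_s)})\subset S^1\setminus\{1\}$.

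Next I would show $\Phi_f$ is a homotopy equivalence. The strategy is: (a) for $f(x)=x$ this is Theorem~\ref{p.SCHE.4}; (b) for general $f$ satisfying (i)--(iii), I would construct a homotopy, through such admissible functions, from $f$ to the identity — e.g. $f_\lambda := (1-\lambda)f + \lambda\,\mathrm{id}$ suitably rescaled so that $|f_\lambda|\le 1$ is preserved (the convex combination of two admissible functions is admissible, strictly increasing near $0$ because both summands are, odd because both are, and bounded by $1$ because both are, so no rescaling is even needed). The map $(\lambda,T)\mapsto -F_s e^{\pi f_\lambda(TF_s)}$ is jointly continuous — here continuity in $\lambda$ and $T$ follows from continuity of functional calculus, using that the family $f_\lambda$ is equicontinuous and the spectra stay in the compact set $\{it:|t|\le 1\}$ — and each slice lands in $\widetilde\Omega_{r,s}$ by the well-definedness argument above. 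Hence $\Phi_f\simeq\Phi_{\mathrm{id}}=\Phi_{r,s+1}$ through maps into $\widetilde\Omega_{r,s}$, so $\Phi_f$ inherits the homotopy equivalence property from Theorem~\ref{p.SCHE.4}.

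For the Cayley transform display \eqref{eq.NHCT.9}: here $\Phi$ is not literally of the form $-F_s e^{\pi f(TF_s)}$, but I would handle it by the same two-step scheme — first check $\Phi(T)=-F_s(I+TF_s)(I-TF_s)^{-1}\in\widetilde\Omega_{r,s}$ (the Möbius transform $z\mapsto (1+z)/(1-z)$ carries $i\R$ onto the unit circle, is defined on $\spec(TF_s)$ since $1\notin\spec(TF_s)$ as $\|TF_s\|\le 1$ and $TF_s$ is skew-adjoint so its spectrum is in $i[-1,1]$, and it carries the essential spectrum of $TF_s$, which avoids $0$, to a subset of $S^1$ avoiding $-1$... one should be slightly careful: $(1+it)/(1-it)$ equals $1$ at $t=0$ and $-1$ never on the imaginary axis; so actually Fredholmness of $T$, i.e. $0\notin\spec_{\mathrm{ess}}(TF_s)$, is what guarantees $1\notin\spec_{\mathrm{ess}}(\Phi(T)F_s\cdot(-1))$ hence the condition \eqref{eq.NHCT.5}). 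Then construct an explicit homotopy from the map $z\mapsto (1+z)/(1-z)$ to $z\mapsto e^{\pi f(z)}$ for an appropriate admissible $f$ (one can take $f$ so that $e^{\pi f(it)}=(1+it)/(1-it)$, i.e. $\pi f(it) = 2i\arctan t$, which is odd, real-times-$i$, bounded by $i$, and strictly increasing near $0$); this exhibits the Cayley map as one of the maps $\Phi_f$ up to homotopy, reducing it to the already-proven case.

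The main obstacle I expect is not any single computation but the bookkeeping of \emph{joint} continuity and the \emph{standardness} hypothesis: Theorem~\ref{p.SCHE.4} is invoked as a black box, but one must make sure the homotopy $f_\lambda$ keeps the target inside $\widetilde\Omega_{r,s}$ uniformly and that the endpoint map really is the $\Phi_{r,s+1}$ of that theorem (including that "$\sFs$" — the distinguished path component when $r-s\equiv 2\bmod 4$ — is respected by all the homotopies, which it is, since they are continuous and component-preserving). The genuinely delicate analytic point is continuity of $T\mapsto f(TF_s)$ in operator norm for $f$ merely continuous (not Lipschitz) on the compact interval $i[-1,1]$; this is standard (Stone–Weierstrass approximation of $f$ by polynomials on $i[-1,1]$, which are norm-continuous in the operator), but it is the step where the hypotheses $|f|\le 1$ and the confinement $\|TF_s\|\le 1$ (hence $\spec(TF_s)\subset i[-1,1]$) are really used, so I would state it carefully.
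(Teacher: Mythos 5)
Your overall strategy — verify well-definedness, then homotope through admissible functions down to a known case, then observe the Cayley transform is literally one of the $\Phi_f$ — is exactly the paper's approach. But there is a concrete error in the choice of endpoint for the homotopy, and it propagates into two other statements you make.

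You propose $f_\lambda=(1-\lambda)f+\lambda\,\mathrm{id}$ and justify it by saying ``the convex combination of two admissible functions is admissible\dots bounded by $1$ because both are, so no rescaling is even needed.'' The identity function is \emph{not} bounded by $1$ on $i\R$, so it is not admissible and the displayed $f_\lambda$ violates condition (ii) for every $\lambda>0$. There is also no rescaling fix: $(1-\lambda)f+\lambda\,\mathrm{id}$ is unbounded, so no scalar multiple is $\le 1$ in sup-norm. The paper avoids this by homotoping instead to the cutoff function
\[
  \Psi(x)=\begin{cases}x,&|x|\le 1,\\ x/|x|,&|x|\ge 1,\end{cases}
\]
via $f_u=(1-u)f+u\Psi$; here both summands genuinely satisfy (i)--(iii), and the endpoint map $\Phi_\Psi$ on $\sFs^{r,s+1}$ is precisely the homotopy equivalence supplied by the second half of Theorem~\ref{p.SCHE.4}. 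The same confusion underlies your claims that $\|TF_s\|\le 1$ and that $\spec(TF_s)\subset i[-1,1]$: elements of $\sFs^{r,s+1}$ are not normalized (that is $\FO^{r,s+1}$), so $\|T\|$ is arbitrary. Your continuity-of-functional-calculus argument should therefore be run locally (on a ball of operators, where spectra lie in a fixed compact interval of $i\R$), not by appealing to a false global norm bound; the hypothesis $|f|\le 1$ is used to keep the \emph{image} inside $i[-i,i]$, not the argument. Once you replace $\mathrm{id}$ by $\Psi$ and drop the norm-$1$ claim, your argument coincides with the paper's proof, including the treatment of the Cayley transform as $\Phi_f$ for $f(it)=\tfrac{2i}{\pi}\arctan t$.
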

\begin{remark}\label{p.NHCT.5}\indent\par
\paragraph*{1} We first clarify the last remark about the Cayley transform, i.e.
  \begin{equation}\label{eq.NHCT.10}
  e^{\pi f(x)} =: \frac {1+x}{1-x},
\end{equation}
or\footnote{
The formula in parenthesis on the right of
  \Eqref{eq.NHCT.11} holds only for $|x|<1$.}  
  \begin{equation}\label{eq.NHCT.11}
  f(x) = \frac 1\pi \log\frac{1+x}{1-x} 
       \Bigl( = \frac i\pi \arctan \frac{2 x/i}{1+x^2} \Bigr), \quad x\in i\R.
\end{equation}
Thus indeed
\[
    -F_{s} e^{\pi f( T F_{s})} = -F_{s} (I+ T F_{s})(I- T F_{s})\ii.
\]

\paragraph*{2. We comment on the case \emph{$r=0$, $s=0$}.}
Then $\tO_{0,0}$ is the space of unitaries $U$ with 
$\|U-I\|_{\sQ}<2$ resp. $-1\not\in\specess(U)$
and the Cayley transform is $A\mapsto (I+A)(A-I)\ii$. In the complex
case, where one considers self-adjoint instead of skew-adjoint
operators, $A$ should be replaced by $iA$. This should be compared to
\cite{BLP05} and \cite[Sec.~6.1]{KirLes:EIM}.
\end{remark}

\begin{proof}[Proof of Theorem~\ref{p.NHCT.4}]
This is essentially a consequence of Theorem~\ref{p.SCHE.4}.
    
  We first check that $\Phi_f$ maps $\sFs^{r,s+1}$ into $\widetilde\Omega_{r,s}$.
Abbreviate $g(x):= e^{\pi f(x)}$. Then $g$ maps $i\R$ into $S^1$ and
  $g(-x) = g(x)\ii = \ovl{g(x)}$.  Thus for $T\in\sFs^{r,s+1}$ we have
\begin{align*}
  \bl F_{s} g(T F_{s}) \br^* & = - \ovl g(T F_{s} ) F_{s} \\
       & = - g( - T F_{s} ) F_{s} = - F_{s} g( TF_{s} ),
\end{align*}
hence $\Phi_f(T F_{s})$ is skew-adjoint and, since $g$ takes
values in $S^1$, unitary. Consequently, $\Phi_f(T F_{s})^2=-I$.
Clearly, since both $T$ and $F_{s}$ anti-commute with the Clifford
generators except $F_{s}$ it follows that  
$\Phi_f(T F_{s}) \in \sJ^{r,s}(H)$.

Finally, since $T F_{s}$ is a Fredholm operator, there is a gap 
$[-i\eps, i\eps]$ in the essential spectrum. The properties (1)-(3) of
$f$ imply that there is a small arc $\{ |\operatorname{arg} z| <\delta \}$
around $1\in S^1$ which does not meet the essential
spectrum of $e^{\pi f(T F_{s})}$.  Therefore,
\[
  \| - F_{s} g(T F_{s}) - F_{s}\|_\sQ = \| g(T F_{s} ) + I\|_{\sQ} <2,
\]
proving that $\Phi_f$ maps into $\widetilde\Omega_{r,s}$ as claimed.

Now define the function
\begin{equation}\label{eq.NHCT.12}
     \Psi: i\R \to i\R,\quad x\mapsto \begin{cases} x,& |x|\le 1,\\
       x/|x|,& |x|\ge 1.
     \end{cases}
\end{equation}
Then $f_u (x) := (1-u) f(x) + u \Psi(x), 0\le u\le 1$ is a homotopy of
functions satisfying (1)-(3) and consequently $\Phi_{f_u}$ is a
homotopy between $\Phi_f$ and $\Phi_{\Psi}$.

On the deformation retract $\FO_*^{r,s+1}\subset \sFs^{r,s+1}$ the map $\Phi_\Psi$
coincides with the map $\Phi_{r,s+1}$ of Theorem~\ref{p.SCHE.4}. Thus
$\Phi_\Psi$ and hence $\Phi_f$ are homotopy equivalences
as claimed.

Finally, if $T$ is invertible then one argues as in the
proof of Lemma~\ref{p.NHCT.3}. Namely, $f(TF_{s})$ is invertible
as well and hence $1\not\in\spec\bl e^{\pi f( T F_{s})}\br$,
thus
  \[
    \|-F_{s} e^{\pi f( T F_{s})}- F_{s}\|<2.
    \qedhere
  \]
\end{proof}

\section{Fredholm pairs and the Clifford index}
\label{s.FPCI}

Here we introduce a Fredholm theory of pairs of complex structures
where the index naturally takes values in
$A_{r,s}=\sM_{r,s-1}/\sM_{r,s}\simeq \KO_{s-r}(\R)$, cf.  Sec.
\ref{s.ABSC}. The complex analogue has a long history. Kato
systematically studied Fredholm pairs of subspaces in a Banach space
\cite[IV.4.1]{Kat:PTL}. The first hint that in the Hilbert space
setting one should instead look at the corresponding orthogonal
projections as the primary objects can be found in \cite[Rem.
4.9]{BDF73}. These projections appear prominently in the theory of
boundary value problems for Dirac type operators \cite[Sec.
24]{BooWoj:EBP}. The theory was rediscovered  and further developed
 in the influential 
paper \cite{ASS94} by Avron, Seiler, and Simon;
see also \cite{BL01}. For further details see Sec.~\ref{sss.IPP}
and Sec.~\ref{sss.CC}.

\subsection{Fredholm pairs of complex structures}
\label{ss.FPCS}

\begin{dfn}\label{p.FPCI.3} Let $H$ be a $\Cl_{r,s}$--Hilbert space.
  A pair $(J_0, J_1)$ of elements of $\sJ^{r,s+1}(H)$ is
called a \emph{Fredholm pair} if $\|J_0-J_1\|_\sQ <2$.

Recall from Sec.~\ref{ss.PN} that $\|\ldots \|_\sQ$ is an abbreviation
for $\|\pi(\ldots)\|_\sQ$.
\end{dfn}

Given a Fredholm pair $(J_0, J_1) \in \sJ^{r,s+1}(H)\times \sJ^{r,s+1}(H)$
we note that $H$ becomes a $\Cl_{r,s+1}$--Hilbert space by setting $F_{s+1}=J_0$.
Moreover, the space of those $J_1$ with $(J_0, J_1)$ being a Fredholm
pair is nothing but the space 
\[
  \tilde\Omega_{r,s+1}(H;E_1,\ldots,E_r,
   F_1,\ldots,F_{s},F_{s+1}=J_0),
\]   
see \Eqref{eq.NHCT.3}.

From now on we fix a $\Cl_{r,s}$--Hilbert space $H$. The useful
identities of the next Lemma are straightforward to check.

\begin{lemma}[{\cite[Lemma 5.3]{CPSchuba19}, \cite[Theorem 2.1]{ASS94}}]
\label{lem:T0_T1_identities}
  Let $J_0,J_1 \in\sJ^{r,s+1}(H)$. Define the skew-adjoint operators 
\[
   T_0 = \frac{1}{2}(J_0 + J_1), \qquad T_1 = \frac{1}{2}(J_0 - J_1).
\]
Then $T_0^2 + T_1^2 = -1$, $T_0T_1 = -T_1 T_0$, $T_0J_0 = J_1 T_0$,
$T_0J_1 = J_0 T_0$, $T_1J_0 = -J_1 T_1$, and $T_1J_1 = -J_0 T_1$.
Furthermore, both $T_0$ and $T_1$ anti-commute with the 
Clifford generators $\Edots$ and $\Fdots$.
\end{lemma}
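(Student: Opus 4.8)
\textbf{Proof plan for Lemma~\ref{lem:T0_T1_identities}.}

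The plan is to verify each identity by direct algebraic manipulation, using only the defining relations $J_0^2 = J_1^2 = -1$ (both $J_i$ are complex structures) together with $J_i^* = -J_i$. First I would note that $T_0 = \frac12(J_0+J_1)$ and $T_1 = \frac12(J_0-J_1)$ are skew-adjoint because each $J_i$ is, and a real linear combination of skew-adjoint operators is skew-adjoint. For the quadratic relations, I would compute $4T_0^2 = (J_0+J_1)^2 = J_0^2 + J_0J_1 + J_1J_0 + J_1^2 = -2 + (J_0J_1+J_1J_0)$ and similarly $4T_1^2 = (J_0-J_1)^2 = -2 - (J_0J_1+J_1J_0)$, so that $4(T_0^2+T_1^2) = -4$, giving $T_0^2+T_1^2 = -1$. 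In the same way $4T_0T_1 = (J_0+J_1)(J_0-J_1) = J_0^2 - J_0J_1 + J_1J_0 - J_1^2 = J_1J_0 - J_0J_1$ while $4T_1T_0 = (J_0-J_1)(J_0+J_1) = J_0J_1 - J_1J_0$, so $T_0T_1 = -T_1T_0$.

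Next I would handle the four intertwining relations. For $T_0J_0 = J_1T_0$: compute $2T_0J_0 = (J_0+J_1)J_0 = J_0^2 + J_1J_0 = -1 + J_1J_0$ and $2J_1T_0 = J_1(J_0+J_1) = J_1J_0 + J_1^2 = J_1J_0 - 1$, which agree. The identity $T_0J_1 = J_0T_0$ is obtained by the symmetric computation with the roles of $J_0$ and $J_1$ interchanged (note $T_0$ is symmetric in $J_0,J_1$). For $T_1J_0 = -J_1T_1$: $2T_1J_0 = (J_0-J_1)J_0 = -1 - J_1J_0$ and $-2J_1T_1 = -J_1(J_0-J_1) = -J_1J_0 + J_1^2 = -J_1J_0 - 1$, which agree; and $T_1J_1 = -J_0T_1$ follows by the antisymmetric analogue (replacing $J_0-J_1$ by $-(J_1-J_0)$).

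Finally, the statement that $T_0$ and $T_1$ anti-commute with the Clifford generators $E_1,\ldots,E_r,F_1,\ldots,F_s$ is immediate: by the definition of $\sJ^{r,s+1}(H)$ in Definition~\ref{p.NHCT.1}, both $J_0$ and $J_1$ anti-commute with each such generator, hence so do their real linear combinations $T_0$ and $T_1$. There is no real obstacle here — the lemma is purely a matter of expanding products of complex structures — but it is worth recording that the only inputs used are $J_i^2=-1$ and the anti-commutation with the Clifford elements, so the identities hold verbatim in the abstract $C^*$-algebra setting of $\sJ^{r,s+1}(\sA)$ as well.
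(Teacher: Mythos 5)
Your proof is correct and uses the same approach implicitly indicated by the paper, which states that the identities are "straightforward to check" and leaves the verification to the reader. Your direct expansion of the products using only $J_i^2=-1$, skew-adjointness of $J_i$, and the anti-commutation with the Clifford generators supplies exactly the details the paper omits.
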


\begin{defprop} [Definition of $\ind_{r,s+2}^1(J_0, J_1)$] \label{p.FPCI.5}
Let $(J_0, J_1)$ be a Fredholm pair in $\sJ^{r,s+1}(H)$. Then 
for all $t\in [0,1]$, the operator $\gamma(t)= (1-t) J_0 + t J_1 \in\sF^{r,s+1}$
is a skew-adjoint Fredholm operator anti-commuting with 
$\Edots$, $\Fdots$, and 
\[
  \Phi_{r,s+1}\bl \gamma(t) \br := - F_{s} e^{\pi \gamma(t) F_{s} },
   \quad 0\le t \le 1
\]
is a loop in $\tilde\Omega_{r,s}(H)$.

Now embed the $\Cl_{r,s+1}$--Hilbert space 
$(H; \Edots,\; \Fdots,F_{s+1}:=J_0)\hookrightarrow H'$ into a standard
$\Cl_{r,s+1}$--Hilbert space and put $J_j':= J_j\oplus F_{s+1}\restr{ H^\perp }$.
Then we define $\ind_{r,s+2}^1(J_0, J_1)$ to be the class of the loop 
$\Phi_{r,s+1}\circ \gamma\oplus {F_{s}}\restr{H^\perp}
 =\Phi_{r,s+1}\circ \bl \gamma\oplus {F_{s+1}}\restr{H^\perp} \br
$
in $\pi_1\bl \tilde\Omega_{r,s}(H'),F_s \br$,
which by Theorem~\ref{p.SCHE.6} is canonically isomorphic to 
the abelian group 
$A_{r,s+2}=\sM_{r,s+1}/\sM_{r,s+2}\simeq \KO_{s+2-r}(\R)$.

Alternatively and equivalently, $\ind_{r,s+2}^1(J_0, J_1)$ denotes the
stable homotopy class of the loop $\Phi_{r,s+1}\circ \gamma$
in the stable fundamental group 
$\pi_1^{\textup S}(\tilde\Omega_{r,s}(H), F_{s})$ 
which again is canonically isomorphic to $A_{r,s+2}$,
see Sec.~\ref{ss.SHFC}.
\end{defprop}
\begin{proof} By construction $\gamma(t)$ is skew-adjoint and
anti-commutes with the Clifford generators $\Edots$, $\Fdots$.  To see
that it is a Fredholm operator we note that
\begin{equation}  \label{eq.FPCI.4}
\bigl \| -J_1 \gamma(t) - I\bigr\|_\sQ =
     \bigl \| \gamma(t) - J_1\bigr\|_\sQ <1
     \text{ for } \frac 12\le t\le 1,
\end{equation}
and similarly $\bigl \| J_0 \gamma(t) - I\bigr\|_\sQ <1$
for $0\le t \le \frac 12$. 

  To prove the claim about $\Phi_{r,s+1}\circ\gamma$ we note that
  $\Phi_{r,s+1}(\gamma(0))=\Phi_{r,s+1}(\gamma(1)) = F_{s}$. From
\Eqref{eq.FPCI.4} it follows that for all $0\le t\le 1$, 
$\gamma(t) F_{s}$ lies in the open $1$ ball around a unitary of
square $-1$ ($J_0 F_{s}$ resp.  $J_1 F_{s}$). Thus 
$\specess\bl \gamma(t) F_{s}\br\subset [-i,0)\cup(0,i]$
and hence $1\not \in \specess\bl e^{\pi\gamma(t) F_{s}}\br$.
  Consequently, cf. \Eqref{eq.NHCT.5}, 
  $\| \Phi_{r,s+1}(\gamma(t)) - F_{s}\|_\sQ <2$ and $\gamma$
is a loop in $\tilde\Omega_{r,s}$ based at $F_{s}$ as claimed.
\end{proof}

It is an immediate consequence of the construction that
$\ind_{r,s+2}^1$ is locally constant on the space of Fredholm pairs
$(J_0, J_1)\in\sJ^{r,s+1}(H) \times \sJ^{r,s+1}(H)$.

We provide an alternative approach to the index of a Fredholm pair,
whose consequence will be the important additivity formula Prop.~\ref{p.FPCI.8} below.

\begin{prop}\label{p.FPCI.6} Let
$(J_0, J_1)\in \sJ^{r,s+1}(H)\times \sJ^{r,s+1}(H)$ be a Fredholm
pair and let $T_0, T_1$ as in Lemma~\ref{lem:T0_T1_identities}.
Then  
\begin{enumerate}
  \item The operator $\frac{1}{2}(J_0 + J_1)$ is Fredholm, 
  \item The space $\ker(J_0+J_1)$ is an ungraded $\Cl_{r,s+1}$--module
    w.r.t. the Clifford algebra generated by $\Edots$, $\Fdots$,
    $F_{s+1}:=J_0$.
  \item For any fixed $1>\lambda>0$, the spectral subspace 
  $X_\lambda = \chi_{(0, \lambda^2)}( -T_0^2 )$
carries naturally the structure of an ungraded $\Cl_{r,s+2}$--module with
    respect to the generators $\Edots$, $\Fdots$, $F_{r+1}=J_0$ and
    $F_{s+2}$ being the phase of
$J_0 T_1 T_0 = -\frac 14 (J_1+J_0J_1J_0)$, cf. Lemma~\ref{lem:T0_T1_identities}.
\end{enumerate}
\end{prop}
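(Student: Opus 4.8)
The plan is to treat the three assertions in turn, using throughout the algebraic relations of Lemma~\ref{lem:T0_T1_identities} and writing $T_0=\tfrac12(J_0+J_1)$, $T_1=\tfrac12(J_0-J_1)$. For (1), the Fredholm pair condition $\|J_0-J_1\|_\sQ<2$ is exactly $\|T_1\|_\sQ<1$; with $T_0^2+T_1^2=-1$ this gives $-T_0^2=1+T_1^2$, so in the Calkin algebra $\pi(-T_0^2)=1+\pi(T_1)^2$ has spectrum in $[1-\|T_1\|_\sQ^2,1]\subset(0,1]$ and is invertible. Hence $-T_0^2=T_0^*T_0$ is Fredholm, and since $T_0$ is skew-adjoint (so normal) so is $T_0=\tfrac12(J_0+J_1)$; in particular $\ker(J_0+J_1)=\ker T_0$ is finite dimensional and $0\notin\specess(-T_0^2)$. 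For (2), by Lemma~\ref{lem:T0_T1_identities} the generators $\Edots,\Fdots$ anti-commute with $T_0$, so preserve $\ker T_0$, while $T_0J_0=J_1T_0$ shows $J_0$ maps $\ker T_0$ into itself; since $J_0\in\sJ^{r,s+1}(H)$ is a complex structure anti-commuting with $\Edots,\Fdots$, the operators $\Edots,\Fdots,F_{s+1}:=J_0$ obey the $\Cl_{r,s+1}$ relations, so restrict to make $\ker T_0$ an ungraded $\Cl_{r,s+1}$-module.

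Part (3) is the substantive step. Set $S:=J_0T_1T_0$ and $A:=-T_0^2\ge 0$. Short computations with the identities $T_0J_0=J_1T_0$, $T_0J_1=J_0T_0$, $T_1J_0=-J_1T_1$, $T_1J_1=-J_0T_1$ and $T_0T_1=-T_1T_0$ are meant to extract four facts: $S^*=-S$; $S$ commutes with $T_0^2$ (one checks on the way that $T_0^2$ commutes with $J_0$ and with $T_1$), hence $S$ preserves every spectral subspace of $A$; $-S^2=(-T_1^2)(-T_0^2)=(1-A)A$, using $-T_1^2=1-A$; and $S$ anti-commutes with each of $\Edots$, $\Fdots$ and with $J_0$. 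Now fix $0<\lambda<1$. Since $A$ is Fredholm by part (1), on $X_\lambda=\chi_{(0,\lambda^2)}(A)$ the spectrum of $A$ lies in $[\delta,\lambda^2]\subset(0,1)$ for some $\delta>0$, so $-S^2=(1-A)A$ is positive and bounded below on $X_\lambda$; therefore $S$ restricts to an invertible skew-adjoint operator there and its phase $F_{s+2}:=\bigl(S\,(-S^2)^{-1/2}\bigr)\restr{X_\lambda}$ is a genuine complex structure, with $F_{s+2}^2=S^2(-S^2)^{-1}=-1$ on $X_\lambda$. Because $\Edots,\Fdots$ and $J_0$ all commute with $A$, hence with $(-S^2)^{-1/2}=\bigl((1-A)A\bigr)^{-1/2}$, while anti-commuting with $S$, they anti-commute with $F_{s+2}$; with (2) this exhibits $\Edots,\Fdots,F_{s+1}=J_0,F_{s+2}$ as $\Cl_{r,s+2}$ Clifford generators on $X_\lambda$, which is the claim. (For $\lambda$ small enough $X_\lambda$ is moreover finite dimensional, so genuinely a Clifford module.)

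The step I expect to be the real obstacle is part (3): first the sign bookkeeping needed to chain the $T_iJ_j$ and $T_0T_1$ relations into $S^*=-S$, $-S^2=(1-A)A$ and the anti-commutations; and more conceptually, the realization that the phase of the skew-adjoint operator $S$ is a complex structure only on $\overline{\operatorname{ran}}\,|S|$, which is why one localizes to the spectral interval $(0,\lambda^2)$ and invokes the Fredholmness of $-T_0^2$ from part (1) precisely to ensure $S$ is invertible there. Parts (1) and (2) are then routine.
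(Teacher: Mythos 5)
Your proof is correct and follows essentially the same route as the paper's. Part~(2) is identical; part~(3) is the same argument (commutation of $T_1T_0$ and $J_0$ with $-T_0^2$, invertibility of $J_0T_1T_0$ on $X_\lambda$ via Fredholmness, then take the phase), just with the sign bookkeeping and the identity $-S^2=(1-A)A$ spelled out where the paper elides them; your part~(1) gives a direct Calkin-algebra estimate $\pi(-T_0^2)=1+\pi(T_1)^2\geq 1-\|T_1\|_\sQ^2>0$, whereas the paper simply refers back to the Fredholmness already established in the proof of Definition~\ref{p.FPCI.5} --- both amount to the same $\sQ(H)$-estimate.
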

\begin{proof}
We note that this is essentially \cite[Lemma~5.3 and
Prop.~5.2]{CPSchuba19}. We will make repeated use of the identities
given in Lemma~\ref{lem:T0_T1_identities}.

\subsubsection*{(1)} was proved in the previous proof.

\subsubsection*{(2)} $\ker(J_0+J_1)$ is certainly invariant under
$\Edots$, $\Fdots$, and from $T_0 J_0 = J_1 T_0$ we infer that
it is invariant under $J_0$ as well.

\subsubsection*{(3)} For part (3), we first note that $X_\lambda$ is 
$T_1T_0$-invariant, $E_j T_1T_0 = T_1T_0 E_j$ and $J_0T_1T_0 = -T_1T_0 J_0$. 

Also observe that for $|\lambda|<1$ we have $-1<T_0^2|_{X_\lambda}<0$ (the
upper inequality is the Fredholm property), and so $T_0^2+T_1^2=-1$
tells us that $T_1$ is invertible on $X_\lambda$. Furthermore,
$J_0T_1T_0$ is skew-adjoint and anti-commutes with $\Edots$, $\Fdots$,
and with $J_0$. Finally, its phase $S$ is another complex structure
anti-commuting with $\Edots$, $\Fdots$, $J_0$, and hence $X_\lambda$
carries the structure of a $\Cl_{r,s+2}$--module with generators
$\Edots$, $\Fdots$, $J_0, S$.
\end{proof}

In light of this proposition we put for a Fredholm pair $(J_0, J_1)$
\[
   \ind_{r,s+2}^2(J_0, J_1):= \bigl[ \ker(J_0+J_1) \bigr]
      \in\sM_{r,s+1}/\sM_{r,s+2},
\]
i.e. the class of the $\Cl_{r,s+1}$--module $\ker(J_0+J_1)$ in
$A_{r,s+2}\simeq\KO_{s+2-r}(\R)$. 

It follows from (3) (cf. \cite[Prop.~5.2]{CPSchuba19}) that
$\ind_{r,s+2}^2$ is locally constant on the space of Fredholm pairs as
well.

\begin{theorem}\label{p.FPCI.7} The two definitions of a $\Cl_{r,s+2}$
index of a Fredholm pair $(J_0, J_1)$ coincide,
$\ind_\bullet^1=\ind_\bullet^2$. Hence from now on we may just write
$\ind_{r,s+2}(J_0, J_1)$.
\end{theorem}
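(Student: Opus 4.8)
The plan is to show that both indices are locally constant on the (open) space of Fredholm pairs, that they agree on an explicit normal form, and that every Fredholm pair can be connected to such a normal form. Since $\ind^1_{r,s+2}$ is locally constant by the remark following Definition--Proposition~\ref{p.FPCI.5} and $\ind^2_{r,s+2}$ is locally constant by part (3) of Proposition~\ref{p.FPCI.6}, it suffices to compare the two invariants on a single representative of each path component and then invoke connectedness. Concretely, after fixing $F_{s+1}:=J_0$ the space of admissible $J_1$ is $\tilde\Omega_{r,s+1}(H;E_1,\dots,E_r,F_1,\dots,F_s,F_{s+1}=J_0)$ as noted after Definition~\ref{p.FPCI.3}, so the path components are governed by $\pi_0$ of this space; I would reduce to a generator of each component by a direct deformation together with the Absorption Theorem to pass to a standard Clifford--Hilbert space.

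The key computation is on the normal form. Following the construction in~\eqref{eq.ABSC.9} (applied with $s$ replaced by $s+1$, so that the role of the trailing generator is played by $F_{s+1}=J_0$ and the new one by $J_1$), take a finite dimensional $\Cl_{r,s+1}$--module $V$ and set $J_1 := -J_0$ on $V$ while $J_1 = J_0$ off $V$. Then on $V$ one has $T_0 = \tfrac12(J_0+J_1)=0$ and $T_1=\tfrac12(J_0-J_1)=J_0$, so $\ker(J_0+J_1)=V$ as a $\Cl_{r,s+1}$--module (generators $E_j,F_k,F_{s+1}=J_0$), giving $\ind^2_{r,s+2}=[V]\in A_{r,s+2}$. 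On the other hand the path $\gamma(t)=(1-t)J_0+tJ_1$ is the straight line from $J_0$ to $-J_0$ through $0$ on $V$; its image $\Phi_{r,s+1}\circ\gamma$ under the exponential map $T\mapsto -F_s e^{\pi TF_s}$ is precisely the standard generator of $\pi_1(\tilde\Omega_{r,s},F_s)$ attached to $T_V$, because $\gamma(1/2)=0$ on $V$ realizes the maximal excursion and the loop winds once around the class $[V]$ in the sense of the isomorphism of Theorem~\ref{p.SCHE.6}. Hence $\ind^1_{r,s+2}=[V]$ as well. That both invariants are additive under direct sum (obvious for $\ind^2$; for $\ind^1$ because concatenation/stacking of loops corresponds to addition in $\pi_1$) then lets me match them on arbitrary generators, and local constancy propagates the equality to all of the space of Fredholm pairs.

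The main obstacle I expect is the precise identification, on the normal form, of the homotopy class of the loop $\Phi_{r,s+1}\circ\gamma$ with the element $[V]\in A_{r,s+2}$ under the canonical isomorphism $\pi_1(\tilde\Omega_{r,s},F_s)\cong A_{r,s+2}$. This is exactly the place where one must unwind the Atiyah--Singer--type isomorphism of Theorem~\ref{p.SCHE.6} together with the suspension/exponential map, and keep careful track of orientation conventions (the sign issues flagged after Proposition~\ref{p.ICA.1} and in Section~\ref{ss.ICA}). One clean way to sidestep the explicit winding-number bookkeeping is to instead argue via the mapping-cylinder/clutching description: the loop $t\mapsto \Phi_{r,s+1}(\gamma(t))$, being based at $F_s$ and passing through the configuration where $\gamma(1/2)F_s$ has $\ker(\gamma(1/2))$ as its only "essential" obstruction, defines via the Cayley transform of Theorem~\ref{p.NHCT.4} a clutching datum whose associated $\Cl_{r,s+2}$--module is manifestly $\ker(J_0+J_1)$ with the extra generator being the phase of $J_0T_1T_0$ from Proposition~\ref{p.FPCI.6}(3); this directly yields $\ind^1=\ind^2$ without ever choosing generators, and reduces the remaining work to checking that the two descriptions of the extra Clifford generator agree up to the homotopies already available in Section~\ref{s.NHCT}.
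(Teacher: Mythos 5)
Your proposal takes essentially the same route as the paper: local constancy of both indices, reduction (via the Absorption Theorem and Cor.~\ref{p.SCHE.5}) to the normal form $J_1 = J_0|_{H_0}\oplus (-J_0)|_V$, and comparison on that normal form using Theorem~\ref{p.SCHE.6}. The step you flag as the ``main obstacle'' — identifying the class of $\Phi_{r,s+1}\circ\gamma$ with $[V]$ — is in fact a one-line computation in the paper: on $H_0$ the loop is constant equal to $F_s$ (since $(J_0F_s)^2=-I$ gives $e^{\pi J_0F_s}=-I$), and on $V$ one finds $\Phi_{r,s+1}(\gamma(t)) = -F_s e^{\pi(1-2t)J_0F_s} = F_s e^{-2\pi t J_0 F_s}$, which is verbatim the standard generator of $\pi_1(\tilde\Omega_{r,s},F_s)$ attached to $[V]$ in Theorem~\ref{p.SCHE.6} (with $J_0$ in the role of $F_{s+1}$); no separate clutching argument or extra orientation bookkeeping is needed beyond what that theorem already packages.
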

As for the Fredholm index we write $\Ind_{r,s+2}(J_0,J_1):=
\tau_{r,s+2}\bl \ind_{r,s+2}(J_0,J_1) \br$.
\begin{proof} By the local constancy of both indices it suffices to check
the claim for fixed $J_0\in\sJ^{r,s+1}(H)$ on representatives of the
path components of the space
\[
  \bigsetdef{ J\in\sJ^{r,s+1}(H)}{ \| J-J_0\|_\sQ < 2 }\simeq 
    \tilde\Omega_{r,s+1}(H;\Edots,\; \Fdots, F_{s+1}=J_0)
\]
in a standard $\Cl_{r,s+1}$--Hilbert space. By doubling the space and
replacing $J_0$ by $J_0\oplus -J_0$ if necessary we may assume that
the Hilbert space is standard  with $J_0$ being one of the
Clifford generators.
Note that both indices are not affected by stabilizing:
  if we add another Hilbert space $H'$ and fix $J_0'\in \sJ^{r,s+1}(H')$
then $\ind_{r,s+2}^\ga(J_0, J_1) = \ind_{r,s+2}^\ga(J_0\oplus J_0',
  J_1\oplus J_0')$,  $\ga=1,2$.

Then by Cor.~\ref{p.SCHE.5} we know the path components of
\[ 
  \tilde\Omega_{r,s+1}(H; \Edots,\; \Fdots,J_0)
\]
and 
hence we may consider 
$J_1 = {J_0}\restr{H_0} \oplus -{J_0}\restr{V}$ where 
$H=H_0\oplus V$ is a $\Cl_{r,s+1}$--linear decomposition (generators $\Edots$, $\Fdots$,
  $F_{s+1}=J_0$)
 and $\dim V<\infty$.
Then $\frac 12 (J_0+J_1) = {J_0}\restr{H_0} \oplus 0\restr{V}$, whence
  $\ind_{r,s+2}^2(J_0, J_1) = [V]\in\sM_{r,s+1}/\sM_{r,s+2}$.  Further, 
\begin{equation}
\gamma(t) = (1-t) J_0 + t J_1 = {J_0}\restr{H_0} \oplus (1-2t)
{J_0}\restr{V}.
\end{equation}
On $H_0$ we have 
  $\Phi_{r,s+1}\bl \gamma(t) \br = - F_{s} e^{\pi J_0 F_{s} } \equiv F_{s}$,
and on $V$ we have
\[
  \Phi_{r,s+1}(\gamma(t) ) = - F_{s} e^{\pi (1-2t) J_0 F_{s}} =
   F_{s} e^{-2\pi t J_0 F_{s}}.
\]
By Theorem~\ref{p.SCHE.6} the homotopy class of this loop is again
$[V]$, thus $\ind_{r,s+2}^1(J_0,J_1) =\ind_{r,s+2}^2(J_0,J_1)$.
\end{proof}

The proof and the known standard generators of $\sM_{r,s+1}/\sM_{r,s+2}$ 
immediately show that (cf. Sec.~\ref{ss.ABS-examples})

\begin{cor}\label{p.FPCI.6A} For a standard $\Cl_{r,s+1}$--Hilbert space $H$ the
  map 
\[
  \ind_{r,s+2}(F_{s+1},\cdot): \tO_{r,s+1}(H)\to A_{r,s+2} \simeq \sM_{r,s+1}/\sM_{r,s+2}
  \]  
labels the connected components of $\tO_{r,s+1}(H)$.
For a non-standard $\Cl_{r,s+1}$--Hilbert space $\ind_{r,s+2}$ labels the stable
connected components.
\end{cor}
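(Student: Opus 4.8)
The plan is to evaluate $\ind_{r,s+2}(F_{s+1},\cdot)$ on the normal forms for the path components of $\tO_{r,s+1}(H)$ that already appeared in the proof of Theorem~\ref{p.FPCI.7} and to check that it reproduces verbatim the identification $\pi_0\bl\tO_{r,s+1}(H)\br\cong A_{r,s+2}$ coming from Corollary~\ref{p.SCHE.5}. That the map is well defined and locally constant needs no extra work: for $J\in\tO_{r,s+1}(H)$ the pair $(F_{s+1},J)$ is a Fredholm pair in the sense of Definition~\ref{p.FPCI.3}, so $\ind_{r,s+2}(F_{s+1},J)$ is defined, and $J\mapsto(F_{s+1},J)$ is continuous into the space of Fredholm pairs, on which $\ind_{r,s+2}=\ind_{r,s+2}^1=\ind_{r,s+2}^2$ is locally constant.

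I would then treat the standard case first, where $F_{s+1}$ is itself a Clifford generator and $\tO_{r,s+1}(H)=\tilde\Omega_{r,s+1}(H;\Edots,\;\Fdots,F_{s+1})$. By Corollary~\ref{p.SCHE.5} every $J\in\tO_{r,s+1}(H)$ is joined within $\tO_{r,s+1}(H)$ to a normal form $J_V:={F_{s+1}}\restr{H_0}\oplus\bl-{F_{s+1}}\restr V\br$, where $H=H_0\oplus V$ is a $\Cl_{r,s+1}$--linear orthogonal decomposition with $\dim V<\infty$, and the component of $J_V$ corresponds to $[V]\in\sM_{r,s+1}/\sM_{r,s+2}=A_{r,s+2}$. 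It then remains only to compute the index on these normal forms. Exactly as in the proof of Theorem~\ref{p.FPCI.7}, $\frac12(F_{s+1}+J_V)={F_{s+1}}\restr{H_0}\oplus 0\restr V$, so $\ker(F_{s+1}+J_V)=V$, and by Proposition~\ref{p.FPCI.6}(2) the $\Cl_{r,s+1}$--module structure on this kernel (generators $\Edots$, $\Fdots$, $F_{s+1}$) is just the restricted structure of the submodule $V\subset H$; hence $\ind_{r,s+2}(F_{s+1},J_V)=[\ker(F_{s+1}+J_V)]=[V]$.

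Combining these two observations, $\ind_{r,s+2}(F_{s+1},\cdot)$ is constant on components and carries the component of $J_V$ to $[V]$, so it induces precisely the bijection $\pi_0\bl\tO_{r,s+1}(H)\br\cong A_{r,s+2}$ of Corollary~\ref{p.SCHE.5}; this is the claim. Surjectivity can also be seen by hand: by the table and the generators listed in Sec.~\ref{ss.ABS-examples}, $A_{r,s+2}$ is already exhausted by classes $[V]$ of honest finite dimensional $\Cl_{r,s+1}$--modules --- in the two cases $A_{r,s+2}\simeq\Z$ the two inequivalent irreducibles of $\Cl_{r,s+1}$ contribute opposite signs, so negative classes occur as well --- and since $H$ is standard the Absorption Theorem realizes each such $V$ as a $\Cl_{r,s+1}$--submodule with $\Cl_{r,s+1}$--linear complement, giving $J_V\in\tO_{r,s+1}(H)$ with index $[V]$. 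For a non-standard $\Cl_{r,s+1}$--Hilbert space $H$ I would embed $H$ into a standard $H'$; since $\ind_{r,s+2}$ is unchanged under the stabilization $(J_0,J_1)\mapsto\bl J_0\oplus{F_{s+1}}\restr{H^\perp},\,J_1\oplus{F_{s+1}}\restr{H^\perp}\br$ recorded in the proof of Theorem~\ref{p.FPCI.7}, the index on $\tO_{r,s+1}(H)$ factors through the stable connected components of Sec.~\ref{ss.SHFC}, and the standard case applies.

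The only genuinely nontrivial input is Corollary~\ref{p.SCHE.5}: not merely that every $J$ is homotopic to a normal form $J_V$, but that $J_V$ and $J_{V'}$ already lie in the same path component of $\tO_{r,s+1}(H)$ once $[V]=[V']$ in $A_{r,s+2}$, an equivalence that absorbs the identifications built into the ABS quotient $\sM_{r,s+1}/\sM_{r,s+2}$. Granting that homotopy-theoretic fact, the corollary is immediate from the elementary $\ind_{r,s+2}^2$--computation above; the one bookkeeping point is to keep the distinguished generator $F_{s+1}$ fixed throughout, so that all the normal forms $J_V$ genuinely live in $\tO_{r,s+1}(H)$ rather than in some conjugate of it.
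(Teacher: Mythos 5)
Your proof is correct and matches the paper's (very terse) argument: the paper simply asserts that the corollary follows from the proof of Theorem~\ref{p.FPCI.7} together with the known generators of $\sM_{r,s+1}/\sM_{r,s+2}$, which is exactly what you have written out — evaluate $\ind_{r,s+2}(F_{s+1},\cdot)$ on the normal forms $J_V$ of Corollary~\ref{p.SCHE.5}, compute $\ker(F_{s+1}+J_V)=V$ to get $[V]$, and invoke the same corollary for the identification of $\pi_0\bl\tO_{r,s+1}(H)\br$ with $A_{r,s+2}$, with the absorption theorem / Sec.~\ref{ss.SHFC} handling the non-standard case by stabilization.
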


\newcommand{\indkp}{\ind_{r,s+2}}
\begin{prop}\label{p.FPCI.8} Suppose that $(J_0,J_1)$ and $(J_1,J_2)$ are
  Fredholm pairs of elements in $\sJ^{r,s+1}(H)$ with 
\begin{equation}\label{eq.FPCI.5}
\| J_0 - J_1 \|_\sQ<1 \text{ and } \| J_1 - J_2 \|_\sQ<1.
\end{equation}
Then
\[
\indkp(J_0,J_1) + \indkp(J_1,J_2)= \indkp(J_0,J_2).
\]
\end{prop}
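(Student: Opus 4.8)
The plan is to realise all three quantities as (stable) homotopy classes of loops in $\tilde\Omega_{r,s}(H)$ based at $F_s$, where the group law is concatenation of loops, and then to exhibit an explicit homotopy identifying the concatenation of the first two loops with the third. First note that $(J_0,J_2)$ is itself a Fredholm pair, as $\|J_0-J_2\|_\sQ\le\|J_0-J_1\|_\sQ+\|J_1-J_2\|_\sQ<2$, so $\ind_{r,s+2}(J_0,J_2)$ is defined. Write $\gamma_{ij}(t):=(1-t)J_i+tJ_j$ and $\ell_{ij}:=\Phi_{r,s+1}\circ\gamma_{ij}$. Since $\Phi_{r,s+1}(J_i)=-F_s e^{\pi J_iF_s}=F_s$ (because $J_iF_s$ is a complex structure, so $e^{\pi J_iF_s}=-I$), each $\ell_{ij}$ is, by Definition--Proposition~\ref{p.FPCI.5} together with Theorem~\ref{p.FPCI.7}, a loop in $\tilde\Omega_{r,s}(H)$ based at $F_s$ whose stable homotopy class is $\ind_{r,s+2}(J_i,J_j)$ under the canonical isomorphism $\pi_1^{\textup{S}}(\tilde\Omega_{r,s}(H),F_s)\simeq A_{r,s+2}$. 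As this isomorphism is one of groups and the group operation is concatenation of loops, it suffices to produce a homotopy of based loops from $\ell_{01}*\ell_{12}$ to $\ell_{02}$.

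The key lemma is that every $T$ in the solid triangle $\Delta:=\operatorname{conv}\{J_0,J_1,J_2\}$ lies in $\sF^{r,s+1}(H)$ and satisfies $\Phi_{r,s+1}(T)\in\tilde\Omega_{r,s}(H)$. Indeed such a $T=\alpha J_0+\beta J_1+\gamma J_2$ (with $\alpha,\beta,\gamma\ge0$, $\alpha+\beta+\gamma=1$) is skew-adjoint, anticommutes with $E_1,\dots,E_r,F_1,\dots,F_s$, and has $\|T\|\le1$; moreover, using $J_1^2=-I$ one computes $-J_1T-I=-J_1(T-J_1)$, so that
\[
  \|-J_1T-I\|_\sQ=\|T-J_1\|_\sQ\le\alpha\|J_0-J_1\|_\sQ+\gamma\|J_2-J_1\|_\sQ<\alpha+\gamma\le1,
\]
whence $\pi(-J_1T)$ is invertible in the Calkin algebra and $T$ is Fredholm. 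Then $TF_s$ is skew-adjoint, Fredholm, and of norm $\le1$, so $\specess(TF_s)\subset\{\,it:t\in[-1,1]\setminus\{0\}\,\}$; hence $1\notin\specess(e^{\pi TF_s})=\specess\bigl(F_s\Phi_{r,s+1}(T)\bigr)$, and $\Phi_{r,s+1}(T)\in\tilde\Omega_{r,s}(H)$ by \eqref{eq.NHCT.5}.

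Now let $\gamma_0\colon[0,1]\to\Delta$ be the reparametrised concatenation of $\gamma_{01}$ and $\gamma_{12}$, i.e.\ $\gamma_0(t)=\gamma_{01}(2t)$ for $0\le t\le\tfrac12$ and $\gamma_0(t)=\gamma_{12}(2t-1)$ for $\tfrac12\le t\le1$, so that $\Phi_{r,s+1}\circ\gamma_0=\ell_{01}*\ell_{12}$. Consider the straight-line homotopy $G(u,t):=(1-u)\gamma_0(t)+u\,\gamma_{02}(t)$. Since $\gamma_0(t)$ and $\gamma_{02}(t)$ both lie in the convex set $\Delta$, so does $G(u,t)$; furthermore $G(u,0)=J_0$ and $G(u,1)=J_2$ for all $u$, while $G(0,\cdot)=\gamma_0$ and $G(1,\cdot)=\gamma_{02}$. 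By the key lemma, $(u,t)\mapsto\Phi_{r,s+1}(G(u,t))$ is a continuous homotopy of loops in $\tilde\Omega_{r,s}(H)$, based at $F_s$, running from $\ell_{01}*\ell_{12}$ to $\ell_{02}$. Passing to stable homotopy classes and using $\pi_1^{\textup{S}}(\tilde\Omega_{r,s}(H),F_s)\simeq A_{r,s+2}$ gives
\[
  \ind_{r,s+2}(J_0,J_1)+\ind_{r,s+2}(J_1,J_2)=[\ell_{01}*\ell_{12}]=[\ell_{02}]=\ind_{r,s+2}(J_0,J_2).
\]

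The only real obstacle is the key lemma, and inside it the uniform estimate $\|T-J_1\|_\sQ<1$ over the whole triangle $\Delta$: this is exactly the point at which the hypothesis \eqref{eq.FPCI.5} (norm $<1$, not merely the Fredholm-pair bound $<2$) is needed, and it is what keeps $\Phi_{r,s+1}$ defined and valued in $\tilde\Omega_{r,s}(H)$ all along the homotopy $G$. The remaining points --- identifying $\Phi_{r,s+1}\circ\gamma_0$ with the concatenation $\ell_{01}*\ell_{12}$, and checking that $G$ has the stated endpoint behaviour --- are routine.
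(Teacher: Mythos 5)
Your proof is correct and follows essentially the same strategy as the paper's: realise each $\ind_{r,s+2}(J_i,J_j)$ as the class of the based loop $\Phi_{r,s+1}\circ\gamma_{ij}$ in the (stable) fundamental group of $\tilde\Omega_{r,s}(H)$, and then exhibit a straight-line homotopy, through the convex hull of $\{J_0,J_1,J_2\}$, from the concatenation $\gamma_{01}*\gamma_{12}$ to $\gamma_{02}$.

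The one genuine difference is in how the homotopy is certified to remain in good territory. The paper simply asserts that every operator $T$ in the range of the straight-line homotopy satisfies $\|T-J_0\|_\sQ<2$ and concludes that the homotopy lives in $\sF^{r,s+1}$. That estimate is true but on its own insufficient — a skew-adjoint contraction within Calkin distance $<2$ of a complex structure need not be Fredholm (e.g. $T=0$). What actually happens in the paper's homotopy is that the bound sharpens to $\|G(u,t)-J_0\|_\sQ<1$ on the half $t\in[0,\tfrac12]$ and to $\|G(u,t)-J_2\|_\sQ<1$ on the half $t\in[\tfrac12,1]$, and the argument of Definition--Proposition~\ref{p.FPCI.5} then applies piecewise. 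Your choice of $J_1$ as the reference vertex is cleaner: the single uniform estimate $\|T-J_1\|_\sQ\leq\alpha\|J_0-J_1\|_\sQ+\gamma\|J_2-J_1\|_\sQ<\alpha+\gamma\leq1$ holds over the \emph{entire} triangle, whence $\pi(-J_1T)$ is invertible, $T$ is Fredholm, $\specess(TF_s)$ avoids $0$, and $\Phi_{r,s+1}(T)\in\tilde\Omega_{r,s}(H)$, all in one stroke. This dispenses with the need to split the parameter interval and makes explicit exactly where the hypothesis $<1$ (rather than the Fredholm-pair bound $<2$) is used. So your write-up is not only correct but actually repairs a small gap in the paper's stated justification.
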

\begin{proof} Denote by $\gamma_{01}, \gamma_{12},
\gamma_{02}$ the straight line paths from $J_0$ to $J_1$ etc.  The
assumption \Eqref{eq.FPCI.5} implies that the straight line homotopy
from $\gamma_{02}$ to the concatenation $\gamma_{12}*\gamma_{01}$ is a
  homotopy within paths in $\sF^{r,s+1}$ since any operator $T$ in the range
of the homotopy satisfies $\| T-J_0 \|_\sQ<2$. But then we have for
the homotopy classes
\[
  \bigl[ \Phi_{r,s+1}\circ\gamma_{02}\bigr]
  = \bigl[ \Phi_{r,s+1}\circ\gamma_{01}\bigr] +
    \bigl[ \Phi_{r,s+1}\circ\gamma_{12}\bigr]
\]
in $\pi_1(\tilde\Omega_{r,s}, F_{s} )$, thus
$\indkp(J_0,J_2)= \indkp(J_0,J_1)+ \indkp(J_1,J_2)$.
\end{proof}

\subsection{Examples and comparison with the classical index of a pair
of projections}
\label{ss.CCIPP}

Here we discuss a few illustrative cases for small $r,s$ and in
particular we connect our theory of Clifford covariant Fredholm pairs
to the classical theory of Fredholm pairs of orthogonal projections
in a real or complex Hilbert space.

\subsubsection{$r=0$, $s=0$} An element $J\in\sJ^{0,1}(H)$
is just a complex structure without any additional symmetries.
Thus, if $(J_0,J_1)$ is a Fredholm pair in $\sJ^{0,1}(H)$ then
by the discussion in Sec.~\ref{sss.rstwo} we infer
that $\Ind_{0,2}(J_0,J_1)=\frac 12 \dim\ker(J_0+J_1)\mlmod 2$.
Note that $\ker(J_0+J_1)$ is invariant under $J_0$ and 
is therefore automatically even, cf. \cite[Prop.~6.2]{CPSchuba19}.

We compare $\Ind_{0,2}$ to $\operatorname{Sf}_2$ of
\cite[Def.~2.1]{CPSchuba19}:

\begin{prop}\label{p.CPSpairindex}  Let $H$ be a finite dimensional Hilbert space,
$J\in\sJ^{0,1}(H)$, and $O\in \OMat(H)$ orthogonal.
Then
\[
  (-1)^{\Ind_{0,2}(J, O^*JO )} = \det(O),
\]   
thus $\Ind_{0,2}$ coincides with $\operatorname{Sf}_2$ of
\cite[Def.~2.1]{CPSchuba19}.
\end{prop}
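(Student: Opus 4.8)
The plan is to reduce the identity $(-1)^{\Ind_{0,2}(J,O^*JO)} = \det(O)$ to a statement about the connected components of the orthogonal group, using the fact that $\Ind_{0,2}$ is a homotopy invariant of the Fredholm pair (here, on a finite-dimensional space, every pair is automatically Fredholm, so $\Ind_{0,2}(J,\cdot)$ is just an invariant of the path component in which $O^*JO$ lives inside $\sJ^{0,1}(H)$). Both sides of the claimed equality are multiplicative under composition of orthogonal transformations in a suitable sense — $\det$ obviously, and for $\Ind_{0,2}$ via the additivity formula Prop.~\ref{p.FPCI.8} once one checks the small-norm hypothesis can be arranged by inserting intermediate complex structures along a path — and both depend only on the homotopy class of $O$ in $\OMat(H)$ relative to the stabilizer of $J$. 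Since $\pi_0(\OMat(H)) \simeq \Z/2\Z$ is detected exactly by $\det$, it suffices to evaluate both sides on a single representative of the non-identity component.

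The key steps, in order, would be: (i) observe $J$ makes $H$ a complex vector space of complex dimension $n = \tfrac12\dim_\R H$, and the subgroup of $\OMat(H)$ commuting with $J$ is the unitary group $U(n)$, which is connected and lands in $SO(H)$; hence $O^*JO$ depends, up to the relevant homotopy, only on the coset of $O$ in $\OMat(H)/U(n)$, and in particular $\Ind_{0,2}(J,O^*JO)$ only depends on $\det(O) = \pm1$. (ii) For $\det(O) = +1$, note $O$ can be joined to the identity through $\OMat(H)$, and one checks (using the homotopy invariance of $\ind^2_{0,2}$, i.e. local constancy of $\dim\ker(J_0+J_1)$ together with Prop.~\ref{p.FPCI.8}) that $\Ind_{0,2}(J, O^*JO) = \Ind_{0,2}(J,J) = 0$, giving $(-1)^0 = 1 = \det(O)$. (iii) For the non-identity component, pick the explicit model $H = \R^2$ with $J = L_1$ and $O$ a reflection, e.g. $O = K_1$; then $O^*JO = K_1 L_1 K_1 = -L_1 = -J$, so $J_0 + J_1 = 0$ and $\ker(J_0+J_1) = \R^2$, whence by Sec.~\ref{sss.rstwo} $\Ind_{0,2}(J,-J) = \tfrac12\dim_\R\R^2 \bmod 2 = 1$, matching $\det(K_1) = -1$. (iv) For general $H$ with $\det(O) = -1$, decompose $H$ as an orthogonal $J$-invariant sum where $O$ acts as a reflection on one $\R^2$-block of the above type and lies in the identity component on the complement, then combine (ii) and (iii) via additivity; alternatively appeal directly to the $\Z/2\Z$-valued homotopy classification to conclude $\Ind_{0,2}(J,O^*JO)$ equals $1$ iff $\det(O) = -1$.

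The final sentence — that $\Ind_{0,2}$ coincides with $\operatorname{Sf}_2$ of \cite[Def.~2.1]{CPSchuba19} — follows because $\operatorname{Sf}_2$ is, by its definition there, characterized on finite-dimensional spaces by precisely this determinant formula (or by the same homotopy-class bookkeeping), so the two locally constant $\Z/2\Z$-valued invariants agree on a generating set of pairs and hence everywhere; one should spell out that any Fredholm pair $(J_0,J_1)$ in $\sJ^{0,1}(H)$ for infinite-dimensional $H$ reduces, after splitting off the kernel of $J_0+J_1$ and the part where $J_0+J_1$ is invertible, to the finite-dimensional situation.

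The main obstacle I expect is step (i)–(iv)'s bookkeeping: making the passage $O \mapsto O^*JO$ compatible with the additivity formula Prop.~\ref{p.FPCI.8}, whose hypothesis requires the quantitative bound $\|J_0 - J_1\|_\sQ < 1$ rather than merely $< 2$. One must either subdivide paths finely enough to meet this bound (harmless since $\OMat(H)$ is path-connected to $U(n)$-cosets and norms are continuous) or circumvent it by working directly with $\ind^2$ and its local constancy plus the explicit block decomposition, which is cleaner. Everything else — the complex-structure/unitary-group identification and the $\R^2$ computation — is routine.
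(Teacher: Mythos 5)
Your proposal is correct and follows essentially the same route as the paper: both sides are locally constant in $O$, hence factor through $\pi_0(\OMat(H)) \simeq \Z/2\Z$, and one evaluates on a representative of each component — the paper uses $O=I$ and $O=K_2$ on $(\R^2, J=L_1)$, while you use $O=K_1$, which works equally well. One small remark on your closing worry: since $H$ is finite dimensional, $\sK(H)=\sB(H)$ and $\sQ(H)=0$, so $\|\cdot\|_\sQ$ vanishes identically and the quantitative hypothesis of Prop.~\ref{p.FPCI.8} is automatic; there is no bookkeeping issue to circumvent.
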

\begin{proof} Both sides are locally constant in $\operatorname{O}$.
Furthermore, all $J\in\sJ^{0,1}(H)$ are unitarily equivalent. 
Therefore, it suffices to check this for $\OMat=I$ in which case
  both sides are $1$ and for 
\[
  \OMat=\twomat 0 1 1 0,\quad J=\twomat 0{-1}10,
\]
in which case both sides equal $-1$.
\end{proof}

\newcommand{\Ematrix}{\begin{pmatrix} 0&1\\1&0\end{pmatrix}}
\newcommand{\gradingmatrix}{\begin{pmatrix} 1&0\\0 &-1\end{pmatrix}}
\subsubsection{$r=1$, $s=0$} Let $H$ be a $\Clrs$--Hilbert space.
With regard to Prop.~\ref{p.ICA.1} and the second part of Remark~\ref{p.ICA.2}, 
we may write $H=H'\oplus H'$ such that $E_1=K_1$. If $J\in\sJ^{1,1}(H)$
then $J$ takes the form
\[
  J = \twomat 0 {-U^*}U 0 = \Re(U)\otimes L_1 + \Im(U)\otimes
  K_2.
\]
Thus $U=U(J)$ is a real unitary (aka orthogonal) operator.
If $(J_0,J_1)$ is a Fredholm pair in $\sJ^{1,1}(H)$ and if
$U_k:=U(J_k)$, $k=0,1$ denotes the associated unitary operators, then
comparison with Sec.~\ref{sss.rsone} shows that 
\begin{multline*}
  \Ind_{1,2}(J_0,J_1) =\frac 12 \dim\ker(J_0+J_1) \mlmod 2 \\
     = \dim\ker(U_0+U_1)\mlmod 2 
     = \dim\ker(I+U_0^*U_1)\mlmod 2.
\end{multline*}
Note that  $(J_0,J_1)$ being a Fredholm pair is equivalent
to $\|U_0-U_1\|_\sQ<2$ which in turn, cf. \Eqref{eq.NHCT.5},
is equivalent to $I+U_0^*U_1$ being Fredholm. Furthermore,
since $U_0, U_1$ are real, eigenvalues of $U_0^*U_1$ moving
through $-1$ come in pairs, hence $\dim\ker(I+U_0^*U_1)\mlmod 2$
has the stability properties of a Fredholm index.

We note that $\Ind_{1,2}(J_0,J_1)$ equals the \emph{parity
index} defined in \cite[Def. 3]{DSBW}. 
By
Theorem~\ref{t.forgetful} and Prop.~\ref{p.ClrsFred.2}
(or just directly) and suppressing forgetful maps from the notation, we have
\[
  \Ind_{1,2}(J_0,J_1) = \Ind_{0,2}(J_0,J_1).
\]

\begin{prop} Let $H$ be a finite dimensional Hilbert space,
$J_k=\twomat 0 {-U_k^*} {U_k} 0\in\sJ^{1,1}(H)$, $k=0,1$
with associated orthogonal matrices $U_0, U_1\in \OMat(H)$.
Then
\[
  (-1)^{\dim\ker(I+U_0^*U_1) }
    = \det(U_0^*U_1) = \bl \det U_0 \br
\cdot \bl \det U_1 \br.
\]
\end{prop}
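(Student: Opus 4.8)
The plan is to reduce the statement to a single orthogonal operator and then invoke the real normal form from linear algebra. The second equality is immediate: $U_0$ is a real unitary, so $\det U_0 = \det U_0^* \in\{\pm1\}$, and multiplicativity of $\det$ gives $\det(U_0^*U_1) = (\det U_0)(\det U_1)$. The content is therefore the first equality, which I would rewrite as the purely linear-algebraic claim
\[
  (-1)^{\dim\ker(I+V)} = \det V \qquad \text{for all } V\in\OMat(H).
\]

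First I would put $V:=U_0^*U_1\in\OMat(H)$ and observe that both $\det V$ and $\dim\ker(I+V)$ are unchanged when $V$ is replaced by $O^*VO$ for $O\in\OMat(H)$. Hence we may assume $V$ is in real normal form: a block-diagonal operator whose blocks are the $2\times2$ rotations $\twomat{\cos\theta_j}{-\sin\theta_j}{\sin\theta_j}{\cos\theta_j}$ with $\theta_j\in(0,\pi)$, together with a diagonal part carrying the eigenvalue $+1$ with some multiplicity $p$ and the eigenvalue $-1$ with multiplicity $m=\dim\ker(I+V)$. Then I would compute $\det V$ block by block: each rotation block has determinant $1$, the $+1$-block contributes $1^p$, and the $-1$-block contributes $(-1)^m$, so $\det V=(-1)^m=(-1)^{\dim\ker(I+V)}$, as claimed. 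Combining this with the first paragraph finishes the proof.

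There is essentially no obstacle here; the only point that deserves a word is that for a real orthogonal matrix the non-real eigenvalues occur in conjugate pairs $e^{\pm i\theta_j}$ with $|e^{i\theta_j}|=1$, so they contribute trivially to the determinant and do not meet $\ker(I+V)$ — which is precisely what the real normal form encodes. (Alternatively, one could deduce the identity from Prop.~\ref{p.CPSpairindex} together with the equality $\Ind_{1,2}(J_0,J_1)=\Ind_{0,2}(J_0,J_1)$ noted above, but the direct argument is shorter and self-contained.)
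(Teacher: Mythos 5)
Your proof is correct, but it takes a genuinely different route from the paper's. The paper proves this proposition by saying it is ``similar to the proof of Prop.~\ref{p.CPSpairindex}'', which is a local-constancy-plus-representatives argument: both $(-1)^{\dim\ker(I+U_0^*U_1)}$ and $\det(U_0^*U_1)$ are locally constant in $(U_0,U_1)$, and $\OMat(H)\times\OMat(H)$ has exactly four path components (indexed by $(\det U_0,\det U_1)\in\{\pm1\}^2$), so one only needs to verify the identity on one representative per component, e.g.\ $U_0,U_1\in\{I,\twomat0110\}$. Your argument instead normalizes $V=U_0^*U_1$ into the real normal form for orthogonal matrices — $2\times2$ rotation blocks with $\theta\in(0,\pi)$ plus a $\pm1$ diagonal part — and reads off both sides directly. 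What the paper's route buys is brevity given the machinery already in place (local constancy of $\dim\ker\bmod2$ under orthogonal Fredholm perturbations is exactly the kind of fact the surrounding sections establish), and it dovetails with the homotopy-theoretic viewpoint of the paper; what yours buys is a self-contained, fully explicit computation that requires no appeal to connectedness of $\OMat(H)$ or prior results. One small remark: the observation in your final paragraph that $\det V$ and $\dim\ker(I+V)$ are invariant under $V\mapsto O^*VO$ and that non-real eigenvalues come in conjugate pairs is exactly what makes the reduction to normal form legitimate, so it is well placed even if elementary. Both proofs are sound.
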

This should be compared to \cite[Def.~1]{DSBW}.
\begin{proof} The proof is similar to the proof of Prop.~\ref{p.CPSpairindex}.  
\end{proof}  

\subsubsection{$r=2$, $s=0$: the index of a pair of projections}
\label{sss.IPP}
Let $H$ be a $\Clrs$--Hilbert space. Apply Prop.~\ref{p.ICA.1}
and write $H=H'\oplus H'$ such that 
\[
  E_1=K_1=\gradingmatrix,\quad E_2=K_2=\Ematrix.
\]
If $J\in\sJ^{2,1}(H)$ 
then $J$ takes the form
\[
  \begin{pmatrix} 0 & -(2P-I) \\ 2P-I & 0 \end{pmatrix},
\]    
with an orthogonal projection\footnote{A linear     
operator $P\in\sB(H)$ is an orthogonal projection if $P^*=P=P^2$.}
$P\in\sB(H')$. Thus there is a one to one correspondence between
Fredholm pairs $(J_0,J_1)$ in $\sJ^{2,1}(H)$ and pairs of orthogonal
projections (without further symmetries) $P,Q\in\sB(H')$ with
$\|P-Q\|_{\sQ(H')}<1$.

Let us therefore recall that a pair of orthogonal projections
$P,Q$ acting on a separable real or complex Hilbert space $H$
is called a Fredholm pair if $\|P-Q\|_{\sQ}<1$.  This is equivalent to
the fact that $Q\restr{\ran P}: \ran P \to \ran Q$ is a Fredholm
operator. The index of this Fredholm operator is called the index
$\ind(P,Q)$ of the pair.  

\begin{lemma}[{cf. \cite[Prop.~3.1]{ASS94}} ]\label{l.CCIPP.1}
Let $P,Q$ be a Fredholm pair of orthogonal projections. 
Then $\ker (P+Q-I) = \ran P\cap \ker Q \oplus \ker P \cap \ran Q$
  and $\ind(P,Q)=\dim \bl \ran P\cap \ker Q \br
  - \dim \bl \ker P\cap \ran Q \br$.
\end{lemma}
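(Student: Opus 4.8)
The plan is to verify the kernel description directly and then read off the index formula. First I would show the set equality $\ker(P+Q-I) = (\ran P\cap\ker Q)\oplus(\ker P\cap\ran Q)$. The inclusion $\supseteq$ is immediate: if $v\in\ran P\cap\ker Q$ then $(P+Q-I)v = v + 0 - v = 0$, and symmetrically for $v\in\ker P\cap\ran Q$. Moreover the two subspaces are orthogonal since one lies in $\ran P$ and the other in $\ker P=(\ran P)^\perp$. For the reverse inclusion, suppose $(P+Q-I)v=0$, i.e. $Qv = v - Pv$. Applying $P$ gives $PQv = Pv - Pv = 0$, so $Qv\in\ker P$; applying $Q$ gives $Qv = Qv - QPv$, so $QPv=0$, i.e. $Pv\in\ker Q$. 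Writing $v = Pv + (I-P)v$ and noting $(I-P)v = v-Pv = Qv$, we see $Pv\in\ran P\cap\ker Q$ and $(I-P)v = Qv\in\ker P\cap\ran Q$, which gives the decomposition.

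Next I would identify $\ind(P,Q)$ with the stated dimension difference. By definition $\ind(P,Q)$ is the Fredholm index of the operator $A := Q\restr{\ran P}\colon \ran P\to\ran Q$, so $\ind(P,Q) = \dim\ker A - \dim\coker A$. The kernel of $A$ is exactly $\{v\in\ran P : Qv=0\} = \ran P\cap\ker Q$, which handles the first term. For the cokernel, the adjoint of $A$ (as an operator between the Hilbert spaces $\ran P$ and $\ran Q$) is $P\restr{\ran Q}\colon\ran Q\to\ran P$, since for $v\in\ran P$, $w\in\ran Q$ one has $\langle Av,w\rangle = \langle Qv,w\rangle = \langle v,Qw\rangle = \langle v,w\rangle = \langle v,Pw\rangle$. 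Hence $\dim\coker A = \dim\ker(A^*) = \dim(\ran Q\cap\ker P)$, and the formula $\ind(P,Q) = \dim(\ran P\cap\ker Q) - \dim(\ker P\cap\ran Q)$ follows.

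The only genuinely substantive point — and the step I expect to require the most care — is the standard fact that $\|P-Q\|_\sQ<1$ actually forces $A = Q\restr{\ran P}$ to be Fredholm with finite-dimensional kernel and cokernel, so that the dimensions appearing above are finite and the index is well-defined; but this is exactly the content being imported from \cite[Prop.~3.1]{ASS94} (and was already invoked in the definition of $\ind(P,Q)$ just above the statement), so I would simply cite it rather than reprove it. Everything else is the elementary linear-algebra manipulation sketched above, valid verbatim in both the real and complex cases since no complex scalars are used.
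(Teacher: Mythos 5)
Your proof is correct and follows essentially the same route as the paper's: for the forward inclusion you deduce from $(P+Q-I)v=0$ that $Pv\in\ran P\cap\ker Q$ and $Qv=(I-P)v\in\ker P\cap\ran Q$ with $v=Pv+Qv$, and for the index formula you identify $P\restr{\ran Q}$ as the adjoint of $Q\restr{\ran P}$. Your version is slightly more explicit (spelling out the reverse inclusion and the $PQv=QPv=0$ computations that the paper leaves to the reader), but there is no substantive difference.
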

\begin{proof} If $x\in\ker(P+Q-I)$ then $Px = (I-Q) x \in\ran P\cap \ker Q$
resp. $Qx = (I-P) x\in \ker P\cap\ran Q$, proving the inclusion $\subset$.
The inclusion $\supset$ is similarly easy. To see the index formula, 
one notes that the adjoint of the operator $Q\restr{\ran P}: \ran P
\to \ran Q$ is given by
$P\restr{\ran Q}: \ran Q\to \ran P$.\end{proof}

Let us return to  the $\Cl_{2,0}$--Hilbert space $H=H'\otimes \R^2$
as before and consider a Fredholm pair $(J_0,J_1)$ in $\sJ^{2,1}(H)$
with associated orthogonal projections $P_0,P_1\in \sB(H)$ such that
\[
  J_l =  \begin{pmatrix} 0 & -(2P_l-I) \\ 2P_l-I & 0 \end{pmatrix}, \;
    l=0,1.
\]
We have seen that $(J_0,J_1)$ being a Fredholm pair is equivalent
to $(P_0,P_1)$ being a Fredholm pair of orthogonal projections.
To compute the index $\ind_{2,2}(J_0,J_1)$ we apply
Sec.~\ref{sss.rs}, by first computing the volume element
of the $\Cl_{2,1}$ representation given by $E_1, E_2, J_0$,
\[
  \omega = E_1\cdot E_2 \cdot J_0 =
  \twomat 1 0 0 {-1} \cdot 
  \twomat 0 1 1  0   \cdot  
  \begin{pmatrix} 0 & -(2P-I) \\ 2P-I & 0 \end{pmatrix}
  =      
  \begin{pmatrix} 2P-I & 0 \\  0 & 2P-I \end{pmatrix}.
\]  
Furthermore in view of Lemma~\ref{l.CCIPP.1} we have

\[\begin{split}
  \ker (J_0+J_1) &=\begin{matrix} 
      \ran P\cap \ker Q \oplus \ker P\cap\ran Q \\
         \oplus \\
      \ran P\cap \ker Q \oplus \ker P\cap\ran Q \\
  \end{matrix}\\
     &= (\ran P\cap \ker Q)\otimes \R^2 
         \oplus (\ker P\cap\ran Q)\otimes \R^2.
\end{split}       
\]  
With respect to this decomposition, $\go$ acts 
on $\ker(J_0+J_1)$ as
\begin{equation}
  \omega \restr{\ker J_0+J_1} =  I\restr{(\ran P\cap \ker Q)\otimes \R^2}
        \oplus - I\restr{(\ker P \cap \ran Q)\otimes \R^2}.
\end{equation}
Now $\ind_{2,2}(J_0,J_1)$ is the class of the $\Cl_{2,1}$ module
$\ker (J_0,J_1)$ with Clifford generators $E_1, E_2, F_1=J_0$.
By Sec.~\ref{sss.rs} its class in $A_{2,2}$ 
is naturally identified with the number
\[ \frac 12 \bl \dim \ker (\omega\restr{\ker(J_0+J_1)} - I) 
        - \dim \ker (\omega\restr{\ker(J_0+J_1)}+I)  \br\in\Z.
\]
Thus
\[
\begin{split}
  \Ind_{2,2}(J_0, J_1) & = \frac 12 \dim\bl  \ker(J_0+J_1)\cap \ker
  (\omega - I)\br \\
       & \qquad     -\frac 12 \dim\bl \ker(J_0+J_1)\cap \ker (\omega  + I)\br \\
	    & = \dim \bl \ran P \cap \ker Q \br - \dim\bl \ker P\cap\ran Q\br \\
	    & =\ind (Q: \ran P\to \ran Q) =\ind (P,Q).
\end{split}
\]

The previous discussion also can be applied to 
the forgetful maps considered in 
 Theorem~\ref{t.forgetful} and Prop.~\ref{p.ClrsFred.2}. Namely, 
suppressing the forgetful maps from the notation, we have
\begin{equation}\label{eq.forgetfull-pairs}
    \Ind_{0,2}(J_0,J_1) = \Ind_{1,2}(J_0,J_1) =
    \Ind_{2,2}(J_0,J_1)\mlmod 2.
\end{equation}

\subsubsection{$r=0$, $s=0$ in the complex case}\label{sss.CC}
We briefly mention the case of a complex Hilbert space $H$ where
things are easier as there is a direct correspondence between
``complex structures'' and orthogonal projections since we may write
$J= i (2P-I)$. Therefore, there is no need to introduce additional
symmetries.

Given orthogonal projections $P$, $Q$ put $J_0:= i (2P-I)$, $J_1 = i (2Q-I)$. 
Then $J_0$, $J_1\in \sJ^{0,1}(H)$ and $\|J_0-J_1\|_{\sQ} = 2 \| P- Q\|_{\sQ}$. 
Hence $(J_0,J_1)$ is a Fredholm pair in the sense of Def.
\ref{p.FPCI.3} if and only if $(P,Q)$ is a Fredholm pair of
projections. Furthermore,
\begin{equation}
 \frac 12 ( J_0+J_1) = i (P+Q-I),
\end{equation}
thus by the Lemma~\ref{l.CCIPP.1} we have
\begin{equation}
    \ker J_0+J_1 = \ran P\cap \ker Q \oplus \ker P \cap \ran Q.
\end{equation}
With respect to this decomposition, $J_0$ acts as
\begin{equation}
    J_0\restr{\ker J_0+J_1} = i \cdot I\restr{\ran P\cap \ker Q}
        \oplus -i \cdot I\restr{\ker P \cap \ran Q}.
\end{equation}
The class $\ind_{0,2}(J_0,J_1)$ is the class of the $\CCl_1$--module
$\ker (J_0,J_1)$ with Clifford generator $E_1=J_0$. The class of
an ungraded $\CCl_1$--module $(V,E_1)$ in
$\sM_{0,1}/\sM_{0,2}\simeq \Z$ is naturally identified
(cf. Sec.~\ref{s.ABSC}) with
the number
\[ \dim \ker (E_1- i) - \dim \ker (E_1+i)\in\Z.\]
Thus
\[
\begin{split}
    \Ind_2(J_0, J_1) & = \dim\bl  \ker(J_0+J_1)\cap \ker (J_0 - i)\br
          - \dim\bl \ker(J_0+J_1)\cap \ker (J_0  + i)\br \\
	    & = \dim \bl \ran P \cap \ker Q \br - \dim\bl \ker P\cap\ran Q\br \\
	    & =\ind (Q: \ran P\to \ran Q) =\ind (P,Q).
\end{split}
\]

\subsection{Standard forms of complex structures}
\label{ss.SP}

Using the ABS construction of Sec.~\ref{s.ABSC}, we now give
standard generators of complex structures with a non-trivial index.
These pairs will be constructed to represent an arbitrary class in
$\KO_{s+2-r}(\R)$ (resp. an arbitrary $\ind_{r,s+2}$), cf. Cor.
\ref{p.FPCI.6A} and the proof of Theorem~\ref{p.FPCI.7}.  This will be
needed in particular for the formulation of the axioms which ensure
the uniqueness of the $\KO$--valued spectral flow, see Remark
\ref{p.ClkSF.20} 5.  and Sec.~\ref{ss.N} below.

\begin{prop}
Let $V$ be a finite dimensional $\Cl_{r,s+1}$--module with Clifford generators
$E_1,\ldots, E_r$, $F_1,\ldots,F_{s+1}$ representing a class in
$A_{r,s+2}$. Let $H_0$ be a standard $\Cl_{r,s+1}$ Hilbert space and 
$H:=H_0\oplus V$. Now put
\begin{align}
      J_0 := F_{s+1} = F_{s+1}\restr{H_0}\oplus F_{s+1}\restr{V}, \quad
      J_1 := F_{s+1}\restr{H_0}\oplus -F_{s+1}\restr{V}. 
\end{align}
Then $(J_0, J_1)$ is a Fredholm pair of elements of $\sJ^{r,s+1}(H)$
  with $\ind_{r,s+2}(J_0,J_1) = [V]$.
\end{prop}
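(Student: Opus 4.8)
The plan is to verify directly that $(J_0,J_1)$ meets the definition of a Fredholm pair and then compute the index using the second description $\ind^2_{r,s+2}$ from Definition/Proposition~\ref{p.FPCI.5} and Theorem~\ref{p.FPCI.7}, since the kernel of $J_0+J_1$ is visibly available from the block structure. First I would note that $F_{s+1}\restr{H_0}\oplus(\pm F_{s+1}\restr V)$ is a complex structure on $H$ anti-commuting with $E_1,\dots,E_r,F_1,\dots,F_s$, because $F_{s+1}$ already has these properties on each summand; hence $J_0,J_1\in\sJ^{r,s+1}(H)$. For the Fredholm pair condition, observe that $J_0-J_1 = 0\restr{H_0}\oplus 2F_{s+1}\restr V$, which is a \emph{finite rank} operator (as $\dim V<\infty$), so $\|J_0-J_1\|_\sQ = 0 < 2$. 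Thus $(J_0,J_1)$ is trivially a Fredholm pair.

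Next I would compute $\ind^2_{r,s+2}(J_0,J_1) = [\ker(J_0+J_1)]\in\sM_{r,s+1}/\sM_{r,s+2}$. Here $J_0+J_1 = 2F_{s+1}\restr{H_0}\oplus 0\restr V$; since $F_{s+1}$ is invertible (indeed a complex structure) on $H_0$, we get $\ker(J_0+J_1) = V$ exactly, as a $\Cl_{r,s+1}$--module with the generators $E_1,\dots,E_r,F_1,\dots,F_s$ together with $F_{s+1}=J_0\restr V = F_{s+1}\restr V$, which is precisely the given $\Cl_{r,s+1}$--module structure on $V$. Therefore $\ind^2_{r,s+2}(J_0,J_1) = [V]$, and by Theorem~\ref{p.FPCI.7} this equals $\ind_{r,s+2}(J_0,J_1)$. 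One small bookkeeping point to address: strictly speaking the index is defined after embedding into a standard Hilbert space and stabilizing by $F_{s+1}$ on the orthogonal complement; but $H$ already contains the standard space $H_0$, and by the stabilization invariance noted in the proof of Theorem~\ref{p.FPCI.7} (or directly, since adding $F_{s+1}\restr{H_0'}$ contributes $0$ to the kernel) the class is unaffected. Alternatively one may invoke that $\ind^2$ is locally constant and the construction in~\eqref{eq.ABSC.9} identifies $[V]$ with exactly this kind of model operator.

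I do not anticipate a genuine obstacle here — this proposition is essentially a worked example assembling the model complex structures that realize an arbitrary class, exactly as foreshadowed before Corollary~\ref{p.FPCI.6A} and in the proof of Theorem~\ref{p.FPCI.7} (where $J_1 = J_0\restr{H_0}\oplus -J_0\restr V$ with $\dim V<\infty$ was the representative chosen for each path component). The only care needed is to state clearly which Clifford generators equip $\ker(J_0+J_1)=V$, namely $E_1,\dots,E_r,F_1,\dots,F_s,F_{s+1}=J_0$, and to match these with the original module structure on $V$ — but that match is immediate since $J_0$ restricts to $F_{s+1}$ on $V$. So the proof is a short verification: (i) $J_0,J_1\in\sJ^{r,s+1}(H)$; (ii) $\|J_0-J_1\|_\sQ=0$ so the pair is Fredholm; (iii) $\ker(J_0+J_1)=V$ with the correct module structure, hence $\ind_{r,s+2}(J_0,J_1)=[V]$ by Theorem~\ref{p.FPCI.7}.
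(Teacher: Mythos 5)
Your proof is correct and follows essentially the same route as the paper: observe $J_0-J_1$ is finite rank so the pair is Fredholm, then compute $\ker(J_0+J_1)=V$ with its natural $\Cl_{r,s+1}$ structure and conclude via $\ind^2_{r,s+2}$ and Theorem~\ref{p.FPCI.7}. The extra bookkeeping you add about stabilization and verifying $J_0,J_1\in\sJ^{r,s+1}(H)$ is reasonable but unnecessary detail that the paper leaves implicit.
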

\begin{proof} 
The difference $J_0-J_1$ is  finite-rank, hence $\|J_0-J_1\|_\sQ=0$
and $(J_0, J_1)$ is a Fredholm pair. Furthermore,
$\ker (J_0+J_1) = V$ with $\Cl_{r,s+1}$ structure given
by $E_1\restr V,\ldots, E_r\restr V$, $F_1\restr V,\ldots,
F_{s+1}\restr V$, hence $\ind_{r,s+2}(J_0,J_1) = [V]$.
\end{proof}

\section{The $\KO$--valued spectral flow}
\label{s.ClkSF}

This section is the core of the paper.
We give the definition of $\KO$--valued spectral flow 
 along with its basic properties.
We subsequently relate our definition to the approach of J. Phillips \cite{Phi96} and more generally 
show how our construction encompasses
previous instances of analytic spectral flow that have appeared in the literature.
Unless otherwise said, $H$ denotes a fixed $\Cl_{r,s}$--Hilbert space.

\subsection{Definition and properties}

We can now  define the 
$A_{r,s+2}\simeq  \sM_{r,s+1}/\sM_{r,s+2}\simeq \KO_{r,s+2}(\R)$--valued 
spectral flow of a path in $\sF^{r,s+1}$ with invertible
endpoints exactly as it was done in \cite{BLP05,L05}:

\begin{dfn}\label{p.ClkSF.3} Let $[0,1]\ni t\mapsto T_t\in\sF^{r,s+1}(H)$
be a continuous path of skew-adjoint Fredholm operators with
invertible end points $T_0, T_1$. Let $\Phi$ be the map defined 
in \Eqref{eq.NHCT.9}. Then
  \[
    t\mapsto \Phi(T_t) = 
       -F_{s} (I+ T_t F_{s})(I- T_t F_{s})\ii
  \]
is a path $\bl [0,1], \{0,1\}  \br \to \bl \tO_{r,s}, \tO_{r,s,*}\br$
in $\tO_{r,s}$ with endpoints in the  \emph{contractible} 
(see Lemma~\ref{p.NHCT.3}) neighbourhood
$\tO_{r,s,*}$ of $F_{s}$. Connect the endpoints of
 $\Phi(T_\bullet)$ arbitrarily, within $\tO_{r,s,*}$, to $F_{s}$. Then let 
\[
  \sff_{r,s+2}( T_\bullet) \in A_{r,s+2}\simeq \KO_{s+2-r}(\R)
\]  
be the class of the resulting loop in the stable fundamental
group $\pi_1^{\textup S}(\tO_{r,s}, F_{s})$. Since $\tO_{r,s,*}$ is
contractible, it is irrelevant how the path was closed up.
Alternatively, 
embed the Hilbert space $H$, equipped with the $\Cl_{r,s+1}$
structure with generators $E_1,\ldots,E_{r+s}, T_0|T_0|\ii$,
into a standard $\Cl_{r,s+1}$--Hilbert space $H'$ and take the class of the
loop obtained from $\Phi(T_t\oplus F_{s+1}\restr{H^\perp})$ in
the fundamental group $\pi_1(\tO_{r,s}(H'), F_{s})$.

  Finally, we put $\SF_{r,s+2}(T_\bullet):= \tau_{r,s+2}\bl
  \sff_{r,s+2}(T_\bullet) \br$ with $\tau_{r,s+2}$ as in Eq.~\eqref{eq.ABSC.16}.
\end{dfn}

\begin{remark}\label{p.ClkSF.20}
It follows from the Absorption Theorem mentioned in Sec.~\ref{ss.PN}
that the $\sff_{r,s+2}$ is well-defined. We collect some properties
of the spectral flow. These properties will be of relevance for the
discussion of the uniqueness in Sec.~\ref{s.USF} below.

\subsubsection*{1. Relation to the index of Fredholm pairs of complex structures}
Let $J_0, J_1\in\sJ^{r,s+1}(H)$ be a Fredholm pair of complex structures.
It then follows from Definition and Proposition~\ref{p.FPCI.5}
and Theorem~\ref{p.NHCT.4} that 
$\ind_{r,s+2}(J_0, J_1)$ equals the spectral flow $\sff_{r,s+2}$
of the straight line path $t\mapsto (1-t) J_0+ t J_1$.

\subsubsection*{2. Homotopy invariance}
If $T:[0,1]\times [0,1] \to  \sF^{r,s+1}(H)$ 
is a continuous map such that for all $u\in [0,1]$ the
endpoints $T(0,u)$, $T(1,u)$ are invertible operators, then
$\sff_{r,s+2}( T(\cdot,0 ) ) = \sff_{r,s+2}( T(\cdot,1) )$.

To see this, consider $\Phi(T(t,u))$. Now by Lemma~\ref{p.NHCT.3} and the proof of 
Prop.~\ref{p.NHCT.2} it follows that the endpoints 
$\Phi(T(0,u)), \Phi(T(1,u))$ can be connected to $F_{s}$
continuously in the parameter $u$ to obtain a homotopy of 
loops. Hence by definition 
$\sff_{r,s+2}( T(\cdot,0 ) ) = \sff_{r,s+2}( T(\cdot,1) )$.

\subsubsection*{3. Path additivity}
If $T^1_\bullet, T^2_\bullet:[0,1]\to \sF^{r,s+1}(H)$
are paths of Fredholm operators with invertible endpoints and
$T^1_1 = T^2_0$ then the $\sff_{r,s+2}$ of the concatenated path 
$T^1  * T^2$ equals the
$\sff_{r,s+2}(T^1_\bullet) + \sff_{r,s+2}(T^2_\bullet)$.

\subsubsection*{4. Stability} Let $T_\bullet:[0,1]\to \sF^{r,s+1}(H)$ be
a path of Fredholm operators with invertible endpoints. Let $H'$
  be another $\Cl_{r,s}$--Hilbert space and $S\in\sF^{r,s+1}(H')$ a
fixed invertible  operator. Then $\sff_{r,s+2}(T_\bullet)=
\sff_{r,s+2}(T_\bullet\oplus S)$.

\subsubsection*{5. Normalization} 

Let $V$ be a finite dimensional $\Cl_{r,s+1}$--module
(cf. Sec.~\ref{ss.SP})
and consider the straight line path
  $\gamma(t) = (1-2t)F_{s+1}$, $0\le t \le 1$ from $F_{s+1}$ to $-F_{s+1}$.
Then
\begin{equation}\label{eq.Norm}
  \sff_{r,s+2}(\gamma) = [V]\in A_{r,s+2} \simeq \sM_{r,s+1}/\sM_{r,s+2}.
\end{equation}

This follows immediately from the relation to the index of a pair
  since here $\sff_{r,s+2}(\gamma) = \ind_{r,s+2}(F_{s+1}, -F_{r,s+1})
  = [\ker (F_{r,s+1} + (-F_{r,s+1}) ) ] =[V]$.

\subsubsection*{6. Constancy}\label{sss.C}
The $\sff_{r,s+2}(T)$ of a constant
path (in which $T$ must be invertible) is $0$. 

\subsubsection*{7. Direct sum} \label{sss.DS}
If $T^1_\bullet:[0,1]\to \sF^{r,s+1}(H_1)$,
$T^2_\bullet:[0,1]\to \sF^{r,s+1}(H_2)$ are  paths of
skew-adjoint Fredholm operators with invertible endpoints,
then $\sff_{r,s+2}(T^1_\bullet) + \sff_{r,s+2}(T^2_\bullet)
= \sff_{r,s+2}(T^1_\bullet \oplus T^2_\bullet)$.
\end{remark}

\subsection{The `local formula' a la J. Phillips}

Next we will show that the spectral flow of a path $T_\bullet$
can be computed `locally' along the path by measuring the relative
indices of the phases of $T_\bullet$. This is the analogue of 
J. Phillips \cite{Phi96} approach in the complex case.

Let $[0,1]\ni t\mapsto T_t\in \sF^{r,s+1}(H)$ be a continuous path
with $T_0, T_1$ being invertible. The first difficulty is that
the path of phases $T_t |T_t|\ii$ is ill-defined as $T_t$ might not
be invertible. However, the invertibility of the endpoints
and the local constancy of the Fredholm index imply
$\ind_{r,s+1} T_t = 0 $ for all $0\le t\le 1$. Hence there exists a complex
structure $F_{s+1}\in\sJ^{r,s+1}(\ker T_t)$ turning $\ker T_t$ into a 
$\Cl_{r,s+1}$--module. We denote  by $J(T_t)\in\sJ^{r,s+1}(H)$  a
complex structure with
\begin{equation}\label{eq.ClkSF.4}
\begin{split}
       J(T_t)\big|_{\ker T_t^\perp} &= T_t |T_t|\ii,   \\
       J(T_t)\big|_{\ker T_t} & \in \sJ^{r,s+1}(\ker T_t )   \text{ arbitrary }.
\end{split}
\end{equation}
The complex structure $J(T_t)$ is unique only up to a finite rank perturbation, but
certainly $t\mapsto \pi\bl  J(T_t) \br\in\sQ(H)$ is continuous
in the Calkin algebra.

Alternatively, one might require, analogously to 
\cite[Sec.~4]{CPSchuba19}, that $J(T_t) = T_t |T_t|\ii$
only on the complement of a spectral subspace
$\ran\chi_{(0,a)}(|T_t|)$, where $0<a<\min\specess{|T_t|}$.
This would allow for example that $t\mapsto J(T_t)$ has
only finitely many discontinuities. The next Lemma says that
given a subdivision of the interval $[0,1]$, there is a finite-rank
perturbation $\tilde T_\bullet$ of $T_\bullet$ with the same
spectral flow and such that $T_t$ is invertible at the subdivision
points, moreover the phases may be prescribed.

\begin{lemma}\label{p.ClkSF.6}
  Let $T_t$ and $J_t:=J(T_t)$ be as before and let 
  $0=t_0<t_1\cdots <t_n=1$ be a subdivision
  of the interval $[0,1]$. Then there exists a continuous path
  $[0,1]\ni t\mapsto \tilde T_t\in\sF^{r,s+1}(H)$ with the following
  properties:
\begin{enumerate}
  \item $\tilde T_t - T_t$ is finite rank for all $t$.
  \item $\tilde T_{t_j}$ is invertible and 
    $\tilde T_{t_j} |\tilde T_{t_j}|\ii = J_{t_j}$,\; for \;$j=0,1,\ldots,n$.
  \item $\sff_{r,s+1}(\tilde T_\bullet )= \sff_{r,s+1}(T_\bullet)$.
\end{enumerate}
\end{lemma}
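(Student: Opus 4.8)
The plan is to modify $T_\bullet$ only in finite-rank fashion near each subdivision point $t_j$ in order to make the operator invertible there with the prescribed phase $J_{t_j}$, while controlling the spectral flow by homotopy invariance (Remark~\ref{p.ClkSF.20}~2) and path additivity (Remark~\ref{p.ClkSF.20}~3). First I would fix $j$ and choose $a>0$ with $a<\min\specess|T_t|$ for all $t$ in a small closed neighbourhood $I_j$ of $t_j$; this is possible by continuity of $t\mapsto\pi(T_t)$ and compactness. On $I_j$ the spectral projections $P_t:=\chi_{[0,a)}(|T_t|)$ are continuous and of constant finite rank, and $\ran P_t$ is a $\Clrs$--submodule on which $T_t$ acts as a skew-adjoint operator anti-commuting with $E_1,\dots,F_s$; its orthogonal complement is $T_t$--invariant with $T_t$ invertible there and phase equal to the "outer part" of $J(T_t)$. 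On $\ran P_t$ I would deform $T_t$, keeping the complementary part fixed, to the complex structure $F_{s+1}$ coming from $\ker T_t$ (extended to $\ran P_t$): since the space of complex structures on a fixed finite-dimensional $\Clrs$--module, compatible with the inner product, is path-connected within $\sJ^{r,s+1}$, and since $T_t|_{\ran P_t}$ is skew-adjoint of norm $<a$, a straight-line--type homotopy followed by normalizing the phase on $\ker T_t^\perp\cap\ran P_t$ lands inside $\sF^{r,s+1}$ throughout. This produces $\tilde T_{t_j}$ invertible with $\tilde T_{t_j}|\tilde T_{t_j}|\ii=J_{t_j}$, and $\tilde T_t-T_t$ supported on $\ran P_t$, hence finite rank, giving (1) and (2).

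For (3) I would observe that the deformation just described is, on each $I_j$, a homotopy $T^{(u)}_\bullet$ from $T_\bullet|_{I_j}$ to $\tilde T_\bullet|_{I_j}$ through paths in $\sF^{r,s+1}$; the only subtlety is that this is a homotopy of paths with possibly \emph{non-invertible} endpoints, so Remark~\ref{p.ClkSF.20}~2 does not apply directly. The fix is standard: extend the subdivision so that between consecutive $t_j$'s the modification is performed on one half-interval and then undone on the next, or equivalently glue the local homotopies into a single homotopy of the whole path $T_\bullet$ that is constant near $t=0$ and $t=1$. Concretely, one builds $T^{(u)}_\bullet:[0,1]\times[0,1]\to\sF^{r,s+1}(H)$ with $T^{(0)}_\bullet=T_\bullet$, $T^{(1)}_\bullet=\tilde T_\bullet$, and $T^{(u)}_0=T_0$, $T^{(u)}_1=T_1$ invertible for all $u$ (the endpoints $0,1$ need not be among the $t_j$, but if they are, the prescribed phase at them is already $T_0|T_0|\ii$, so no modification is needed there). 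Then homotopy invariance yields $\sff_{r,s+2}(\tilde T_\bullet)=\sff_{r,s+2}(T_\bullet)$. (Here I am writing $\sff_{r,s+2}$; the statement's "$\sff_{r,s+1}$" should read $\sff_{r,s+2}$, consistent with Definition~\ref{p.ClkSF.3}.)

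The main obstacle is the bookkeeping in the previous paragraph: making sure the local finite-rank surgeries near the $t_j$'s can be assembled into a \emph{global} homotopy of paths that keeps the genuine endpoints $t=0,1$ invertible and fixed, so that homotopy invariance applies cleanly. This is where one must be slightly careful that the neighbourhoods $I_j$ are disjoint and that the cutoff parameter $a$ can be chosen uniformly on each $I_j$; once that is set up, the Clifford-covariance of every operation is automatic because all the surgery takes place inside $\Clrs$--submodules and uses only functional calculus of $|T_t|$ together with the connectedness of the (finite-dimensional) space of complex structures, which anti-commutes with the action of $\Clrs$ by construction. Everything else — that $\tilde T_t-T_t$ is finite rank, that $\tilde T_t$ stays Fredholm and skew-adjoint and $\Clrs$--anti-linear — is routine verification along the lines of Prop.~\ref{p.NHCT.2} and Lemma~\ref{p.NHCT.3}.
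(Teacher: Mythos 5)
Your proposal is more elaborate than what the paper does, and along the way it makes one claim that is actually false.

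The paper's construction is a one-liner: for each \emph{interior} $j$ take $R_j$ to be the finite-rank operator which vanishes on $\ker T_{t_j}^\perp$ and equals $J_{t_j}$ on $\ker T_{t_j}$, pick disjoint bump functions $\varphi_j$ with $\varphi_j(t_j)=1$, and set $\tilde T_t := T_t + \sum_{j=1}^{n-1}\varphi_j(t)R_j$. Properties (1)--(2) are then immediate by inspection: on $\ker T_{t_j}^\perp$ the operator $\tilde T_{t_j}$ is $T_{t_j}$ itself (invertible, with phase $T_{t_j}|T_{t_j}|^{-1}$), and on $\ker T_{t_j}$ it equals the complex structure $J_{t_j}$ (invertible, with phase $J_{t_j}$); these glue to give precisely $J_{t_j}$. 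No spectral cut at level $a$, no projections $P_t = \chi_{[0,a)}(|T_t|)$, and no re-normalization of phases on $\ker T_{t_j}^\perp \cap \ran P_{t_j}$ are needed. For (3), the fact that $\varphi_j$ vanishes at $t=0,1$ means $\tilde T_\bullet$ and $T_\bullet$ share their endpoints, so the \emph{straight-line} homotopy $s\mapsto (1-s)\tilde T_\bullet + sT_\bullet$ (which stays Fredholm because the difference is finite rank) is already a homotopy with \emph{fixed invertible} endpoints, and the homotopy axiom applies directly. Your worry about ``possibly non-invertible endpoints'' of the homotopy, and the ensuing suggestion to carefully cut and glue local surgeries, is therefore a non-issue in the paper's construction.

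Separately, the assertion that ``the space of complex structures on a fixed finite-dimensional $\Clrs$--module, compatible with the inner product, is path-connected within $\sJ^{r,s+1}$'' is false in general --- this space typically has several path components, and indeed the entire machinery of $\KO$-valued indices exists precisely because $\pi_0(\sJ^{r,s+1})$ is nontrivial (compare, already for $r=s=0$ and $V=\R^2$, where $\sJ^{0,1}(\R^2)=\{\pm L_1\}$ has two components). Your construction probably does not really rely on this claim --- what you need is only that the target phase is a specific, pre-chosen $J_{t_j}$, not that any two complex structures can be joined --- but as written the claim would have to be excised. You are right that the ``$\sff_{r,s+1}$'' in the statement is a typo for $\sff_{r,s+2}$.
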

\begin{proof} For each $j=1,\ldots, n-1$ let, e.g.,
\[
    \begin{cases}
         R_j \big|_{\ker T_{t_j}} &:= J_{t_j}\big|_{\ker T_{t_j}}, \\
         R_j \big|_{\ker T_{t_j}^\perp} &:=0.
    \end{cases}
\]
Furthermore let $\varphi_j\in C^\infty[0,1]$ 
be smooth bump functions supported in a neighbourhood of $t_j$ resp.
such that their supports are disjoint and such that
$\varphi_j(t_j)=1$. Then $\tilde T_t:= T_t +
  \sum_{j=1}^{n-1} \varphi_j(t) R_j$ does the job.

  Property (3) follows since $\tilde T_\bullet$ and $T_\bullet$
have the same endpoints and hence the straight line
  $s\mapsto (1-s) \tilde T_\bullet + s T_\bullet$ is a homotopy
  between $\tilde T_\bullet$ and $T_\bullet$ keeping endpoints
  fixed.
\end{proof}  

\begin{theorem}[cf. {\cite[Prop.~6.3]{CPSchuba19}}] \label{p.ClkSF.7} 
Let $[0,1]\ni t\mapsto T_t\in \sF^{r,s+1}(H)$ be a continuous path
with $T_0, T_1$ being invertible. Let $J_t$ be complex structures
obtained by complementing the phase $T_t |T_t|\ii$ on 
  $\ker T_t^\perp$ by an arbitrary complex structure in $\sJ^{r,s+1}(\ker T_t)$,
cf. \Eqref{eq.ClkSF.4}.

Choose a partition $0=t_0<t_1<\cdots<t_n=1$ with the property that on
each segment $\| J_{t_{j-1}} - J_{t}\|_\sQ<1$, $t_{j-1}\le t\le t_j$.
Then
  \[
    \sff_{r,s+2}( T_\bullet ) =
       \sum_{j=1}^n \ind_{r,s+2}(J_{t_{j-1}}, J_{t_j} ).
\]
\end{theorem}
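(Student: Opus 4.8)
The plan is to reduce the spectral flow of $T_\bullet$ to a concatenation of straight-line paths between the complex structures $J_{t_j}$ and then apply the additivity of the pair index. First I would invoke Lemma~\ref{p.ClkSF.6}: replacing $T_\bullet$ by a finite-rank perturbation $\tilde T_\bullet$ that is invertible at each partition point $t_j$ with prescribed phase $\tilde T_{t_j}|\tilde T_{t_j}|^{-1}=J_{t_j}$, and which has the same spectral flow. So without loss of generality I may assume $T_{t_j}$ is invertible with phase $J_{t_j}$ for each $j$. Since the left-hand side and both sides of the claimed formula are unchanged under this reduction (the pair index $\ind_{r,s+2}(J_{t_{j-1}},J_{t_j})$ depends only on the Calkin classes $\pi(J_{t_{j-1}}),\pi(J_{t_j})$, which are undisturbed by finite-rank perturbations), the problem is now localized to the individual segments.

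Next, by path additivity (Remark~\ref{p.ClkSF.20}.3) it suffices to prove the formula segment by segment, i.e.\ to show for each $j$ that $\sff_{r,s+2}(T_\bullet|_{[t_{j-1},t_j]}) = \ind_{r,s+2}(J_{t_{j-1}},J_{t_j})$, where now $T_{t_{j-1}},T_{t_j}$ are invertible with phases $J_{t_{j-1}},J_{t_j}$ respectively and $\|J_{t_{j-1}}-J_t\|_\sQ<1$ along the segment. The idea is to replace the segment $T_\bullet|_{[t_{j-1},t_j]}$ by the straight-line path $\gamma(t)=(1-t)J_{t_{j-1}}+tJ_{t_j}$ (after reparametrizing to $[0,1]$). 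To see these two paths are homotopic rel endpoints within $\sF^{r,s+1}(H)$ — modulo first deforming $T_{t_{j-1}}$ to its phase $J_{t_{j-1}}$ and $T_{t_j}$ to $J_{t_j}$ through invertibles, which is harmless since multiplying an invertible skew-adjoint operator by a positive function of $|T|$ stays in $\sF^{r,s+1}$ — I would use that the Calkin-algebra image $t\mapsto\pi(J(T_t))$ is continuous and stays within $\sQ$-distance $<1$ of $\pi(J_{t_{j-1}})$, together with the straight-line homotopy argument as in the proof of Prop.~\ref{p.FPCI.8}: any operator on the straight-line homotopy from $\gamma$ to the (phase-deformed) segment satisfies $\|\,\cdot\,-J_{t_{j-1}}\|_\sQ<2$, hence is Fredholm, so the homotopy lives in $\sF^{r,s+1}$. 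By homotopy invariance of $\sff_{r,s+2}$ (Remark~\ref{p.ClkSF.20}.2) the two spectral flows agree.

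Finally, for the straight-line path $\gamma(t)=(1-t)J_{t_{j-1}}+tJ_{t_j}$ we already know by Remark~\ref{p.ClkSF.20}.1 (which itself follows from Definition and Proposition~\ref{p.FPCI.5} and Theorem~\ref{p.NHCT.4}) that $\sff_{r,s+2}(\gamma)=\ind_{r,s+2}(J_{t_{j-1}},J_{t_j})$. Summing over $j$ and using path additivity on the left gives the claimed identity.

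The main obstacle I anticipate is handling the non-invertibility of $T_t$ at intermediate, non-partition points: the phase $J(T_t)$ is only defined up to finite-rank perturbation and $t\mapsto J(T_t)$ need not be norm-continuous, only Calkin-continuous. I would need to be careful that the homotopy constructed between the phase-deformed segment and the straight-line path $\gamma$ genuinely stays in $\sF^{r,s+1}$; the safe route is to work with a representative path $\hat T_\bullet$ that equals $T_t|T_t|^{-1}$ off the spectral subspace $\ran\chi_{(0,a)}(|T_t|)$ for small $a$ as indicated after \Eqref{eq.ClkSF.4}, which makes $t\mapsto \hat T_t$ genuinely norm-continuous with only finitely many adjustments, and argue there. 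Alternatively one can bypass continuity of the phase entirely by noting that the whole argument only uses the partition points — the segment bound $\|J_{t_{j-1}}-J_t\|_\sQ<1$ is used only to guarantee the straight-line homotopy stays Fredholm, and that bound can be arranged on the continuous Calkin-valued path $t\mapsto\pi(J(T_t))$ regardless of whether a continuous operator-valued lift exists. I expect this to be the only genuinely delicate point; everything else is an assembly of results already established (Lemma~\ref{p.ClkSF.6}, Prop.~\ref{p.FPCI.8}, Remark~\ref{p.ClkSF.20}, Theorem~\ref{p.NHCT.4}).
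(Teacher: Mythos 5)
Your overall route is the paper's own: reduce via Lemma~\ref{p.ClkSF.6}, localize to segments by path additivity, homotope each segment to the straight-line path between the prescribed phases, and finish with Remark~\ref{p.ClkSF.20}.1. The delicate point you flag — non-invertibility and phase discontinuity at intermediate $t$ — is exactly what the paper handles, by first rescaling so that $\min_t\specess|T_t|\ge 1$ and then applying the homotopy $H(u,t)=(1-u)T_t+u\Psi(T_t)$, which simultaneously flattens the essential spectrum to $\{\pm i\}$ (so $T_t-J(T_t)$ becomes finite rank) and deforms the endpoints to their phases; your proposed ``safe route'' is a sketch of the same idea.

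One parenthetical justification you offer is genuinely false and should be dropped: the pair index $\ind_{r,s+2}(J_0,J_1)$ does \emph{not} depend only on the Calkin classes $\pi(J_0),\pi(J_1)$. For example take $J_1=J_0$ on the orthogonal complement of a finite dimensional $\Cl_{r,s+1}$--invariant subspace $V$ and $J_1=-J_0$ on $V$; then $\pi(J_0)=\pi(J_1)$ but $\ker(J_0+J_1)=V$, so $\ind_{r,s+2}(J_0,J_1)=[V]$, which need not vanish. The conclusion you want (that the right-hand side of the formula is unchanged after applying Lemma~\ref{p.ClkSF.6}) is true, but for the simpler reason that Lemma~\ref{p.ClkSF.6}(2) lets you \emph{prescribe} $\tilde T_{t_j}|\tilde T_{t_j}|^{-1}=J_{t_j}$, so the same complex structures $J_{t_j}$ appear. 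Relatedly, ``$\|\cdot-J_{t_{j-1}}\|_\sQ<2$, hence Fredholm'' is not valid reasoning for a general operator (e.g.\ $\|0-J\|_\sQ=1<2$); what saves the argument is that the homotopy operators are convex combinations lying within Calkin distance $<1$ of one of the $J_{t_j}$'s, whence Fredholm because $\pi(J_{t_j})$ is unitary.
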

\begin{proof} We make repeated use of the homotopy invariance of
  $\sff_{r,s+2}$ and of $\ind_{r,s+2}$.

In view of Lemma~\ref{p.ClkSF.6} we may w.l.o.g. assume that
  $T_{t_j}$ is invertible for $j=0,\ldots, n$.  

Since $t\mapsto T_t$ is a continuous path of Fredholm operators
we have
\[
  r:= \min \bl \min_t \specess |T_t|, \min\spec |T_{t_0}|,\ldots, \min \spec
  |T_{t_n}|\br
    >0.
\]
We certainly have $J(T_t/r) = J(T_t)$ and by the homotopy invariance
  $\sff_{r,s+2}(T_\bullet) = \sff_{r,s+2}(T_\bullet/r)$. 

Thus without loss of generality it suffices to prove the Theorem for paths with
  $\min\specess |T_t| \ge 1$ and $\min\spec |T_{t_j}| \ge 1, j=0,\ldots,n$.

With $\Psi$ of \Eqref{eq.NHCT.12} consider the homotopy
  \begin{align*}
    H(u,t)&:= (1-u) T_t + u\Psi(T_t) = h_{u}(T_t), \\
    h_u(x)&:= (1-u) x + u \Psi(x).
  \end{align*}
For all $u$ the function $h_u$  is increasing and strictly
increasing in a neighbourhood of $0$. So for all $u$ we have
  $\ker H(u,t) = \ker T_u$ and $J(H(u,t)) = J(T_t)$.
Furthermore, by construction 
$h_1(T_{t_j}) = T_{t_j}|T_{t_j}|\ii =  J_{t_j}$.

Hence both sides of the claimed equation remain constant under
the deformation $H$. We are finally left to prove the claim for
the special case that $\specess T_t \subset \{\pm  i\}$ for all $t$
and $T_{t_j} = J_{t_j}$.

But then $T_t-J(T_t)$ is of finite rank and hence
$\|T_t-T_{t_{j-1}}\|_\sQ = \|J_{T_t}- J_{T_{j-1}}\|_\sQ<1$ for all
$t_{j-1}\le t\le t_j$.
Now $u\mapsto (1-u) ( (1-t) J_{t_{j-1}} + t J_{t_j}) + u T_t$
is a homotopy between $T_{t}, t_{j-1}\le t \le t_j$ and
the straight line path $(1-t) J_{t_{j-1}} + t J_{t_j}$.
The endpoints are kept fixed under the homotopy.
Consequently 
\[
\sff_{r,s+2}( (T_t)_{t_{j-1}\le t \le t_j} )
  =\sff_{r,s+2}(   (1-t) J_{t_{j-1}} + t J_{t_j}, 0\le t \le 1) 
  =\ind_{r,s+2}( J_{t_{j-1}}, J_{t_j} ).
\]
The path additivity
of the spectral flow then implies the claim.
\end{proof}  

The previous result allows us to relate the $\KO$--valued spectral
flow to the various known versions of spectral flow.

\begin{theorem} Let $T_\bullet\in\sF^{r,1}(H)$ be a family of
Fredholm operators with invertible endpoints. Let $K_1,K_2,L_1\in
  \Mat(2,\R)$ be the Clifford matrices \Eqref{eq.ICA.1}.
\subsubsection*{1. $r=2$ } 
Write $H= H'\otimes \R^2$ with $E_j=I_H\otimes K_j$, $j=1,2$.
Then $T_t=\tilde T_t\otimes K_1K_2$\footnote{Recall $K_1 K_2$ is the volume element of
$\Cl_{2,0}$.} with a family $\tilde T_\bullet$ of \emph{self-adjoint}
Fredholm operators and 
  $\SF(\tilde T_\bullet) =\SF_{2,2} (T_\bullet) \br$.

\subsubsection*{2. $r=0$. } In this case $T_\bullet$ is a family of 
skew-adjoint Fredholm operators without any additional symmetries.
Furthermore, $\SF_{0,2}\bl T_\bullet \br$ equals the
$\Z/2\Z$--valued Spectral Flow $\operatorname{Sf}_2$ of \cite{CPSchuba19}.

\subsubsection*{3. $r=1$. } Write again $H=H'\otimes \R^2$
with $E_1=K_1$. Then 
$T_t= \Re(\tilde T_t)\otimes K_2 + \Im(\tilde T_t)\otimes L_1$
with a family $\tilde T_\bullet$ of Fredholm operators on $H'$.
We have a forgetful map $\sF^{1,2}\subset \sF^{0,2}$ and 
 $\SF_{1,2}(T_\bullet) = \SF_{0,2}(T_\bullet) = \operatorname{Sf}_2(T_\bullet)$. 

Moreover, $\SF_{1,2}(T_\bullet)$ equals the parity
$\sigma(\tilde{T}_\bullet)$ of~\cite[Def.~3]{DSBW}.
More concretely, given a family $\tilde T_\bullet$
of Fredholm operators in a Hilbert space $H'$, put
\[
     T_t:= \Re(\tilde T_t)\otimes K_2 + \Im(\tilde T_t)\otimes L_1
         = \begin{pmatrix} 0 & -\tilde T^*_t\\ \tilde T_t & 0 \end{pmatrix}
\]
then
\[
    \sigma(\tilde T_\bullet) = \SF_{1,2}(T_\bullet) 
    = \SF_{0,2}( T_\bullet ) = \operatorname{Sf}_2(T_\bullet).
\]    
\end{theorem}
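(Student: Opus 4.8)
The plan is to deduce all three identities from the local formula of Theorem~\ref{p.ClkSF.7}, combined with the explicit descriptions of $\Ind_{r,2}$ of a Fredholm pair of complex structures worked out in Section~\ref{ss.CCIPP}. In each case, given a path $T_\bullet\in\sF^{r,1}(H)$ with invertible endpoints, I would first invoke Lemma~\ref{p.ClkSF.6} to replace $T_\bullet$ by a finite-rank perturbation with the same spectral flow for which there is a partition $0=t_0<\dots<t_n=1$ with $T_{t_j}$ invertible of phase $J_{t_j}=T_{t_j}|T_{t_j}|\ii$ and $\|J_{t_{j-1}}-J_t\|_\sQ<1$ on each segment. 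Theorem~\ref{p.ClkSF.7} then gives $\SF_{r,2}(T_\bullet)=\sum_{j=1}^n\Ind_{r,2}(J_{t_{j-1}},J_{t_j})$, so everything reduces to matching the summands with the pairwise invariant of the relevant target theory. Since the partition condition only involves the operators $T_t$, the same partition serves the $r=0$ pictures obtained by forgetting Clifford symmetries.

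For $r=2$ the extra ingredient is a normal form: writing $H=H'\otimes\R^2$ with $E_1=I_{H'}\otimes K_1$ and $E_2=I_{H'}\otimes K_2$, the common anti-commutant of $K_1,K_2$ in $\Mat(2,\R)$ is the line spanned by the skew-adjoint volume element $\go_{2,0}=K_1K_2$, $\go_{2,0}^2=-I_2$; hence every $T\in\sF^{2,1}(H)$ equals $\tilde T\otimes\go_{2,0}$ with $\tilde T=\tilde T^*$ Fredholm on $H'$, the operator $T$ is invertible exactly when $\tilde T$ is, and $T|T|\ii=(\tilde T|\tilde T|\ii)\otimes\go_{2,0}$. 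Passing $J_{t_j}$ through the normal form of $\sJ^{2,1}(H)$ used in Section~\ref{sss.IPP}, it corresponds to the spectral projection $P_{t_j}=\chi_{(-\infty,0)}(\tilde T_{t_j})$, and that section identifies $\Ind_{2,2}(J_{t_{j-1}},J_{t_j})$ with the index $\ind(P_{t_{j-1}},P_{t_j})$ of the associated Fredholm pair of orthogonal projections. Summing over $j$ is then J.~Phillips' local formula~\cite{Phi96} for the (real) self-adjoint spectral flow of $\tilde T_\bullet$, with the sign fixed by the normalisation in Remark~\ref{p.ClkSF.20}; this gives $\SF(\tilde T_\bullet)=\SF_{2,2}(T_\bullet)$.

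For $r=0$ nothing has to be decomposed: each $\Ind_{0,2}(J_{t_{j-1}},J_{t_j})$ is, by Proposition~\ref{p.CPSpairindex}, the pairwise $\operatorname{Sf}_2$ of~\cite[Def.~2.1]{CPSchuba19}, and summing over $j$ reproduces the local formula for $\operatorname{Sf}_2$ of~\cite[Prop.~6.3]{CPSchuba19}, so $\SF_{0,2}(T_\bullet)=\operatorname{Sf}_2(T_\bullet)$. For $r=1$, writing $H=H'\otimes\R^2$ with $E_1=I_{H'}\otimes K_1$, anti-commutation with $K_1$ forces the block form $T=\begin{pmatrix}0&-\tilde T^*\\ \tilde T&0\end{pmatrix}$ (cf.\ \eqref{eq.ABSC.13} and Remark~\ref{p.ICA.2}) with $\tilde T$ Fredholm on $H'$, so that the phase is $\begin{pmatrix}0&-U^*\\ U&0\end{pmatrix}$ with $U=\tilde T|\tilde T|\ii$; the $r=1$ computation in Section~\ref{ss.CCIPP} then gives $\Ind_{1,2}(J_{t_{j-1}},J_{t_j})=\dim\ker(I+U_{t_{j-1}}^*U_{t_j})\mlmod 2$, and summing over $j$ yields the parity $\sigma(\tilde T_\bullet)$ of~\cite[Def.~3]{DSBW}, whence $\SF_{1,2}(T_\bullet)=\sigma(\tilde T_\bullet)$. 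Finally, forgetting the $E_1$-symmetry, each $(J_{t_{j-1}},J_{t_j})$ is also a Fredholm pair in $\sJ^{0,1}(H)$ whose kernel module is the $\Cl_{0,1}$-module underlying the $\Cl_{1,1}$-module computing $\ind_{1,2}$, so $\ind_{0,2}(J_{t_{j-1}},J_{t_j})=\fg_{1,2}\bigl(\ind_{1,2}(J_{t_{j-1}},J_{t_j})\bigr)$; since $\fg_{1,2}$ is an isomorphism with $\tau_{0,2}\circ\fg_{1,2}=\tau_{1,2}$ by Theorem~\ref{t.forgetful}, the two local formulas agree term by term, giving $\SF_{1,2}(T_\bullet)=\SF_{0,2}(T_\bullet)$ and hence, by the $r=0$ case, $=\operatorname{Sf}_2(T_\bullet)$. (Proposition~\ref{p.ClrsFred.2} records the analogous forgetful compatibility for the ordinary Fredholm index and may be cited in place of this step.)

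The main obstacle is bookkeeping of orientations rather than anything structural: confirming in the $r=2$ case that $\SF_{2,2}$ agrees with the classical $\SF$ on the nose and not merely up to a global sign — equivalently, that the identification $J_{t_j}\leftrightarrow P_{t_j}=\chi_{(-\infty,0)}(\tilde T_{t_j})$ is compatible with the orientation built into the ABS map $\tau_{2,2}$ — and similarly tracking the $\mlmod 2$ conventions in the $r=1$ case against those of~\cite{DSBW}. These I would settle by evaluating on the standard generators of the $A_{r,s}$ listed in Section~\ref{ss.ABS-examples} and by using the normalisation together with the path-- and direct--sum--additivity of $\sff$ collected in Remark~\ref{p.ClkSF.20}.
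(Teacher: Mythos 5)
Your proposal is correct and follows essentially the same route as the paper's (much terser) proof: the paper also reduces everything to the local formula of Theorem~\ref{p.ClkSF.7} combined with the pairwise index computations of Sec.~\ref{ss.CCIPP} and the tensor decompositions from Prop.~\ref{p.ICA.1} and Remark~\ref{p.ICA.2}. Your added care with the orientation/sign conventions, resolved by evaluating on the generators in Sec.~\ref{ss.ABS-examples} and invoking the normalisation axiom, is exactly the right way to pin down what the paper leaves implicit.
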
  
\begin{proof} This follows immediately from the local formula
Theorem~\ref{p.ClkSF.7} and the corresponding statements
for the index of a pair in Sec.~\ref{s.FPCI}.
The claimed tensor product decompositions follow from
Prop.~\ref{p.ICA.1}, cf. also Remark~\ref{p.ICA.2} 2.
\end{proof}

\subsection{Examples from topological insulators}
\label{ss.TopPhaseExamples}


Here we briefly outline some applications of our $\KO$--valued spectral flow to 
models in condensed matter physics. We first review results in~\cite{CPSchuba19} 
and building from this idea 
provide a (somewhat ad hoc) example, which by the normalization and 
homotopy invariance of 
spectral flow covers all remaining cases. We also outline
the relationship between paths of Hamiltonians in symmetry class AII with 
the quaternionic spectral flow $\sff_{r,r+4} \in KO_4(\R)$.

\begin{example}[Kitaev chain, cf. Sec.~10 of~\cite{CPSchuba19}] \label{ex:Kit_flow}
We consider a one dimensional Hamiltonian with particle-hole symmetry 
on $\ell^2(\Z)\otimes \C^2$, where 
$$
   \mathbf{H}(\mu,w) = \frac{1}{2}\begin{pmatrix} -w(S+S^*) - \mu & -i w(S^*-S)  \\ -i w(S^*-S) & w(S+S^*) + \mu \end{pmatrix}, 
   \qquad \mu, \, w \in \R
$$
and $S$ the unilateral shift operator on $\ell^2(\mathbb{Z})$. We see that 
$\mathbf{H}(\mu,w)$ anti-commutes with the particle-hole involution 
$C = \mathfrak{c}( I_{\ell^2(\Z)} \otimes K_2)$ with $\mathfrak{c}$ complex 
conjugation and $K_2$ from Eq.~\eqref{eq.ICA.1}.
For simplicity we will consider the case of $w=-1$, $\mu=0$, where the 
Hamiltonian is invertible. We furthermore condense 
our notation and write
$$
  \mathbf{H} = \mathbf{H}(0,-1) = \hat{S} + \hat{S}^*, 
  \qquad \hat{S} = S \otimes \frac{1}{2}\begin{pmatrix} 1 & i \\ i & -1 \end{pmatrix}.
$$
We now perturb this Hamiltonian by inserting a local flux through the unit cell at site $0$. 
Namely, we consider the path for $\alpha \in [0,1]$ 
$$
\mathbf{H}_{\alpha} = \hat{S}_\alpha + \hat{S}_\alpha^*, 
\quad \hat{S}_\alpha = S\otimes\frac{1}{2}\begin{pmatrix} 1 & i \\ i & -1 \end{pmatrix}
+ \nu_1 \nu_0^*\otimes \frac{1}{2} 
\begin{pmatrix} e^{-i\pi \alpha}-1 & i(e^{i\pi\alpha}-1) \\  i(e^{-i\pi\alpha}-1) & -(e^{i\pi\alpha}-1) \end{pmatrix},
$$
where $\nu_n$ is the partial isometry onto the site $n\in \mathbb{Z}$.

We see that $\mathbf{H}_{\alpha} - \mathbf{H}$ is finite-rank 
and therefore compact for any $\alpha$. We do not consider any additional symmetries 
or Clifford generators and so can compute $\SF_{0,2}(i\mathrm{H}_\bullet) = \SF_2(i\mathrm{H}_\bullet)$ 
using the $\Cl_{0,2}$ index of the (invertible) endpoints. 
Letting $J_k = i\mathbf{H}_k |\mathbf{H}_k|^{-1}$ for $k \in {0,1}$, 
it is shown in~\cite[Sec.~10]{CPSchuba19} that 
\[
  \SF_{0,2}(i\mathbf{H}_\bullet) = \ind_{0,2}(J_0, J_1) = 1.
\]
\end{example}

\begin{example}[$\KO$--valued spectral flow in the Kitaev chain]
Let us follow the same basic idea of Example~\ref{ex:Kit_flow} and consider more 
general $\KO$--valued spectral flow via a flux insertion of a one dimensional Hamiltonian. 
Let $V$ be a finite dimensional $\Cl_{r,s+1}$--module with Clifford generators
$E_1,\ldots, E_r$, $F_1,\ldots,F_{s+1}$ representing a class in $A_{r,s+2}$. 
We now consider the Hilbert space $\ell^2(\Z) \otimes V \otimes \C^2$ and 
again consider the spectral flow via a flux insertion through a unit cell. Namely, 
we 
consider the path of Hamiltonians 
$\mathbf{H}_\alpha = \hat{S}_\alpha + \hat{S}_\alpha^*$, where 
$$
\hat{S}_\alpha = S\otimes F_{s+1} \otimes \frac{1}{2}\begin{pmatrix} 1 & i \\ i & -1 \end{pmatrix}
+ \nu_1 \nu_0^*\otimes F_{s+1} \otimes  \frac{1}{2} 
\begin{pmatrix} e^{-i\pi \alpha}-1 & i(e^{i\pi\alpha}-1) \\  i(e^{-i\pi\alpha}-1) & -(e^{i\pi\alpha}-1) \end{pmatrix}.
$$
As in the case of the Kitaev chain, $\mathbf{H}_\alpha$ is a finite-rank perturbation 
of $\mathbf{H}_0$ that sends $F_{s+1}$ to $-F_{s+1}$ on the finite dimensional 
unit cell through which the flux is inserted. As such, we can easily compute that 
$$
  \sff_{r,s+2}( i\mathbf{H}_\bullet) = \ind_{r,s+2}(J_0, J_1) = [V] \in A_{r,s+2} \simeq \sM_{r,s+1}/\sM_{r,s+2}.
$$
Using this ad hoc construction, we can find a non-trivial $\KO$--valued 
spectral flow of any degree via a flux insertion in a Kitaev chain.
\end{example}

\begin{example}[$KO$--valued spectral flow in symmetry class AII]
Here we outline the basic framework for studying $KO$--valued spectral flow in systems 
with a fermionic time-reversal symmetry/quaternionic structure. As outlined in 
Sec.~\ref{ss.Alan.MPT}, our constructions are most easily illustrated using the framework 
of Kennedy and Zirnbauer~\cite{KZ16}. The relationship between the Kennedy--Zirnbauer 
framework and the more standard ten-fold way~\cite{RSFL} is outlined in~\cite{KZ16, AMZ19}.

We start with the Hilbert space $\mathcal{V}$ spanned by the fermionic creation operators 
(e.g. $\mathcal{V} = \ell^2(\Lambda)$ for some countable set $\Lambda$)
and form the Nambu space $\mathcal{W} = \mathcal{V} \oplus \mathcal{V}^*$ with 
real structure $C = \begin{pmatrix} 0 & R^{-1} \\ R & 0 \end{pmatrix}$ given by the 
Riesz map $R: \mathcal{V} \to \mathcal{V}^*$. We further assume that 
$\mathcal{V}$ has a quaternionic structure $T$, an anti-unitary map such that 
$T^2 = -I_\mathcal{V}$. Hamiltonians $\mathbf{H}$ on the Nambu space $\mathcal{W}$ are of 
Bogoliubov--de Gennes type, $\mathbf{H} = -C\mathbf{H}C$. 

The time-reversal and charge symmetry operators $\hat{T}$ and $Q$ are constructed on $\mathcal{W}$ 
as follows:
$$
  \hat{T} = \begin{pmatrix} T & 0 \\ 0 & RTR^{-1} \end{pmatrix}, \qquad 
  Q = \begin{pmatrix} I_\mathcal{V} & 0 \\ 0 & -I_{\mathcal{V}^*} \end{pmatrix}.
$$
We can use these operators to construct a $\Cl_{0,2}$ representation on the real 
Hilbert space $\mathcal{W}_\R$ of elements fixed by $C$. Namely, we take the generators 
$F_1 = C\hat{T}$, $F_2 = iC\hat{T}Q$ which commute with $C$.

Following~\cite{KZ16}, $\mathbf{H}$ is in symmetry class AII if 
$[\mathbf{H}, \hat{T}] = [\mathbf{H}, Q] = 0$. One then sees that 
$i\mathbf{H}$ anti-commutes with $F_1$ and $F_2$. Any continuous Fredholm path 
$\{\mathbf{H}_t\}_{t\in [0,1]}$ on $\mathcal{W}$ with invertible endpoints and 
such that $[\mathbf{H}_t,\hat{T}] = [\mathbf{H}_t, Q] = 0$ for all $t$ 
gives a path $\{i\mathbf{H}_t\}_{t\in[0,1]}$ in $\sF^{0,3}(\mathcal{W}_\R)$ with quaternionic spectral 
flow 
$$
\sff_{0,4}(i\mathbf{H}_\bullet) \in \sM_{0,3}/\sM_{0,4}, 
\qquad \SF_{0,4}(i\mathbf{H}_\bullet) \in \Z
$$
with $\SF_{0,4}$ measured by a sum of quaternionic Fredholm indices. 

We note that any Hamiltonian $\mathbf{H}$ such that $[\mathbf{H}, \hat{T}] = [\mathbf{H}, Q] = 0$ 
and $C \mathbf{H}C = -\mathbf{H}$ 
must be of the form $\mathbf{H} = \begin{pmatrix} h & 0 \\ 0 & -Rh R^{-1} \end{pmatrix}$ 
with $h$ a self-adjoint operator on $\mathcal{V}$ with $[h, T] = 0$. 
Considering $\mathcal{V}$ as a real vector space, any eigenvalue of $h$ will have 
a four-fold degeneracy coming from the quaternionic structure $\{i,j,k\} \sim \{i, T, iT\}$. 
Given a Fredholm path $h_\bullet$ in $\mathcal{V}$ with $h_t=h_t^*$, $[h_t, T] = 0$ for all $t$
and such that $h_0$ and $h_1$ are invertible, we can consider the standard spectral 
flow $\mathrm{SF}(h_\bullet)$.
 Passing to the Nambu space $\mathcal{V}\oplus \mathcal{V}^*$, because $\sigma(h_t) = \sigma(Rh_tR^{-1})$, 
 the kernel of $\mathbf{H}_t$ will be double the dimension of $\ker(h_t)$. 
This observation along with the doubling of Hilbert spaces gives us that 
$$
  \frac{1}{4} \SF(h_\bullet) =  \SF_{0,4}(i \mathbf{H}) \in \Z.
$$
As such, simple examples of non-trivial quaternionic spectral flow can be written down 
from a non-trivial and $T$-symmetric spectral flow of $h_\bullet$ in $\mathcal{V}$. 
We again remark that by taking complex dimension of the kernels, then 
this quaternionic spectral flow takes values in $2\Z$.
\end{example}


\section{Extension to unbounded operators}
\label{s.EUO}

In applications the operators of interest are often differential operators and hence
unbounded. Our approach to $\sff_{r,s+2}$ via the Cayley transform
extends almost seamlessly to unbounded operators.  However, the
uniqueness proofs are slightly more complicated in the unbounded case
than in the bounded case, mainly due to complications in the homotopy
theory.  Our discussion parallels the complex case that is found in \cite{L05}.

\subsection{Topologies on spaces of unbounded operators}
\label{ss.TSUO}
We briefly introduce here the several topologies on unbounded
operators, for proofs and further details see \cite[Sec.~2]{L05}.
Let $H$ be a real Hilbert space. We recall that a densely defined 
operator $T:\dom(T)\subset H \to H$ is closed if $\dom(T)$ is a complete 
space with respect to the graph norm
\[
  \|v\|_{T} := \sqrt{\|v\|^2 + \|Tv\|^2} .
\]
We denote by $\sC(H)$ we denote the space of
densely defined closed operators in $H$ equipped with the so-called
gap topology. It can be described in several equivalent ways. For 
$T_1, T_2\in\sC(H)$ the gap distance is $d_G(T_1,T_2)=\|P_1-P_2\|$,
where $P_j$ denotes the orthogonal projection onto the graph of $T_j$
in the product space $H\times H$. For skew-adjoint $T_j$ the gap
distance is equivalent to $\| (T_1-I)\ii - (T_2-I)\ii\|$.

For skew-adjoint operators one can consider the \emph{Riesz transform}
$F(T):= T(I-T^2)\ii$. The corresponding \emph{Riesz metric}
$d_R(T_1,T_2)=\|F(T_1)-F(T_2)\|$ is stronger than the gap metric.

Finally, we consider operators with a fixed given domain: consider
a fixed skew-adjoint operator $D$ in $H$ with domain $W\hookrightarrow H$.
The operator $D$ serves as reference operator and is part of the
structure. Let
\[\sB^1(W,H)
   :=\bigsetdef{T\in \sC(H)}{ T=-T^* \text{ and } \dom(T)= W}.
\]   
This space is naturally equipped with the norm given by
$\|T\|_{W\to H}=\|T(I-D^2)^{-1/2}\|$. With $D$ being any fixed complex
structure on $H$ and $W=H$ we obtain $\sB^1(H,H)=\sB^1(H)$ the bounded
skew-adjoint operators on $H$ as a special case.

\begin{remark}\label{p.EUO.1} A word of warning: one might be tempted to
``identify'' $\sB^1(W,H)$ with the skew-adjoint operators in $H$ by
sending  
\[
  \sB^1(W,H)
   \ni T\to (I-D^2)^{-1/4} T (I-D^2)^{-1/4}=:B(T).
\]
Using a bit of interpolation theory (cf. \cite[Appendix]{L05}) one sees that
indeed $B(T)$ is a skew-adjoint bounded operator, and the map $T\mapsto B(T)$
is injective. However, it is in general \emph{not} surjective. Namely,
interpolation theory shows that for an operator $S\in\sB^1(H)$ to be in
the range of $B$ it is necessary that
\[(I-D^2)^{\alpha} S (I-D^2)^{-\alpha}\] is bounded on $H$ for
$-1/2\le \ga \le 1/2$.  In the interesting case of unbounded $D$, this
is a restriction which is not satisfied by all operators in
$\sB^1(H)$.
\end{remark}

If $H$ is a $\Clrs$--Hilbert space then we denote by $\sC^{r,s+1},
\sB^{r,s+1}, \ldots$ the corresponding spaces of skew-adjoint operators
which anti-commute with the Clifford generators $\Edots$, $\Fdots$,
cf. Sec.~\ref{ss.PN}. In the case of operators with a fixed domain
this requires, of course, that $D$ itself is $\Clrs$--anti-linear and
hence the inclusion $W\hookrightarrow H$ is an inclusion of
$\Clrs$--Hilbert spaces.  Similarly, $\sC\sF^{r,s+1}(H)$ and $\sF^{r,s+1}(W,H)$ denote
the corresponding spaces of skew-adjoint Fredholm operators.

The relations between the metric spaces of unbounded operators
are summarized by noting that that there
are natural continuous (inclusion) maps \cite[Prop.~2.2]{L05}
\begin{equation}\label{eq.EUO.1}
  \sB^{r,s+1}(W,H) \hookrightarrow (\sC^{r,s+1}(H),d_R) 
    \stackrel{\id}{\longrightarrow}
    (\sC^{r,s+1}(H), d_G),
\end{equation}
where $d_R, d_G$ denote the Riesz and gap distance resp.

\subsection{Spectral flow for families of unbounded operators}
\label{ss.SFFUO}

With the preparations of the previous section at hand, the 
definition of $\sff_{r,s+2}$ for families of unbounded skew-adjoint
Fredholm operators is straightforward. Let us
consider a $\Cl_{r,s}$--Hilbert space $H$ and a gap continuous
family $T_\bullet:[0,1]\to \sC\sF^{r,s+1}(H)$ of unbounded skew-adjoint
Fredholm operators with invertible endpoints. As remarked above,
the gap continuity implies (and is in fact equivalent to)
the continuity of the resolvent maps $t\mapsto (T_t\pm I)\ii$.
Hence, the Cayley transform (Definition~\ref{p.ClkSF.3})
\begin{equation}
    t\mapsto \Phi(T_t) = 
       -F_{s} (I+ T_t F_{s})(I- T_t F_{s})\ii
\end{equation}
is a continuous path $\bl [0,1], \{0,1\}  \br \to \bl \tO_{r,s},
\tO_{r,s,*}\br$
in $\tO_{r,s}$ with endpoints in the  \emph{contractible} 
(see Lemma~\ref{p.NHCT.3}) neighbourhood
$\tO_{r,s,*}$ of $F_{s}$. One is now in the situation of Def.
\ref{p.ClkSF.3} and therefore proceeding as in loc. cit. one obtains
an element $\sff_{r,s+2}( T_\bullet) \in A_{r,s+2}$
and $\SF_{r,s+2}:= \tau_{r,s+2}\bl  \sff_{r,s+2}( T_\bullet)  \br
\in \KO_{s+2-r}(\R)$, where $\tau$ was defined in \Eqref{eq.ABSC.16}.

The reader might notice that alternatively one could first have
introduced a spectral flow invariant of paths in 
$\bl [0,1], \{0,1\}  \br \to \bl \tO_{r,s}, \tO_{r,s,*}\br$ as the
basic object. The spectral flow of paths of bounded and unbounded
skew-adjoint Fredholm operators are then just obtained by pullback via
the map $\Phi$.

From the homotopy theory of $\tO_{r,s}$ it is clear that the
$\sff_{r,s+2}$ for gap continuous families of skew-adjoint Fredholm
operators satisfies \emph{Homotopy, Path additivity, Stability,
Normalization, Constancy}, and \emph{Direct sum} in the sense of
Remark~\ref{p.ClkSF.20}.

In view of the inclusions \Eqref{eq.EUO.1} the $\sff_{r,s+2}$ for Riesz
continuous families of skew-adjoint Fredholm operators
(resp.  such families in the space $\sF^{r,s+1}(W,H)$)
is just obtained by taking the $\sff_{r,s+2}$ after sending the family
to $\sC\sF^{r,s+1}$ via the natural maps \Eqref{eq.EUO.1}.

\subsection{Two technical results}
\label{ss.TTR}
In the unbounded case we will need adaptations of 
the two technical results \cite[Assertion 1 and 2 in Sec.~5]{L05}. 
The first statement is an improvement of the results in Lemma~\ref{p.ClkSF.6}.

\begin{lemma}\label{p.USF.3} Let $H$ be a $\Clrs$--Hilbert space 
and let $T_\bullet:[0,1] \ni t\mapsto \sC\sF^{r,s+1}(H)$ be a continuous
path with invertible endpoints. Then there exists a subdivision
$0=t_0<t_1\cdots <t_n=1$ of the interval $[0,1]$
and a continuous path
$\tilde T:[0,1]\to \sC\sF^{r,s+1}(H)$ with the following properties:
\begin{enumerate}
\item $\tilde T_t$ is a finite rank perturbation of $T_t$ and it has
  the same endpoints as $T_\bullet$, i.e.
$\tilde T_0=T_0, \tilde T_1=T_1$.
\item $\tilde T_{t_j}$ is invertible, $j=0,\ldots,n$.
The value of $T_{t_j}$ may even be prescribed as in Lemma
\ref{p.ClkSF.6}.
\item There exist $\eps_j>0$, $j=1,\ldots, n$ such that for all 
$t\in [t_{j-1},t_j]$ we have $\eps_j\not\in\spec |T_{t_j}| $ and
    $\specess(|T_{t_j}|)\cap [0,\eps_j]=\emptyset$.
\end{enumerate}

  The properties (1)--(2) imply that $\tilde T_\bullet$ is
(straight-line) homotopic to $T_\bullet$ and hence both paths
have the same $\sff_{r,s+1}$ spectral flow.

The result holds verbatim also for Riesz continuous paths in
  $\sC\sF^{r,s+1}(H)$ as well as for continuous paths into
  $\sF^{r,s+1}(W,H)$.
\end{lemma}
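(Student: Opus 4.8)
The plan is to mimic the bounded construction of Lemma~\ref{p.ClkSF.6}, but to insert \emph{small--amplitude} localized finite--rank bumps, so that the path becomes invertible at the subdivision points with prescribed phases while the low--lying spectrum stays away from a fixed level on each subinterval. First I would record what is essentially free. Since $T_\bullet$ is a continuous path of Fredholm operators with invertible endpoints, $\ind_{r,s+1}(T_t)$ is locally constant, hence $\equiv\ind_{r,s+1}(T_0)=0$, so every $\ker T_t$ is a finite dimensional $\Cl_{r,s+1}$--module and carries a complex structure $J_t=J(T_t)\in\sJ^{r,s+1}(H)$ as in \Eqref{eq.ClkSF.4} whose restriction to $\ker T_t$ is at our disposal. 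Gap continuity makes $t\mapsto(T_t+I)\ii$ norm continuous, hence $t\mapsto\pi\bl(T_t+I)\ii\br$ norm continuous in $\sQ(H)$; upper semicontinuity of the spectrum in the Banach algebra $\sQ(H)$ then shows $t\mapsto\operatorname{dist}\bl 0,\specess(T_t)\br$ is lower semicontinuous, hence $\geq 2\delta>0$ on $[0,1]$. Together with upper semicontinuity of $t\mapsto\spec(T_t)$ and continuity of Riesz projections around isolated eigenvalues, this also yields a uniform bound $N:=\sup_t\#\bl\spec|T_t|\cap[0,\tfrac{3\delta}{2}]\br<\infty$ and, for each $\eta>0$, a mesh $\rho(\eta)>0$ such that $\spec|T_t|\cap[0,\tfrac{3\delta}{2}]$ moves (in Hausdorff distance) by at most $\eta$ along any subinterval of length $<\rho(\eta)$. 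In the Riesz and $\sB^1(W,H)$ settings all of this comes for free through the continuous inclusions \Eqref{eq.EUO.1}.

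Next I would fix the data. Choose $\eta,\lambda>0$ with $2N(\eta+\lambda)<\delta$, a partition $0=t_0<\cdots<t_n=1$ of mesh $<\rho(\eta)$, finite--rank $R_j\in\sB(H)$ ($1\le j\le n-1$) equal to the prescribed complex structure on $\ker T_{t_j}$ and zero on $(\ker T_{t_j})^\perp$, amplitudes $\lambda_j\in(0,\lambda]$, and $\varphi_j\in C^\infty[0,1]$ with pairwise disjoint supports in small neighbourhoods of the $t_j$ and $\varphi_j(t_j)=1$; then put $\tilde T_t:=T_t+\sum_{j=1}^{n-1}\lambda_j\varphi_j(t)R_j$, so $\tilde T_0=T_0$, $\tilde T_1=T_1$. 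As in Lemma~\ref{p.ClkSF.6}, $\sum_j\lambda_j\varphi_j(t)R_j$ is a norm--continuous family of bounded finite--rank operators, so $\tilde T_t\in\sC\sF^{r,s+1}(H)$ with $\dom\tilde T_t=\dom T_t$ and $\tilde T_\bullet$ is gap (resp. Riesz, resp. $\sB^1(W,H)$) continuous — the admissibility of such perturbations in the Riesz and graph--norm topologies being the adaptation of \cite[Sec.~5]{L05} promised in the statement; this is (1). Since $\tilde T_\bullet$ differs from $T_\bullet$ by a bounded finite--rank family and shares its invertible endpoints, $s\mapsto T_\bullet+(1-s)\sum_j\lambda_j\varphi_j(\cdot)R_j$ is an admissible homotopy rel endpoints, whence $\sff_{r,s+1}(\tilde T_\bullet)=\sff_{r,s+1}(T_\bullet)$. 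At $t=t_j$ the operator $\tilde T_{t_j}=T_{t_j}+\lambda_jR_j$ is block diagonal for the reducing splitting $H=(\ker T_{t_j})^\perp\oplus\ker T_{t_j}$, with first block $T_{t_j}$ on $(\ker T_{t_j})^\perp$ (bounded below, by Fredholmness) and second block $\lambda_j$ times a unitary; hence $\tilde T_{t_j}$ is invertible and, since phases are scale invariant, $\tilde T_{t_j}|\tilde T_{t_j}|\ii=J_{t_j}$, the value being as prescribed — this is (2).

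Finally, for (3): because $\|\sum_j\lambda_j\varphi_j(t)R_j\|\le\lambda$, every point of $\spec|\tilde T_t|$ below $\tfrac{3\delta}{2}$ lies within $\lambda$ of $\spec|T_t|$, which by the mesh bound lies within $\eta$ of the at most $N$ points of $\spec|T_{t_{j-1}}|\cap[0,\tfrac{3\delta}{2}]$; hence $\spec|\tilde T_t|\cap(\lambda,\tfrac{3\delta}{2})$ is contained, for all $t\in[t_{j-1},t_j]$, in a union of $N$ intervals of total length $<2N(\eta+\lambda)<\delta$, leaving a point $\eps_j\in(\lambda,\tfrac{3\delta}{2})$ outside it; and $\specess(|\tilde T_t|)=\specess(|T_t|)\subset[2\delta,\infty)$ gives $[0,\eps_j]\cap\specess(|\tilde T_t|)=\emptyset$. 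This is (3) (and the straight--line homotopy already noted gives the final assertion). I expect the main obstacle to be precisely this last step together with the perturbation theory behind it: one must choose the mesh fine relative to $T_\bullet$ \emph{and} the amplitudes $\lambda_j$ small relative to the spectral gaps of the $T_{t_j}$ in a mutually compatible, uniform way, and simultaneously ensure that passing from $T_\bullet$ to $\tilde T_\bullet$ preserves continuity in whichever of the gap, Riesz or graph--norm topology is in play — the requisite Hausdorff--stability of the low--lying spectrum and admissibility of bounded finite--rank perturbations in the Riesz and $\sB^1(W,H)$ topologies being exactly the two results adapted from \cite[Sec.~5]{L05}.
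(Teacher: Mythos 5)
Your proof is correct and follows essentially the same approach as the paper: insert small scaled finite--rank bumps at the subdivision points exactly as in Lemma~\ref{p.ClkSF.6}, and then verify property (3) by a compactness argument. The only structural difference is that the paper first fixes the subdivision and the $\eps_j$ so that (3) holds for the \emph{unperturbed} family $T_\bullet$ (a direct Fredholmness--plus--compactness step), and then uses a second compactness step to shrink the uniform amplitude $\lambda$ so that (3) persists for $\tilde T_\bullet(\lambda)$; you instead extract uniform bounds ($\delta$, the eigenvalue count $N$, the Hausdorff--modulus $\rho(\eta)$) up front and then tune $\eta$, $\lambda$ and the mesh jointly. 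Both are valid executions of the same compactness idea, though the paper's two--step ordering is slightly cleaner and dispenses with the auxiliary eigenvalue count $N$.
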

\begin{proof} To avoid repetitive statements a like 
`resp. for Riesz continuous resp for paths in $\sF^{r,s+1}(W,H)$'
we present the proof for gap continuous paths and leave the 
(obvious) modifications for the other two cases to the reader.
  
Clearly, since $T_t$ is Fredholm and by
compactness there is a subdivision $0=t_0< t_1<\ldots< t_n=1$
and $\eps_j>0$ such that $\eps_j\not\in\spec |T(t)| $ and
$\specess(|T(t)|)\cap [0,\eps_j]=\emptyset$. The problem
is that $T(t_j)$ need not be invertible. However,
an inspection of the proof of Lemma~\ref{p.ClkSF.6} shows, that
it holds for paths in $\sC\sF^{r,s+1}$ as well. More precisely,
the proof shows that there are finite rank $\Clrs$--antilinear
skew-adjoint operators $R_j$ (and the same choices as in
Lemma~\ref{p.ClkSF.6} are possible) with 
$R_j\restr{\ker T_{t_j}}\in \sJ^{r,s+1}(\ker T_{t_j})$
and $R_j\restr{\ker T_{t_j}^\perp} =0$.
With the notation of the proof of Lemma~\ref{p.ClkSF.6}
we see that for $\gl>0$ the operator family
$\tilde T_t(\gl):=T_t+\gl\cdot \sum_{j=1}^{n-1} \varphi_j(t) R_j$
  satisfies (1) and (2) for all $\gl>0$.
By compactness, there is a $\delta>0$  such that
for all $0< \gl\le \delta$ the family $\tilde T_\bullet(\gl)$
  satisfies (3) as well.    
\end{proof}

\begin{lemma}\label{p.USF.4} Let $H$ be a $\Clrs$--Hilbert space 
and let $T_\bullet:[0,1] \ni t\mapsto \sC\sF^{r,s+1}(H)$ be a gap continuous
path with invertible endpoints. Furthermore, assume that
\textup{(3)} of the previous Lemma is fullfilled for $T_\bullet$.
More precisely, assume that there exists an $\eps>0$ such that
for all $t\in [0,1]$ we have $\eps\not\in\spec |T_t| $ and
$\specess(|T_t|)\cap [0,\eps]=\emptyset$.

  Let $E(t) = 1_{[0,\eps]}(|T_t|)$ be the finite-rank
spectral projection onto the eigenspaces with eigenvalues
$\gl\le\eps$ of $|T_t|$.

Then there is a continuous family $U:[0,1]\to\sB(H)$ 
of \emph{unitary} operators commuting with the $\Clrs$--action
  such that $\tilde T_t = U(t)^* T_t U(t)$ has the following
  properties:
\begin{enumerate}
\item $\tilde T_\bullet:[0,1]\ni\to\sC\sF^{r,s+1}$
    with invertible endpoints.
\item $\tilde T_\bullet$ is homotopic to $T_\bullet$
    within paths in $\sC\sF^{r,s+1}$ with invertible endpoints.
  \item $\tilde T_t$ commutes with $E(0)$ and $\tilde T_t\restr{\ran
    E(0)}$ is invertible.
\end{enumerate}
  As a consequence $T_\bullet$ is homotopic within paths in
  $\sC\sF^{r,s+1}$ with invertible endpoints to a path
  $T^1_\bullet$ commuting with $E(0)$ and such that
  $T^1_\bullet\restr{\ker E(0)}$ is constant and invertible.

The result holds verbatim also for Riesz continuous paths in
  $\sC\sF^{r,s+1}(H)$ as well as for continuous paths into
  $\sF^{r,s+1}(W,H)$.
\end{lemma}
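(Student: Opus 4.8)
The plan is to reduce the statement to a $\Clrs$--equivariant version of Kato's parallel transport of the spectral projections $E(\cdot)$, just as in the complex case \cite[Sec.~5]{L05}, \cite[IV.4.1]{Kat:PTL}. First I would record that $t\mapsto E(t)$ is a norm-continuous path of finite-rank projections commuting with the $\Clrs$--action. Since $\specess(|T_t|)\cap[0,\eps]=\emptyset$ and $\eps\notin\spec|T_t|$, the part of $\spec|T_t|$ in $[0,\eps]$ is a finite set of eigenvalues of finite multiplicity, so $E(t)$ is finite rank; and since the distance from $\eps$ to $\spec|T_t|$ is positive and lower semicontinuous on the compact interval $[0,1]$, there is a uniform gap $\eta>0$ with $\spec|T_t|\cap(\eps-\eta,\eps+\eta)=\emptyset$ for all $t$. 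Fixing an even, compactly supported continuous $\psi:i\R\to[0,1]$ equal to $1$ on $\{|x|\le\eps-\eta\}$ and to $0$ on $\{|x|\ge\eps\}$, one has $E(t)=\psi(T_t)$; norm-continuity in the gap topology is then the standard fact that compactly supported continuous functions of a gap-continuous family of skew-adjoint operators depend norm-continuously on the family (cf. \cite[Sec.~2]{L05}), and the Riesz and fixed-domain cases follow via the inclusions \eqref{eq.EUO.1}. Equivariance is immediate: $\psi$ being even and $T_t$ anti-commuting with each Clifford generator $X$ gives $X\psi(T_t)X\ii=\psi(-T_t)=\psi(T_t)$.

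Next I would construct the transport. For $\tau,t$ with $\|E(t)-E(\tau)\|<1$ put $V_\tau(t):=E(t)E(\tau)+(1-E(t))(1-E(\tau))$: this is invertible, intertwines $E(\tau)$ and $E(t)$, commutes with the $\Clrs$--action, and its polar part $U_\tau(t):=V_\tau(t)\,|V_\tau(t)|\ii$ is a $\Clrs$--linear unitary with $U_\tau(t)^*E(t)U_\tau(t)=E(\tau)$. Picking a subdivision $0=\tau_0<\cdots<\tau_m=1$ fine enough that $\|E(t)-E(\tau_{k-1})\|<1$ on $[\tau_{k-1},\tau_k]$ and concatenating the $U_{\tau_{k-1}}$, I get (this is the classical construction) a norm-continuous family of $\Clrs$--linear unitaries $U:[0,1]\to\sB(H)$ with $U(0)=I$ and $U(t)^*E(t)U(t)=E(0)$ for all $t$. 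The key structural point is that $U(t)-I$ and $U(t)^*-I$ are finite rank with range inside $\ran E(0)+\sum_k\ran E(\tau_k)$, and that each $\ran E(\tau)\subseteq\dom T_\tau$, which in the fixed-domain case is $W$; so $U(t)$ preserves $W$ there.

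Setting $\tilde T_t:=U(t)^*T_tU(t)$, I would then verify the three properties. $\tilde T_t$ is closed, densely defined, skew-adjoint and Fredholm, anti-commutes with $\Edots,\Fdots$ because $U(t)$ is $\Clrs$--linear, has domain $U(t)^*\dom T_t$ (which is $W$ in the fixed-domain case, where one also checks $\tilde T_t(I-D^2)^{-1/2}$ is bounded using the finite-rank structure of $U(t)-I$), and satisfies $\tilde T_0=T_0$, $\tilde T_1=U(1)^*T_1U(1)$, both invertible: this is (1). For (2), the map $G(\sigma,t):=U(\sigma t)^*T_tU(\sigma t)$ is a continuous homotopy $[0,1]^2\to\sC\sF^{r,s+1}(H)$ (conjugation by a unitary is jointly continuous in the gap, Riesz and $\sB^1(W,H)$ topologies) from $G(0,\cdot)=T_\bullet$ to $G(1,\cdot)=\tilde T_\bullet$, whose endpoints $G(\sigma,0)=T_0$ and $G(\sigma,1)=U(\sigma)^*T_1U(\sigma)$ are invertible for all $\sigma$. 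For (3): $|T_t|$ strongly commutes with $T_t$, so $E(t)=\psi(T_t)$ commutes with $T_t$, whence $\tilde T_t$ commutes with $U(t)^*E(t)U(t)=E(0)$; and since $U(t)$ maps $\ker E(0)$ unitarily onto $\ker E(t)=\ran\,1_{(\eps,\infty)}(|T_t|)$, on which $|T_t|\ge\eps+\eta$, the restriction $\tilde T_t\restr{\ker E(0)}$ to the high-energy part is invertible.

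Finally, for the consequence, I would use the $\Clrs$--invariant splitting $H=\ran E(0)\oplus\ker E(0)$ from (3) and the homotopy $K(\sigma,t):=\tilde T_t\restr{\ran E(0)}\oplus\tilde T_{(1-\sigma)t}\restr{\ker E(0)}$, which runs in $\sC\sF^{r,s+1}(H)$ from $\tilde T_\bullet$ at $\sigma=0$ to a path $T^1_\bullet$ that is constant ($=\tilde T_0\restr{\ker E(0)}$) and invertible on $\ker E(0)$ at $\sigma=1$, and whose endpoints $K(\sigma,0)=T_0$, $K(\sigma,1)=\tilde T_1\restr{\ran E(0)}\oplus\tilde T_{1-\sigma}\restr{\ker E(0)}$ are invertible for all $\sigma$ by (3) and invertibility of $\tilde T_1$; concatenating $G$ and $K$ gives the assertion, and every step survives replacing the gap topology by the Riesz topology or by that of $\sF^{r,s+1}(W,H)$. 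I expect the main obstacle to be exactly this last point in the fixed-domain case: one must check, via the finite-rank-with-range-in-$W$ structure of $U(t)-I$ and the interpolation-theoretic description of $\sB^1(W,H)$ (cf. \cite[Sec.~2 and Appendix]{L05} and Remark~\ref{p.EUO.1}), that $\tilde T_t$ and all the $G(\sigma,t)$, $K(\sigma,t)$ stay in $\sF^{r,s+1}(W,H)$ and vary continuously in the $\|\cdot\|_{W\to H}$--norm, whereas in the gap and Riesz topologies this is immediate from the identity $F(U^*TU)=U^*F(T)U$ for the Riesz transform $F$ and the joint continuity of conjugation; the $\Clrs$--equivariance itself costs nothing, being built into the functorial construction of $U$.
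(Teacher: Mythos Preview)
Your proposal is correct and follows essentially the same route as the paper: establish norm-continuity and $\Clrs$--linearity of the spectral projections $E(t)$, build a $\Clrs$--linear unitary parallel transport $U(t)$ with $U(0)=I$ and $U(t)^*E(t)U(t)=E(0)$, conjugate, and then refer to the analysis in \cite[Sec.~5]{L05} for the remaining verifications. The paper obtains continuity of $E(t)$ via the Riesz--Dunford contour integral $E(t)=\frac{1}{2\pi i}\oint(-T_t^2-z)^{-1}\,dz$ and invokes Prop.~\ref{p.NHCT.2} together with \cite[Prop.~4.3.3]{Bla98} for the transport, whereas you use the equivalent functional-calculus realization $E(t)=\psi(T_t)$ with $\psi$ even and compactly supported, and write out the Kato--Nagy transport $V_\tau(t)=E(t)E(\tau)+(I-E(t))(I-E(\tau))$ explicitly; these are interchangeable devices. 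Your added observation that $U(t)-I$ is finite rank with range in the domain is exactly the input needed for the fixed-domain case, which the paper handles by reference to \cite[Assertion 2 in Sec.~5]{L05}.

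One point worth flagging: in verifying (3) you correctly argue that $\tilde T_t$ is invertible on $\ker E(0)$ (the high-energy block), which is what the ``consequence'' and the application in Theorem~\ref{p.EUO.2} actually require; the $\ran E(0)$ in the printed statement of (3) is evidently a slip, since $\tilde T_t\restr{\ran E(0)}$ can fail to be invertible precisely where $T_t$ has kernel.
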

\begin{proof} Note that
\[
   E(t) = \frac{1}{2\pi i} \oint (- T_t^2 - z )\ii dz,
\]   
where the integral is along a contour encircling the interval
$[0,\eps^2]$ counterclockwise such that no other spectral points are
encircled. This is possible by the spectral gap assumptions. This
formula shows that $t\mapsto E(t)$ is continuous, in view of
\Eqref{eq.EUO.1} a fortiori if $T_\bullet$ is Riesz continuous or
fixed domain continuous.  Furthermore, $E(t)$ commutes with the
$\Clrs$--action.  The construction in the proof of
Prop.~\ref{p.NHCT.2} and in \cite[Prop.~4.3.3]{Bla98} shows that
there is a continuous family of $\Clrs$--linear unitaries
$U:[0,1]\to \sB(H)$, $U(0)=I$ such that $E(t) = U(t) E(0) U(t)^*$. Now
put $\tilde T_t:= U(t)^* T_t U(t)$. 

Since $E(t)$ and $U(t)$ are $\Clrs$--linear, the rest of the proof is
now identical to that in \cite[Assertion 2 in Sec.~5]{L05}.
\end{proof}

\subsection{The `local formula' for $\SF$ in the unbounded case and
consequences}
\label{ss.FINDLABEL}
Next we discuss the analogue of Theorem~\ref{p.ClkSF.7}.  For
this we will need the two technical Lemmas in Sec.
\ref{ss.TTR}. For Theorem~\ref{p.ClkSF.7} however, 
it is essential that the family $t\mapsto \pi(J_t)\in\sQ(H)$
depends continuously on $t$ where $J_t$ is essentially the phase
$T_t|T_t|\ii$.  For this to behave continuously, gap continuity is not
enough.  Therefore, the analogue of Theorem~\ref{p.ClkSF.7} can only
be proved for \emph{Riesz continuous} families of skew-adjoint Fredholm
operators.

\begin{theorem} \label{p.EUO.2} 
Let $H$ be a $\Cl_{r,s}$--Hilbert space and let 
 $[0,1]\ni t\mapsto T_t\in \sC\sF^{r,s+1}(H)$ be a Riesz continuous path with
$T_0, T_1$ being invertible. Let $J_t$ be complex structures obtained
by complementing the phase $T_t |T_t|\ii$ on $\ker T_t^\perp$ by an
  arbitrary complex structure in $\sJ^{r,s+1}(\ker T_t)$,
cf. \Eqref{eq.ClkSF.4}.

Then $t\mapsto \pi(J_t)\in\sQ(H)$ is continuous.
For a fine enough\footnote{The phrase `fine enough' can be quantified by
working through the constructions in the proof.} 
partition 
$0=t_0<t_1<\cdots<t_n=1$ of the interval $[0,1]$ we then
have the formula
\[
    \SF_{r,s+2}( T_\bullet ) =
       \sum_{j=1}^n \Ind_{r,s+2}(J_{t_{j-1}}, J_{t_j} ).
\]
\end{theorem}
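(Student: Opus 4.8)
The plan is to reduce the unbounded, Riesz-continuous case to the bounded case already treated in Theorem~\ref{p.ClkSF.7} via the Riesz transform, together with the two technical lemmas of Sec.~\ref{ss.TTR}. First I would establish the continuity of $t\mapsto \pi(J_t)\in\sQ(H)$. Recall that on $\ker T_t^\perp$ we have $J_t = T_t|T_t|^{-1}$, and that the Riesz transform $F(T_t) = T_t(I-T_t^2)^{-1}$ depends norm-continuously on $t$ by hypothesis. One computes that the phase $T_t|T_t|^{-1}$ is a norm-continuous function of $F(T_t)$ \emph{away from the essential spectrum gap at $0$}: indeed, $F(T_t)$ is a bounded skew-adjoint operator whose essential spectrum is bounded away from $0$ (because $T_t$ is Fredholm), so $|F(T_t)|^{-1}F(T_t) = T_t|T_t|^{-1}$ modulo a finite-rank operator supported near $\ker T_t$. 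Since the map $A\mapsto A|A|^{-1}$ is continuous on the set of bounded skew-adjoint operators with $0\notin\specess$, the image $\pi(J_t)$ in the Calkin algebra varies continuously. (Here the finite-rank choices made in the definition of $J_t$ on $\ker T_t$ are invisible in $\sQ(H)$.)

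Next I would apply Lemma~\ref{p.USF.3} to obtain, after a straight-line homotopy that preserves $\sff_{r,s+2}$, a path $\tilde T_\bullet$ which is a finite-rank perturbation of $T_\bullet$, agrees with $T_\bullet$ at the endpoints, is invertible at a subdivision $0=t_0<\cdots<t_n=1$, and whose phases at the $t_j$ may be taken to be the prescribed $J_{t_j}$. Then Lemma~\ref{p.USF.4}, applied segment by segment, lets me conjugate by a continuous $\Clrs$-linear unitary family so that on each segment the path commutes with the spectral projection $E(t_{j-1})$ onto the small eigenvalues and is constant and invertible on its complement; this pins the essential behaviour. Applying the Riesz transform $T\mapsto -F_s\bigl(I+TF_s\bigr)\bigl(I-TF_s\bigr)^{-1} = \Phi(T)$ sends this path to a path in $\widetilde\Omega_{r,s}(H)$ whose endpoints lie in $\widetilde\Omega_{r,s,*}$; by the very construction of $\sff_{r,s+2}$ in the unbounded case (Sec.~\ref{ss.SFFUO}) the spectral flow $\SF_{r,s+2}(T_\bullet)$ is the class of the resulting loop, exactly as in the bounded case. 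Running the argument of the proof of Theorem~\ref{p.ClkSF.7} — use the homotopy $H(u,t) = h_u(T_t)$ with $h_u(x) = (1-u)x + u\Psi(x)$ to flatten the essential spectrum to $\{\pm i\}$, noting that $h_u$ is increasing and strictly increasing near $0$ so $\ker$ and phase are preserved — reduces to the case $\specess T_t\subset\{\pm i\}$ with $T_{t_j}=J_{t_j}$, where $T_t - J(T_t)$ is finite rank; then on each segment $u\mapsto (1-u)\bigl((1-t)J_{t_{j-1}}+tJ_{t_j}\bigr)+uT_t$ is an endpoint-fixing homotopy to the straight-line path of complex structures, and path additivity gives $\SF_{r,s+2}(T_\bullet) = \sum_{j=1}^n \Ind_{r,s+2}(J_{t_{j-1}},J_{t_j})$ once one applies $\tau_{r,s+2}$.

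The main obstacle is the first step: making precise that the flattening homotopy $h_u(T_t)$ and the subsequent finite-rank-perturbation arguments are legitimate \emph{within Riesz-continuous paths}. Gap continuity is genuinely not enough here — applying a bounded Borel function to an unbounded skew-adjoint operator need not be gap-continuous, let alone Riesz-continuous — so one must check that $h_u$, being a bounded continuous odd function on $i\R$ that is the identity near $0$ and asymptotically $\pm i$, induces a jointly continuous map $(u,t)\mapsto h_u(T_t)$ in the Riesz topology. This follows because $h_u$ differs from the Cayley/Riesz-type flattening by a function vanishing at $\pm i\infty$, so $h_u(T_t)$ is a norm-continuous function of $F(T_t)$; I would isolate this as the key technical point and reference \cite[Sec.~2]{L05} for the analogous complex statement, after which the rest is a faithful transcription of the proof of Theorem~\ref{p.ClkSF.7}.
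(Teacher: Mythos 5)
Your overall plan — reduce via Lemmas~\ref{p.USF.3} and~\ref{p.USF.4} and then quote the bounded case Theorem~\ref{p.ClkSF.7} — is the right skeleton, but you have not extracted from Lemma~\ref{p.USF.4} what it actually provides, and the step you single out as the ``main obstacle'' rests on an incorrect justification.

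The paper's proof never applies the flattening homotopy $h_u(T_t)=(1-u)T_t+u\Psi(T_t)$ to an unbounded family. After Lemma~\ref{p.USF.4}, the conjugated path $U(t)^*T_tU(t)$ splits as $S_t\oplus R_t$ on $\ker E(0)\oplus\ran E(0)$, with $S_\bullet$ an invertible family and $R_\bullet$ a \emph{finite-rank} family. By direct sum and homotopy, $\SF_{r,s+2}(T_\bullet)=\SF_{r,s+2}(R_\bullet)$, and $R_\bullet$ is a bounded (indeed finite-dimensional) family to which Theorem~\ref{p.ClkSF.7} and Prop.~\ref{p.FPCI.8} apply verbatim; the conjugation and a short homotopy then give $\Ind_{r,s+2}(J_0,J_1)$. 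No continuity question for unbounded functional calculus in the Riesz topology is ever raised. Your proposal instead re-runs the flattening homotopy on the original unbounded path, which is exactly what the paper's reduction was designed to avoid, and is redundant once Lemma~\ref{p.USF.4} has been used.

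Moreover your justification for the flattening step is wrong on two counts. First, $h_u(x)=(1-u)x+u\Psi(x)$ is \emph{unbounded} on $i\R$ for every $u<1$ (it grows linearly), so it is not ``a bounded continuous odd function'' and its application to $T_t$ is not a bounded Borel function of $F(T_t)$. Second, the stated reason — that ``$h_u$ differs from the Cayley/Riesz-type flattening by a function vanishing at $\pm i\infty$'' — is false: $h_u(x)-\Psi(x)=(1-u)\bigl(x-\Psi(x)\bigr)$ grows without bound. The conclusion you want (joint Riesz-continuity of $(u,t)\mapsto h_u(T_t)$) is in fact salvageable, but only via a different argument: $\phi_u:=F\circ h_u\circ F^{-1}$ extends to a jointly continuous family of self-maps of the closed interval in $i\R$, because $F\bigl(h_u(ix)\bigr)\to\pm i$ uniformly in $u$ as $x\to\pm\infty$, and hence $F\bigl(h_u(T_t)\bigr)=\phi_u\bigl(F(T_t)\bigr)$ is jointly norm continuous. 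If you insist on this route you must prove that cleanly; but the simpler and more robust route is the paper's, which confines all homotopies of $T_\bullet$ to finite-rank perturbations and unitary conjugations, and only runs the flattening (inside Theorem~\ref{p.ClkSF.7}) on the already-bounded finite-rank family $R_\bullet$.
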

\begin{proof} The continuity of $t\mapsto \pi(J_t)\in\sQ(H)$ follows
as in \cite[Lemma 3.3]{L05}. By the path additivity and homotopy
invariance of the spectral flow and by Lemma~\ref{p.USF.3},
it suffices to prove the Theorem for a family $T_t$ satisfying the
spectral gap assumptions of Lemma~\ref{p.USF.4}. We can further assume that 
$E(t)-E(0)$ and $U(t)-U(0)$ and the
Riesz distance $d_R(T_t,T_0)$ are small enough,
 and finally $T_0=J_0, T_1=J_1$.
So $T_\bullet$ now stands for the family on one of the segments
in Lemma~\ref{p.USF.3} and it remains to show that 
$\SF_{r,s+2}(T_\bullet)=\Ind_{r,s+2}(J_0,J_1)$.
With respect to the decomposition $H=\ker E(0)\oplus \ran E(0)$ we
have
  \[ U(t)^* T_t U(t) = S_t \oplus R_t\]
with an invertible family $S_\bullet$ and a finite rank family
  $R_\bullet$. 
Again by Homotopy and applying Theorem~\ref{p.ClkSF.7} and
Prop.~\ref{p.FPCI.8} to the finite-rank family $R_\bullet$
we have
\[
   \begin{split}
     \SF_{r,s+2}( T_\bullet ) 
        & =  \SF_{r,s+2}\bl U(\bullet)^* T_\bullet U(\bullet) \br\\
        & =  \SF_{r,s+2} \bl R_\bullet \br \\
        & =  \Ind_{r,s+2} \bl 
           U(0)^* J_0\restr{\ran E(0)} U(0), 
             U(1)^* J_1 U(1)\restr{\ran E(0) } \br\\
        & =  \Ind_{r,s+2} \bl U(0)^* J_0 U(0), U(1)^* J_1 U(1) \br\\
        & =  \Ind_{r,s+2} \bl  J_0, U(0)U(1)^* J_1 U(1)U(0)^* \br \\
        & = \Ind_{r,s+2}  (J_0, J_1).
   \end{split}
\]
From line 3 to line 4 it was used that $S_0, S_1$ is an invertible
pair since they are close enough in the Riesz distance. For the last 
equality,  one may invoke the homotopy
  $U(0)U(s)^* J_1 U(s) U(0)^*$, $0\le s\le 1$, again using
   the smallness assumptions which imply that this is a valid
  homotopy of Fredholm pairs.
\end{proof}  

\begin{theorem}[cf. {\cite[Theorem 3.6]{L05}}]     \label{p.EUO.3} 
Let $H$ be a $\Cl_{r,s}$--Hilbert space and let 
  $[0,1]\ni t\mapsto T_t\in \sC\sF^{r,s+1}(H)$ be a Riesz continuous path with
$T_0, T_1$ being invertible. Assume furthermore, that the domain of
$T_t$ does not depend on $t$ and that for each $t\in[0,1]$ the
difference $T_t-T_0$ is relatively $T_0$-compact, i.e. 
$(T_t-T_0)(T_0 +I)^{-1}$ is compact.
  Then the pair 
$(T_0 |T_0|\ii, T_1|T_1|\ii)$ consisting of the phases of the
  endpoints is a Fredholm pair of complex structures in $\sJ^{r,s+1}(H)$ and
we have the index formula
\[
    \sff_{r,s+2} ( T_\bullet ) = \ind_{r,s+2} ( T_0|T_0|\ii, T_1|T_1|\ii ).
\]  
\end{theorem}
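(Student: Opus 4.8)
The plan is to derive the index formula from the `local' index formula of Theorem~\ref{p.EUO.2} together with the additivity of the Fredholm pair index, Prop.~\ref{p.FPCI.8}; the point that makes the resulting sum telescope is that the relative compactness hypothesis forces every phase along the path to agree with the endpoint phases modulo $\sK(H)$. Throughout, for $t\in[0,1]$ let $J_t\in\sJ^{r,s+1}(H)$ be a complex structure obtained by complementing $T_t|T_t|\ii$ on $\ker T_t^\perp$ by an arbitrary element of $\sJ^{r,s+1}(\ker T_t)$, cf.~\eqref{eq.ClkSF.4}; this is possible since $T_\bullet$ has invertible endpoints, so $\ind_{r,s+1}(T_t)=0$ for all $t$. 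Because $T_0$ and $T_1$ are invertible, $J_0=T_0|T_0|\ii$ and $J_1=T_1|T_1|\ii$ involve no arbitrary choice and lie in $\sJ^{r,s+1}(H)$.

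\textbf{Step 1.} First I would show that relative $T_0$-compactness of $T_t-T_0$ implies $\pi(J_t)=\pi(J_0)$ in the Calkin algebra $\sQ(H)$ for every $t$. This is the functional-analytic heart of the statement and is exactly the content of the corresponding step in \cite[Theorem~3.6]{L05} and the lemmas preceding it; the adaptation to the $\Cl_{r,s}$--equivariant skew-adjoint setting is routine, the Clifford generators commuting with all the functional calculus involved. Concretely, one checks that $(T_t-T_0)(F_s+T_0)\ii=\bl(T_t-T_0)(I+T_0)\ii\br\cdot\bl(I+T_0)(F_s+T_0)\ii\br$ is compact, the second factor $(I+T_0)(F_s+T_0)\ii=I+(I-F_s)(F_s+T_0)\ii$ being bounded; this shows that $T_tF_s-T_0F_s=(T_t-T_0)F_s$ is relatively $(T_0F_s)$-compact and hence that the Cayley transforms satisfy $\Phi(T_t)-\Phi(T_0)\in\sK(H)$, from which one recovers $\pi(J_t)=\pi(J_0)$ as in loc.~cit. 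In particular $\|J_0-J_1\|_\sQ=0<2$, so $(T_0|T_0|\ii,T_1|T_1|\ii)=(J_0,J_1)$ is a Fredholm pair of complex structures in the sense of Def.~\ref{p.FPCI.3}; this is the first assertion of the theorem.

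\textbf{Step 2.} Since $T_\bullet$ is Riesz continuous with invertible endpoints, Theorem~\ref{p.EUO.2} applies and, for a sufficiently fine partition $0=t_0<t_1<\cdots<t_n=1$, gives
\[
  \sff_{r,s+2}(T_\bullet)=\sum_{j=1}^n\ind_{r,s+2}(J_{t_{j-1}},J_{t_j})
\]
(the group-valued refinement of Theorem~\ref{p.EUO.2}, which its proof also delivers; apply $\tau_{r,s+2}$ for the $\SF$-version). By Step~1 we have $\pi(J_{t_j})=\pi(J_0)$ for all $j$, hence $\|J_{t_i}-J_{t_j}\|_\sQ=0<1$ for all $i,j$, and a repeated application of the additivity formula of Prop.~\ref{p.FPCI.8} collapses the right-hand side to $\ind_{r,s+2}(J_{t_0},J_{t_n})=\ind_{r,s+2}(J_0,J_1)=\ind_{r,s+2}(T_0|T_0|\ii,T_1|T_1|\ii)$. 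This is the desired identity.

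I expect the only genuine obstacle to be Step~1: propagating relative $T_0$-compactness through the Cayley transform and then down to the \emph{phases}, where care is needed because the scalar function $z\mapsto z/|z|$ is discontinuous at $0$ and $\pi(\Phi(T_t))$ need not be essentially invertible when the essential spectrum of $T_t$ is unbounded. Both subtleties are resolved using the spectral gap of the Fredholm endpoints, exactly as in \cite[Sec.~3]{L05}; everything after Step~1 is formal, given Theorem~\ref{p.EUO.2} and Prop.~\ref{p.FPCI.8}.
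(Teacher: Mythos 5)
Your proposal is correct and takes essentially the same route as the paper's proof: establish that the relative $T_0$-compactness hypothesis forces the phases $J_t$ to agree modulo compacts (so every pair $(J_{t_{j-1}},J_{t_j})$ is Fredholm with small Calkin distance), then telescope the local formula of Theorem~\ref{p.EUO.2} using the additivity of Prop.~\ref{p.FPCI.8}. The only cosmetic difference is in Step~1: the paper invokes \cite[Prop.~3.4]{L05} directly for compactness of the Riesz transform differences $F(T_t)-F(T_0)$ and then passes to the phases, whereas you route the compactness argument through the Cayley transform; both ultimately rest on the analysis in \cite[Sec.~3]{L05}, and your acknowledgement that the Cayley-to-phase step requires the spectral gap is exactly the care needed.
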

\begin{proof} It follows from \cite[Prop.~3.4]{L05} that under the
assumptions of the Theorem the differences of the Riesz transforms
$F(T_t)-F(T_0)$ is a compact operator for all $t\in [0,1]$. Similarly,
the differences of the `phases' $J_t-J_0$ are compact as well.
Now the claim follows by applying Prop.~\ref{p.FPCI.8} to the
formula in Theorem~\ref{p.EUO.2}.
\end{proof}

Thus, for a path of relatively compact operators the $\SF_{r,s+2}$
depends only on the endpoints. This applies in particular to
the finite dimensional case which we single out here.

\begin{cor}\label{p.EUO.4} 
Let $H$ be a finite dimensional $\Cl_{r,s}$--module and let 
  $[0,1]\ni t\mapsto T_t\in\sB^{r,s+1}(H)$ be a continuous family of
skew-adjoint matrices anti-commuting with the Clifford generators. Then
$\sff_{r,s+2} ( T_\bullet ) = \ind_{s,r+2} ( T_0|T_0|\ii, T_1|T_1|\ii )$.  
\end{cor}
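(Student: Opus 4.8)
The plan is to obtain Corollary~\ref{p.EUO.4} as the specialization of Theorem~\ref{p.EUO.3} to a finite-dimensional module, so that the only real task is to check that the hypotheses of that theorem are automatically met when $H$ is finite-dimensional. First I would record the elementary observation that on a finite-dimensional Hilbert space $\sC(H)=\sB(H)=\sK(H)$: every densely defined closed operator is everywhere-defined and bounded, and the gap, Riesz, and operator-norm topologies all coincide on $\sB(H)$ (both the resolvent $T\mapsto(T-I)\ii$ and the Riesz transform $T\mapsto T(I-T^2)\ii$ are norm-continuous on $\sB(H)$ with norm-continuous inverses on their ranges). Consequently the given norm-continuous path $T_\bullet\colon[0,1]\to\sB^{r,s+1}(H)$ is in particular a Riesz continuous path in $\sC\sF^{r,s+1}(H)$, and the $\sff_{r,s+2}(T_\bullet)$ appearing in the statement is both the bounded one of Sec.~\ref{s.ClkSF} and the unbounded one of Sec.~\ref{ss.SFFUO}, these agreeing via the inclusions \eqref{eq.EUO.1}.

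Next I would verify the remaining hypotheses of Theorem~\ref{p.EUO.3}. The domain of $T_t$ is all of $H$ for every $t$, hence trivially $t$-independent; and for each $t$ the difference $T_t-T_0$ lies in $\sB(H)=\sK(H)$, so it is compact and in particular relatively $T_0$-compact, i.e. $(T_t-T_0)(T_0+I)\ii$ is compact. Invertibility of the endpoints $T_0,T_1$ is part of the standing assumption under which $\sff_{r,s+2}(T_\bullet)$ is defined at all (Definition~\ref{p.ClkSF.3}), so the phases $T_0|T_0|\ii$, $T_1|T_1|\ii$ are genuine complex structures in $\sJ^{r,s+1}(H)$. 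Theorem~\ref{p.EUO.3} then tells us that $\bl T_0|T_0|\ii, T_1|T_1|\ii\br$ is a Fredholm pair and that $\sff_{r,s+2}(T_\bullet)=\ind_{r,s+2}\bl T_0|T_0|\ii, T_1|T_1|\ii\br$, which is precisely the assertion.

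I expect no genuine obstacle: the corollary is a packaging statement and the argument above is essentially a hypothesis check. The one point worth a sentence of care is that a finite-dimensional $\Cl_{r,s}$--module is never standard, so both $\sff_{r,s+2}$ and $\ind_{r,s+2}$ here are understood in their stabilized forms (stable fundamental group, resp. stable homotopy class of a loop; cf. Definition~\ref{p.ClkSF.3}, Definition--Proposition~\ref{p.FPCI.5}, and Appendix~\ref{ss.SHFC}); since this stabilization is already built into the definitions, Theorem~\ref{p.EUO.3} applies verbatim with nothing extra to prove. As a consistency check one may also note that the conclusion follows directly from the local formula Theorem~\ref{p.ClkSF.7}: in finite dimensions $t\mapsto J_t=J(T_t)$ differs from $t\mapsto T_t|T_t|\ii$ by a finite-rank operator and $t\mapsto\pi(J_t)$ is continuous, so telescoping $\sum_{j=1}^n\ind_{r,s+2}(J_{t_{j-1}},J_{t_j})=\ind_{r,s+2}(J_0,J_1)$ via Prop.~\ref{p.FPCI.8}, together with $J_0=T_0|T_0|\ii$ and $J_1=T_1|T_1|\ii$, gives the claim.
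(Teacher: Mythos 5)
Your proposal is correct and takes exactly the route the paper intends: Cor.~\ref{p.EUO.4} is stated immediately after the remark that Theorem~\ref{p.EUO.3} "applies in particular to the finite dimensional case," so the paper's own (implicit) proof is precisely the hypothesis check you carry out. The only small remark worth making is that the $\ind_{s,r+2}$ appearing in the statement of the corollary must be read as $\ind_{r,s+2}$ (consistent with Theorem~\ref{p.EUO.3}), which is what you correctly produce.
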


\section{Uniqueness of the $\KO$--valued spectral flow}
\label{s.USF}

In this section we give an `axiomatic' characterization of $\KO$--valued spectral flow.
Our approach is not exactly the same as in the complex case as described in \cite{L05}
because we have to start with the finite dimensional case. As a result, stability must  play a role in
the passage to infinite dimensions.

\subsection{Uniqueness in the finite dimensional case}
\label{ss.UFDC}

We introduce some notation. For a finite dimensional
$\Cl_{r,s}$--module $H$ we denote by $\sA^{r,s+1}(H)$ the set of
continuous paths $\gamma:[0,1]\to \sB^{r,s+1}(H)$ with $\gamma(0),
\gamma(1)$ being invertible. 

\begin{theorem}\label{p.USF.1} Suppose we are given,
for each finite dimensional $\Cl_{r,s}$--module $V$,
a map $\mu_V:\sA^{r,s+1}(V) \to A_{r,s+2}$ satisfying \emph{Homotopy,
Path additivity, Stability}, and \emph{Normalization} in the sense
of Remark~\ref{p.ClkSF.20}.  Then $\mu_V = \sff_{r,s+2}$.
\end{theorem}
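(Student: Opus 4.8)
The plan is to prove that the four axioms already force
\[
  \mu_V(\gamma) = \ind_{r,s+2}\bl J(\gamma(0)),\, J(\gamma(1)) \br
  \qquad\text{for every } \gamma\in\sA^{r,s+1}(V),
\]
where for an invertible $T\in\sB^{r,s+1}(V)$ we write $J(T):=T|T|\ii\in\sJ^{r,s+1}(V)$ for its phase. Since $\sff_{r,s+2}$ itself satisfies Homotopy, Path additivity, Stability and Normalization (Remark~\ref{p.ClkSF.20} 2--5), and, by Cor.~\ref{p.EUO.4}, is given by exactly this formula in finite dimensions, it will follow that $\mu_V=\sff_{r,s+2}$. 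First some elementary consequences of the axioms: any two (monotone) reparametrisations of a path are homotopic rel endpoints through paths in $\sB^{r,s+1}(V)$, so Homotopy gives reparametrisation invariance; since the concatenation of a constant path with itself \emph{is} that constant path, Path additivity yields $\mu_V(\textup{const})=0$; and the standard null--homotopy of $\gamma*\bar\gamma$ (all of whose intermediate values lie in the image of $\gamma$, endpoints fixed and invertible) gives $\mu_V(\bar\gamma)=-\mu_V(\gamma)$.

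\emph{Reduction to the endpoint components.} The crucial finite--dimensional simplification is that $\sB^{r,s+1}(V)$ is a real linear subspace of $\Mat(\dim V,\R)$, hence convex. Consequently, if $\gamma_0,\gamma_1\in\sA^{r,s+1}(V)$ have the same invertible endpoints, the affine homotopy $(s,t)\mapsto(1-s)\gamma_0(t)+s\gamma_1(t)$ stays in $\sB^{r,s+1}(V)$ with fixed invertible endpoints, so $\mu_V(\gamma_0)=\mu_V(\gamma_1)$; and letting the two endpoints travel along paths in the set $G^{r,s+1}(V):=\{T\in\sB^{r,s+1}(V):T\text{ invertible}\}$ and using convexity once more shows that $\mu_V(\gamma)$ depends only on the connected components of $\gamma(0)$ and $\gamma(1)$ in $G^{r,s+1}(V)$. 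Since $u\mapsto J(A)\bl(1-u)|A|+uI\br$ is a deformation retraction of $G^{r,s+1}(V)$ onto $\sJ^{r,s+1}(V)$ carrying $A$ to $J(A)$, the value $\mu_V(\gamma)$ depends only on the classes $[J(\gamma(0))],[J(\gamma(1))]\in\pi_0\bl\sJ^{r,s+1}(V)\br$. In particular $\mu_V$ vanishes on every path both of whose endpoints lie in a single component of $G^{r,s+1}(V)$, and $\mu_V(\gamma)$ equals $\mu_V$ of any (e.g. straight--line) path joining $J(\gamma(0))$ to $J(\gamma(1))$.

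\emph{Computation via Stability and Normalization.} Fix $\gamma$ with invertible endpoints $A,B$ and set $J_0:=J(A)$, $J_1:=J(B)$. By Stability we may enlarge $V$: using the Stability axiom to pass from $V$ to $V\oplus H_1$ (adding a fixed invertible $S\in\sJ^{r,s+1}(H_1)$) we may assume, invoking the stable form of Cor.~\ref{p.FPCI.6A} and the standard forms of Sec.~\ref{ss.SP} with $F_{s+1}:=J_0$, that $V$ is a $\Cl_{r,s+1}$--module with $F_{s+1}=J_0$ and that $J_1$ lies in the component of a standard complex structure $J_1'=F_{s+1}\restr{H_0}\oplus(-F_{s+1})\restr W$ for some $\Cl_{r,s+1}$--linear splitting $V=H_0\oplus W$, with $\ind_{r,s+2}(F_{s+1},J_1')=[W]$. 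By the previous paragraph, $\mu_V(\gamma)$ equals $\mu_V$ of the straight--line path from $F_{s+1}$ to $J_1'$, which is constant equal to $F_{s+1}$ on $H_0$ and equal to $(1-2t)F_{s+1}\restr W$ on $W$. Applying Stability again with the summand $H_0$ and the invertible $S=F_{s+1}\restr{H_0}$, and then Normalization on the $\Cl_{r,s+1}$--module $W$, we get $\mu_V(\gamma)=[W]=\ind_{r,s+2}(F_{s+1},J_1')=\ind_{r,s+2}(J_0,J_1)$, the last equality by local constancy of $\ind_{r,s+2}$ and its invariance under stabilisation. Hence $\mu_V=\sff_{r,s+2}$.

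The main obstacle is the last step: one must invoke the \emph{stable} form of Cor.~\ref{p.FPCI.6A} correctly --- choosing the stabilising module so that $J_0$ restricts to a complex structure on it and so that $\ind_{r,s+2}$ really is the complete stable invariant with values in $A_{r,s+2}$ --- and one must track orientations carefully so that the class $[W]$ produced by Normalization matches $\ind_{r,s+2}$ of the endpoint pair with no spurious sign (recall the table in Sec.~\ref{ss.ABS} is only fixed up to sign in the cases $\equiv 0\bmod 4$). The homotopy inputs of the reduction step are routine thanks to the convexity of $\sB^{r,s+1}(V)$, but it is precisely this finite--dimensional convexity, together with the essential use of Stability, that makes the argument genuinely different from the infinite--dimensional uniqueness proof in \cite{L05}.
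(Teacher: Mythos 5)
Your proof is correct and follows essentially the same route as the paper: straight‑line homotopies in the linear space $\sB^{r,s+1}(V)$ reduce $\mu_V(\gamma)$ to a function of the stable path components of the endpoints' phases, and Stability together with the standard forms of Sec.~\ref{ss.SP} (equivalently the stable version of Cor.~\ref{p.FPCI.6A}/Appendix~\ref{ss.SHFC}) and Normalization then pins it down. The extra detail you supply (the explicit retraction $A\mapsto J(A)\bigl((1-u)|A|+uI\bigr)$, the explicit stabilization steps, the identification of the value with $\ind_{r,s+2}$ of the endpoint phases via Cor.~\ref{p.EUO.4}) is a fleshed‑out version of the argument the paper gives more tersely.
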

More precisely, Normalization means the following:
if $V$ is a module as in \Eqref{eq.Norm} (resp. Sec.~\ref{ss.SP})
then $\mu_V(t\mapsto (1-2t) F_{s+1}) = [V]$.

\begin{proof} Suppose first, we are given two paths 
$\gamma_1, \gamma_2\in\sA^{r,s+1}(V)$ with the same endpoints.
  Then the straight line homotopy $u\mapsto (1-u) \gamma_1 + u \gamma_2$
stays within $\sA^{r,s+1}(V)$, hence
$\mu_V(\gamma_1)=\mu_V(\gamma_2)$. Thus for
$\gamma\in\sA^{r,s+1}(V)$ the value $\mu_V(\gamma)$ depends only on
the endpoints of $\gamma$.  In particular, 
$\mu_V(\gamma)=\mu_V(t\mapsto (1-t) \gamma(0)+t \gamma(1))$.

If $h_0(u), h_1(u)$ are paths of invertible elements in
$\sB^{r,s+1}(V)$ with $h_0(0)=\gamma(0)$ and $h_1=\gamma(1)$,
then $H(u,t):=(1-t) h_0(u) + t h_1(u)$ is a homotopy with invertible
endpoints from the straight line path from $\gamma(0)$ to $\gamma(1)$
to the straight line path from $h_0(1)$ to $h_1(1)$.

Together with Stability this shows that $\mu_V(\gamma)$ depends
only on the stable homotopy classes of the endpoints of the paths.
Since any invertible $T\in\sB^{r,s+1}(V)$ is homotopic to its phase
$T|T|\ii\in\sJ^{r,s+1}(V)$, it suffices to consider pairs of
complex structures. By Appendix~\ref{ss.SHFC} we know that
in each stable path component there is a standard pair as 
in Sec.~\ref{ss.SP}. Hence by Normalization 
(and Stability) the claim follows.
\end{proof}  

\subsection{Uniqueness in the fixed domain unbounded case 
(includes bounded case)}
\label{ss.UFD}

Here we discuss uniqueness for families in $\sF^{r,s+1}(W,H)$. This
contains the case $W=H$ and hence the bounded case as a special case.
The obvious extension to gap or Riesz continuous families is left to
the reader, but see \cite[Sec.~5]{L05}.

So fix a $\Cl_{r,s}$--Hilbert space and a skew-adjoint operator
$D\in\sC\sF^{r,s+1}(H)$ with domain $W$. By $\sA^{r,s+1}(W,H)$ we denote
the set of continuous paths $\gamma:[0,1]\to \sF^{r,s+1}(W,H)$ with
invertible endpoints. A pair $(\tilde W,\tilde H)$ is called
$W$--admissible if there is a finite dimensional $\Cl_{r,s+1}$--module $V$
such that $\tilde W=W\oplus V$, $\tilde H=H\oplus V$ as
$\Cl_{r,s}$--Hilbert spaces.  Hence $\tilde W\hookrightarrow \tilde H$
sits naturally in $\tilde H$ as a dense subspace. $\tilde W$ is the
domain, e.g., of the skew-adjoint operator $D\oplus F_{s+1}\restr{V}$.

\begin{theorem}\label{p.USF.2}  Suppose that for each $W$-admissible
pair $(\tilde W, \tilde H)$ we are given a map 
$\mu_{\tilde W}: \sA^{r,s+1}(\tilde W,\tilde H)\to A_{r,s+2}$ 
satisfying \emph{Homotopy, Path additivity}, and \emph{Stability}
in the sense of Remark~\ref{p.ClkSF.20}. Furthermore, assume that
$\mu$ satisfies the following variant of \emph{Normalization}:
\begin{quote} 
Let $T\in\sF^{r,s+1}(W,H)$ be an invertible operator. Furthermore,
let $V$ be a finite dimensional $\Cl_{r,s+1}$--module. Consider the path
  \[
    \gamma(t):= T \oplus (1-2t) F_{s+1}\in \sF^{r,s+1}(W\oplus
    V,H\oplus V).
  \]  
    Then $\mu_{W\oplus V}(\gamma) = [V]\in A_{r,s+2}$.
\end{quote}
Then $\mu_{\tilde W}=\SF_{r,s+2}$ for all admissible pairs 
$(\tilde W,\tilde H)$.
\end{theorem}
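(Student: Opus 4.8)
I would follow the same strategy as the finite-dimensional uniqueness result (Theorem~\ref{p.USF.1}), but now bootstrap it to the fixed-domain setting by using the two technical lemmas in Sec.~\ref{ss.TTR}. The goal is to show that any $\mu$ satisfying Homotopy, Path additivity, Stability, and the stated variant of Normalization must agree with $\SF_{r,s+2}$ on every admissible pair $(\tilde W,\tilde H)$. Since $\SF_{r,s+2}$ itself satisfies all four axioms (as recorded in Sec.~\ref{ss.SFFUO} and Remark~\ref{p.ClkSF.20}), it suffices to compute $\mu_{\tilde W}(\gamma)$ for an arbitrary $\gamma\in\sA^{r,s+1}(\tilde W,\tilde H)$ and see that the axioms pin it down.

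\emph{Reduction to a nice normal form.} First I would apply Lemma~\ref{p.USF.3} to $\gamma$: after a straight-line homotopy (which changes neither $\mu$ nor $\SF$ by Homotopy) we may assume there is a subdivision $0=t_0<\cdots<t_n=1$ and $\eps_j>0$ with $\eps_j\notin\spec|T_{t_j}|$ and $\specess(|T_t|)\cap[0,\eps_j]=\emptyset$ on $[t_{j-1},t_j]$, and moreover $T_{t_j}$ invertible with prescribed phase $J_{t_j}$. By Path additivity of $\mu$ (and of $\SF$), it is enough to treat a single segment, say $[t_{j-1},t_j]=[0,1]$, where the spectral-gap hypothesis of Lemma~\ref{p.USF.4} holds and the endpoints are invertible. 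Next I would apply Lemma~\ref{p.USF.4}: conjugating by a $\Clrs$--linear unitary family $U(t)$ (again a Homotopy, since conjugation of a Fredholm path by a norm-continuous unitary loop based anywhere is realized by a homotopy of the path with fixed behaviour traceable via Homotopy + Path additivity) reduces us to a path $T^1_\bullet$ on $\tilde H=\ker E(0)\oplus\ran E(0)$ of the form $S_\bullet\oplus R_\bullet$ with $S_\bullet$ invertible (hence constant up to homotopy, contributing nothing by Homotopy) and $R_\bullet$ a finite-rank family on the \emph{finite dimensional} $\Cl_{r,s+1}$--module $\ran E(0)$, with invertible endpoints. By Stability, $\mu_{\tilde W}(\gamma)=\mu_{\ran E(0)}(R_\bullet)$ and likewise for $\SF$; and on the finite-dimensional module $\ran E(0)$ the restricted map $\mu_{\ran E(0)}$ satisfies Homotopy, Path additivity, Stability (inherited), and Normalization --- the latter because the $W$-admissible Normalization variant, specialized to $T$ invertible on the $\ker E(0)$ part and then stabilized away, is exactly the finite-dimensional Normalization of Theorem~\ref{p.USF.1}. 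Therefore Theorem~\ref{p.USF.1} gives $\mu_{\ran E(0)}=\sff_{r,s+2}$ on finite-dimensional modules, and chaining the equalities back yields $\mu_{\tilde W}(\gamma)=\SF_{r,s+2}(\gamma)$.

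\emph{Where the work is.} The main obstacle is bookkeeping rather than a deep new idea: one must be careful that every ``reduction'' step is genuinely a move licensed by the four axioms. In particular, two points need attention. (i) Passing from $\gamma$ to $U(\bullet)^* \gamma(\bullet) U(\bullet)$ must be justified purely from Homotopy and Path additivity (one uses that $U(0)=I$, closes the unitary path up through the contractible/connected unitary group of $\Clrs$--linear operators, and realizes the conjugation by an ambient homotopy of paths with invertible endpoints; Lemma~\ref{p.USF.4} already packages this). (ii) In the last step one must check that the finite-dimensional Normalization paths $t\mapsto(1-2t)F_{s+1}$ on $V$ are obtained, after Stability, from the $W$-admissible Normalization hypothesis applied with a convenient invertible $T\in\sF^{r,s+1}(W,H)$ (e.g. $T=D$ if $D$ is invertible, otherwise first stabilize $D$ to an invertible operator, which is harmless by Stability). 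Once these two transitions are in place, the argument closes exactly as in the complex case \cite[Sec.~5]{L05}, and the statement follows. $\qed$
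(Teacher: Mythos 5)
Your overall strategy matches the paper's: reduce via Lemma~\ref{p.USF.3} to a finite-rank modification with spectral gaps, split into segments by Path additivity, apply Lemma~\ref{p.USF.4} to decompose $T_t \sim S\oplus R_t$ with $S$ constant invertible on $\ker E(0)$ and $R_t$ finite-rank on $\ran E(0)$, and then invoke the finite-dimensional Theorem~\ref{p.USF.1}. However, the pivotal step ``By Stability, $\mu_{\tilde W}(\gamma)=\mu_{\ran E(0)}(R_\bullet)$'' contains a genuine gap, and it is precisely the one the paper flags as ``a slight difficulty.''

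The issue is that $\ker E(0)$ and $\ran E(0)$ are \emph{internal} summands of $H$ (resp. of $W$), so there simply is no map ``$\mu_{\ran E(0)}$'' provided by the hypothesis: the hypothesis only furnishes $\mu_{\tilde W}$ for $W$-admissible pairs, i.e. for $\tilde W = W\oplus V$ with $V$ a finite-dimensional module \emph{added} to $W$. Stability, as axiomatized, only lets you \emph{append} a fixed invertible operator on such an added finite-dimensional module; it does not allow you to strip away the constant infinite-dimensional $S$-part on $\ker E(0)\subset H$, nor to ``restrict'' $\mu_W$ to the internal summand $\ran E(0)$. Likewise, the Normalization axiom of Theorem~\ref{p.USF.2} speaks only of paths $T\oplus(1-2t)F_{s+1}$ on $W\oplus V$ with $V$ externally adjoined; it says nothing directly about a path that is nontrivial only on a finite-dimensional subspace of $W$ itself. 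Your remark ``(ii)'' gestures at this but does not resolve it: choosing $T = D$ (or a stabilized version) does not turn the internal finite-rank family $R_\bullet$ on $\ran E(0)\subset W$ into a path on an externally adjoined module $V$.

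The paper's fix is a doubling-and-swap maneuver. Since $R_0$ is invertible on $\ran E(0)$, one enlarges the pair to $\tilde W := \ran E(0)\oplus W$ (a $W$-admissible pair) and uses Stability to add the constant invertible $R_0$ on the \emph{new} external copy of $\ran E(0)$:
\[
\mu_W(T_\bullet) = \mu_{\tilde W}\bigl(R_0 \oplus (R_\bullet\oplus S)\bigr).
\]
Then a Homotopy that rotates the two isomorphic copies of $\ran E(0)$ into one another transports the varying family to the external copy:
\[
\mu_{\tilde W}\bigl(R_0 \oplus (R_\bullet\oplus S)\bigr)
= \mu_{\tilde W}\bigl(R_\bullet \oplus (R_0\oplus S)\bigr).
\]
Now the path is in exactly the shape required by the Normalization hypothesis: a finite-dimensional family on the externally adjoined module, tensored with the constant invertible operator $R_0\oplus S$ on $(W,H)$. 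The map $R_\bullet\mapsto\mu_{\tilde W}(R_\bullet\oplus(R_0\oplus S))$ then satisfies the axioms of Theorem~\ref{p.USF.1}, hence equals $\sff_{r,s+2}$, which closes the argument. Inserting this doubling-and-swap step into your proof would make it correct.
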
  
\begin{proof} Let $T_\bullet$ be given. Apply Lemma~\ref{p.USF.3}
and let $\tilde T_\bullet$ be the corresponding finite-rank
modification of $T_\bullet$. By Homotopy and Path additivity
we have
\[
      \mu_W(T)   =\mu_W(\tilde T) 
            = \sum_{j=1}^n \mu_W\bl  \tilde T\restr{[t_{j-1}, t_j]}
            \br.
\]
We look at one of the $\tilde T\restr{[t_{j-1}, t_j]}$
and write again $T$ for it and parametrize it over $[0,1]$.
As a result of this exercise we may now assume that 
there is a global spectral gap $\eps>0$ such that
$T$ satisfies the prerequisites of Lemma~\ref{p.USF.4}.
By homotopy
\[
     \mu_W( T_\bullet ) = \mu_W( R_t\oplus S),
\]
where $S$ is constant and invertible on $\ker E(0)$ and
$R_t$ is finite-rank on $\ran E(0)$. 

There is a slight difficulty here, as $\ker E(0)$ is
only a subspace of $H$ (resp. $W$). Since $R_0$ is also
  invertible on $\ran E(0)$ we put $\tilde W:= \ran E(0)\oplus W$.
Then by Stability (and Homotopy to switch the roles of the two
  copies of $\ran E(0)$)
\[
  \mu_W(T_\bullet) = \mu_{\tilde W}( R_t \oplus (R_0\oplus S) ).
\]
By Normalization and the  finite dimensional case, which has been proved, 
  the latter equals $\sff_{r,s+2}( T_\bullet)$ and we are done.
\end{proof}

\section{The Robbin--Salamon Theorem for $\KO$--valued spectral flow}
\label{s.RS}

The axiomatic characterization of the $\KO$--valued spectral flow in the previous section
allows a short proof of our main objective. Namely we now prove the `spectral flow $=$ Fredholm index' result
alluded to in the introduction.

\subsection{Standing assumptions}\label{ss.SA} Following
\cite[(A1)--(A3)]{RobSal:SFM}, \cite[Sec.~8]{KaaLes:SFU} 
we need to introduce quite a bit of notation here. We continue
to work in the framework of Fredholm operators with a fixed
domain. So let $H$ be a $\Clrs$--Hilbert space
$H$ and let $D\in\sC\sF^{r,s+1}(H)$ be a skew-adjoint
Fredholm operator \emph{with compact resolvent}. The latter
is equivalent to the saying that the domain $W:=\dom(D)$ with
its natural graph norm is compactly embedded into $H$. 

Furthermore, consider a one-parameter 
family $A(\cdot):\R \mapsto \sF^{r,s+1}(W,H)$
of skew-adjoint Fredholm operators such that the following
conditions are satisfied:
\begin{enumerate}\renewcommand{\labelenumi}{\textup{(A \arabic{enumi})}}
\item\label{Ass1} 
  The map $A : \R \to \sB^{r,s+1}(W,H)$ is weakly differentiable. 
This means that for all $\xi\in W$ and all $\eta\in H$ the map
$t \mapsto \inn{A_1(t)\xi,\eta}$ is differentiable.
Furthermore, we suppose that the weak derivative $A'(t) : W \to H$
is bounded for each $t \in\R$ and that the supremum
    $\sup_{t \in\R}\|A'(t)\|_{W\to H} =: K < \infty$ is finite.

  \item\label{Ass2} The domain $\dom(A(t))=W$ is independent of $t$
and equals $W$. Moreover, there exist constants  $C_1,C_2 > 0$ such that
\begin{equation}\label{eq:Ass2}
C_1 \|\xi\|_{W} \leq \|\xi\|_{A(t)} \leq C_2 \|\xi\|_{W}
\end{equation}
for all $\xi \in W$ and all $t \in\R$. In other words, the
graph norms are uniformly equivalent to the norm $\|\cdot\|_W$ of $W$.

\item\label{Ass3} There exists $R,c>0$ such that for all $t\in\R$ with
  $|t|\ge R$ we have $|A(t)|\ge c\cdot I$, i.e. $A(t)$ is
invertible and uniformly bounded below.
\end{enumerate} 
\newcommand{\Aref}[1]{\textup{(A \ref{#1})}}
\newcommand{\AssA}{\Aref{Ass1}}
\newcommand{\AssB}{\Aref{Ass2}}
\newcommand{\AssC}{\Aref{Ass3}}
\AssA\ and \AssB\ are the same as in \cite{RobSal:SFM}, while \AssC\ 
is slightly more general than \cite[(A3)]{RobSal:SFM}, see
\cite[Sec.~8]{KaaLes:SFU}.

If $A(\cdot)$ satisfies \AssA--\AssC\ then we let $\sff_{r,s+2}(A(\cdot))$
be the $\sff_{r,s+2}$ of the path $[-R,R]\ni t\mapsto A(t)$.
We leave the straightforward verification that this is well-defined
to the reader. The condition \AssA\ implies continuity as 
a map $\R \to \sB^{r,s+1}(W,H)$. Furthermore, \AssA\ can
be relaxed to the assumption that $A(\cdot)$ is continuous and
piecewise weakly continuously differentiable. This has the
benefit that given $A(t)_{a\le t\le b}$ satisfying the 
finite interval analogues of \AssA--\AssC\ with $A(a), \, A(b)$ invertible, the
family 
\[
  \tilde A(t):= 
  \begin{cases}
    A(a), & t\le a,\\
    A(t), & a\le t\le b, \\
    A(b), & t \ge b,
   \end{cases}
\]
satisfies the modified axiom \AssA\ and the axioms \AssB, \AssC.

\subsection{The Robbin--Salamon Theorem for $\Clrs$--antilinear
skew-adjoint Fredholm operators}
After these preparations, we are ready to formulate the main
result of this section:

\begin{theorem}
\label{thm:Robbin-Salamon}
Let $A(\cdot)$ be a family of unbounded skew-adjoint Fredholm operators
  in $\sC\sF^{r,s+1}(W,H)$ satisfying the axioms \AssA--\AssC\ above.
Let 
\begin{align}
  D  & = A(t) \otimes \go_{1,1} - \frac{d}{dt}\otimes K_1\\
     & =\begin{pmatrix} -\frac d{dt} & -A(t) \\ - A(t) & \frac d{dt}\end{pmatrix}.
\end{align}
Then
 $D$ is an essentially skew-adjoint Fredholm operator in
$\sC\sF^{r,s+2}( L^2(\R, H\otimes \R^2) )$
where the Clifford generators are given by, cf. Prop.~\ref{p.ICA.1}, 
  $\tilde E_j:= E_j\otimes \go_{1,1}$, $j=1,\ldots,r$,
  $\tilde F_k:= F_k\otimes \go_{1,1}$, $j=1,\ldots,s$,
  $\tilde F_{s+1} = - I\otimes L_1$.
With these normalizations we have
\[
  \sff_{r,s+2}( A(\cdot) ) = \ind_{r,s+2}(D).
\]
\end{theorem}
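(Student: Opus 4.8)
The plan is to use the axiomatic characterization of $\sff_{r,s+2}$ from Theorem~\ref{p.USF.2}, exactly as in the complex Robbin--Salamon theorem. First I would establish the analytic groundwork: that $D$ is essentially skew-adjoint with the stated Clifford symmetries and that it is Fredholm. The Clifford symmetry claim is a routine check using $\go_{1,1}^2 = I_2$, $K_1 \go_{1,1} = -\go_{1,1} K_1$, $L_1 \go_{1,1} = -\go_{1,1} L_1$ and the fact that each $E_j, F_k$ anti-commutes with $A(t)$; the key computation is that $D$ anti-commutes with all the $\tilde E_j, \tilde F_k, \tilde F_{s+1}$ and is skew-adjoint. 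Essential skew-adjointness and the Fredholm property follow from assumptions \AssA--\AssC: the invertibility of $A(t)$ for $|t|\ge R$ together with $D^2 = \bigl(A(t)^2 + (d/dt)^2 + \text{bounded}\bigr)$-type estimates (using $A'(t)$ bounded, as in \cite{RobSal:SFM}) gives a discrete spectrum near $0$, hence Fredholmness of $D$ on $L^2(\R, H\otimes\R^2)$ with compact resolvent contributions away from a finite-rank piece.

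The heart of the argument is then to define, for a family $A(\cdot)$ satisfying \AssA--\AssC, the quantity $\mu(A(\cdot)) := \ind_{r,s+2}(D_A)$, where $D_A$ is the operator built from $A(\cdot)$ as in the statement, and to verify that $\mu$ satisfies \emph{Homotopy}, \emph{Path additivity}, \emph{Stability}, and the \emph{Normalization} variant of Theorem~\ref{p.USF.2} (translated to the noncompact-interval setting via the truncation $\tilde A(t)$ described in Sec.~\ref{ss.SA}). Homotopy invariance follows because $\ind_{r,s+2}$ is locally constant on $\sF^{r,s+2}$ (it lands in $A_{r,s+2}$ and is locally constant by the discussion after \Eqref{eq.ABSC.9} and Remark~\ref{p.ClrsFred.3}), and a homotopy of families $A_u(\cdot)$ through \AssA--\AssC\ induces a norm-resolvent-continuous path of the operators $D_{A_u}$. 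Path additivity comes from the standard trick: concatenating families $A^1 * A^2$ corresponds, up to a compact/finite-rank perturbation and a homotopy that shrinks the overlap region, to the direct sum $D_{A^1}\oplus D_{A^2}$ on $L^2(\R)\oplus L^2(\R)$; here one uses that when $A$ is constant and invertible on an interval, the corresponding $D$ restricted there is invertible, so gluing does not change the index. Stability is immediate since adding a fixed invertible $S\in\sF^{r,s+1}(H')$ to $A(\cdot)$ adds a fixed invertible skew-adjoint operator (with the extra Clifford symmetry from its phase in a nontrivial eigenspace, cf. Remark~\ref{p.ClrsFred.3}) to $D_A$, which does not change $\ind_{r,s+2}$.

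For \emph{Normalization}, I would compute $\ind_{r,s+2}(D)$ directly for the model path $\gamma(t) = T\oplus(1-2t)F_{s+1}$ on $W\oplus V \to H\oplus V$ with $T$ invertible and $V$ a finite dimensional $\Cl_{r,s+1}$--module. The invertible summand contributes nothing, so one is reduced to the scalar operator $-\frac{d}{dt}\otimes K_1 + a(t) F_{s+1}\otimes \go_{1,1}$ on $L^2(\R, V\otimes\R^2)$, where $a(t)$ interpolates from a positive constant to a negative constant (a truncated version of $(1-2t)$). This is the classical one-dimensional Dirac/creation-annihilation computation: the kernel of such an operator is one-dimensional over $\Cl_{r,s+1}$-isotypic pieces, given explicitly by $\exp\bigl(-F_{s+1}\go_{1,1}K_1\int a\bigr)$ type solutions in $L^2$, and carefully tracking the Clifford module structure of that kernel (the generators $\tilde E_j, \tilde F_k$ together with $\tilde F_{s+1} = -I\otimes L_1$ acting on the ground state) identifies $\ker D \cong V$ as a $\Cl_{r,s+1}$--module, giving $\ind_{r,s+2}(D) = [V]\in A_{r,s+2}$. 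I expect this last identification --- getting the Clifford module structure on the kernel exactly right, including signs and the choice of which generator plays the role of $F_{s+1}$ --- to be the main obstacle; it is where the careful bookkeeping from Sec.~\ref{ss.ABS-examples} and Prop.~\ref{p.ICA.1} must be brought to bear, and it is the only place where the specific normalizations $\tilde F_{s+1} = -I\otimes L_1$ and $\go_{1,1} = -K_2$ matter. Once Normalization is checked, Theorem~\ref{p.USF.2} forces $\mu = \SF_{r,s+2}$, hence $\ind_{r,s+2}(D) = \sff_{r,s+2}(A(\cdot))$ as claimed.
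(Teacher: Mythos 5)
Your proposal is correct and follows essentially the same strategy as the paper: invoke the uniqueness Theorem~\ref{p.USF.2}, verify Homotopy, Path additivity and Stability for $\mu(A) := \ind_{r,s+2}(D_A)$, and reduce the remaining Normalization to an explicit ODE/kernel computation for the model path $a(t)F_{s+1}$ on a finite-dimensional $\Cl_{r,s+1}$-module, tracking the Clifford module structure on $\ker D$. You also correctly identify the delicate point — pinning down the Clifford action on the ground state, which is exactly what the paper handles by exhibiting the equivariant isomorphism $\Phi: V \to \ker D$, $\eta \mapsto \frac{\bar u(x)}{\sqrt 2}\binom{\eta - F_{s+1}\eta}{\eta + F_{s+1}\eta}$. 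The one place where your route differs from the paper is the analytic groundwork: rather than estimating $D^2$ directly, the paper factors $D = -(1\otimes K_1)\bl\frac{d}{dt} - A(t)\otimes L_1\br$ and views $H\otimes\R^2$ as a complex Hilbert space with complex structure $-I\otimes L_1$, so that $-A(t)\otimes L_1$ becomes a family of \emph{self-adjoint} operators satisfying (A1)--(A3) and the essential skew-adjointness plus Fredholmness of $D$ follow at once from the known complex results of Robbin--Salamon and Kaad--Lesch. This Clifford-algebraic reduction is cleaner than re-deriving the estimates, but your sketch would also go through.
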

\begin{remark} \label{rk:RS_Clifford_normalization}
There are other possible normalizations of course. 
For example, the roles of the $2\times 2$ matrices $K_1, K_2$ can (up to
sign) be reversed.
The self-adjoint unitary 
$\frac 1{\sqrt 2}(K_1-K_2)$ conjugates $K_1$ and $-K_2$. It is then
not hard to see that $D$ can be replaced by
\begin{equation}
    D' = \begin{pmatrix} A(t) & \frac d{dt} \\ \frac d{dt}  & -A(t) 
    \end{pmatrix} = A(t)\otimes K_1 + \frac d{dt}\otimes K_2
\end{equation}
with Clifford generators given by
  $\tilde E_j:= E_j\otimes K_1$, $j=1,\ldots,r$,
  $\tilde F_k:= F_k\otimes K_1$, $j=1,\ldots,s$,
  $\tilde F_{s+1} = I\otimes L_1$.
\end{remark}

\begin{proof} For the analytic part of the theorem, the
results of \cite{RobSal:SFM} and \cite{KaaLes:SFU} apply.
Note that we may write
\[
  D= -(1\otimes K_1) \bl \frac d{dt} - A(t)\otimes L_1\br.
\]
Thus viewing $H\otimes \R^2$ as a complex Hilbert space
with complex structure $-I\otimes L_1$ we see that 
$\tilde A(t):=-A(t)\otimes L_1$ is a family of self-adjoint
Fredholm operators satisfying \AssA--\AssC.

Consequently, $D$ is essentially skew-adjoint and Fredholm.
Furthermore, a direct  calculation shows that $D$ is
$\Cl_{r,s+1}$--anti-linear with respect to the Clifford
matrices $\tilde E_1,\ldots,\tilde E_r$,
$\tilde F_1,\ldots,\tilde F_{s+1}$ given in the theorem.

It is clear that the right hand side of the claimed index formula
satisfies homotopy invariance. Concatenation can either be proved
directly by invoking classical methods of elliptic boundary value
theory or as in \cite[Prop.~4.26]{RobSal:SFM} where it is shown that
concatenation already follows from homotopy and the easy to check
constant and direct sums axioms.

It remains to check normalization which is done below.
\end{proof}

\subsection{Normalization}
\label{ss.N}

Let $V$ be a finite dimensional $\Cl_{r,s+1}$--module and consider the path
\[
  f(t) F_{s+1} =: A(t),
\]
where $f:\R\to \R$ is a continuous function with
\[f(t)= \begin{cases} 1, & t\le 0\\ -1, & t\ge 1.
      \end{cases}
\]

The path $A(t)$ is homotopic to the straight line path
from $F_{s+1}$ to $-F_{s+1}$ in the finite dimensional 
$\Cl_{r,s+1}$--module $V$, hence by Sec.~\ref{ss.SP} we have
\[
  \sff_{r,s+2}( A(\cdot) ) = [V] \in A_{r,s+2}.
\]
We will need the $L^2$-solutions of the ODE
\begin{equation}
    u' + \eps f u =0, \quad \eps\in\{\pm 1\}
\end{equation}
on $\R$. For $x\le 0$ we have $u(x) = e^{-\eps x} u(0)$
and for $x\ge 1$ we have $u(x) = e^{\eps x} u(1)$.
In both cases the solution is square integrable if and only if $\eps = - 1$.
Thus, for $\eps=1$ there are no $L^2$-solutions and for
$\eps=-1$ there is a unique $L^2$-solution $\bar u$ with
$\bar u(0)=1$.

With regard to the consideration about $L^2$-solutions we
infer that the map
\[
    \Phi: V\mapsto \ker D, \quad \eta\mapsto \Bl x\mapsto 
      \frac{\bar u(x)}{\sqrt 2}
      \begin{pmatrix} \eta-F_{s+1} \eta \\ \eta+ F_{s+1}\eta
      \end{pmatrix}\Br
\]
is an isomorphism. One immediately checks, that $\Phi$
is $\Cl_{r,s+1}$--equivariant in the sense that
$\Phi( E_j \eta ) = \tilde E_j \Phi(\eta)$, $j=1,\ldots,r$
and $\Phi( F_k\eta) = \tilde F_k\Phi(\eta)$, $k=1,\ldots,s+1$.
Thus $\Phi$ is an isomorphism of the Clifford module
$(V; E_1,\ldots,E_r, F_1,\ldots, F_{s+1})$ onto the Clifford module
$(\ker D;\tilde E_1,\ldots,\tilde E_r, \tilde F_1,\ldots,\tilde
F_{s+1})$ and thus
$\ind_{r,s+2} D = [V] =\sff_{r,s+2}( A(\cdot) )$.
The proof of Theorem~\ref{thm:Robbin-Salamon} is complete.
\hfill\qed

\section{Spectral flow and the Kasparov product}
\label{s.SFKP}

In this section we relate  Theorem~\ref{thm:Robbin-Salamon} to
Kasparov's theory via the unbounded Kasparov product. In order to have
both a spectral flow and a Robbin--Salamon interpretation of the index
theorems provided by Kasparov's theory, we must be in a setting where
we have a one-parameter path of operators. 
For analogous results in the complex case, see~\cite{AzzaliWahl, vdDungen, KaaLes:SFU}.

We first note a word of caution in that 
up until this point, we have largely ignored the gradings on Clifford algebras. 
However, $\KK$-theory is intrinsically $\Z/2\Z$--graded and as such all Clifford 
representations that appear as part of a Kasparov module 
must respect the grading. We will explicitly state when we can consider 
representations and modules as ungraded.

Throughout this section we use the identification of $\KU$-theory with
$\KK$-theory contained in the next Lemma.

\begin{lemma}
\label{lem:PD-iso}
For $\sA$ a $\sigma$-unital $C^*$-algebra, we have for $s\geq r$ 
\[
\KO_{s-r}(\sA):=\KKO(\R,\sA\ox C_0(\R^{s-r})) \simeq
\KKO(\Cl_{s,r},\sA)
\]
where 
 the isomorphism is given by the Kasparov product with the class
  of the ($\Z/2\Z$--graded) spectral triple
\[
\lambda_{s-r}=\Big( C_0(\R^{s-r})  \ox \Cl_{s-r,0} , \, L^2( \R^{s-r}, \Lambda^* \R^{s-r}), \, d + d^* \Big)
\]
with $d+d^* = \sum_{j=1}^{s-r} \partial_{x_j} \ox f_j$ using 
the isomorphism $\sB(\Lambda^* \R^k) \simeq \Cl_{k,k}$. 
\end{lemma}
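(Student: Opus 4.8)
The plan is to establish the isomorphism $\KKO(\R,\sA\otimes C_0(\R^{s-r}))\simeq\KKO(\Cl_{s,r},\sA)$ by exhibiting an explicit invertible class in $\KKO(C_0(\R^{s-r})\otimes\Cl_{s-r,0},\C)$ (or rather its real analogue $\KKO(C_0(\R^{s-r})\otimes\Cl_{s-r,0},\R)$) and invoking associativity of the Kasparov product. First I would recall that for a single copy of $\R$ the Dirac-type spectral triple $(C_0(\R)\otimes\Cl_{1,0},L^2(\R,\Lambda^*\R),\tfrac{d}{dx}\otimes f_1)$ represents the Bott/Dirac class implementing the $\KKO$-equivalence $C_0(\R)\otimes\Cl_{1,0}\sim\R$; this is the standard real Bott periodicity fact (cf. \cite{Kas80}), and the grading on $\Lambda^*\R=\R\oplus\R$ is the parity grading, under which $\sB(\Lambda^*\R)\simeq\Cl_{1,1}$ graded-isomorphically. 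Taking external products over the $s-r$ coordinate directions gives the class $\lambda_{s-r}$ as stated, with $d+d^*=\sum_{j=1}^{s-r}\partial_{x_j}\otimes f_j$ acting on $L^2(\R^{s-r},\Lambda^*\R^{s-r})$, and the identification $\sB(\Lambda^*\R^{s-r})\simeq\Cl_{s-r,s-r}$ as graded algebras; the left $\Cl_{s-r,0}$-action is by the ``first'' family of generators of this $\Cl_{s-r,s-r}$. Thus $\lambda_{s-r}$ is a $\KKO$-equivalence between $C_0(\R^{s-r})\otimes\Cl_{s-r,0}$ and $\R$.

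Next I would assemble the chain of isomorphisms. Kasparov product with $\lambda_{s-r}$ gives
\[
\KKO(\R,\sA\otimes C_0(\R^{s-r}))\xrightarrow{\ \otimes\lambda_{s-r}\ }\KKO(\R,\sA\otimes\R\otimes\Cl_{s-r,0}^{\mathrm{op}})\simeq\KKO(\Cl_{0,s-r},\sA),
\]
using that tensoring the second variable by $C_0(\R^{s-r})\otimes\Cl_{s-r,0}$ and then contracting with $\lambda_{s-r}$ is the same as replacing $C_0(\R^{s-r})$ by $\Cl_{s-r,0}$ up to a flip between left and right module structures, i.e. $\sA\otimes\Cl_{s-r,0}$ on the right corresponds to $\Cl_{0,s-r}$ on the left by the standard adjunction $\KKO(B,\sA\otimes\Cl_{p,q})\simeq\KKO(B\otimes\Cl_{q,p},\sA)$. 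Finally I would use the Clifford algebra isomorphism $\Cl_{0,s-r}\simeq\Cl_{s-r,0}^{\mathrm{op}}$ together with $\KKO(\Cl_{0,s-r},\sA)\simeq\KKO(\Cl_{s,r},\sA)$, which follows since $\Cl_{s,r}\simeq\Cl_{s-r,0}\otimes\Cl_{r,r}$ by \eqref{eq.ABSC.1}--\eqref{eq.ABSC.2} and $\Cl_{r,r}\simeq\Mat(2^r,\R)$ is Morita trivial (hence $\KKO$-invisible). Chasing generators through these identifications one sees the composite is precisely Kasparov product with $[\lambda_{s-r}]$, and invertibility of each arrow (Bott periodicity plus Morita invariance) gives the claim.

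The routine verifications I would not belabor: that $\lambda_{s-r}$ is a genuine (unbounded, even) Kasparov module — self-adjointness of $d+d^*$, compactness of $(d+d^*+i)^{-1}$ composed with the $C_0$-action, boundedness of commutators with a dense subalgebra — all standard for Dirac operators on $\R^k$; and that the various Clifford adjunction isomorphisms above are compatible with products. The main obstacle I anticipate is bookkeeping the gradings and the left-versus-right Clifford actions consistently: the paper's convention $\Cl_{r,s}$ has $r$ self-adjoint and $s$ skew-adjoint generators (opposite to \cite{LM89}), the parity grading on $\Lambda^*\R^{s-r}$ must be tracked so that $\sB(\Lambda^*\R^{s-r})\simeq\Cl_{s-r,s-r}$ is a \emph{graded} isomorphism, and one must verify that the resulting $\Cl_{s-r,0}$-action on the left matches the $\Cl_{s,r}$ on the right only after absorbing a Morita-trivial $\Cl_{r,r}$ and using the hypothesis $s\ge r$. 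Getting every sign and every $f_j$-versus-$e_j$ placement right so that the stated formula $d+d^*=\sum_{j=1}^{s-r}\partial_{x_j}\otimes f_j$ is literally the equivalence (and not its inverse or a shifted variant) is the delicate point; everything else is assembly from Bott periodicity and Morita invariance in real $\KK$-theory.
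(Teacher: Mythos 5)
The paper's own proof of Lemma~\ref{lem:PD-iso} is a one-line citation of \cite[Theorem~7, Sec.~5]{Kas80}, so your detailed reconstruction necessarily ``differs'' in the trivial sense that you actually supply the argument. Your strategy is the correct one: establish the Dirac class $\lambda_1$ on a single line as the $\KKO$-equivalence $C_0(\R)\hat\otimes\Cl_{1,0}\sim\R$, take external products to get $\lambda_{s-r}$, and then run the Clifford adjunction plus the Morita triviality of $\Cl_{r,r}\simeq\Mat(2^r,\R)$ to land on $\KKO(\Cl_{s,r},\sA)$. This is precisely the mechanism underlying the cited theorem, and it is the proof one would give if not simply deferring to Kasparov.

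One bookkeeping slip worth fixing, since it falls exactly in the region you flag as delicate: after contracting with $\lambda_{s-r}$, your display ends with $\KKO(\Cl_{0,s-r},\sA)$, but applying the adjunction $\KKO(\sB,\sA\hat\otimes\Cl_{p,q})\simeq\KKO(\sB\hat\otimes\Cl_{q,p},\sA)$ to $\KKO(\R,\sA\hat\otimes\Cl_{0,s-r})$ (with $p=0$, $q=s-r$) gives $\KKO(\Cl_{s-r,0},\sA)$, with the indices in the other order. This matters: $\Cl_{0,k}$ and $\Cl_{k,0}$ are \emph{not} $\KKO$-equivalent in general (they carry degrees $8-k$ and $k$ respectively in the Bott clock), so the subsequent sentence ``$\KKO(\Cl_{0,s-r},\sA)\simeq\KKO(\Cl_{s,r},\sA)$ since $\Cl_{s,r}\simeq\Cl_{s-r,0}\otimes\Cl_{r,r}$'' is not correct as written. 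Replacing $\Cl_{0,s-r}$ by $\Cl_{s-r,0}$ throughout the final two steps repairs the chain, and then the Morita-equivalence $\Cl_{s,r}\simeq\Cl_{s-r,0}\otimes\Mat(2^r,\R)$ finishes it cleanly. With that correction your argument is a sound and instructive unpacking of the citation the paper gives.
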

\begin{proof}
This is a special case of \cite[Theorem 7, Sec.~5]{Kas80}.
\end{proof}

Let us first review the equivalence between paths of operators in 
$\sFsrs$ and Kasparov modules. 

\begin{prop} \label{prop:skew_Fredholm_to_Kasmod}
Let $\{T(t)\}_{t\in\R}$ be a norm-continuous path in $\sF_{\ast}^{r,s+1}$ with 
$T(t)$ invertible for all $t$ outside the compact set $[-R,R]$. 
Then 
\begin{equation} \label{eq:aah_another_kas_mod}
  \left( \Cl_{s+1,r}, \, \begin{pmatrix} H \ox C_0(\R) \\ H \ox C_0(\R) \end{pmatrix}_{C_0(\R)} , \, 
   \begin{pmatrix} 0 & -T(\cdot) \\ T(\cdot) & 0 \end{pmatrix} \right)
\end{equation}
is a real Kasparov module 
where the left $Cl_{s+1,r}$--representation is generated by the elements 
 $\left\{ I\ox K_2, F_1 \ox L_1, \ldots, F_s \ox L_1, E_1 \ox L_1, \ldots, E_r \ox L_1 \right\}$.
\end{prop}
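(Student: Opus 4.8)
The plan is to verify the three defining properties of a real Kasparov module for the triple in \eqref{eq:aah_another_kas_mod}: that the operator is self-adjoint (in the graded sense, i.e. odd and symmetric) with respect to the $\Z/2\Z$--grading, that it has compact resolvent modulo the algebra action in the appropriate sense, and that the graded commutators with the $\Cl_{s+1,r}$--action together with the representation of the coefficient algebra (here $\C$, or rather $\R$, acting by scalars) are compact. Since the coefficient algebra is just $\R$ (or $C_0(\R)$ in the Hilbert module picture, acting by multiplication), most of this reduces to checking algebraic relations and a Fredholm-type statement for the family.

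First I would set $\mathbf{F}(t) = \begin{pmatrix} 0 & -T(t)^* \\ T(t) & 0 \end{pmatrix} = T(t)_{1,1}$ in the notation of \eqref{eq.ABSC.13}, but with $T(t)$ skew-adjoint so $T(t)^* = -T(t)$ and hence $\mathbf{F}(t) = \begin{pmatrix} 0 & T(t) \\ T(t) & 0 \end{pmatrix}$; I would note this is skew-adjoint on $H\otimes\R^2$ and anti-commutes with $K_1 = \begin{pmatrix} 1 & 0 \\ 0 & -1\end{pmatrix}$, which serves as the grading operator on the module $H\otimes\R^2$. This uses precisely Remark~\ref{p.ICA.2}.2: $T_{1,1}$ is skew-adjoint, anti-commutes with $K_1$, and when $T$ already carries $\Cl_{r,s}$--symmetries, $T_{1,1}$ acquires $\Cl_{r+1,s+1}$--symmetries under \eqref{eq.ABSC.1}. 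Here $T$ anti-commutes with $E_1,\dots,E_r,F_1,\dots,F_s$, so $T_{1,1}$ anti-commutes with $E_j\otimes\omega_{1,1}$, $F_k\otimes\omega_{1,1}$, and with $I\otimes K_1$, $I\otimes L_1$ — giving a $\Cl_{r+1,s+1}$--action. Then I would invoke the isomorphism \eqref{eq.ABSC.4}, $\Cl_{r+1,s+1}\simeq\Cl_{s+2,r}$, realized concretely as in the proof of Proposition~\ref{p.ICA.1} by forming products with the extra skew generator; unwinding the identification of Remark~\ref{p.ICA.2}.1 gives exactly the generating set $\{I\otimes K_2, F_1\otimes L_1,\dots,F_s\otimes L_1, E_1\otimes L_1,\dots,E_r\otimes L_1\}$ for $\Cl_{s+1,r}$ (not $\Cl_{s+2,r}$ — one generator of the bigger algebra is spent producing the grading/operator, which is the standard index-theoretic shift). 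I would check the Clifford relations for this list directly: each $F_k\otimes L_1$ and $E_j\otimes L_1$ squares to $+I$ since $L_1^2 = -I$ and $F_k^2 = -I$, $E_j^2 = I$ — wait, I need to recompute signs carefully, $(F_k\otimes L_1)^2 = F_k^2\otimes L_1^2 = (-I)\otimes(-I) = I$ and $(E_j\otimes L_1)^2 = I\otimes(-I) = -I$, so in fact $E_j\otimes L_1$ are skew generators and $F_k\otimes L_1, I\otimes K_2$ are self-adjoint generators; with $r$ of the former and $s+1$ of the latter this is indeed $\Cl_{s+1,r}$, and mutual anti-commutativity follows from the Clifford relations on $H$ together with $K_2 L_1 = -L_1 K_2$.

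Next I would address the two analytic/compactness conditions. For the module $(H\otimes C_0(\R))\oplus(H\otimes C_0(\R))$ over $C_0(\R)$, the operator $\mathbf{F}(\cdot)$ acts fibrewise. The representation of the ghost algebra is just pointwise scalar multiplication, so graded commutators of $\mathbf{F}(\cdot)$ with the Clifford generators vanish identically (they anti-commute, and the Clifford generators are themselves the "odd" operators whose graded commutator with an odd operator is an anticommutator — which I have just arranged to be zero), hence are certainly compact; and $\mathbf{F}(\cdot)$ commutes with the $C_0(\R)$--action up to nothing, since it is given by a norm-continuous function $t\mapsto\mathbf{F}(t)$ of the base point. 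The remaining point is that $(I-\mathbf{F}(\cdot)^2)^{-1}$, or equivalently $a(\mathbf{F}(\cdot)^2+I)^{-1}$ for $a\in C_0(\R)$, is a compact operator on the Hilbert module. Here $\mathbf{F}(t)^2 = \begin{pmatrix} T(t)^2 & 0 \\ 0 & T(t)^2\end{pmatrix}$, so this reduces to: $a(t)(T(t)^2+I)^{-1}$ defines a compact endomorphism of $H\otimes C_0(\R)$ for every $a\in C_0(\R)$. This is the expected mechanism — $T(t)$ is Fredholm for each $t$, so $(T(t)^2+I)^{-1}$ differs from a genuinely compact operator by... no, $(T(t)^2+I)^{-1}$ need not be compact when $H$ is infinite-dimensional. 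The correct statement is the standard one for these ``spectral section / path of Fredholms'' Kasparov modules: compactness in the Hilbert-module sense does not require fibrewise compactness but rather that $a\cdot(\mathbf{F}^2+1)^{-1}$ lies in the norm closure of finite-rank module operators; and this is exactly the content that makes \eqref{eq:aah_another_kas_mod} a Kasparov module representing the spectral-flow class, and it is the technical heart.

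The main obstacle I anticipate is precisely this compactness-over-$C_0(\R)$ verification. I would handle it by the usual device: since $a\in C_0(\R)$ is supported (up to $\varepsilon$) on a compact interval $[-R',R']$, and $t\mapsto T(t)$ is norm-continuous into Fredholm operators on that interval, one covers $[-R',R']$ by finitely many subintervals on each of which $T(t)$ is a small perturbation of a fixed Fredholm $T(t_i)$; on each such piece one writes $(T(t)^2+1)^{-1} = (T(t_i)^2+1)^{-1} + (\text{small bounded})$, and $(T(t_i)^2+1)^{-1}$ itself is handled by noting it has the form (compact) $+$ (scalar, i.e. $C_0$-function times identity on a finite-corank subspace where $T(t_i)$ is bounded below) — more precisely, the operator $a(\cdot)(T(\cdot)^2+1)^{-1}$ is a limit of module operators of the form $\sum_k |e_k\otimes\phi_k\rangle\langle e_k\otimes\psi_k|$ by a partition-of-unity argument in $t$ combined with finite-rank approximation of the compact part of each resolvent. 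Alternatively, and more cleanly, I would simply invoke that this is a known construction — a minor variant of the self-adjoint case in \cite{KaaLes:SFU} — and that \AssB-type domain uniformity is not even needed here since the path is norm-continuous in $\sB(H)$; I would cite the bounded self-adjoint analogue and note that the skew-adjoint, $\Clrs$--covariant case follows by the algebraic translation of Remark~\ref{p.ICA.2} that was carried out above. This keeps the proof short while flagging the one genuinely analytic ingredient.
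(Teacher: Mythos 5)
Your proposal has a genuine operator-reading error that would make the argument collapse. You set $\mathbf F(t)=\begin{pmatrix} 0 & -T(t)^* \\ T(t) & 0 \end{pmatrix}$ and, using $T^*=-T$, simplify this to $\begin{pmatrix} 0 & T \\ T & 0 \end{pmatrix}=T\otimes K_2$. But the statement of the proposition has $-T(\cdot)$, not $-T(\cdot)^*$, in the $(1,2)$--slot: the operator is $\begin{pmatrix} 0 & -T \\ T & 0 \end{pmatrix}=T(\cdot)\otimes L_1$, which is precisely what the paper's proof works with. The distinction is decisive. For skew-adjoint $T$, $T\otimes L_1$ is \emph{self}-adjoint (as a Kasparov module operator must be), whereas your $T\otimes K_2$ is \emph{skew}-adjoint, which you even assert explicitly; a skew-adjoint $F$ cannot satisfy $a(F-F^*)$ compact unless $aF$ is compact. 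Worse, your operator fails the graded-commutation condition outright: $T\otimes K_2$ and the designated generator $I\otimes K_2$ have anticommutator $2\,T\otimes I\neq 0$, while $T\otimes L_1$ graded-commutes with all of $\{I\otimes K_2,\ F_j\otimes L_1,\ E_j\otimes L_1\}$ because $L_1K_2+K_2L_1=0$ and $T$ anticommutes with each $E_j, F_j$. So the short algebraic check that constitutes the paper's whole proof — $T(\cdot)\otimes L_1$ is odd, self-adjoint, and graded-commutes with the $\Cl_{s+1,r}$--action — breaks if you substitute $T\otimes K_2$. Your count of generator types ($s+1$ self-adjoint, $r$ skew-adjoint, giving $\Cl_{s+1,r}$) is correct after your mid-paragraph self-correction, but it is attached to the wrong Fredholm operator.

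A secondary point: your compactness discussion conflates the bounded and unbounded Kasparov module conditions. Here $T(t)$ is bounded, so the relevant requirement is that $a(1-F^2)$ be compact for $a$ in the (unital) algebra $\Cl_{s+1,r}$, i.e. $1-F^2$ itself should be a compact module operator; there is no resolvent $(1+F^2)^{-1}$ in play. With the correct operator, $F^2=|T(\cdot)|^2\otimes I_2$, so $1-F^2=(1-|T(\cdot)|^2)\otimes I_2$, and compactness over $C_0(\R)$ amounts to $t\mapsto 1-|T(t)|^2$ being norm-continuous $\sK(H)$-valued and vanishing at infinity. You gesture at the right mechanism (finite rank approximation over compactly supported $t$) but your formulas do not match the bounded module axioms. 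For fairness, the paper's own proof of this proposition is extremely terse and does not spell out this condition either, so this is more a shared imprecision than a mistake specific to you; the operator error above is the concrete defect you should repair.
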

\begin{proof}
Because $\{T(t)\}_{t\in\R}$ is a norm-continuous path of Fredholm operators 
invertible outside a compact set, it follows that $T(\cdot)$ is a well-defined 
operator on the $C^*$-module $H \ox C_0(\R)$. 
It is then a simple check that the matrix $T(\cdot) \ox L_1$ is self-adjoint and 
anti-commutes (graded-commutes) with the left Clifford action. 
\end{proof}

\begin{remark}
Analogously to the previous proposition, if $A(\cdot):\R \mapsto \sF^{r,s+1}(W,H)$ is a 
one-parameter family of skew-adjoint operators with compact resolvent and satisfying  \AssA--\AssC\ as in 
the Robbin--Salamon theorem. Then we can construct an \emph{unbounded} 
Kasparov module 
\begin{equation} \label{eq:RS_Kasmod}
  \left( \Cl_{s+1,r}, \, \begin{pmatrix} H \ox C_0(\R) \\ H \ox C_0(\R) \end{pmatrix}_{C_0(\R)} , \, 
   \begin{pmatrix} 0 & -A(\cdot) \\ A(\cdot) & 0 \end{pmatrix} \right)
\end{equation}
with Clifford generators as in Prop.~\ref{prop:skew_Fredholm_to_Kasmod}. 
\end{remark}

The properties of the spectral flow listed in Remark~\ref{p.ClkSF.20} 
 ensure that $\sff_{r,s+2}$ descends to a map
\[
\sff_{r,s+2}:\,\KO_{s+1-r}(C_0(\R))\to \KO_{s+2-r}(\R).
\]
We will use Theorem~\ref{thm:Robbin-Salamon}
to compare spectral flow of the family 
$A(\cdot):\R \mapsto \sF^{r,s+1}(W,H)$ satisfying \AssA--\AssC\ 
with the Kasparov product of the Kasparov module represented by 
Equation \eqref{eq:RS_Kasmod} with the fundamental class for $\R$.
The fundamental class $[\partial_x]\in \KKO(C_0(\R)\ox\Cl_{1,0},\R)$ 
of the real line is  represented
by the unbounded Kasparov module
\begin{equation}
\left( C_0(\R)\ox\Cl_{1,0}, \, L^2(\R) \otimes \Lambda^* \R , \, \partial_x\ox f \right),
\label{eq:fun-class}
\end{equation}
where $\Lambda^* \R \simeq \R^2$, $f=\begin{pmatrix} 0 & -1\\ 1 & 0\end{pmatrix}$ 
and the representation of $\Cl_{1,0}$ is generated by
$e=\begin{pmatrix} 0 & 1\\ 1 & 0\end{pmatrix}$.

\begin{prop}
\label{prop:kasprod=specflow}
Let $A(\cdot):\R \mapsto \sF^{r,s+1}(W,H)$ be a 
one-parameter family of skew-adjoint operators with compact resolvent and 
satisfying  {\AssA--\AssC}. 
The Kasparov product $[A(\cdot)]\ox_{C_0(\R)}[\partial_x]$ 
of the classes of the Kasparov modules 
\eqref{eq:RS_Kasmod}
and \eqref{eq:fun-class} is represented by
the Kasparov module
\[
\left( \Cl_{s+2,r}, \, L^2(\R, H) \ox \Lambda^* \R^2, \, A(\cdot) \ox f_1 + \partial_x \ox f_2  \right)
\]
where we identify $\sB(\Lambda^* \R^2) \simeq \Cl_{2,2}$ and the Clifford generators of 
the left action are 
$\{I \otimes e_1, I\otimes e_2, F_1 \otimes f_1, \ldots, F_s\otimes f_1, E_1\otimes f_1, \ldots, E_r\otimes f_1 \}$.

Applying the isomorphism 
$\phi:\,\KKO(\Cl_{s+2,r},\R)\to \KO_{s+2-r}(\R)$
to the product yields
\[
\phi \big([A(\cdot)]\ox_{C_0(\R)}[\partial_x] \big) 
 = \ind_{r, s+2} \begin{pmatrix} A(\cdot) & \partial_x\\ \partial_x & -A(\cdot) \end{pmatrix} 
 =   -\sff_{r,s+2}(A(\cdot)).
\]
\end{prop}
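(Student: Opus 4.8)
The plan is to establish the two displayed equalities one at a time: first that the stated unbounded module represents the interior product $[A(\cdot)]\ox_{C_0(\R)}[\partial_x]$, and then that applying $\phi$ to that class returns $-\sff_{r,s+2}(A(\cdot))$, the latter via the already-proved Robbin--Salamon theorem. For the product I would invoke the constructive unbounded Kasparov product of \cite{KaaLes:SFU} verbatim, exactly as in the complex spectral-flow case. The interior tensor product of the underlying $C^*$-modules, $\bigl((H\ox C_0(\R))\oplus(H\ox C_0(\R))\bigr)\ox_{C_0(\R)}\bigl(L^2(\R)\ox\Lambda^*\R\bigr)$, is $L^2(\R,H)\ox\Lambda^*\R^2$ once the $\R^2$ coming from the Clifford doubling in \eqref{eq:RS_Kasmod} is absorbed into the factor $\Lambda^*\R^2$. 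One then takes $A(\cdot)\ox f_1$ as the vertical lift of the operator of \eqref{eq:RS_Kasmod} and $1\ox\partial_x\ox f_2$ as a connection for $\partial_x$; since $f_1$ and $f_2$ anticommute inside $\sB(\Lambda^*\R^2)\simeq\Cl_{2,2}$, the two summands anticommute modulo bounded operators, so the Kaad--Lesch/Kucerovsky hypotheses (stability of $\dom(A(\cdot)\ox 1)\cap\dom(1\ox\partial_x)$, boundedness of the graded commutators with the left action, and the positivity estimate) are immediate. The only bookkeeping is that the left action runs through $\Cl_{s+1,r}\hat\otimes\Cl_{1,0}$, and under the graded isomorphism $\Cl_{s+1,r}\hat\otimes\Cl_{1,0}\simeq\Cl_{s+2,r}$ the tensor of the generators in Prop.~\ref{prop:skew_Fredholm_to_Kasmod} with $e$ goes exactly to the list $\{I\ox e_1, I\ox e_2, F_i\ox f_1, E_j\ox f_1\}$ in the statement. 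As noted in the introduction, this step is routine.

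For the second assertion, recall from Lemma~\ref{lem:PD-iso} that $\phi$ is inverse to Kasparov multiplication by $\lambda_{s+2-r}$, and that for a Kasparov module over $\R$ whose operator is Fredholm the resulting class in $\KO_{s+2-r}(\R)\simeq A_{r,s+2}$ is the Clifford class of the (finite-dimensional) kernel, i.e.\ the $\ind_{r,s+2}$ of Sec.~\ref{ss.ClrsFred}. The operator $A(\cdot)\ox f_1+\partial_x\ox f_2$ is self-adjoint and $\Cl_{s+2,r}$--anti-linear; multiplying it by one of the self-adjoint Clifford generators $I\ox e_i$ and invoking the dictionary of Remark~\ref{p.ICA.2} (together with \eqref{eq.ABSC.4} and Remark~\ref{p.ClrsFred.3}) converts it into a skew-adjoint, $\Cl_{r,s+1}$--anti-linear Fredholm operator with the same kernel up to the induced relabelling of the Clifford structure. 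This is precisely the Robbin--Salamon operator $\begin{pmatrix}A(\cdot)&\partial_x\\\partial_x&-A(\cdot)\end{pmatrix}$ of Remark~\ref{rk:RS_Clifford_normalization}, so $\phi\bigl([A(\cdot)]\ox_{C_0(\R)}[\partial_x]\bigr)=\ind_{r,s+2}\begin{pmatrix}A(\cdot)&\partial_x\\\partial_x&-A(\cdot)\end{pmatrix}$, and Theorem~\ref{thm:Robbin-Salamon} in that normalization evaluates the right-hand side on $\sff_{r,s+2}(A(\cdot))$.

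The genuine point of care --- and the step I expect to be the main obstacle --- is the sign. The system of Clifford generators handed to us by the Kasparov product is not the one chosen by hand in Remark~\ref{rk:RS_Clifford_normalization}; reconciling the two (and accounting for the grading, which \eqref{eq.ABSC.4} does not preserve) requires a conjugation of the $\Cl_{2,2}$--factor which, as warned after \eqref{eq.ICA.2} in Sec.~\ref{ss.ICA}, reverses the orientation of the volume element and hence flips the sign of the resulting $A_{r,s+2}$--class. Tracking this orientation carefully --- comparing the volume element realized by the product representation against the one fixed in Sec.~\ref{ss.N} --- is what produces the stated $-\sff_{r,s+2}(A(\cdot))$. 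Should this bookkeeping prove delicate, an independent check is available: both $\phi\bigl((\,\cdot\,)\ox_{C_0(\R)}[\partial_x]\bigr)$ and $\sff_{r,s+2}$ are homotopy invariant and additive under concatenation, so it suffices to verify the identity, with its sign, on the normalization family $A(t)=f(t)F_{s+1}$, where the kernel of the corresponding operator was computed explicitly by the $L^2$-solution analysis in Sec.~\ref{ss.N}.
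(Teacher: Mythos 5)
Your proposal is correct and follows essentially the same route as the paper: invoke the constructive unbounded Kasparov product of Kaad--Lesch to write the product operator on $L^2(\R,H)\ox\Lambda^*\R^2$, translate the graded self-adjoint $\Cl_{s+2,r}$--module to an ungraded skew-adjoint $\Cl_{r,s+1}$--Fredholm operator (which the paper does by writing out explicit $4\times 4$ Clifford matrices and reading off $T_+$, while you route it through the dictionary of Remark~\ref{p.ICA.2} and \eqref{eq.ABSC.4}), apply Theorem~\ref{thm:Robbin-Salamon}, and locate the minus sign in the orientation reversal of the $\Cl_{2,0}\simeq\Cl_{2,2}$--factor under the conjugation by $\frac{1}{\sqrt 2}(K_1-K_2)$ of Remark~\ref{rk:RS_Clifford_normalization}. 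Your suggested fallback check on the normalization family $A(t)=f(t)F_{s+1}$ is not needed in the paper's argument but is a sound way to pin down the sign.
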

\begin{proof}
To take the (unbounded) Kasparov product, we first take 
$d:C^1_0(\R)\to \sB(L^2(\R))$ the trivial connection. Then making the identification 
\[
C_0(\R, H)\ox_{C_0(\R)} L^2(\R)\to L^2(\R, H),
\]
we have that $1\ox_d\partial_x$ is represented as  $\partial_x$ on 
$L^2(\R, H)$. We can similarly identify $\Lambda^* \R \otimes \Lambda^* \R \simeq \Lambda^* \R^2$ 
and so our product Hilbert space is $L^2(\R, H) \ox \Lambda^* \R^2$. 
Putting together our Clifford actions, the results in \cite{KaaLes:SFU} ensure 
that the triple 
\[
\left( \Cl_{s+2,r}, \, L^2(\R, H) \ox \Lambda^* \R^2, \, A(\cdot) \ox f_1 + \partial_x \ox f_2  \right)
\]
is an unbounded representative of the product $[A(\cdot)] \ox_{C_0(\R)}[\partial_x]$. 

We now relate our product spectral triple to a $\KO$-theory class. We first 
identify $\Lambda^* \R^2 \simeq \R^4$ and take the following 
generators of a $Cl_{2,2}$--representation:
\[
 \left\{  
  \begin{pmatrix} & & 1 & \\ & & & 1 \\ 1 & & & \\ & 1 & & \end{pmatrix}, \, 
  \begin{pmatrix} & & & 1 \\ & & -1 & \\ & -1 & & \\ 1 & & & \end{pmatrix}, \, 
   \begin{pmatrix} & & -1 & \\ & & & 1 \\ 1 & & & \\ & -1 & & \end{pmatrix}, \, 
  \begin{pmatrix} & & & -1 \\ & & -1 & \\ & 1 & & \\ 1 & & & \end{pmatrix} \right\}.
\]
The product operator $T = A(\cdot) \ox f_1 + \partial_x \ox f_2$ therefore has the form 
\[
  T = \begin{pmatrix} 0_2  &  T_+^* \\ T_+ & 0_2  \end{pmatrix}, \qquad 
  T_+ =\begin{pmatrix} A(\cdot) & \partial_x\\ \partial_x & -A(\cdot) \end{pmatrix}.
\]
We see that $T_+$ is a skew-adjoint unbounded Fredholm operator on 
$L^2(\R, H)^{\oplus 2}$ anti-commuting with an \emph{ungraded} 
left $\Cl_{r,s+1}$--action with generating elements  
$\{ E_1 \otimes K_1, \ldots, E_r\otimes K_1, F_1 \otimes K_1, \ldots, F_s \otimes K_1, I \otimes L_1 \}$. 
We can now apply the isomorphism $\phi:\,\KKO(\Cl_{s+2,r},\R)\to \KO_{s+2-r}(\R)$ which 
is given by  
\[
 \phi \left( \big[(\Cl_{s+2, r}, \, L^2(\R, H)^{\oplus 4}, \, T )\big] \right) 
 = \ind_{r,s+2}(T_+) \in A_{r,s+2} \simeq KO_{s+2-r}(\R).
\]
Therefore, we find that
\[
\phi([A(\cdot)]\ox_{C_0(\R)}[\partial_x]) 
 = \ind_{r, s+2} \begin{pmatrix} A(\cdot) & \partial_x\\ \partial_x & -A(\cdot) \end{pmatrix} 
 = - \sff_{r,s+2}(A(\cdot)).
 \]
 The last equality comes from Theorem~\ref{thm:Robbin-Salamon} and 
 by conjugating $T_+$ by the self-adjoint unitary $\frac 1{\sqrt 2}(K_1-K_2)$, 
 cf. Remark~\ref{rk:RS_Clifford_normalization}, which will reverse the 
 orientation of $\Cl_{2,0}$ and sends the class in $KO_{s+2-r}(\R)$ to its inverse.
\end{proof}

The minus sign relating the Kasparov product to the $KO$--valued 
spectral flow is common in such index formulas. In the case that 
$r=s$ and $r-s = 1$, $SF_{r,s+2}$ has range $\Z/2\Z$ and so this 
minus sign can be ignored.

\appendix

\section{Some classical homotopy equivalences}
\label{s.SCHE}

In this appendix we will explicitly describe a few useful homotopy
equivalences most of which are reformulations of results due to Atiyah
and Singer \cite{AS69}. We will use freely the notation introduced in
Sec.~\ref{ss.PN}.

Strictly speaking, \cite{AS69} cover ``only'' the case of $\Cl_{0,k}$
symmetry. Note, however, that due to the isomorphisms
\Eqref{eq.ABSC.1}--\Eqref{eq.ABSC.4} one has the following: if
$X^{r,s}$ denotes any of the spaces $\sF^{r,s+1},
\tO_{r,s},\ldots $ etc. then $X^{r+1,s+1}$ is naturally
homeomorphic to $X^{r,s}$. Also, in a standard Hilbert space,
$X^{r+8,s}, X^{r,s+8}$ are homeomorphic to $X^{r,s}$ and the
homeomorphism is compatible with all the homotopy equivalences stated
below.\footnote{This is a metastatement whose details are hard
to formulate such that a lawyer will be happy; however every literate
reader will be able to fill in the correct details.
}                             
Therefore, we just summarize the results. The easy (but tedious) details are left to the reader.

\subsection{Homotopy results for standard $\Clrs$--Hilbert spaces}

\begin{prop}\label{p.SCHE.1} For a $\Cl_{r,s}$--Hilbert space $H$ the
inclusion $\bO_{r,s}\subset \tO_{r,s}$ is a weak homotopy equivalence.\footnote{A weak or singular
homotopy equivalence is a map which induces isomorphisms on all
homotopy groups}
\end{prop}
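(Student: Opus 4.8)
The statement to prove is that for a $\Cl_{r,s}$--Hilbert space $H$, the inclusion $\bO_{r,s}(H)\hookrightarrow \tO_{r,s}(H)$ is a weak homotopy equivalence, where $\bO_{r,s}=\{T\in\sJ^{r,s}(H): T-F_s \text{ compact}\}$ and $\tO_{r,s}=\{T\in\sJ^{r,s}(H): \|T-F_s\|_\sQ<2\}$. I would prove this by an explicit compactness/approximation argument at the level of representatives of continuous maps from finite CW complexes (in fact from spheres and disks, which suffices for a weak equivalence), combined with the elementary homotopy of Proposition~\ref{p.NHCT.2}.

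\emph{Step 1: Reduce to a controlled norm bound.} Given a compact parameter space $P$ (a sphere $S^n$ or a disk $D^n$ rel boundary) and a continuous map $g:P\to\tO_{r,s}$, the function $p\mapsto \|g(p)-F_s\|_\sQ$ is continuous and bounded by some $\beta<2$ by compactness. The first move is to observe that every $T\in\sJ^{r,s}$ satisfies $\|T-F_s\|\le 2$, while the quotient norm being $<2$ means $\|\pi(F_sT)+I\|_\sQ<2$, i.e. $1\notin\specess(F_sT)$; so there is a uniform spectral gap: there exists $\delta>0$ (depending on $P$) with $\spec_{\mathrm{ess}}(F_s g(p))\cap\{z\in S^1:|z-1|<\delta\}=\emptyset$ for all $p\in P$.

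\emph{Step 2: Finite-rank truncation via functional calculus.} For each $p$, consider the unitary $u(p):=F_s g(p)\in\sB(H)$, which anti-commutes with $F_s$-free Clifford generators and satisfies $u(p)^*=F_s g(p)F_s^{-1}$-type relations coming from $g(p)^2=-I$. Using the continuous functional calculus, let $\chi$ be a continuous function that is $1$ near $1\in S^1$ and $0$ outside the $\delta$-arc; then $\chi(u(p))$ is a continuous family of \emph{finite-rank} projections (finite rank because $1$ is not in the essential spectrum), and it commutes with the Clifford generators because $u(p)$ does. On the complement of $\ran\chi(u(p))$ the unitary $u(p)$ is bounded away from $1$, so we may continuously deform $u(p)$ there to have spectrum exactly $\{-1\}$, i.e. replace $u(p)$ by $-I$ on that subspace; this deformation can be carried out $\Cl_{r,s}$-equivariantly by radial retraction of $S^1\setminus\{\text{arc}\}$ onto $\{-1\}$ within the unitaries, and it keeps us inside $\tO_{r,s}$. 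The resulting family $\tilde u(p)$ equals $-I$ up to a finite-rank perturbation supported on $\ran\chi(u(p))$; translating back, $\tilde T(p):=F_s\tilde u(p)$ (suitably interpreted so that $\tilde T(p)^2=-I$, which one arranges by taking the phase of $F_s\tilde u(p)$) differs from $F_s$ by a finite-rank operator, hence lies in $\bO_{r,s}$. One must check the Clifford relations survive; this is where Lemma~\ref{lem:T0_T1_identities}-style bookkeeping is needed.

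\emph{Step 3: Connect the deformation.} The homotopy of functions on $S^1$ used in Step 2 produces a homotopy $H:P\times[0,1]\to\tO_{r,s}$ from $g$ to a map landing in $\bO_{r,s}$; and if $g$ already mapped a subcomplex $Q\subset P$ into $\bO_{r,s}$, one checks that on $Q$ the deformation is through $\bO_{r,s}$ (there the perturbation was already compact, and the functional calculus preserves compactness of $T-F_s$ since $\chi$ vanishes near $-1$). This gives surjectivity of $\pi_n(\bO_{r,s})\to\pi_n(\tO_{r,s})$ and, applied to a homotopy $P=S^n\times[0,1]$, injectivity as well. Alternatively, once one has the finite-rank retraction, Proposition~\ref{p.NHCT.2} applied fibrewise shows any two finite-rank-perturbation representatives of the same $\tO_{r,s}$-homotopy class are $\bO_{r,s}$-homotopic, since $\|J_0-J_1\|\le 2$ and on the relevant finite-dimensional summand strict inequality can be arranged after a small perturbation.

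\emph{The main obstacle.} The delicate point is Step 2: carrying out the ``push the non-essential part of the spectrum to $-1$'' deformation \emph{continuously in $p$} and \emph{compatibly with the Clifford symmetries}, while ensuring that at each stage the operator is still a genuine complex structure (square $=-I$) and not merely a unitary. The clean way around this is to work throughout with the Cayley-transform picture: transfer the whole problem along $\Phi$ of Theorem~\ref{p.NHCT.4} to a statement about skew-adjoint Fredholm operators $\sFs^{r,s+1}$ versus those that are finite-rank (or compact) perturbations of $F_{s+1}$, where the functional calculus $T\mapsto \Psi(T)$ of \Eqref{eq.NHCT.12} already does the spectral normalization and manifestly preserves both skew-adjointness and the Clifford relations. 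I expect the cleanest proof to deduce the proposition from this transferred statement together with the known homotopy equivalence $\FO_*^{r,s+1}\subset\sFs^{r,s+1}$ noted after Definition~\ref{p.ClrsFred.1}, so that the only genuinely new content is that compactly-supported normalization is a retract of the Calkin-distance-$<2$ condition — which is the functional-calculus argument above.
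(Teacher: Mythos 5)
Your approach is genuinely different from the paper's, and unfortunately both the direct argument and the fallback have problems. The paper's proof is a one-liner: the map $\tO_{r,s}(H)\to\bigsetdef{x\in\sJ^{r,s}(\sQ(H))}{\|x-\pi(F_s)\|_\sQ<2}$, $T\mapsto\pi(T)$, is a fibration with fibre $\bO_{r,s}$ over the base point $\pi(F_s)$ (this is the Atiyah--Singer lifting argument), and the base is contractible by Prop.~\ref{p.NHCT.2}; the long exact homotopy sequence of the fibration then gives the claim immediately. By passing to the Calkin algebra one entirely sidesteps the spectral deformations that dominate your Step 2.

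The genuine gap in your Step 2 is the assertion that $\chi(u(p))$ is a continuous family of finite-rank \emph{projections}. If $\chi$ is continuous it is not a spectral projection, and if you replace $\chi$ by the indicator of the arc you lose continuity in $p$: the uniform spectral gap only pertains to the \emph{essential} spectrum, so isolated eigenvalues of finite multiplicity can drift across the boundary of the arc, making the rank of $1_{\text{arc}}(u(p))$ jump. Consequently the ``push the non-essential spectrum to $-1$'' deformation is not well defined as a continuous retraction. Your proposed escape route is circular: you want to transfer the problem along the Cayley transform $\Phi$ of Theorem~\ref{p.NHCT.4} and use that $\Phi_{r,s+1}$ is a homotopy equivalence, but in the paper's logical chain $\Phi_\Psi:\sFs^{r,s+1}\to\tO_{r,s}$ is established as a homotopy equivalence precisely by combining Theorem~\ref{p.SCHE.4} (which lands in $\bO_{r,s}$) with Prop.~\ref{p.SCHE.1} itself. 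So you cannot invoke that statement while proving it. The fibration argument is the non-circular way to supply the missing ingredient.

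One further small remark: your Step 2 aims to deform $g(p)$ to a \emph{finite-rank} perturbation of $F_s$, which is stronger than membership in $\bO_{r,s}$ (compact perturbation suffices). Even if one could make the finite-rank normalization work on a compact parameter space, extracting a genuine deformation retraction of $\tO_{r,s}$ onto $\bO_{r,s}$ is more than the proposition asserts; the fibration argument delivers exactly the weak homotopy equivalence and no more.
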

\begin{proof} As in \cite{AS69} one concludes that
  $\bO_{r,s}\hookrightarrow \tO_{r,s}\to \bigsetdef{x\in\sJ^{r,s}(\sQ(H))}{
\|x-\pi(F_s)\|_{\sQ}<2} $ is a fibration. In view of Prop.
\ref{p.NHCT.2} the base is contractible, whence the claim.
\end{proof}

Next we recall the main results of \cite{AS69}:

\begin{theorem}[\cite{AS69}, Theorems A(k), B(k)] \label{p.SCHE.2}
Let $H$ be a standard $\Cl_{r,s}$--Hilbert space.  Then
\[
  \ind_{r,s+1}:\sFs^{r,s+1} \to A_{r,s+1}=\KO_{s+1-r}(\R),\quad
    T\mapsto [ \ker T ]_{\sM_{r,s}/\sM_{r,s+1} }
\]
labels $\pi_0(\sFs^{r,s+1})$.
\end{theorem}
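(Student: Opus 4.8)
The plan is to reduce the statement to the original Atiyah--Singer theorems for $\Cl_{0,k}$--symmetry and then transport the identification of $\pi_0$ along the canonical homeomorphisms that relate the two--parameter spaces $\sF^{r,s+1}$ to the one--parameter spaces $\sF^{0,k}$. Concretely, the isomorphisms \cliffisos{} of Clifford algebras, together with the decomposition results in Proposition~\ref{p.ICA.1} (parts 2.a--2.c), yield for a \emph{standard} $\Cl_{r,s}$--Hilbert space $H$ a covariant identification of $\sF^{r,s+1}(H)$ with $\sF^{0,k}(H')$ for suitable $H'$ and $k$ depending only on $s-r \mlmod 8$; the footnote after Definition~\ref{p.ClrsFred.1} already records that this homeomorphism is natural and depends only on $s-r\mlmod 8$. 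Under this homeomorphism the kernel functor is preserved in the evident sense: if $T$ corresponds to $T'$ then $\ker T$, as a $\Cl_{r,s}$--module, corresponds to $\ker T'$ as a $\Cl_{0,k-1}$--module, since the Clifford generators act on kernels through the same matricial recipes used to build the homeomorphism. Hence $\ind_{r,s+1}$ corresponds to $\ind_{0,k}$ under the identifications $\sF^{r,s+1}(H)\simeq\sF^{0,k}(H')$ and $A_{r,s+1}\simeq A_{0,k}$, reducing everything to the case $r=0$.

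For $r=0$ the statement is precisely Theorems A($k$) and B($k$) of \cite{AS69}, invoked here as a black box: in the standard Hilbert space $H'$ the map $T\mapsto[\ker T]\in\sM_{0,k-1}/\sM_{0,k}$ is a bijection from $\pi_0(\sF_*^{0,k})$ onto $A_{0,k}$, which is canonically $\KO_k(\R)$ by the ABS theorem recalled in Sec.~\ref{ss.ABS} (the cases $k\equiv 3,5,6,7\mlmod 8$ being the trivial ones, and the `interesting component' convention built into the subscript $*$ taking care of $k\equiv 3\mlmod 4$). There is a small bookkeeping point: for $r-s\not\equiv 2\mlmod 4$ one has $\sF_*^{r,s+1}=\sF^{r,s+1}$ so the statement is about $\pi_0(\sF^{r,s+1})$ outright, while for $r-s\equiv 2\mlmod 4$, i.e.\ $s+1-r\equiv 3\mlmod 4$, the relevant group $A_{r,s+1}$ vanishes and the subscript $*$ selects the single interesting component, consistently with \cite[p.~7]{AS69}; I would state this explicitly so the reader sees that no hidden component is lost.

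The one genuinely non-routine point is verifying that the homeomorphism $\sF^{r,s+1}(H)\simeq\sF^{0,k}(H')$ really does intertwine the \emph{kernel-class} maps, not merely induce a bijection on $\pi_0$. This requires chasing the explicit formulas in the proof of Proposition~\ref{p.ICA.1}: the conjugation $\Psi^{-1}X\Psi$ sends each old Clifford generator to a new one of the form $F_{s+1}X$ (or $XE_{r+2}$), and the operator $T$ on $H$ corresponds to an operator $T'$ on $H'\otimes\R^2$ built from $T_{1,1}$-type matrices as in \eqref{eq.ABSC.13}; one must check that $\ker$ of the transported operator is, as a module over the transported Clifford generators, isomorphic to the original kernel. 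This is exactly the content of Proposition~\ref{p.ClrsFred.2} (the index formulas $\Ind_{r,r}(T)=\Ind_{r+1,r+1}(T_{1,1})$ and the forgetful-map relations) iterated $8$-periodically, so the heavy lifting has already been done; what remains is to assemble it into a statement at the level of whole path components rather than individual operators, which follows because $\ind_{r,s+1}$ is locally constant (Sec.~\ref{ss.ClrsFred}) and the homeomorphism is continuous. I would therefore present the proof as: (i) recall \cite{AS69} for $r=0$; (ii) invoke the periodicity homeomorphism and its compatibility with $\ind$, citing Proposition~\ref{p.ICA.1}, Remark~\ref{p.ICA.2}, and Proposition~\ref{p.ClrsFred.2}; (iii) conclude, after noting the $*$-convention bookkeeping above.
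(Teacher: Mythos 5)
Your proposal is correct and follows exactly the route the paper intends: the paper itself provides no detailed proof of Theorem~\ref{p.SCHE.2}, stating instead in the preamble to Appendix~\ref{s.SCHE} that \cite{AS69} covers ``only'' $\Cl_{0,k}$ and that the transfer to $(r,s)$ via the homeomorphisms induced by \cliffisos{} is a metastatement whose ``easy (but tedious) details are left to the reader.'' You have filled in precisely those details — the reduction via Proposition~\ref{p.ICA.1}~(2.a--2.c), the compatibility of $\ind$ with the tensor-factor homeomorphism (supported by Remark~\ref{p.ICA.2}~2 and Proposition~\ref{p.ClrsFred.2}), and the $\sF_*$ bookkeeping — in a manner consistent with the paper's intent.
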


This means that each $T\in\sFs^{r,s+1}$ can be path connected to an
operator $T_V$ of the \emph{normal form} \Eqref{eq.ABSC.9}.

\begin{theorem}[\cite{AS69}, Theorems A(k), B(k)]\label{p.SCHE.3} Let $s>0$ or $r=s=0$.
Then for a standard $\Cl_{r,s}$--Hilbert space, the map
\[\begin{split}
  &\Fs^{r,s+1} \to \Omega( \FO_*^{r,s} ),\quad
     T\mapsto \ga_{\bullet}(T),\\
  &\ga_t(T):=\cos(\pi t) F_{s} + \sin(\pi t) T,\quad 0\le t\le 1
\end{split}
\]
is a homotopy equivalence.  The RHS is the space of paths from
$F_{s}$ to $-F_{s}$ in $\FO_*^{r,s}$.
  
In the corner case $r=s=0$ this must be read as follows: $\FO_*^{0,0}$ is
the space of essentially unitary Fredholm operators and $F_0=I$.
\end{theorem}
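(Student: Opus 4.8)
The plan is to carry out the elementary verification that $\gamma_\bullet$ is a well-defined continuous map into the asserted path space, and then to reduce the homotopy-equivalence assertion to the corresponding delooping theorem of Atiyah and Singer \cite{AS69}, the Clifford bookkeeping being supplied by the isomorphisms of Section~\ref{s.ABSC}.

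First I would check well-definedness. For $T\in\sFs^{r,s+1}(H)$ one has $T^*=-T$ and $T$ anticommutes with $E_1,\ldots,E_r,F_1,\ldots,F_s$, in particular with $F_s$; hence the cross term in the square drops out and
\[
  \gamma_t(T)^2 \;=\; \cos^2(\pi t)\,F_s^2 + \sin^2(\pi t)\,T^2 \;=\; -\cos^2(\pi t)\,I + \sin^2(\pi t)\,T^2 .
\]
Since $T\in\FO^{r,s+1}$, its image $\pi(T)\in\sQ(H)$ is a skew-adjoint unitary, so $T^2\equiv -I$ modulo $\sK(H)$ and therefore $\gamma_t(T)^2\equiv -I$ in the Calkin algebra; together with $\|T\|=1$ this gives $\|\gamma_t(T)\|=1$ and $\pi(\gamma_t(T))$ a complex structure, so $\gamma_t(T)\in\FO^{r,s}$. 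Since both $F_s$ and $T$ anticommute with $E_1,\ldots,E_r,F_1,\ldots,F_{s-1}$, so does $\gamma_t(T)$, and $\gamma_0(T)=F_s$, $\gamma_1(T)=-F_s$; thus $\gamma_\bullet(T)$ is a path from $F_s$ to $-F_s$ in $\FO^{r,s}$. In the degrees where $\FO^{r,s}$ is disconnected one checks that $\gamma_\bullet(T)$ stays in the distinguished component, using that $\pi(\gamma_t(T))$ is a complex structure and hence, in a standard module, has both $\pm i$ in its essential spectrum. Continuity of $T\mapsto\gamma_\bullet(T)$ is clear, and in the corner case $r=s=0$ the same computation goes through with $F_0=I$ and $\FO_*^{0,0}$ the space of essentially unitary Fredholm operators.

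Next I would reduce to \cite{AS69}. By Proposition~\ref{p.ICA.1} and the isomorphisms \eqref{eq.ABSC.1}--\eqref{eq.ABSC.4}, the pair of spaces $(\sFs^{r+1,s+1},\FO_*^{r+1,s+1})$ is canonically homeomorphic to $(\sFs^{r,s},\FO_*^{r,s})$, and on a standard Hilbert space so is $(\sFs^{r,s+8},\FO_*^{r,s+8})$; all these homeomorphisms are visibly compatible with the formula defining $\gamma_\bullet$ (the same metastatement as in the preamble of this appendix). Hence it is enough to treat the case $r=0$, which is exactly the situation covered by \cite[Theorems A(k), B(k)]{AS69}: there $\sFs^{0,s+1}$ is the Atiyah--Singer space and $\gamma_\bullet$ is their delooping map built from the last Clifford generator $F_s$, shown there to be a homotopy equivalence.

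The main obstacle is precisely the Atiyah--Singer argument that this map is a genuine homotopy equivalence and not merely a bijection on path components (the latter being Theorem~\ref{p.SCHE.2}); it rests on fibration arguments in the spirit of \cite{AS69} and Theorem~\ref{p.SCHE.1}, whose contractibility inputs are exactly of the kind recorded in Proposition~\ref{p.NHCT.2} and Lemma~\ref{p.NHCT.3}. The only bookkeeping genuinely beyond \cite{AS69} is tracking the self-adjoint generators $E_1,\ldots,E_r$ through the reduction and matching distinguished components in the degrees $s-r\equiv 1,2\pmod 4$; in keeping with the policy of this appendix I would carry this out only at the level of the explicit homeomorphisms of Proposition~\ref{p.ICA.1} and otherwise appeal to \cite{AS69} verbatim.
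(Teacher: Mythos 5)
Your proposal is correct and tracks the paper's own route: the paper offers no separate proof, citing Atiyah--Singer directly, and the appendix preamble explicitly delegates the $(r,s)$-index bookkeeping to the homeomorphisms coming from the isomorphisms \eqref{eq.ABSC.1}--\eqref{eq.ABSC.4}, so your well-definedness check and the reduction via Proposition~\ref{p.ICA.1} are exactly the ``easy but tedious details'' left to the reader. One small simplification is available: once $\gamma_\bullet(T)$ is known to be a continuous path in $\FO^{r,s}$ with endpoints $\pm F_s$, it lies in the distinguished component automatically because it is connected to $F_s$, so the essential-spectrum argument for component-membership can be dropped.
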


\begin{theorem}\label{p.SCHE.4} Let $s>0$ or $r=s=0$ and let
$H$ be a standard $\Cl_{r,s}$--Hilbert space. Then the map
\[
  \Phi_{r,s+1}: \Fs^{r,s+1} \to \ovl{\Omega}_{r,s},\quad
    T\mapsto -F_{s} e^{\pi T F_{s} }
\]
is a homotopy equivalence. Again, if $r=s=0$ then $F_0=I$.
Putting
\begin{equation}\label{eq.SCHE.2}
     \Psi: i\R \to i\R, \quad x\mapsto \begin{cases} x,& |x|\le 1,\\
       x/|x|,& |x|\ge 1,
     \end{cases}
\end{equation}
one obtains a homotopy equivalence
\[
  \Phi_\Psi: \sFs^{r,s+1} \to \tO_{r,s},\quad 
           T \mapsto -F_{s} e^{\pi \Psi(T F_{s} ) }.
\]
\end{theorem}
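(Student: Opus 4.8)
The plan is to deduce this from the Atiyah--Singer result Theorem~\ref{p.SCHE.3}, together with elementary homotopies and the periodicity reduction sketched at the start of this appendix. First I would pass to the deformation retract: since $\FO_*^{r,s+1}$ is a deformation retract of $\sFs^{r,s+1}$, the maps $\Phi_{r,s+1}$, $\Phi_\Psi$ and $\ga_\bullet$ are to be read as defined on $\FO_*^{r,s+1}$ and extended to $\sFs^{r,s+1}$ by precomposing with a retraction. On $\FO_*^{r,s+1}$ one has $\spec(TF_{s})\subset i[-1,1]$ and $\pi(TF_{s})^{2}=-1$; hence $\Psi$ acts as the identity on $\spec(TF_{s})$, so $\Phi_\Psi$ and $\Phi_{r,s+1}$ agree there, while $\pi(e^{\pi TF_{s}})=-1$, so $-F_{s}e^{\pi TF_{s}}-F_{s}$ is compact, i.e.\ $\Phi_{r,s+1}$ really does map $\FO_*^{r,s+1}$ into $\ovl\Omega_{r,s}$ (this is the computational content of Lemma~\ref{p.NHCT.3}). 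Pushing $T$ along the retracting homotopy and applying $\Phi_\Psi$ shows that, as a map into $\tO_{r,s}$, $\Phi_\Psi$ is homotopic to $\Phi_{r,s+1}$ (restricted to $\FO_*^{r,s+1}$) followed by the retraction; and $\ovl\Omega_{r,s}=\bO_{r,s}\hookrightarrow\tO_{r,s}$ is a weak homotopy equivalence by Proposition~\ref{p.SCHE.1}. By the two-out-of-three property, all assertions of the theorem follow once $\Phi_{r,s+1}\colon\FO_*^{r,s+1}\to\ovl\Omega_{r,s}$ is known to be a homotopy equivalence; and since the spaces in question have the homotopy type of CW complexes, ``weak'' may be dropped throughout.

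For the latter I would first take $r=0$. By Theorem~\ref{p.SCHE.3} the map $\ga_\bullet\colon\FO_*^{0,s+1}\to P(\FO_*^{0,s};F_{s},-F_{s})$, $T\mapsto(t\mapsto\cos(\pi t)F_{s}+\sin(\pi t)T)$, is a homotopy equivalence onto the space of paths from $F_{s}$ to $-F_{s}$; on the subset $\{T:T^{2}=-1\}$ this path coincides with the exponential path $t\mapsto F_{s}e^{\pi t\,TF_{s}}$, whose endpoint is $F_{s}=\Phi_{0,s+1}(T)$ (consistently, every complex structure in $\sJ^{0,s+1}(H)$ is sent by $\Phi_{0,s+1}$ to the basepoint), so $\Phi_{0,s+1}$ only records the compact-perturbation part of $T$. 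One then constructs a homotopy equivalence $\rho\colon P(\FO_*^{0,s};F_{s},-F_{s})\to\ovl\Omega_{0,s}$ — morally: concatenate a path with the reverse of a fixed reference exponential path and read off the resulting complex structure — such that $\rho\circ\ga_\bullet\simeq\Phi_{0,s+1}$; that $\rho$ is a homotopy equivalence is the Bott-type fact that the space of compact-perturbation complex structures is the loop space of the next classifying space in the Clifford tower. This identification, and its compatibility with $\Phi$, are precisely the substance of the appendix of \cite{AS69} (cf.\ also \cite{Mil63,Kar70}); granting it, $\Phi_{0,s+1}$ is a homotopy equivalence.

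Finally, for general $r,s$ with $s>0$ I would invoke the homeomorphisms $X^{r+1,s+1}\cong X^{r,s}$ coming from $\Cl_{r+1,s+1}\simeq\Clrs\otimes\Cl_{1,1}$ and the $8$--periodicity $X^{r,s+8}\cong X^{r,s}\cong X^{r+8,s}$ for $X\in\{\sFs,\FO_*,\tO,\ovl\Omega\}$ on a standard Hilbert space; these are compatible with $\Phi_{r,s+1}$, $\Phi_\Psi$, $\ga_\bullet$ and with the index labels, so the general case reduces to $r=0$. The corner case $r=s=0$ is subsumed in the above with the convention $F_{0}=I$. I expect the main obstacle to be the middle paragraph: making the comparison of the path-space model of Theorem~\ref{p.SCHE.3} with the complex-structure model $\ovl\Omega_{r,s}$ completely precise and natural, in particular tracking the distinguished path component when $r-s\equiv 2\mlmod 4$ (the subscript $*$). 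The reductions in the first and third paragraphs are of the ``easy but tedious'' kind this appendix delegates to the reader.
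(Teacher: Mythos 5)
Your treatment of the second claim and the periodicity reduction is sound, and your first paragraph matches the second sentence of the paper's proof: on the deformation retract $\FO_*^{r,s+1}\subset\sFs^{r,s+1}$ one has $\|TF_s\|=1$, so $\Psi(TF_s)=TF_s$ and $\Phi_\Psi=\Phi_{r,s+1}$ there, while $\pi(TF_s)^2=-1$ gives $\pi(e^{\pi TF_s})=-1$ and hence $\Phi_{r,s+1}(T)-F_s$ compact, so $\Phi_{r,s+1}$ lands in $\ovl\Omega_{r,s}$; combined with Prop.~\ref{p.SCHE.1} this reduces the whole theorem to the first claim, that $\Phi_{r,s+1}$ is a homotopy equivalence.

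The gap is the middle paragraph, and it is not small. You want to derive the first claim from Theorem~\ref{p.SCHE.3} by constructing a homotopy equivalence $\rho\colon P(\FO_*^{r,s};F_s,-F_s)\to\ovl\Omega_{r,s}$ with $\rho\circ\ga_\bullet\simeq\Phi_{r,s+1}$, but $\rho$ is never produced. Your ``moral'' recipe (concatenate with the reverse of a fixed reference path and ``read off the resulting complex structure'') yields a loop in $\FO_*^{r,s}$, not an element of $\ovl\Omega_{r,s}$; there is no elementary step from one to the other, and the existence of a natural homotopy equivalence between the based loop space of $\FO_*^{r,s}$ and the compact-perturbation complex structures is, up to cosmetics, exactly the Clifford Bott-periodicity statement you are trying to establish, so you cannot simply ``grant it.'' You acknowledge this and defer to ``the substance of the appendix of \cite{AS69},'' but that makes the argument circular, because the statement you want exists in \cite{AS69} in precisely the form required, and the paper's proof just cites it: \cite[Prop.~4.2]{AS69} for $s>0$ and \cite[Prop.~3.3]{AS69} for $r=s=0$. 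Theorem~\ref{p.SCHE.3} and the present theorem are two parallel outputs of the Atiyah--Singer development, not one a consequence of the other; rebuilding the second from the first leaves you needing exactly the same input from \cite{AS69} that you could have imported directly.
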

\begin{proof}
The claim about the first homotopy equivalence follows
from \cite{AS69}, Prop.~4.2 ($s>0$) and Prop.~3.3 ($r=s=0$).
The second claim is then a simple consequence since 
  on the deformation retract $\FO_*^{r,s+1}\subset \sFs^{r,s+1}$ 
the map $\Phi_\Psi$ coincides with the map $\Phi_{r,s+1}$.
\end{proof}

We note that $\Phi_\Psi(\pm F_{s+1})=\Phi_{r,s+1}(\pm F_{s+1}) =
F_{s}$ so that 
$\Phi_\Psi, \Phi_{r,s+1}$ sends paths from $F_{s+1}$ to $-F_{s+1}$ in
$\sFs^{r,s+1}, \, \Fs^{r,s+1}$ to \emph{loops} in $\ovl{\Omega}_{r,s}$
with base point $F_{s}$. As a corollary we obtain the characterization
of the connected components of $\tO^{r,s+1}$:

\begin{cor}\label{p.SCHE.5} Let $s>0$ or $r=s=0$ and
let $H$ be a standard $\Cl_{r,s}$--Hilbert space. Then we have
\emph{canonical} identifications 
$\pi_0(\tO_{r,s}) \simeq
  \pi_0(\bO_{r,s})\simeq A_{r,s+1}\simeq 
  \KO_{s+1-r}(\R)$.  
More concretely, each path component contains an element which,
w.r.t. a $\Cl_{r,s}$--linear decomposition $H=H_0\oplus V$,
takes the form
${F_{s}}\restr{H_0} \oplus -{F_s}\restr{V}$. Under the claimed
isomorphism this element is then mapped to
$[V]\in\sM_{r,s}/\sM_{r,s+1}$.
\end{cor}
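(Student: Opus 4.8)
The plan is to obtain all three identifications by chaining together the homotopy equivalences already at our disposal. First, Proposition \ref{p.SCHE.1} gives that the inclusion $\bO_{r,s}\subset \tO_{r,s}$ is a weak homotopy equivalence, so in particular $\pi_0(\bO_{r,s})\simeq \pi_0(\tO_{r,s})$; this takes care of the first isomorphism. For the second, I would invoke Theorem \ref{p.SCHE.4}: the Cayley-type map $\Phi_\Psi:\sFs^{r,s+1}\to\tO_{r,s}$ is a homotopy equivalence, hence induces a bijection $\pi_0(\sFs^{r,s+1})\simeq \pi_0(\tO_{r,s})$. Composing with the Atiyah--Singer labelling of Theorem \ref{p.SCHE.2}, namely $\ind_{r,s+1}:\pi_0(\sFs^{r,s+1})\xrightarrow{\ \sim\ } A_{r,s+1}\simeq \KO_{s+1-r}(\R)$, yields the desired canonical identification $\pi_0(\tO_{r,s})\simeq A_{r,s+1}$.

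Next I would pin down the concrete representative claimed in the statement. Given a $\Cl_{r,s}$--linear decomposition $H=H_0\oplus V$ with $\dim V<\infty$, consider the operator $\go={F_s}\restr{H_0}\oplus -{F_s}\restr{V}\in\tO_{r,s}$ (note $\go-F_s$ is finite rank, so indeed $\go\in\bO_{r,s}\subset\tO_{r,s}$). Under the equivalence $\Phi_\Psi$ this element is the image of an operator $T\in\sFs^{r,s+1}$ of the normal form \eqref{eq.ABSC.9}; concretely one may take $T = F_{s+1}\restr{H_0}\oplus 0\restr{V}$, whose phase on $H_0$ is $F_{s+1}$ and for which $\Phi_\Psi(T)=-F_s e^{\pi\Psi(TF_s)}$ restricts to $-F_s e^{\pi F_{s+1}F_s}$ on $H_0$ — and since $F_{s+1}F_s$ has square $-1$, $-F_s e^{\pi F_{s+1}F_s}=-F_s\cdot(-I)=F_s$ there — while on $V$ it is $-F_s e^{0}=-F_s$. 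Thus $\Phi_\Psi(T)=\go$. By Theorem \ref{p.SCHE.2}, $\ind_{r,s+1}(T)=[\ker T]=[V]\in\sM_{r,s}/\sM_{r,s+1}$, which is precisely the class assigned to $\go$ under the chain of identifications. Finally, surjectivity of the labelling — that every path component of $\tO_{r,s}$ contains such a $\go$ — follows by running this correspondence backwards: $\ind_{r,s+1}$ is surjective (cf. the discussion around \eqref{eq.ABSC.9}), and the normal form operators $T_V$ map under $\Phi_\Psi$ exactly to the operators $\go$ of the stated shape.

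The corner case $r=s=0$ needs to be read with $F_0=I$: here $\tO_{0,0}$ is the space of essentially unitary operators with $-1\notin\specess$, $\bO_{0,0}$ the genuine unitaries in that set, and the same argument applies verbatim using the $r=s=0$ clauses of Propositions \ref{p.SCHE.1}, Theorems \ref{p.SCHE.2} and \ref{p.SCHE.4}. I expect the only mild obstacle to be bookkeeping: checking that the three identifications are genuinely \emph{canonical} and mutually compatible (so that the composite sends $\go$ to $[V]$ with no sign or convention ambiguity), which amounts to carefully tracking the basepoint $F_s$ through $\Phi_\Psi$ and through the deformation retract $\FO_*^{r,s+1}\subset\sFs^{r,s+1}$ on which $\Phi_\Psi$ agrees with $\Phi_{r,s+1}$. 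The rest is a direct assembly of results already proved.
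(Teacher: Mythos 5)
Your proposal is correct and follows essentially the same route as the paper's own proof: first isomorphism from Proposition~\ref{p.SCHE.1}, second from the homotopy equivalence of Theorem~\ref{p.SCHE.4} composed with the Atiyah--Singer labelling of Theorem~\ref{p.SCHE.2}, and the concrete representative computed by evaluating the Cayley-type map on the normal-form operator $T=F_{s+1}|_{H_0}\oplus 0|_V$ (the paper uses $\Phi_{r,s+1}$ on $\FO^{r,s+1}_*$ while you use $\Phi_\Psi$, but these agree on this $T$, as you note). The computation $-F_s e^{\pi F_{s+1}F_s}=F_s$ from $(F_{s+1}F_s)^2=-I$ is exactly the one in the paper.
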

\begin{proof} The first isomorphism is Prop.~\ref{p.SCHE.1}.
According to the remark after Theorem~\ref{p.SCHE.2} each class in
$\pi_0(\sF_*^{r,s+1})$ has a representative $T= F_{s+1} \oplus 0$ w.r.t.
the $\Cl_{r,s}$--linear decomposition $H=H_0\oplus V$ such that $H_0$
carries a $\Cl_{r,s+1}$ structure; then $\ind_{r,s+1}(T)=[V]$.
Under the homotopy equivalence $\Phi_{r,s+1}$ of Theorem~\ref{p.SCHE.4}
this is mapped to
\[
   \Phi_{r,s+1}(T)  = - F_{s+1} e^{\pi F_{s+1}F_s}\restr{H_0} \oplus
      - {F_s}\restr{ V } 
      =  {F_{s}}\restr{H_0} \oplus- {F_s}\restr{ V },
\]
since $(F_{s+1}F_s)^2 = -I$.      
\end{proof}

Now we are in the position to prove the main result of this section:

\begin{theorem}\label{p.SCHE.6} Let $s>0$ or $r=s=0$ and let $H$ be a
standard $\Cl_{r,s+1}$--Hilbert space.  Then the fundamental group
$\pi_1(\tilde\Omega_{r,s}, F_s)$ is canonically isomorphic to
$\sM_{r,s+1}/\sM_{r,s+2}\simeq \KO_{s+2-r}(\R)$.

The isomorphism is given as follows: given a loop 
$\gamma:[0,1]\to \tilde\Omega_{r,s}, \gamma(0)=\gamma(1)=F_s$,
then there is a $\Cl_{r,s+1}$ decomposition $H=H_0\oplus V$
with $\dim V<\infty$ and $H_0$ carrying a $\Cl_{r,s+2}$-structure
such that $\gamma$ is homotopic to the loop
\begin{align*}
\tilde\gamma(t) 
   &:= {F_s}\restr{H_0} \oplus F_s e^{-2\pi t F_{s+1}F_s} \restr V\\
   &= {F_s}\restr{ H_0 } \oplus \Bigl( \cos(2\pi t) F_s - \sin(2\pi t)
                F_{s+1} \Bigr)\restr V,\quad 0\le t \le 1.
\end{align*}
The element $[\gamma]$ is then mapped to $[V]\in\sM_{r,s+1}/\sM_{r,s+2}$. 

As usual, in the case $r=s=0$ one has to read the formulas with $F_0=I$.
\end{theorem}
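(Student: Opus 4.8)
The plan is to deduce the statement — the group isomorphism together with the explicit normal form — purely from the homotopy equivalences already assembled in this appendix, by shifting the Clifford degree up by one. Since $H$ is standard as a $\Cl_{r,s+1}$--Hilbert space it is a fortiori standard as a $\Cl_{r,s}$--Hilbert space, so Proposition~\ref{p.SCHE.1} and Theorem~\ref{p.SCHE.4} are available for the given $r,s$, while Theorems~\ref{p.SCHE.2} and~\ref{p.SCHE.3} are available with $s$ replaced by $s+1$; by the discussion in Section~\ref{ss.PN} we may also assume the standard $\Cl_{r,s+2}$--action is at our disposal on $H$.

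First I would assemble a chain of bijections identifying $\pi_1(\tilde\Omega_{r,s},F_s)$ with $\pi_0(\sF_*^{r,s+2}(H))$. Proposition~\ref{p.SCHE.1} gives $\pi_1(\tilde\Omega_{r,s},F_s)\cong\pi_1(\ovl{\Omega}_{r,s},F_s)$. A direct computation shows $(F_{s+1}F_s)^2=-I$, so $e^{\pm\pi F_{s+1}F_s}=-I$ and hence $\Phi_{r,s+1}(\pm F_{s+1})=-F_s\,e^{\pm\pi F_{s+1}F_s}=F_s$: the homotopy equivalence $\Phi_{r,s+1}\colon\FO_*^{r,s+1}(H)\to\ovl{\Omega}_{r,s}(H)$ of Theorem~\ref{p.SCHE.4} carries each path from $F_{s+1}$ to $-F_{s+1}$ to a \emph{loop} based at $F_s$. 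Since a homotopy equivalence induces a homotopy equivalence between the corresponding spaces of paths with prescribed endpoints, it induces a homotopy equivalence
\[
  \bigsetdef{\alpha\colon[0,1]\to\FO_*^{r,s+1}(H)}{\alpha(0)=F_{s+1},\ \alpha(1)=-F_{s+1}}\ \longrightarrow\ \Omega_{F_s}\ovl{\Omega}_{r,s}(H).
\]
By Theorem~\ref{p.SCHE.3} (with $s$ replaced by $s+1$) the left-hand side is homotopy equivalent, via $T\mapsto\bigl(t\mapsto\cos(\pi t)F_{s+1}+\sin(\pi t)T\bigr)$, to $\FO_*^{r,s+2}(H)$, which is a deformation retract of $\sF_*^{r,s+2}(H)$. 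Taking $\pi_0$ and using $\pi_0\bigl(\Omega_{F_s}\ovl{\Omega}_{r,s}\bigr)=\pi_1(\ovl{\Omega}_{r,s},F_s)$ gives $\pi_1(\tilde\Omega_{r,s},F_s)\cong\pi_0\bigl(\sF_*^{r,s+2}(H)\bigr)$, and Theorem~\ref{p.SCHE.2} (again with $s$ replaced by $s+1$) identifies the latter with $A_{r,s+2}=\sM_{r,s+1}/\sM_{r,s+2}\simeq\KO_{s+2-r}(\R)$ through $T\mapsto[\ker T]$.

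Next I would extract the explicit normal form. Running a loop $\gamma$ backwards through the chain yields an operator $T\in\sF_*^{r,s+2}(H)$, which by Theorem~\ref{p.SCHE.2} is joined by a path to the normal form $T_V=F_{s+2}\restr{H_0}\oplus 0\restr{V}$ of \eqref{eq.ABSC.9}, with $H_0$ standard over $\Cl_{r,s+2}$, $V\cong\ker T$ a finite dimensional $\Cl_{r,s+1}$--module, and $[\ker T]=[V]$. Applying $\Phi_{r,s+1}$ to $\gamma_\bullet(T_V)(t)=(\cos(\pi t)F_{s+1}+\sin(\pi t)F_{s+2})\restr{H_0}\oplus\cos(\pi t)F_{s+1}\restr{V}$ one computes that on $H_0$ the operator $(\cos(\pi t)F_{s+1}+\sin(\pi t)F_{s+2})F_s$ again squares to $-I$, so its exponential at $\pi$ is $-I$ and $\Phi_{r,s+1}$ returns the constant value $F_s$; on $V$ one gets $-F_s\,e^{\pi\cos(\pi t)F_{s+1}F_s}$, a loop traversing the circle $\{F_s\,e^{\theta F_{s+1}F_s}\}$ once in the same sense as $t\mapsto F_s\,e^{-2\pi tF_{s+1}F_s}$, hence homotopic to it rel basepoint by a reparametrisation. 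Therefore $\gamma\simeq\tilde\gamma$ with $\tilde\gamma(t)=F_s\restr{H_0}\oplus F_s\,e^{-2\pi tF_{s+1}F_s}\restr{V}$, and the isomorphism sends $[\gamma]$ to $[V]\in\sM_{r,s+1}/\sM_{r,s+2}$, as claimed.

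Finally I would verify that this bijection is a \emph{group} isomorphism and canonical. Canonicity is clear because the only structure used is the basepoint $F_s$ and the distinguished generator $F_{s+1}$, both part of the prescribed $\Cl_{r,s+1}$--structure; the auxiliary generator $F_{s+2}$ enters only through connected choices on the standard part. For the homomorphism property I would realise $\tilde\gamma_{V_1}$ and $\tilde\gamma_{V_2}$ on a common Hilbert space $H_0\oplus V_1\oplus V_2$ by Stability and observe that the concatenation $\tilde\gamma_{V_1}*\tilde\gamma_{V_2}$ winds once around the $V_1$--summand and then once around the $V_2$--summand; the standard ``rotate simultaneously'' homotopy identifies it with $\tilde\gamma_{V_1\oplus V_2}$, so $[\gamma_1][\gamma_2]\mapsto[V_1]+[V_2]$. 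Surjectivity holds since $\tilde\gamma_V\mapsto[V]$, and injectivity because the map is a composite of bijections. The corner case $r=s=0$ runs through the same argument with every formula read with $F_0=I$, so that $\tilde\Omega_{0,0}$ is the space of essentially unitary Fredholm operators. The main obstacle is precisely this last step: the intermediate space of paths from $F_{s+1}$ to $-F_{s+1}$ carries no a priori group structure, so the homomorphism property must be produced by hand from the normal forms rather than inherited formally from the chain, and one must take some care that the $\Cl_{r,s+2}$--structure on $H_0$ supplied by Theorem~\ref{p.SCHE.2} is compatible (up to a connected choice) with the generator $F_{s+2}$ used throughout.
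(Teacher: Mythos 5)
Your proposal is correct and follows essentially the same route as the paper's own proof: reduce to $\ovl{\Omega}_{r,s}$ via Prop.~\ref{p.SCHE.1}, chain together Theorems~\ref{p.SCHE.2}, \ref{p.SCHE.3}, and \ref{p.SCHE.4} (shifted by one in $s$), and extract the explicit normal form by evaluating $\Phi_{r,s+1}\circ\gamma_\bullet$ on the standard representative $F_{s+2}\restr{H_0}\oplus 0\restr V$. The only thing you add is an explicit verification that the resulting bijection is a group homomorphism (via the direct-sum/simultaneous-rotation homotopy), a point the paper's proof leaves implicit in the word ``canonically.''
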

\begin{proof} In view of Prop.~\ref{p.SCHE.1} it suffices to prove the
  result with $\ovl{\Omega}_{r,s}$ instead of $\tilde\Omega_{r,s}$.

Pick a typical representative $T$ of $\sM_{r,s+1}/\sM_{r,s+2}$ as in the
proof of Cor.~\ref{p.SCHE.5}. According to the isomorphism in
Theorem~\ref{p.SCHE.2} this labels $\pi_0(\FO_*^{r,s+2})$. Under the
isomorphism in Theorem~\ref{p.SCHE.3},  $\pi_0(\FO_*^{r,s+2})$ is isomorphic
to the homotopy classes of paths in $\FO_*^{r,s+1}$ from
$F_{s+1}$ to $-F_{s+1}$. For $T=F_{s+2}\oplus 0, H= H_0 \oplus V$ we
first compute $\ga_t(T)$ of Theorem~\ref{p.SCHE.3}: on $H_0$ we have
$\ga_t( F_{s+2}) = \cos(\pi t) F_{s+1}+\sin(\pi t) F_{s+2}$ while on
$V$ we have $\ga_t(0) = \cos(\pi t) F_{s+1}$.

Finally, the map $\Phi_{r,s+1}$ of Theorem~\ref{p.SCHE.4} maps the
homotopy classes of paths in $\FO_*^{r,s+1}$ from $F_{s+1}$ to $-F_{s+1}$
to $\pi_1(\ovl\Omega_{r,s},F_s)$.  On $H_0$ we have 
$\Phi_{r,s+1}\bl \ga_t(F_{s+2})\br 
   = -F_s e^{\pi \ga_t(F_{s+2}F_s)}\equiv F_s$ since $\ga_t(F_{s+1})$
anti-commutes with $F_s$, is skew and has square $-1$. 

On $V$ we have
$\Phi_{r,s+1}(\ga_t(0)) = - F_s e^{\pi \cos(\pi t) F_{s+1} F_s}, 0\le t\le 1$. 
As $\cos(\pi t)$ runs from $1$ to $-1$ this path is homotopic to 
$-F_s e^{\pi (1-2t) F_{s+1}F_s} 
= F_s e^{-2\pi t F_{s+1}F_s}, 0\le t \le 1$, completing the proof.
\end{proof}

\subsection{Stable homotopy in the finite dimensional case}
\label{ss.SHFC}

Up to this point the results of this Appendix~\ref{s.SCHE} are proved
by \emph{finite dimensional} approximation. This means that they do
have finite dimensional analogues. Here, the infinite multiplicity
assumption built-in to the notion of a standard $\Cl_{r,s}$--Hilbert space
needs to be replaced by looking at \emph{stable homotopy} groups.

We will need the stable analogues of Cor.~\ref{p.SCHE.5} and
Theorem~\ref{p.SCHE.6}. The statements can be extracted from
\cite{AS69}. However, they can also be found in Milnor's book
\cite[Sec.~24]{Mil63}.

For a finite dimensional $\Cl_{r,s}$--module $V$ one has
$\ovl\Omega_{r,s}(V)=\tO_{r,s}(V)=:\Omega_{r,s}(V)$. If $V\hookrightarrow W$
is an inclusion of finite dimensional $\Clrs$--modules then
there is an obvious inclusion $\Omega_{r,s}(V)\to \Omega_{r,s}(W)$
by sending $J\mapsto J\oplus F_s\restr{V^\perp}$. Maps
$f,g:X\to\Omega_{r,s}(V)$ are said to be \emph{stably} homotopic
if they become homotopic after embedding into $\Omega_{r,s}(W)$ for
a sufficiently high dimensional $\Clrs$--module $W\supset V$.
In this way one obtains stable homotopy sets/groups 
$\pi_0\stab(\Omega_{r,s}(V))$,  $\pi_1\stab(\Omega_{r,s}(V))$.

\begin{theorem} Let $H$ be a finite dimensional $\Clrs$--module.
Then the stable homotopy set $\pi_0\stab(\Omega_{r,s}(V))$ is
  canonically isomorphic to $\sM_{r,s}/\sM_{r,s+1}$. The concrete
generators are the same as those mentioned in Cor.
\textup{\ref{p.SCHE.5}}.

Similarly, for a finite dimensional $\Cl_{r,s+1}$--module
$H$, the stable fundamental group
  $\pi_1\stab(\Omega_{r,s}(H),F_s)$ is canonically isomorphic to
$\sM_{r,s+1}/\sM_{r,s+2}$. The concrete generators are the
same as those mentioned in Theorem \textup{\ref{p.SCHE.6}}.
\end{theorem}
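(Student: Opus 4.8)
The plan is to deduce the finite--dimensional \emph{stable} statements from the infinite--dimensional results already established in Cor.~\ref{p.SCHE.5} and Theorem~\ref{p.SCHE.6}, by running in reverse the finite--dimensional approximation on which those proofs rest; equivalently, one may quote the explicit computations in \cite[Sec.~24]{Mil63}. First I would record that in the finite dimensional case the three spaces in \Eqref{eq.NHCT.3}, \eqref{eq.NHCT.4} collapse: for a finite dimensional Clifford module $V$ the conditions ``$T-F_s$ compact'' and ``$\|T-F_s\|_\sQ<2$'' are vacuous, so $\ovl{\Omega}_{r,s}(V)=\tilde\Omega_{r,s}(V)=\Omega_{r,s}(V)=\sJ^{r,s}(V)$, a compact set (a disjoint union of homogeneous spaces of the orthogonal group). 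By the periodicity metastatement at the head of this appendix it is enough to treat $\Cl_{0,k}$, which is the case handled in \cite{AS69} and \cite[Sec.~24]{Mil63}.

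Next I would make the colimit picture precise. Fix a standard $\Cl_{r,s}$--Hilbert space $H$ for the $\pi_0$ claim, resp.\ a standard $\Cl_{r,s+1}$--Hilbert space for the $\pi_1$ claim. Every $T\in\ovl{\Omega}_{r,s}(H)=\bigsetdef{T}{T-F_s\text{ compact}}$ is a norm limit of finite--rank perturbations of $F_s$, and any such perturbation is supported, after enlarging, on a finite dimensional Clifford submodule $V\subset H$; hence $\ovl{\Omega}_{r,s}(H)$ is the increasing union over such $V$ of the subspaces $\bigsetdef{T}{T=F_s\text{ off }V}\cong\Omega_{r,s}(V)$, with structure maps the inclusions $J\mapsto J\oplus F_s\restr{V^\perp}$ of Sec.~\ref{ss.SHFC}. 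Since $[0,1]$ and $[0,1]^2$ are compact, every path, based loop, and homotopy lands in a single $\Omega_{r,s}(V)$, so
\[
  \pi_0\bl\ovl{\Omega}_{r,s}(H)\br=\varinjlim_V\pi_0\bl\Omega_{r,s}(V)\br,
  \qquad
  \pi_1\bl\ovl{\Omega}_{r,s}(H),F_s\br=\varinjlim_V\pi_1\bl\Omega_{r,s}(V),F_s\br.
\]
As any two finite dimensional Clifford modules embed in a common one, these colimits are by definition the stable homotopy set $\pi_0\stab(\Omega_{r,s}(V_0))$ and the stable fundamental group $\pi_1\stab(\Omega_{r,s}(V_0),F_s)$ for an arbitrary starting module $V_0$. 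Feeding in Prop.~\ref{p.SCHE.1} together with the identifications $\pi_0(\tO_{r,s})\simeq \sM_{r,s}/\sM_{r,s+1}$ and $\pi_1(\tilde\Omega_{r,s},F_s)\simeq \sM_{r,s+1}/\sM_{r,s+2}$ of Cor.~\ref{p.SCHE.5} and Theorem~\ref{p.SCHE.6} then yields the asserted isomorphisms. The claim about concrete generators is immediate: the generators exhibited in loc.\ cit.\ are already finite--rank perturbations of $F_s$, namely $F_s\restr{H_0}\oplus -F_s\restr{V}$, resp.\ the finite--rank loop $\tilde\gamma$ of Theorem~\ref{p.SCHE.6}, hence already live in some $\Omega_{r,s}(V)$ and represent the same class $[V]$ under the colimit identification.

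The main obstacle is turning this colimit step from a heuristic into a proof. One must verify: (i) that every $T\in\ovl{\Omega}_{r,s}(H)$ is homotopic \emph{within} $\ovl{\Omega}_{r,s}(H)$ to a finite--rank perturbation of $F_s$, and that any based homotopy between such perturbations can be deformed into a single $\Omega_{r,s}(V)$ --- this is where compactness of the parameter cubes enters, and where one must keep the approximants $\Clrs$--linear, which forces $V$ to be a genuine submodule and makes the Clifford--linear retraction of Prop.~\ref{p.NHCT.2} the natural tool; and (ii) that the transition maps $\Omega_{r,s}(V)\to\Omega_{r,s}(W)$ induce on $\pi_0$ and $\pi_1$ precisely the stabilization used to define $\sM_{r,s}/\sM_{r,s+1}$ and $\sM_{r,s+1}/\sM_{r,s+2}$ in Sec.~\ref{s.ABSC}. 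Both are the ``easy but tedious'' bookkeeping the appendix repeatedly defers; the fully self--contained alternative is to invoke \cite[Sec.~24]{Mil63} directly, where the stable homotopy of these spaces of Clifford structures is computed and identified with the Atiyah--Bott--Shapiro groups.
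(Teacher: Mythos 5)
The paper itself offers no argument here: it simply says the statements ``can be extracted from \cite{AS69}'' or ``found in Milnor's book \cite[Sec.~24]{Mil63},'' so your proposal is substantially more detailed than what the authors wrote, and the closing fallback you mention (quote \cite[Sec.~24]{Mil63}) is exactly the paper's own route. Your colimit plan is a reasonable alternative, but be aware it runs the logic backwards relative to how Cor.~\ref{p.SCHE.5} and Theorem~\ref{p.SCHE.6} are themselves established (by finite-dimensional approximation, as the appendix says), so one must take those two results as black boxes to avoid circularity.

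The genuine gap is the passage from ``dense union'' to ``homotopy equals colimit of homotopy.'' Your assertion that $\ovl{\Omega}_{r,s}(H)$ ``is the increasing union'' of the $\Omega_{r,s}(V)$ is false as stated: $\ovl{\Omega}_{r,s}(H)$ consists of \emph{compact} perturbations of $F_s$, and $\bigcup_V \Omega_{r,s}(V)$ (the finite-rank perturbations) is only dense in it. Moreover, the colimit $\varinjlim_V \Omega_{r,s}(V)$ carries the colimit topology, which is strictly finer than the subspace topology inherited from $\ovl{\Omega}_{r,s}(H)$; so the compactness-of-$[0,1]^n$ argument applies to the colimit, and one must separately show that the natural continuous bijection onto its image in $\ovl{\Omega}_{r,s}(H)$ is a weak homotopy equivalence. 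You do flag this as ``the main obstacle,'' but Prop.~\ref{p.NHCT.2} by itself is a local retraction and will not deliver this on the nose: what is needed is a $\Clrs$-equivariant compression argument --- pass to $u=F_s J$ (a unitary with $u+1$ compact), truncate the spectrum of $u$ away from $-1$ onto a finite-dimensional Clifford-invariant subspace, and re-normalize back to a complex structure, checking continuity in $J$ --- together with the bookkeeping that the transition maps $\Omega_{r,s}(V)\to\Omega_{r,s}(W)$ induce precisely the stabilization defining $\sM_{r,s}/\sM_{r,s+1}$. None of this is deep, but it is precisely the content that Milnor's \S 24 supplies, and absent it the colimit argument is a sketch rather than a proof.
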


\providecommand{\bysame}{\leavevmode\hbox to3em{\hrulefill}\thinspace}
\providecommand{\MR}{\relax\ifhmode\unskip\space\fi MR }
\providecommand{\MRhref}[2]{%
  \href{http://www.ams.org/mathscinet-getitem?mr=#1}{#2}
}
\providecommand{\href}[2]{#2}


\end{document}